\def\@secnumfont{\bfseries\scshape}
\def\section{\@startsection{section}{1}%
  \z@{.7\linespacing\@plus\linespacing}{.5\linespacing}%
  {\normalfont\large\bfseries\scshape\centering}}
\def\subsection{\@startsection{subsection}{2}%
  \z@{.5\linespacing\@plus.7\linespacing}{-.5em}%
  {\normalfont\bfseries\scshape}}
\def\subsubsection{\@startsection{subsubsection}{3}%
  \z@{.5\linespacing\@plus.7\linespacing}{-.5em}%
  {\normalfont\scshape}}
\def\specialsection{\@startsection{section}{1}%
  \z@{\linespacing\@plus\linespacing}{.5\linespacing}%
  {\normalfont\centering\large\bfseries\scshape}}
\renewenvironment{proof}[1][\proofname]{\par
\pushQED{\qed}%
\normalfont \topsep4\p@\@plus4\p@\relax
\trivlist
\item[\hskip\labelsep
\bfseries
#1\@addpunct{.}]\ignorespaces
}{%
\popQED\endtrivlist\@endpefalse
}
\newcommand \Dotfill {\leavevmode \leaders \hb@xt@ 6pt{\hss .\hss }\hfill \kern \z@}
\def\@tocline#1#2#3#4#5#6#7{\relax
  \ifnum #1>\c@tocdepth 
  \else
    \par \addpenalty\@secpenalty\addvspace{#2}%
    \begingroup \hyphenpenalty\@M
    \@ifempty{#4}{%
      \@tempdima\csname r@tocindent\number#1\endcsname\relax
    }{%
      \@tempdima#4\relax
    }%
    \parindent\z@ \leftskip#3\relax \advance\leftskip\@tempdima\relax
    \rightskip\@pnumwidth plus4em \parfillskip-\@pnumwidth
    #5\leavevmode\hskip-\@tempdima
      \ifcase #1
       \or\or \hskip 1.65em \or \hskip 3.3em \else \hskip 4.95em \fi%
      #6\nobreak\relax
    \Dotfill
    \hbox to\@pnumwidth{\@tocpagenum{#7}}\par
    \nobreak
    \endgroup
  \fi}
\def\l@section{\@tocline{1}{0pt}{1pc}{}{\scshape}}
\renewcommand{\tocsection}[3]{%
\indentlabel{\@ifnotempty{#2}{\ignorespaces#1 #2.\hskip 0.7em}}#3}
\def\l@subsection{\@tocline{2}{0pt}{1pc}{5pc}{}}
\def\l@subsubsection{\@tocline{3}{0pt}{1pc}{7pc}{}}
\numberwithin{equation}{section}
\newtheoremstyle{mytheorem}{.7\linespacing\@plus.3\linespacing}{.7\linespacing\@plus.3\linespacing}%
     {\itshape}
     {}
     {\bfseries}
     {. }
     {0.3ex}
     {\thmname{{\bfseries #1}}\thmnumber{ {\bfseries #2}}\thmnote{ (#3)}}  
\theoremstyle{mytheorem}
\newtheorem{theorem}{Theorem}[section]
\newtheorem{proposition}[theorem]{Proposition}
\newtheorem{remark}[theorem]{Remark}
\newtheorem{definition}[theorem]{Definition}
\newtheorem{assumption}[]{Assumption}
\newtheorem{question}[]{Question}
\newcommand{\bbE}{{\ensuremath{\mathbb E}} }
\newcommand{\bbN}{{\ensuremath{\mathbb N}} }
\newcommand{\bbP}{{\ensuremath{\mathbb P}} }
\newcommand{\bbR}{{\ensuremath{\mathbb R}} }
\newcommand{\bbV}{{\ensuremath{\mathbb V}} }
\newcommand{\bbZ}{{\ensuremath{\mathbb Z}} }
\newcommand{\cC}{{\ensuremath{\mathcal C}} }
\newcommand{\cE}{{\ensuremath{\mathcal E}} }
\newcommand{\cF}{{\ensuremath{\mathcal F}} }
\newcommand{\cN}{{\ensuremath{\mathcal N}} }
\newcommand{\cP}{{\ensuremath{\mathcal P}} }
\newcommand{\cS}{{\ensuremath{\mathcal S}} }
\newcommand{\cW}{{\ensuremath{\mathcal W}} }
\newcommand{\cY}{{\ensuremath{\mathcal Y}} }
\newcommand{\ga}{\alpha}
\newcommand{\gb}{\beta}
\newcommand{\gl}{\lambda}
\newcommand{\go}{\omega}
\newcommand\sfA{\mathsf A}
\newcommand\sfB{\mathsf B}
\newcommand\sfI{\mathsf I}
\newcommand\sfR{\mathsf R}
\newcommand\sfX{\mathsf X}
\newcommand\sfa{\mathsf a}
\newcommand\sfb{\mathsf b}
\newcommand\sff{\mathsf f}
\newcommand{\mvw}{\boldsymbol{w}}\newcommand{\mvx}{\boldsymbol{x}}
\newcommand{\mvt}{\boldsymbol{t}}
\renewcommand{\tilde}{\widetilde}          
\DeclareMathSymbol{\leqslant}{\mathalpha}{AMSa}{"36} 
\DeclareMathSymbol{\geqslant}{\mathalpha}{AMSa}{"3E} 
\DeclareMathSymbol{\eset}{\mathalpha}{AMSb}{"3F}     
\newcommand{\dd}{\text{\rm d}}             
\newcommand{\sumtwo}[2]{\sum_{\substack{#1 \\ #2}}} 
\newcommand{\R}{\mathbb{R}}
\newcommand{\Z}{\mathbb{Z}}
\newcommand{\N}{\mathbb{N}}
\def\bs{\boldsymbol}
\newcommand{\PEfont}{\mathrm}
\newcommand{\p}{\ensuremath{\PEfont P}}
\newcommand{\e}{\ensuremath{\PEfont E}}
\newcommand{\E}{\e}
\renewcommand{\P}{\p}
\newcommand\bE{\ensuremath{\bs{\mathrm{E}}}}
\DeclareMathOperator{\bbvar}{\ensuremath{\mathbb{V}ar}}
\DeclareMathOperator{\bbcov}{\ensuremath{\mathbb{C}ov}}
\newcommand{\ind}{\mathds{1}}
\renewcommand{\epsilon}{\varepsilon}
\renewcommand{\rho}{\varrho}
\newenvironment{myenumerate}{%
\renewcommand{\theenumi}{\arabic{enumi}}%
\renewcommand{\labelenumi}{{\rm(\theenumi)}}%
\begin{list}{\labelenumi}
	{%
	\setlength{\itemsep}{0.4em}%
	\setlength{\topsep}{0.5em}%
	\setlength\leftmargin{2.45em}%
	\setlength\labelwidth{2.05em}%
	\setlength{\labelsep}{0.4em}%
	\usecounter{enumi}%
	}%
	}%
{\end{list}
}
\newenvironment{myitemize}{%
\begin{list}{$\bullet$}%
 	{%
	\setlength{\itemsep}{0.4em}%
	\setlength{\topsep}{0.5em}%
	\setlength\leftmargin{2.45em}%
	\setlength\labelwidth{2.05em}%
	\setlength{\labelsep}{0.4em}%
	}%
	}%
{\end{list}}
\renewenvironment{itemize}{
\begin{myitemize}}%
{\end{myitemize}}
\def\dd{\mathrm{d}}
\def\hbeta{{\hat{\beta}}}
\renewcommand{\ind}{\mathbb{1}}
\begin{document}

\title
[ Phase transitions of the DPRE]
{Directed polymers in a random environment: a review of the phase transitions}


\author[N.Zygouras]{Nikos Zygouras }
\address{Department of Mathematics\\
University of Warwick\\
Coventry CV4 7AL, UK}
\email{N.Zygouras@warwick.ac.uk}

\begin{abstract}
The model of {\it directed polymer in a random environment} is a fundamental model of
interaction between a simple random walk and ambient disorder. This interaction 
gives rise to complex phenomena and transitions from a central limit theory 
to novel statistical behaviours. Despite its intense study, there are still many aspects and phases
which have not yet been identified. In this review we focus on the current status
of our understanding of the transition between weak and strong disorder phases, 
give an account of some of the methods that the study of the model has motivated
and highlight some open questions.

\end{abstract}

\date{\today}

\keywords{random polymers, disordered systems, phase transitions, weak and strong disorder, martingales, fractional moment method, coarse graining, pinning models, heavy tail disorder, hierarchical lattices, intermediate disorder regime}
\subjclass{Primary: 82B44}

\maketitle
\begin{center}
{\it Dedicated to the memory of Francis Comets}
\end{center}
\tableofcontents

\section{Introduction}
The model of directed polymer in a random environment was conceived almost forty
years ago as a model of one dimensional interfaces of disordered Ising magnets. The interest
in the model soon went beyond its original purpose. The reason for this was that, on the one hand, 
it demonstrated a non trivial interaction between disorder and the underlying simple random walk,
which indicated new exponents different from those dictated by the central limit theorem.
On the other hand, it exhibited links to universality of stochastic growth phenomena, what is nowadays 
known as the Kardar-Parisi-Zhang universality class. One of the most intriguing features is the transition
that this model exhibits between the central limit theory behaviour of that of a simple random walk
 and new statistical phenomena. 
 Besides dimension one, where there is now a good anticipation of several aspects of
the novel statistical theory (even if a mathematical theory is still not fully in place), 
in higher dimensions the situation as to when the transition happens and 
what new statistics emerge is nebulous. In this review we will try to summarise the current status of our
understanding and to present some prominent methods that have emerged during this endeavour, as well
as highlight some questions. 

Let us define the model of the directed polymer in random environment (DPRE) on $\Z^d$: 
We assume that $S=(S_n)_{n\geq 0}$ is a random walk on $\bbZ^d$. 
We will denote its law when starting at $S_0=x\in\bbZ^d$ by $\P_x$ and we will often
drop the subscript $x$ if this is $0$.
Typically, we will consider the case of 
a simple, symmetric random walk but the option of more general lattice random walks, e.g. performing long
jumps, is also available, although less studied in the literature. In addition to $S$, we consider a family of random variables $\omega=(\omega_{n,x})_{n\in \bbN, x\in \bbZ^d}$, which plays the role of disorder. We denote its distribution by $\bbP$ and the associated expectation by 
$\bbE$. Again, typically, $\omega$ is taken to be a family of independent, identically distributed 
(i.i.d.) random variables with
\begin{align}\label{omegaass}
\bbE[\omega_{n,x}]=0,\quad \bbV ar(\omega_{n,x})=1, \quad \text{and}\quad 
\lambda(\beta):=\log\bbE[e^{\beta \omega_{n,x}}]<\infty, \quad \text{for $\beta\in \bbR$}.
\end{align}
The possibility of correlated disorder or disorder without exponential moments has also been
considered  but unless we assume explicitly such hypotheses, we will be working with the
 assumptions of i.i.d. and \eqref{omegaass}.
 The directed polymer measure is a probability measure on random walk paths $S$ defined as 
\begin{align}\label{def:DPRE}
\dd \mu_{n,x}^{\beta, \omega}(S) 
&:=\frac{1}{Z_{n}^{\beta, \omega}(x)} e^{\sum_{i=1}^{n} \beta\omega_{i,S_i}} \,\dd \P_x(S) \quad
\text{with $\,\,\, Z_{n}^{\beta, \omega}(x):=\E_x[e^{\sum_{i=1}^{n} \beta\omega_{i,S_i}}]$} .
\end{align}
The normalisation $ Z_{n,x}^{\beta, \omega}(x)$ is called the partition function. It contains
a significant amount of information and will be discussed extensively in these notes. 
Both $\mu_{n,x}^{\beta, \omega}$ and $ Z_{n,x}^{\beta, \omega}(x)$ are random, depending on the
disorder $\omega$, but
to lighten notation we will drop the superscript $\omega$ and simply write 
$\mu_{n,x}^\beta$ and $Z_{n}^\beta(x)$ and when the starting point $x=0$, we will simply write
$\mu_{n}^\beta$ and  $Z_{n}^\beta$.

We will also want to normalise the partition function and the Gibbs factor so that they have mean $1$
with respect to $\bbE$ and so we subtract from the exponential the term $n\lambda(\beta)$, 
thus writing
\begin{align}\label{defW}
\dd \mu_{n,x}^{\beta, \omega}(S) 
&:=\frac{1}{W_n^{\beta,\omega}(x)} e^{\sum_{i=1}^{n} (\beta\omega_{i,S_i}-\lambda(\beta))} \,\dd \P_x(S) \quad
\text{with $\,\,\, W_{n}^{\beta, \omega}(x):=\E_x[e^{\sum_{i=1}^{n} (\beta\omega_{i,S_i} -\lambda(\beta))} ]$} .
\end{align}
The measure $\mu_{n,x}^\beta$ models the situation where a random 
walk path $S$ moves inside a random ``landscape'', which affects its motion. In particular, we see
from its definition that $\mu_{n,x}^\beta$ gives more weight to trajectories $(S_1,...,S_n)$ that visit
sites that maximise the ``energy'' $\sum_{i=1}^n \beta \omega_{i,S_i}$, i.e. the cumulative 
disorder encountered along the random walk trajectory up to time $n$. 
This interaction might affect the diffusive behaviour of the random walk since the latter will try
to reach possibly remote and spatially restricted locations, where disorder takes atypically high values. In such a situation we say that the walk becomes {\it localised} as it tends to move in 
narrow corridors, which carry favourable disorder. Of course,
an atypical path trajectory, will have a cost in ``entropy'', quantified by $\P_x(S)$ and so the
problem of determining whether the walk will adopt a different long time asymptotic is a matter
of a complex energy-entropy balance. 

Understanding when a phase transition between diffusive and super-diffusive
 behaviour takes place has been a matter of study for very long time but still many 
questions remain to be settled. This will be the focus of this review.
We plan to start from the classical results that dominated the first 15-20 years of the lifespan of the model
and which demonstrated the existence of a transition between a {\it weak disorder} phase (where the walk maintains its diffusive behaviour) and a {\it strong disorder} phase (where deviation from diffusivity is expected). 
We will then pass to recent progress in the understanding of this transition both in higher dimensions
as well as at the {\it critical dimension} $2$, where the transition is hidden. 
We also discuss phase diagrams when disorder has heavy tails.

The directed polymer model has demanded the deployment of a variety of methods
and has also motivated the development of new ones. The first 15-20 years focused on martingale 
methods and a partial characterisation of the weak disorder regime. 
At the turn of the century there was the boom of combinatorial and representation theoretic
 methods related to the random matrix theory and integrability,
which gave the rigorous derivation of the conjectured $1/3,2/3$ 
exponents for certain one-dimensional, {\it integrable} polymers at {\it zero temperature} and the 
identification of the limit theory with statistics of eigenvalues of  random matrices. The expansion
of what is now named {\it integrable probability} allowed for the treatment of a few integrable
polymer models at positive temperature, as well as a number of other models in the KPZ universality
class. 

Around that time and still continuing till today, methods more grounded to probability are developing 
with the purpose of better understanding the transitions in dimensions higher than one and towards
building a universality framework. These methods combine a variety of geometric and probabilist intuition
and tools. Some of the keywords in this direction are: pinning interfaces, fractional moment methods, concentration estimates, coarse graining, Lindeberg principles. Furthermore, the original martingale techniques from the nineties and beginning of the millennium have very recently been revisited and expanded, opening new prospects in the understanding of the phase transitions in higher dimensions.

The above set of methods will be the main focus of this review. Unfortunately, we will not discuss 
methods and aspects relating to integrable probability and one-dimensional KPZ asymptotics. This
is a rather separate and extensive topic to which a number of other reviews have been devoted
and to which we refer \cite{J05, C12, QS15, BDS16, BG16, BP14, Z22}.
We will also not discuss other pertinent polymer models, such as pinning and wetting models or
polymers with self interactions or large deviation type methods.
Some references on this topic are the books \cite{G07, dH09, G11, CdHP12} and we refer
to them. Complementary features and further references of the topics we discuss here can be found in
Francis Comets' St. Flour Lectures \cite{C16}.
\vskip 2mm
This review grew out of a very sad event, that of the loss of Francis Comets.
Francis had been one of the most passionate advocates of the directed polymer model (alongside
his many other interests in disordered systems, complex random walks and more) and his early works 
in collaboration with Shiga and Yoshida played an important role in setting a framework and popularising the model. 
In fact I, myself, came into contact with the subject
 and the surrounding problems from his review paper with Shiga and Yoshida \cite{CSY04}, when
I was a graduate student. This certainly had an impact on me. Francis continued throughout his career to
investigate, develop and popularise the subject, through novel contributions, exposition and 
nurturing of young mathematicians. A summary of his efforts and signature on the topic can be seen in his
   St. Flour lecture notes \cite{C16}. The current review may just be seen as an addendum to Francis'
   St. Flour notes, which includes some of the developments that have taken place since then.
   
   Francis Comets was a dedicated and passionate mathematician, who genuinely cared about the 
   development of the field and the community. He will be missed but the impact of his works and
   efforts will stay. His kindness and generosity will also be remembered by those who met him.
   
   \vskip 4mm
{\bf Acknowledgements.} I would like to thank Thierry Bodineau, Patrick Cattiaux and 
Giambattista Giacomin for the invitation to prepare this review and in particular Thierry Bodineau for his encouragement and patience
during the preparation. I also thank Rongfeng Sun for feedback on the manuscript as well as Amol Aggarwal,  Ofer Busani, Ron Peled and
the anonymous referee for their useful comments. I am grateful to Quentin Berger, Stefan Junk and Hubert Lacoin,  for their
very detailed comments which improved the review and in particular S. Junk and H. Lacoin for settling a number of the questions posed in the first arxiv version of this review. 
The writing of these notes started at the National Taiwan University and I thank the institute and Jhih-huang Li for the hospitality.
The work was partially supported by EPSRC through grant EP/R024456/1.
   \section{A brief historical overview.} 

\subsection{ The physical origins.}
The directed polymer model was first introduced in a one-dimensional setting by Huse and Henley
\cite{HH85} as a model of the interface separating the $+$ and the $-$ states in a two-dimensional 
Ising model
with coupling disorder. It is instructive
to see how the model emerges from this angle, so let us briefly review this. 

Let $\Lambda_n:=[0,n]^2\cap \Z^2$
and consider the {\it spin field} $\sigma=(\sigma_x)_{x\in \Lambda_n}$ with
 $\sigma_x\in\{+1,-1\}$ for all $x\in \Lambda_n$. On top of this field, consider the disordered field of couplings $(J_{x,y}^\omega)_{x,y\in \Lambda_n}$, which is assumed to be an i.i.d. family of random
 variables, which are non-zero only if $x,y$ are neighbouring sites, i.e. $|x-y|=1$.  
 Upon the field $\sigma$, we will impose the probability distribution
\begin{align*}
\dd \mu^{\rm Ising}_{n}(\sigma):=\frac{1}{Z^{\rm Ising}_n} e^{\sum_{x,y\in \Lambda_n, \, |x- y|=1} J^\omega_{x,y} \sigma_x \sigma_y} 
\quad \text{with} \quad
Z^{\rm Ising}_n:= \sum_{\sigma \in \{\pm 1\}^{\Lambda_n}}
 e^{\sum_{x,y\in \Lambda_n, \, |x- y|=1} J^\omega_{x,y} \sigma_x \sigma_y} .
\end{align*}
To simplify the situation, we assume that there are just two regions inside $\Lambda_n$:
 one which has all spin values equal to $+1$ and the other with all spins $-1$, and we assume that the two regions are 
 separated by an interface which does not have loops or overhangs, i.e. it is a path, which we denote by $\pi$,  see Figure \ref{fig:ising}. To be consistent with this, one should also assign $-$ boundary condition on the part of the boundary
 above the interface and $+$ to the part of the boundary below.
 Since $\sigma_x\sigma_y=1$ if $\sigma_x, \sigma_y$ have the same sign, we can write
 \begin{align*}
 \sum_{x,y\in \Lambda_n, \, |x- y|=1} J^\omega_{x,y} \sigma_x \sigma_y
 = \sum_{x,y\in \Lambda_n} J^\omega_{x,y} -2\sum_{x,y\sim \pi} J^\omega_{x,y},
 \end{align*}
 where $x,y\sim \pi$ means that sites $x,y$ are adjacent and on opposite sites of path $\pi$.
 The Ising measure, then, takes the form
 \begin{align*}
 \dd \mu^{\rm Ising}_{n}:=\frac{1}{\hat Z^{\rm Ising}_n} e^{-2\sum_{x,y\in \Lambda_n, \, x,y\sim\pi} J^\omega_{x,y}} 
\quad \text{with} \quad
\hat Z^{\rm Ising}_n(\sigma):= \sum_{\sigma \in \{\pm 1\}^{\Lambda_n}}
 e^{-2\sum_{x,y\in \Lambda_n, \, x,y\sim \pi} J^\omega_{x,y} } ,
 \end{align*}
 since the term $\sum_{x,y\in\Lambda_n} J^\omega_{x,y}$ cancels out in the ratio.
 Even though paths $\pi$ can be non-nearest neighbour, the similarity to the directed
 polymer measure (where paths $S$ are considered to be simple random walks) 
 is apparent and in fact one should expect similar behaviour given that both
 $S$ and $\pi$ fall within the scope of Donsker's invariance principle.
 In fact, Huse and Henley assumed such invariance principle and replaced the discrete interface with
 a continuum model to which they performed numerical calculation, which revealed a localisation-delocalisation transition and new scaling (or ``roughness'') exponents, which indicated deviations
 from diffusive behaviour. Huse and Henley also considered d-dimensional lattices. Then the
 connection to the directed polymer is lost, since paths are replaced $(d-1)$-dimensional interfaces.
 Predictions of a localisation phase were put forward in \cite{HH85} for dimensions $5/3<d<5$ but
 most conclusive and striking were their numerical predictions in $d=2$ 
 (where the links to polymer are present). In this case \cite{HH85} predicted the standard deviations of a  polymer path of length
 $n$ are of order $n^{2/3}$ as opposed to the $n^{1/2}$ deviations of a simple random walk.
 Moreover, they predicted that the fluctuations of the logarithm of the partition function $Z_n^\beta$ are of
 order $n^{1/3}$.
    \begin{figure}[t]
 \begin{center}
\begin{tikzpicture}[scale=.6]
\draw [ultra thick, red] (0,0)--(1.5, 0) -- (1.5,2.5) -- (2.5,2.5) --(2.5,1.5) -- (3.5, 1.5)--(3.5, 4.5)--
(6.5, 4.5)--(6.5, 2.5)--(7.5,2.5)--(7.5, 3.5) -- (8.5, 3.5) --(8.5, 1.5)--(9.5, 1.5)--(9.5, 2.5)--(10.5, 2.5)--
(11.5, 2.5)--(11.5, 0.5)--(12, 0.5);
\foreach \i in {0,...,8}{
  	\foreach \j in {0,...,12}{
		\node[draw,circle,inner sep=1pt,fill] at (\j,\i) {};
	}
	\fill[white] (4.5,0.5)--(4.5,2.5)--(6.5,2.5)--(6.5,0.5)--(4.5,0.5); 
	\node at (5.5, 1.5) {$+$};
	\fill[white] (8.5,5.5)--(8.5,7.5)--(10.5,7.5)--(10.5,5.5)--(8.5,5.5); 
	\node at (9.5, 6.5) {$-$};
	}
\end{tikzpicture}
 \end{center}  
\caption{ \small  An interface separating a $+$ from a $-$ phase in an Ising model. We assume that
phases do not contain isolated islands of opposite sign and the the interface does not have overhangs. }
\label{fig:ising}
\end{figure}

 After the work of Huse and Henley a considerable interest emerged as to the universality of the
 $1/3, 2/3$ exponents observed numerically in \cite{HH85} and further queried
  by Kardar \cite{K85} (see also Huse-Henley-Fischer \cite{HHF85}). 
  The reason that led to querying the universality 
 of these exponents was that similar exponents had already been observed in the study of hydrodynamics and Burgers turbulence by Foster, Nelson and Stephen \cite{FNS77}
 through a more theoretical, dynamic renormalisation approach. 
 The proposal of the universality of the new exponents
  was then put forward by Kardar, Parisi and Zhang in the
 famous KPZ paper \cite{KPZ86}. There they proposed that fluctuations of random (growing)
$d$-dimensional interfaces are governed by the stochastic PDE
 \begin{align}\label{eq:KPZ}
 \partial_t h= \frac{1}{2} \Delta h +\frac{1}{2} |\nabla h|^2 + \xi,\qquad t\geq 0 , \,\,x\in \bbR^d,
 \end{align}
 where $h(t,x)$ denotes the height of the interface at time $t$ and spatial location $x$ and
 $\xi=\xi(t,x)$ is a space-time white noise, that is a Gaussian process with delta correlations
 as $\bbE[\xi(t,x)\, \xi(s,y)] = \delta(t-s) \delta(x-y)$.
 
 The relation between the KPZ equation and the directed polymer model arises through
 a Cole-Hopf transformation: $h(t,x)=:\log u(t,x)$, which transforms \eqref{eq:KPZ} to the stochastic
 heat equation (SHE)
 \begin{align}\label{eq:SHE}
 \partial_t u= \frac{1}{2}\Delta u +\xi u,  \,\,x\in \bbR^d,
 \end{align}
 whose solution can be written via the (stochastic version of the) Feynman-Kac formula 
 \cite{BC95} as
 \begin{align*}
 u(t,x) = \E_x\Big[\, e^{\int_0^t \beta \xi(t-s,B(s)) \,\dd s - \frac{\beta^2}{2}
 \bbE\big[\big( \int_0^t \xi(t-s,B(s)) \,\dd s \big)^2\big]}\, \Big],
 \end{align*}
 with $\E_x$ is the expectation over a Brownian motion starting from $x\in\bbR^d$.
 Even though neither term in the exponential formally makes sense (they are infinities) and 
 a proper interpretation is required, one can see the connection to the directed polymer model:
 one just needs to replace the white noise $\xi$, which is the continuum analog of i.i.d. family,
 with i.i.d. variables $(\omega_{n,x})$ on the discrete lattice, the Brownian motion with the trajectory
 of a simple random walk and 
 $\frac{\beta^2}{2} \bbE\big[\big( \int_0^t \xi(t-s,B(s)) \,\dd s \big)^2\big]$
 with the corresponding log-moment generating function $\lambda(\beta)$, which in the Gaussian case is just $\frac{\beta^2}{2}$.
 
\subsection{Passage into mathematics.} In the mathematical literature the directed polymer model was introduced by Imbrie and Spencer
 \cite{IS88}. Analysing certain expansions of $W_n^\beta$ they showed that diffusivity can still persist. More precisely, in dimensions $d\geq 3$ and small enough $\beta$
it holds that $\E^{\mu_n^\beta}[|S_n|^2/n]\to1$, a.s. with respect to $\omega$.  
Diffusivity was then upgraded to a central limit theorem by Bolthausen \cite{B89}, who also
introduced a martingale approach. In particular, 
he showed that in the same regime of $\beta$ as \cite{IS88}, $d\geq 3$ and 
$a.e.$ realisation of the disorder $\omega$, 
 $S_n/\sqrt{n}$ converges to a normal distribution with 
covariance matrix $\frac{1}{d} \sfI$ and $\sfI$ the identity matrix. 
The martingale approach was further used to upgrade the central limit theorem to an invariance
principle \cite{AZ96}, while the expansion approach of \cite{IS88} led Sinai \cite{S95} to prove a
local limit theorem in $d\geq 3$ and small $\beta$.
The diffusive regime that the above works addressed was identified (see also \cite{S95, SZ96})
as $\{\beta \colon \lambda(2\beta)-2\lambda(\beta) < \log\frac{1}{\pi_d}\}$, where $\pi_d$ is the probability that 
a simple $d$-dimensional random walk which starts at $0$ returns to $0$ in finite time. 
We will later see that this condition emerges from the requirement that the second moment
of $W_n^\beta$ remains bounded.

The regime where the polymer exhibits or it is expected to exhibit diffusive behaviour is 
called {\it weak disorder regime}. Opposite to that is the {\it strong disorder regime} where
 localisation and super-diffusivity behaviour is expected to take place. A dichotomy
between weak and strong disorder was identified according to whether $W_n^\beta$
converges a.s. to a random variable $W_\infty^\beta$, which is a.s. strictly positive (weak disorder)
or a.s. identically equal to $0$ (strong disorder). Identifying the critical value $\beta_c$ 
(in high dimensions) is still a challenging problem and large part of this review will consider
the progress around this. Below the critical $\beta_c$ Comets and Yoshida \cite{CY06} managed
to extend Bolthausen's central limit theorem, showing that the polymer measure $\mu_n^{\beta,\omega}$ 
satisfies a CLT  in probability, with respect to $\omega$,
up to the critical temperature as $n\to\infty$. A CLT for $\mu_n^{\beta,\omega}$ 
that would hold for $a.e.$ disorder $\omega$, for general disorder,
in the whole weak disorder regime is still an open problem. In a Brownian environment such a CLT has been established in
\cite{J23b}. In the strong disorder regime Carmona-Hu \cite{CH02}, 
Comets-Shiga-Yoshida \cite{CSY03} and Vargas \cite{V07}
 demonstrated\footnote{the result on the $\sup_{x}\mu_n^{\beta, \omega}(S_n=x)$ not converging to zero
 was proved in \cite{CH02, CSY03} in $d=1,2$ or under the additional assumption of {\it ``very'' strong disorder},
 which we now know that it is equivalent to strong disorder \cite{JL24a}} localisation phenomena in the form
of $\sup_{x}\mu_n^\beta(S_n=x)$ not converging to zero  
- contrary to what happens for simple random walks.

 \subsection{The rise of integrable probability.}
 Going beyond the weak disorder regime has been a very challenging problem as a new type 
 of probability theory, which would cross beyond the central limit theory, is required. 
 In fact the exponents $1/3$ and $2/3$ predicted by \cite{HH85, KPZ86, FNS77} indicated 
 a non central limit universality. The first breakthrough in this direction was achieved through
 the works of Baik-Deift-Johansson \cite{BDJ99}, Johansson \cite{J00}, Prahofer-Spohn \cite{PS02},
 where the exponents $1/3, 2/3$ were firmly established and the limiting distribution was discovered
 to be identical to distributions arising in random matrix theory. More precisely, it was established that
 for the {\it ground state} of a directed polymer model (more widely known as {\it last passage percolation})
 in an environment with exponential or geometric distribution the following limit theorem takes place:
 \begin{align}\label{LPPGUE}
\frac{ \max_{S:(0,0)\to(2n,0)} \sum_{i=1}^{2n}{\omega_{i,S_i}}- \mu n}{\sigma n^{1/3}}
 \xrightarrow[n\to\infty]{d} {\rm TW}_{\rm GUE},
 \end{align} 
 where the maximum is over all simple random walk paths starting from $0$ at time $0$ and ending 
 also at $0$ at time $2n$, $\mu$ and $\sigma$ are model specific constants and ${\rm TW}_{\rm GUE}$
 is the Tracy-Widom GUE ({\it Gaussian Unitary Ensemble}), which described (in the same (!) $n^{1/3}$
 scaling) the fluctuations of the largest eigenvalue of an ensemble of unitary random matrices
 with independent (up to unitary symmetry) Gaussian entries.
 
 The $n^{2/3}$ exponent emerges through the limit at a process level
 \begin{align}\label{LPPAiry}
\Bigg\{\frac{ \max_{S:(0,0)\to(2n,\lfloor n^{2/3} x \rfloor )} \sum_{i=1}^{2n}{\omega_{i,S_i}}- \mu n}{\sigma n^{1/3}} \Bigg\}_{x\in \R}
 \xrightarrow[n\to\infty]{d} \big\{ {\rm Airy}_2(x)-x^2 \big\}_{x\in \R},
 \end{align} 
 where now the end point of the path varies through $\lfloor n^{2/3} x \rfloor$ 
 (respecting the parity of the walk)  and ${\rm Airy}_2(\cdot)$
 is the {\it Airy-two} process: a continuous, stationary process with one-dimensional marginal given
 by the Tracy-Widom GUE distribution. This process was constructed in \cite{PS02, J03} and the
 finite dimensional distributions were fully determined.
 
 To achieve these novel limit theorems, novel links to algebraic combinatorics, symmetric function theory
  and integrability were required. Some of the key words here are {\it Robinson-Schensted-Knuth correspondence, Young tableaux, Schur functions, Fredholm determinants, steepest descent asymptotics}.
 This endeavour gave rise to the ``new'' probability theory, which is now known as {\it Integrable Probability} and which allowed the study of a large number of models in the one-dimensional KPZ universality class. Unfortunately,  we will not be able to discuss the methods and results of this exciting field but we can
 refer to a number of relevant reviews \cite{J05, C12, QS15, BDS16, BG16, BP14, Z22}.
 
 The analogue of \eqref{LPPGUE} for the directed polymer model:
   \begin{align}\label{logAiry}
\frac{\log Z_{2n}^\beta(0, 0) - \sff(\beta) n}{\sigma(\beta) n^{1/3}} 
  \xrightarrow[n\to\infty]{d} {\rm TW}_{\rm GUE},
  \end{align}
  (the argument $(0,0)$ in $Z_n^\beta(0,0)$ denotes that the path starts and ends at $0$) 
  was achieved later through \cite{COSZ14, OSZ14, BCR13} for an integrable polymer model, 
  known as the log-gamma polymer. This model was introduced by Sepp\"al\"ainen \cite{S12}
  where the disorder $\omega$ has a log-gamma distribution. Analogous results have also 
  been obtained for a semi-discrete, Brownian polymer model, called the O'Connell-Yor
  polymer \cite{OY01, OC12}.
The analogue of \eqref{LPPAiry}, i.e.
  \begin{align}\label{logAiry}
 \Bigg\{ \frac{\log Z_n^\beta(0, \lfloor n^{2/3}x \rfloor) - \sff(\beta) n}{\sigma(\beta) n^{1/3}} \Bigg\}
  \xrightarrow[n\to\infty]{d} \big\{ {\rm Airy}_2(x)-x^2 \big\}_{x\in \R},
  \end{align}
  was recently obtained by Arrarwal and Huang (\cite{AH23}, see Corollary 25.2 therein) employing also geometric and probabilistic methods
  on top of integrable structures.
 \subsection{Probability strikes back.}
 Integrable probability has been very successful in obtaining very detailed results for a large range
of models, which possess some specific integrability (exponential last passage percolation, log-gamma polymer, TASEP and ASEP, six vertex and tiling models etc). Aspects of universality, though, cannot be
covered by the integrable probability approach as integrability is lost by any change of parameters, however
small this might be. 

Nevertheless, there has recently been efforts to combine the detailed information
that emerges through integrability with more probabilistic methods.  
Some\footnote{the list will not be complete but the readers can guide themselves through additional references therein} of the highlights in this direction are:
\begin{itemize}
\item The {\it Airy and KPZ line ensembles} \cite{CH14, CH16, AH23}. The Airy line ensemble is 
a family of non-intersecting Brownian motions, the top line of which is the Airy process from
 \eqref{LPPAiry}.  The KPZ line ensemble is an ensemble of Brownian motions, which are exponential penalised when crossing each other. It emerges from the integrable O'Connell-Yor polymer 
 \cite{OY01, OC12}. 
 A characterisation of the Airy line ensemble has been proposed in  \cite{AH23} as a step towards
 universality considerations.
\item The {\it KPZ fixed point} \cite{MQR21}, which was established via integrability methods as 
a more general than \eqref{LPPAiry} space-time scaling limit of the totally asymmetric exclusion process (TASEP). The KPZ fixed
point is considered to determine a universality class, which is wider than the 
KPZ equation itself. Universal aspects of the KPZ fixed point were established in \cite{QS23}. There it was shown, via probabilistic methods, that TASEP with long range jumps has a KPZ fixed point limit.
\item The {\it Directed Landscape} \cite{DOV22}, which was established as a universal object describing
geodesic curves in last passage percolation. Universal aspects of it have been proposed in \cite{V20}.
\item Well-posedness of KPZ and weak universality. From the stochastic analysis 
point of view the well-posedness of the one-dimensional KPZ equation has been established
from a number of perspectives: the theory of regularity structures \cite{H13}, the theory of paracontrolled
distributions \cite{GIP15}, a renormalisation point of view \cite{K14} and the approach of energy 
solutions \cite{GJ14, GP18}. A framework of weak universality of KPZ, where the term $|\nabla h|^2$
is augmented by small perturbations was also treated in \cite{HQ18}
\item Weak universality at intermediate disorder regime. In \cite{AKQ14} it was established that 
the partition function of the directed polymer model converges in distribution, when the temperature
is suitably scaled with the size of the polymer, to the solution to the stochastic heat equation, irrespective
 the underlying disorder. Such type of temperature scaling is now known as {\it intermediate disorder scaling}. In dimension two, which is the {\it critical} dimension richer phenomena have shown to 
 emerge \cite{CSZ17a, CSZ23}, including certain phase transition. Moreover, in higher dimensions  
 our understanding of the weak-to-strong disorder regime has improved \cite{J22a,J22b, JL24a, JL24b} and further 
 developments seem that should be expected. Some of these aspects will occupy significant 
 parts of these notes.
\end{itemize}
\subsection{Outline of the paper.}
In Section \ref{sec:general}
 we will start with a review of the general framework of weak and strong disorder and its
martingale formulation. We will discuss the presence of the phase transition and provide 
the standard reformulation in terms of uniform integrability of the associated partition function.
We will then investigate relations to random walk pinning models, which provided some  first 
estimates on the critical temperature, and then proceed to more recent characterisations of it. Section 
\ref{sec:general} will close with an overview of the powerful {\it fractional moment method}.
Section \ref{sec:interm} will investigate hidden phase transitions at the critical dimension $2$ and the
framework of {\it intermediate disorder regime}. Section \ref{sec:heavy} will expose phase transitions and
conjectures when disorder has heavy tails. Finally, in Section \ref{sec:gengraph} we will make a very quick 
reference to directed polymers and their phase transitions on more general graph structures.

\section{Weak and strong disorder}\label{sec:general}

\subsection{The general framework and some basic results.}
Let us start by introducing some notation, complementary to \eqref{defW}, 
encoding when the polymer starts at different times and has fixed end point. To this end, we define, 
\begin{align}
	W_{(m,n)}^{\beta}(x,y) &:= \E_x \bigg[
	e^{\sum_{i=m+1}^{n-1} \{\beta \omega_{i,S_i} - \lambda(\beta)\}}
	\, \ind_{\{S_n = y\}}  \bigg]  \,, \qquad \text{and} \label{eq:Zab1} \\
	W_{(m,n]}^{\beta}(x,y) &:= \E_x \bigg[
	e^{\sum_{i=m+1}^{n} \{\beta \omega_{i,S_i} - \lambda(\beta)\}}
	\, \ind_{\{S_n = y\}}  \bigg]  \,,  \label{eq:Zab2}
\end{align}
with the convention that $\sum_{n=m+1}^{n-1} \{\ldots\} := 0$ for $n \le m+1$. 
We will also extend the above definition to $W_{(m,\infty)}$, if this quantity is well defined.
If there is only one spatial and/or 
temporal argument, then we will always refer to definition \eqref{defW}, i.e. consider a fixed starting but
free ending point. We will follow similar conventions  for the unnormalised partition function $Z^\beta$.
\vskip 2mm
 {\bf The martingales.} Consider the filtration 
$\cF_n=\sigma\big\{ \omega_{i,x} \colon i\leq n, x\in \bbZ^d\big\}$ for $n\geq 1$. It is easy to 
check that $W_n^\beta$ is a martingale with respect to $\cF_n$. Given that it is also non-negative,
it will have, by standard martingale theory, an a.s. limit as $N\to\infty$, which we denote by $W_\infty^\beta$. The events $\{W_\infty^\beta>0\}$ and $\{W_\infty^\beta=0\}$ 
belongs to the tail $\sigma$-field because whether the limit of $W_\infty^\beta$ is zero or not
clearly does not dependent on the value of any $\omega_{i,x}$ for any finite index $(i,x)$.
By Kolmogorov's 0-1 law, these events will have probability either equal to $1$ or equal to $0$. 
The dichotomy
between weak disorder (WD) and strong disorder (SD) is identified with the positivity or triviality
of $W_\infty^\beta$. In particular, we have
\begin{definition}\label{def:weak}
Let $W_\infty^\beta$ be the a.s. limit of $W_n^\beta$.
If $\bbP(W_\infty^\beta>0)=1$, we say that weak disorder {\rm (WD)}
takes place and if $\bbP(W_\infty^\beta=0)=1$, we have strong disorder {\rm (SD)}. 
\end{definition}

Uniform integrability of $(W_n^\beta)_{n\geq 1}$ implies weak disorder. This is 
straightforward since uniform integrability will allow to interchange limits and expectations and lead
to $\bbE[W_\infty^\beta]=1$.
It is an interesting fact that 
the converse is also true for the directed polymer partition function $W_n^\beta$ on $\Z^d$.
In particular, we have the following proposition:
\begin{proposition}\label{ui}
$(W_n^\beta)_{n\geq 1}$ is uniformly integrable if and only if the limit $W_\infty^\beta$ is a.s.
strictly positive.
\end{proposition}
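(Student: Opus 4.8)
The plan is to prove the two implications separately. The implication ``uniformly integrable $\Rightarrow$ $W_\infty^\beta>0$ a.s.'' is essentially the remark preceding the statement: if $(W_n^\beta)_{n\ge1}$ is uniformly integrable, the a.s.\ convergence $W_n^\beta\to W_\infty^\beta$ improves to convergence in $L^1$, whence $\bbE[W_\infty^\beta]=\lim_n\bbE[W_n^\beta]=1$, so $\bbP(W_\infty^\beta>0)>0$; since $\{W_\infty^\beta>0\}$ belongs to the tail $\sigma$-field, Kolmogorov's $0$--$1$ law forces $\bbP(W_\infty^\beta>0)=1$. The substance is the converse.

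For the converse I would first record the general fact that, for the nonnegative martingale $(W_n^\beta)$ with $\bbE[W_n^\beta]=1$, uniform integrability is \emph{equivalent} to $\bbE[W_\infty^\beta]=1$: one direction is the computation just made, and the other is Scheff\'e's lemma (using $W_n^\beta\ge0$, $W_n^\beta\to W_\infty^\beta$ a.s., and $\bbE[W_n^\beta]\to\bbE[W_\infty^\beta]$, which yields $L^1$ convergence and hence uniform integrability). By Fatou we always have $c:=\bbE[W_\infty^\beta]\in[0,1]$, so it suffices to show that the weak-disorder hypothesis $\bbP(W_\infty^\beta>0)=1$ forces $c=1$.

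The key ingredient is a fixed-time self-similarity identity for the limit: for every fixed $n\ge1$,
\[
W_\infty^\beta \;=\; \sum_{x\in\bbZ^d} W_{(0,n]}^\beta(0,x)\,\widehat W_\infty(x)\qquad\text{a.s.},
\]
where $\widehat W_\infty(x)$ (which also depends on $n$) is the a.s.\ limit as $m\to\infty$ of $\E_x\big[\exp\big(\sum_{i=n+1}^{n+m}(\beta\omega_{i,S_i}-\lambda(\beta))\big)\big]$, i.e.\ the free-endpoint partition function started from $x$ that only sees the disorder strictly after time $n$. To prove it, apply the Markov property of $S$ at time $n$ to write, for each $m\ge1$, $W_{n+m}^\beta=\sum_x W_{(0,n]}^\beta(0,x)\,\E_x\big[\exp\big(\sum_{i=n+1}^{n+m}(\beta\omega_{i,S_i}-\lambda(\beta))\big)\big]$; let $m\to\infty$ and apply Fatou's lemma in the $x$-sum (the weights $W_{(0,n]}^\beta(0,x)$ do not depend on $m$) to get ``$\ge$''. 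To upgrade to equality, observe that the right-hand side is nonnegative with expectation $\sum_x\P(S_n=x)\,c=c$: indeed $W_{(0,n]}^\beta(0,x)$ is $\cF_n$-measurable with $\bbE[W_{(0,n]}^\beta(0,x)]=\P(S_n=x)$, while $\widehat W_\infty(x)$ is independent of $\cF_n$ and, by translation invariance of the i.i.d.\ environment, has the same law as $W_\infty^\beta$, hence mean $c$; since this expectation coincides with $\bbE[W_\infty^\beta]=c$, the nonnegative difference of the two sides has zero mean and vanishes a.s.

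Now take $\bbE[\,\cdot\mid\cF_n]$ in the identity: pulling the conditional expectation through the sum (Tonelli) and using that the coefficients are $\cF_n$-measurable while each $\widehat W_\infty(x)$ is independent of $\cF_n$ with mean $c$, one gets $\bbE[W_\infty^\beta\mid\cF_n]=c\sum_x W_{(0,n]}^\beta(0,x)=c\,W_n^\beta$. Letting $n\to\infty$, the left-hand side converges a.s.\ to $W_\infty^\beta$ by L\'evy's upward theorem (valid since $c\le1$ makes $W_\infty^\beta$ integrable), and the right-hand side converges a.s.\ to $c\,W_\infty^\beta$; hence $W_\infty^\beta=c\,W_\infty^\beta$ a.s., and on the probability-one event $\{W_\infty^\beta>0\}$ this forces $c=1$, completing the argument. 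The main obstacle is precisely the fixed-time decomposition: the Markov splitting and Fatou only deliver the inequality ``$\ge$'', and promoting it to an almost-sure equality genuinely uses the matching of all the expectations involved, which in turn hinges on the translation invariance of $\bbP$ (so that $\widehat W_\infty(x)$ has mean $c$ for every $x$ and every $n$) together with the a priori bound $c\le1$; everything else is routine bookkeeping with conditional expectations and standard martingale convergence.
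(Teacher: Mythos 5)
Your argument is correct, and it rests on the same backbone as the paper's proof: the Markov decomposition \eqref{linear1}--\eqref{linear2} of $W_\infty^\beta$ at a fixed time $n$, independence of the post-$n$ partition functions from $\cF_n$, and translation invariance giving them mean $c:=\bbE[W_\infty^\beta]\le 1$. Where you diverge is the final mechanism: the paper applies conditional Jensen to $e^{-W_\infty^\beta}$ and compares $\bbE[e^{-W_\infty^\beta}]$ with $\bbE[e^{-c\,W_\infty^\beta}]$, using $c\le 1$ to force equality and hence $W_\infty^\beta=c\,W_\infty^\beta$ a.s.; you instead read off $\bbE[W_\infty^\beta\mid\cF_n]=c\,W_n^\beta$ directly from the decomposition and let $n\to\infty$ via L\'evy's upward theorem, reaching the same identity $W_\infty^\beta=c\,W_\infty^\beta$ without the Laplace-transform detour (integrability of $W_\infty^\beta$, needed for L\'evy upward, is immediate from Fatou since $c\le1$). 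Both routes then conclude $c=1$ on the full-probability event $\{W_\infty^\beta>0\}$ and upgrade to uniform integrability by the standard fact for nonnegative martingales whose limit has mean one; your Scheff\'e argument is exactly what the paper's citation of Durrett supplies. A genuine point in your favour is that you justify the passage from \eqref{linear1} to \eqref{linear2}, which the paper obtains by simply ``taking the limit $m\to\infty$'': Fatou in the $x$-sum only yields ``$\geq$'', and your expectation-matching step (both sides have mean $c$, so the nonnegative difference vanishes a.s., using independence and translation invariance) is the right way to close this gap -- note that the paper's Jensen route also relies on this equality, so your refinement is not idle bookkeeping.
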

\begin{proof}
If $(W_n^\beta)_{n\geq 1}$ is uniformly integrable, then we can interchange limit and expectation in
\begin{align*}
\bbE[W_\infty^\beta]=\lim_{N\to\infty} \bbE[W_N^\beta] =1,
\end{align*}
and therefore $W_\infty^\beta$ cannot be a.s. equal to $0$.

For the opposite claim we start by deriving a self-similar identity for $W_\infty^\beta$.
Using the Markov property and notation \eqref{eq:Zab2}, we derive the identity
\begin{align}\label{linear1}
W_{n+m}^\beta =\sum_{x\in \bbZ^d} W_n^\beta(0,x) W_{(n,m]}^\beta(x),
\end{align}
for any $n,m\in\bbN$
and taking the limit $m\to\infty$ we obtain that
\begin{align}\label{linear2}
W_{\infty}^\beta =\sum_{x\in \bbZ^d} W_n^\beta(0,x) W_{(n,\infty)}^\beta(x).
\end{align}
Let us now compute
\begin{align*}
\bbE\Big[ W_\infty^\beta \, \Big | \, \cF_n\Big] 
=\bbE\Big[ \sum_{x\in\bbZ^d} W_n^\beta(0,x) W_{(n,\infty)}^\beta(x)  \,\Big| \, \cF_n\Big]
= \bbE[\,W_\infty^\beta ]\sum_{x\in\bbZ^d}W_n^\beta(x) 
=  \bbE[\,W_\infty^\beta ] W_n^\beta ,
\end{align*}
Taking expectation with respect to disorder $\cF_n$ on both sides and then the limit $n\to\infty$ we have that
\begin{align*}
\bbE\Big[ W_\infty^\beta \Big] 
=  \bbE\Big[ \bbE[W_\infty^\beta] W_\infty^\beta \Big] 
=\bbE\Big[ W_\infty^\beta \Big]^2
\end{align*}
 and if  $W_\infty>0$ a.s. this implies that $\bbE[W_\infty^\beta]=1$. 
This, combined with the fact that $W_n^\beta$ is a non-negative martingale,
 implies uniform integrability, see \cite{D19}, Chapter 4, Theorem 5.2.
\end{proof}
\begin{remark}
{\rm
It is worth noting that the above equivalence between uniform integrability and positivity of the limit
of the martingale is a more general feature shared among martingales which have a ``linear'' structure
as that in \eqref{linear1} and \eqref{linear2}. Systems possessing such property often go by the name ``linear systems'' and they appear in branching and interacting particle systems,
 see \cite{L85}, Chapter IX.2. We should point out, though, that there are examples of directed polymer models
 on more exotic graph structures for which this equivalence fails \cite{CoSeZe21}.
 }
 \end{remark}
The dichotomy between weak and strong disorder is marked by a critical value of the temperature.
More precisely, we define
\begin{align}\label{criticalb}
\beta_c:=\sup\{ \beta>0 \colon W_\infty^\beta >0\}.
\end{align}
In other words, $ W_\infty^\beta >0$ for $\beta<\beta_c$ and
 $ W_\infty^\beta =0$ for $\beta>\beta_c$. The fact that this phase transition takes
 places only at a single value $\beta_c$ is a consequence of a monotonicity property. 
 More precisely, as we will show below, for $\gamma\in (0,1)$, 
 the function $\beta\to \bbE[(W_n^\beta)^\gamma]$ is non-increasing. 
 Then, since for $\gamma<1$, $(W_n^\beta)^\gamma$ is uniformly integrable, we have that 
 $\lim_{n\to \infty}\bbE[(W_n^\beta)^\gamma] =\bbE[(W_\infty^\beta)^\gamma] $ and by the 
 mentioned monotonicity,
 if $ \bbE[(W_\infty^\beta)^\gamma]=0$ (which would imply that $W_\infty^\beta=0$ a.s.)
 for some $\beta$, then it will also be zero for all larger ones.
 
 Let us now prove the monotonicity as part of a more general monotonicity property.
\begin{proposition}\label{prop:mon}
Consider a function $\phi\colon \bbR\to\bbR$. The function $\beta\to \bbE[\phi(W_n^\beta)]$
is non-decreasing if $\phi$ is convex and non-increasing if $\phi$ is concave.
\end{proposition}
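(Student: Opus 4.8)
It suffices to prove the convex case (replace $\phi$ by $-\phi$ for the concave one), and we work on the physically relevant range $\beta\geq 0$. The first step is a routine reduction to a smooth $\phi$. A convex function on $[0,\infty)$ can be written as $\phi(x)=a+bx+\int_{(0,\infty)}(x-t)_+\,\nu(\dd t)$ for constants $a,b$ and a positive measure $\nu$, so by Tonelli and $\bbE[W_n^\beta]=1$ it is enough to prove the monotonicity for $\phi(x)=(x-t)_+$, $t\geq 0$ (for a concave $\phi$ one reduces likewise to $x\mapsto\min(x,t)$, equivalently again to $(x-t)_+$, which also handles $\phi(x)=x^\gamma$, $\gamma\in(0,1)$, despite its vertical tangent at $0$). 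For such $\phi$ we mollify, $\phi=\lim_\varepsilon\phi_\varepsilon$ with $\phi_\varepsilon\in C^\infty$ convex and $0\leq\phi_\varepsilon'\leq 1$; the limit passes through $\bbE[\cdot]$ since $0\leq\phi_\varepsilon(W_n^\beta)\leq W_n^\beta+\varepsilon$ and $W_n^\beta$ is integrable. So assume $\phi\in C^2$ with bounded first derivative. Since $W_n^\beta\geq 0$ and, by \eqref{omegaass}, $W_n^\beta$ has moments of all orders (it is a finite $\P$-average of products $\prod_{i=1}^n e^{\beta\omega_{i,S_i}-\lambda(\beta)}$), differentiation under the expectation is legitimate and
\[
\frac{\dd}{\dd\beta}\,\bbE\big[\phi(W_n^\beta)\big]=\bbE\big[\phi'(W_n^\beta)\,\partial_\beta W_n^\beta\big].
\]

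The structural point is that, for each space--time site $(i,z)$ with $1\leq i\leq n$, the partition function is affine in the single disorder variable $\omega_{i,z}$:
\[
W_n^\beta=A_{i,z}+B_{i,z}\,e^{\beta\omega_{i,z}-\lambda(\beta)},\qquad A_{i,z},\,B_{i,z}\geq 0,
\]
with $A_{i,z}=\E\big[\ind_{\{S_i\neq z\}}\prod_{j=1}^n e^{\beta\omega_{j,S_j}-\lambda(\beta)}\big]$ and $B_{i,z}=\E\big[\ind_{\{S_i= z\}}\prod_{j\neq i} e^{\beta\omega_{j,S_j}-\lambda(\beta)}\big]$ depending on the disorder only through the variables $\omega_{j,y}$ with $(j,y)\neq(i,z)$. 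A direct computation from \eqref{defW} also gives
\[
\partial_\beta W_n^\beta=\sum_{i=1}^n\sum_{z\in\bbZ^d} B_{i,z}\,\big(\omega_{i,z}-\lambda'(\beta)\big)\,e^{\beta\omega_{i,z}-\lambda(\beta)},
\]
a finite sum, since only sites reachable from $0$ in $n$ steps contribute.

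Now I would treat one site at a time. Conditioning on the $\sigma$-field $\cG_{i,z}$ generated by all disorder variables other than $\omega_{i,z}$, the quantities $A_{i,z},B_{i,z}$ are constants while $\omega_{i,z}$ is an independent copy of the disorder. Because $\bbE[e^{\beta\omega-\lambda(\beta)}]=1$, the tilted law $\widetilde\bbP_\beta$ of density $e^{\beta\omega-\lambda(\beta)}$ with respect to $\bbP$ is a probability measure with $\bbE_{\widetilde\bbP_\beta}[\omega]=\lambda'(\beta)$, so the contribution of $(i,z)$ to the derivative equals $\bbE\big[B_{i,z}\,\cov_{\widetilde\bbP_\beta}\!\big(\psi_{i,z}(\omega_{i,z}),\,\omega_{i,z}\big)\big]$, where $\psi_{i,z}(w):=\phi'\big(A_{i,z}+B_{i,z}e^{\beta w-\lambda(\beta)}\big)$. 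For $\beta\geq 0$ the map $w\mapsto A_{i,z}+B_{i,z}e^{\beta w-\lambda(\beta)}$ is non-decreasing (here $B_{i,z}\geq 0$ is used), hence so is $\psi_{i,z}$ since $\phi'$ is non-decreasing; as $\psi_{i,z}$ and the identity are then both non-decreasing functions of a single real variable, their covariance under $\widetilde\bbP_\beta$ is non-negative. Since $B_{i,z}\geq 0$, every summand is $\geq 0$, and summing over $(i,z)$ yields $\frac{\dd}{\dd\beta}\bbE[\phi(W_n^\beta)]\geq 0$, as claimed.

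The main obstacle is conceptual rather than computational: one has to spot the affine dependence of $W_n^\beta$ on each individual $\omega_{i,z}$ together with the exponential-tilting device that turns each term of the $\beta$-derivative into the covariance of two comonotone functions of one real variable; after that, positivity is immediate from the correlation inequality for monotone functions. The only genuinely technical points are the reduction to smooth $\phi$ and the justification of differentiation under $\bbE$, both routine given the moment bounds on $W_n^\beta$. (For Gaussian disorder one can bypass this entirely: for $\beta'>\beta$ one realises $W_n^{\beta'}$ by superimposing on the environment an independent Gaussian field of variance $\beta'^2-\beta^2$, so that $\bbE[W_n^{\beta'}\mid \text{original environment}]=W_n^\beta$, and the inequality follows from conditional Jensen.)
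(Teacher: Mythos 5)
Your argument is correct, and it reaches the monotonicity by a genuinely different implementation of the same underlying idea (exponential tilting plus a positive-correlation inequality for monotone functions). The paper differentiates $\bbE[\phi(W_n^\beta)]$, pulls the walk expectation outside, and for each fixed path $S$ tilts the whole disorder field along $S$ (the measure $\bbP_S$ of \eqref{sizebiasO}); it then applies the FKG inequality for product measures to the two coordinatewise-increasing functionals $\sum_{i,x}(\omega_{i,x}-\lambda'(\beta))\ind_{\{S_i=x\}}$ and $\phi'(W_n^\beta)$, using that the first has $\bbP_S$-mean zero. You instead exploit the affine dependence of $W_n^\beta$ on each single variable $\omega_{i,z}$, condition on all the remaining disorder, and tilt only that one variable, so each term of the derivative becomes a nonnegative coefficient times the covariance, under the one-site tilted law, of two non-decreasing functions of a single real variable; positivity then needs only the elementary one-dimensional Chebyshev-type covariance inequality rather than the multivariate FKG inequality cited in the paper (\cite{L85}). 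Your write-up also makes explicit two points the paper leaves implicit: the reduction of a general convex $\phi$ to smooth functions with bounded derivative (hinge representation plus mollification), which is what legitimises writing $\phi'$ and differentiating under $\bbE$, and the restriction to $\beta\geq 0$, which is genuinely needed — by Jensen, $\beta=0$ is a minimum of $\beta\mapsto\bbE[\phi(W_n^\beta)]$ for convex $\phi$, and the paper's assertion that $\phi'(W_n^\beta)$ is increasing in each $\omega_{i,x}$ likewise uses $\beta\geq0$. What the paper's route buys is brevity and the early introduction of the path-tilted measure $\bbP_S$, which is reused later (Section \ref{sec:RWpin}); what yours buys is a more elementary, self-contained correlation step, a cleaner treatment of non-smooth $\phi$ such as $x\mapsto x^\gamma$, and, as a bonus, the neat Gaussian shortcut via superposition of an independent field and conditional Jensen.
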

\begin{proof}
We prove the statement for convex $\phi$. Let us start by differentiating:
\begin{align*}
\frac{\dd}{\dd \beta} \bbE[\phi(W_n^\beta)] 
&= \bbE\Big[ \E\big[ \sum_{i=1}^n (\omega_{i,S_i}-\lambda'(\beta)) e^{H_n^\beta(S) }\big] \, \phi'(W_n^\beta) \Big] \\
&= \E\Big[ \bbE\big[ \sum_{i=1}^n (\omega_{i,S_i}-\lambda'(\beta)) \phi'(W_n^\beta) 
\, e^{H_n^\beta(S) }\big] \,  \Big] .
\end{align*}
For a fixed path $S$ we define the distribution $\bbP_S$ on $\omega$ under which 
$(\omega_{i,x})_{i\in\bbN, x\in \bbZ^d}$ are independent with individual laws given by
\begin{align*}
\dd \bbP_S(\omega_{i,x})= \exp\Big\{\big(\beta\omega_{i,x}-\lambda(\beta)\big) \,\ind_{\{S_i=x\}} \Big\} \dd \bbP(\omega_{i,x}).
\end{align*}
This measure tilts the law of $\omega$ on sites visited by the random walk and, thus, 
provides a measure of the correlations between walk and disorder. Similar change
of measure will play an important role later in the notes.

Using the tilted measure we can write 
\begin{align*}
\bbE\big[ \sum_{i=1}^n (\omega_{i,S_i}-\lambda'(\beta)) \phi'(W_n^\beta) \, e^{H_n^\beta(S) }\big]
= \bbE_S \big[ \sum_{i=1}^n\sum_{x\in\bbZ^d} (\omega_{i,x}-\lambda'(\beta)) 
\ind_{\{S_i=x\}} \phi'(W_n^\beta) \big].
\end{align*}
Note that the function $\sum_{i=1}^n\sum_{x\in\bbZ^d} (\omega_{i,x}-\lambda'(\beta)) 
\ind_{\{S_i=x\}} $ is an increasing function in $\omega_{i,x}$ for every $(i,x)$
and so is $\phi'(W_n^\beta)$, if $\phi$ is assumed to be convex. Therefore by FKG
inequality (\cite{L85}, Chapter 2 or \cite{G99}, Section 2.2)
and given that $\bbP_S$ is still a product measure we have that
\begin{align*}
\bbE_S \big[ \sum_{i=1}^n\sum_{x\in\bbZ^d} (\omega_{i,x}-\lambda'(\beta)) \ind_{\{S_i=x\}} \phi'(W_n^\beta) \big]
\geq 
\bbE_S \big[ \sum_{i=1}^n\sum_{x\in\bbZ^d} (\omega_{i,x}-\lambda'(\beta)) \ind_{\{S_i=x\}}  \big] 
\cdot \bbE_S \big[ \phi'(W_n^\beta) \big],
\end{align*}
which is equal to $0$ since
\begin{align*}
\bbE \big[ \sum_{i=1}^n\sum_{x\in\bbZ^d} (\omega_{i,S_i}-\lambda'(\beta)) e^{H_n^\beta(S)} \big]
= \frac{\dd}{\dd \beta} \bbE \big[ e^{H_n^\beta(S)} \big] 
=0
\end{align*}
since $\bbE \big[ e^{H_n^\beta(S)} \big]=1$. Therefore, $\frac{\dd}{\dd \beta} \bbE[\phi(W_n^\beta)] \geq 0$, and the monotonicity follows.
\end{proof}
Characterising the critical value $\beta_c$ has been a problem of major interest since the onset 
of the directed polymer model but progress has been slow. However, there has been some important
developments, which we will describe in the next sections.

The equivalence between weak disorder and uniform integrability provides a path to provide
some information on $\beta_c$. Uniform integrability can be provided through boundedness
of moments higher than 1 of $W_n^\beta$. The easiest case is to estimate the second moment
and in fact this is the regime that works \cite{IS88, B89, S95, SZ96} considered. Let us state:
\begin{theorem}\label{thm:L2regime}
In dimensions $d\geq3$, there exist $\beta_2>0$ such that 
$\sup_{n\geq 1} \bbE[W_n^2]<\infty$ for $\beta<\beta_2$. In particular, this implies
uniform integrability for $\beta<\beta_2$ and, therefore, that $\beta_c>0$. 
\end{theorem}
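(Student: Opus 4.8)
The plan is to compute $\bbE[(W_n^\beta)^2]$ exactly via a two-replica representation and then to reduce the whole question to the transience of the difference of two independent simple random walks, which is precisely where the hypothesis $d\ge 3$ enters.

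First I would expand the square. Writing $S^{(1)},S^{(2)}$ for two independent copies of the walk and $\E^{\otimes 2}$ for the expectation over the pair, Fubini gives
\begin{align*}
\bbE[(W_n^\beta)^2] = \E^{\otimes 2}\Big[\, \bbE\big[ e^{\sum_{i=1}^n(\beta\omega_{i,S_i^{(1)}}-\lambda(\beta)) + \sum_{i=1}^n(\beta\omega_{i,S_i^{(2)}}-\lambda(\beta))}\big]\,\Big].
\end{align*}
Since the $\omega$'s are i.i.d.\ across space--time sites, the inner disorder expectation factorises over sites $(i,x)$: a site touched by at most one of the two walks contributes a factor $\bbE[e^{\beta\omega-\lambda(\beta)}]=1$, while a site touched by both walks at the same time --- i.e.\ an index $i$ with $S_i^{(1)}=S_i^{(2)}$ --- contributes $\bbE[e^{2\beta\omega-2\lambda(\beta)}]=e^{\lambda(2\beta)-2\lambda(\beta)}$. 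Setting $\lambda_2(\beta):=\lambda(2\beta)-2\lambda(\beta)\ge 0$ (finite for every $\beta$ by \eqref{omegaass}), this yields the clean identity
\begin{align*}
\bbE[(W_n^\beta)^2] = \E^{\otimes 2}\Big[ \exp\Big( \lambda_2(\beta)\, \sum_{i=1}^n \ind_{\{S_i^{(1)}=S_i^{(2)}\}} \Big)\Big].
\end{align*}

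Next I would control the collision count $N:=\sum_{i\ge 1}\ind_{\{S_i^{(1)}=S_i^{(2)}\}}$, the number of returns to $0$ of the difference walk $D_i:=S_i^{(1)}-S_i^{(2)}$. For $d\ge 3$ this walk is transient, so $N<\infty$ a.s., and by the strong Markov property at the successive return times of $D$ the variable $N$ is geometric, with $\E^{\otimes 2}[z^N]=(1-\varrho)/(1-\varrho z)$ for $z<1/\varrho$, where $\varrho$ is the return probability of $D$. A Green's-function computation identifies this $\varrho$ with the simple-random-walk return probability $\pi_d$: indeed $\sum_{i\ge 0}\P^{\otimes 2}(D_i=0)=\sum_{i\ge 0}\sum_x p_i(0,x)^2=\sum_{i\ge 0}p_{2i}(0,0)$, which by bipartiteness of the simple walk equals the Green's function $\sum_{j\ge 0}p_j(0,0)=1/(1-\pi_d)$, and also equals $1/(1-\varrho)$. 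Hence, monotonically in $n$,
\begin{align*}
\bbE[(W_n^\beta)^2] \;\le\; \E^{\otimes 2}\big[ e^{\lambda_2(\beta) N} \big] \;=\; \frac{1-\pi_d}{1-\pi_d\, e^{\lambda_2(\beta)}},
\end{align*}
which is finite, uniformly in $n$, exactly when $\lambda_2(\beta)<\log(1/\pi_d)$.

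Finally I would check that this region of $\beta$ is a genuine interval $[0,\beta_2)$ with $\beta_2>0$. Since $\lambda$ is smooth near $0$ with $\lambda(0)=\lambda'(0)=0$ and $\lambda''(0)=1$ (from \eqref{omegaass}), one has $\lambda_2(\beta)=\beta^2+O(\beta^3)\to 0$ as $\beta\downarrow 0$, whereas $\log(1/\pi_d)>0$ because transience forces $\pi_d<1$ in $d\ge 3$; moreover $\lambda_2$ is non-decreasing on $[0,\infty)$ since $\tfrac{\dd}{\dd\beta}\lambda_2(\beta)=2\big(\lambda'(2\beta)-\lambda'(\beta)\big)\ge 0$ by convexity of $\lambda$. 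So $\beta_2:=\sup\{\beta>0:\lambda(2\beta)-2\lambda(\beta)<\log(1/\pi_d)\}>0$ works, and an $L^2$-bounded martingale is uniformly integrable, so Proposition~\ref{ui} gives $W_\infty^\beta>0$ a.s.\ for every $\beta<\beta_2$ and hence $\beta_c\ge\beta_2>0$. The computation is essentially routine once set up; the only place the hypothesis is truly used is the transience of the difference walk, i.e.\ $d\ge 3$, which is what makes $\pi_d<1$ and produces a strictly positive threshold, and the one mildly delicate point is the Green's-function bookkeeping identifying the difference-walk return probability with $\pi_d$ --- everything else is Fubini, the factorisation of the disorder expectation, and a geometric series.
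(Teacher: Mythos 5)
Your proposal is correct, and its first step — the replica computation reducing $\bbE[(W_n^\beta)^2]$ to the exponential moment $\E^{\otimes 2}\big[e^{\lambda_2(\beta)\sum_{i\le n}\ind_{\{S_i^{(1)}=S_i^{(2)}\}}}\big]$ with $\lambda_2(\beta)=\lambda(2\beta)-2\lambda(\beta)$ — is exactly what the paper does. Where you diverge is in evaluating that exponential moment: the paper expands $e^{\lambda_2(\beta)\ind}=1+(e^{\lambda_2(\beta)}-1)\ind$ site by site, expands the product, and uses Chapman--Kolmogorov to resum the resulting renewal-type series $\sum_k\big(\sigma(\beta)^2\sum_{i\ge1}p_{2i}(0)\big)^k$, whereas you bound by the total collision count $N$ of the two walks over infinite time, observe via the strong Markov property for the difference walk that $N$ is geometric with parameter the return probability of that walk, identify this return probability with the simple-random-walk return probability through the Green's-function identity $\sum_i p_{2i}(0)=\sum_j p_j(0)$, and read off the moment generating function of a geometric variable. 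The two routes give the identical threshold $\lambda(2\beta)-2\lambda(\beta)<\log(1/\pi_d)$ (the form quoted in the paper's introduction); your version is slightly more probabilistic and avoids the product expansion, while the paper's expansion has the side benefit of introducing the chaos-expansion bookkeeping it reuses later (Section \ref{sec:interm}). One cosmetic caveat: in your write-up $\pi_d$ is the return probability, while in the paper's proof of this theorem $\pi_d$ denotes the probability of \emph{not} returning, so your bound $\frac{1-\pi_d}{1-\pi_d e^{\lambda_2(\beta)}}$ and the paper's condition $\sigma(\beta)^2\frac{1-\pi_d}{\pi_d}<1$ agree only after translating notation — worth flagging to avoid confusion, but not a gap. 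The remaining steps (positivity of $\beta_2$ from $\lambda_2(\beta)\to0$ and $\pi_d<1$, monotonicity of $\lambda_2$, and $L^2$-boundedness $\Rightarrow$ uniform integrability $\Rightarrow$ weak disorder via Proposition \ref{ui}) are all sound.
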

The proof of this result, which is essentially the original approach of Imbrie-Spencer \cite{IS88},
 is simple but it contains some elements, including notions of expansions,
whose ramifications will be very useful later on, in particular in Section \ref{sec:interm}.
\begin{proof}
The second moment of $W_n^\beta$ can be easily computed. More precisely, if $S,S'$ are two
independent simple random walks and $\E^{S,S'}$ denotes the joint expectation, 
then we have by simple Fubini that
\begin{align}\label{eq:secondmom}
\bbE[(W_n^\beta)^2] 
&= \bbE \Big[ \E^{S,S'}\big[ e^{\sum_{i=1}^n ( \beta\omega_{i,S_i} + \beta \omega_{i,S_i'} -2\lambda(\beta) )}\big]\Big] \notag\\
&=\E^{S,S'}\Big[ e^{\lambda_2(\beta) \sum_{i=1}^n \ind_{\{S_i=S_i'\}} } \Big],
\end{align}
where we have denoted by
\begin{align}\label{l2}
\lambda_2(\beta):=\lambda(2\beta)-2\lambda(\beta).
\end{align}
We can deduce that 
\begin{align*}
\sup_{n\geq 1} \bbE[(W_n^\beta)^2] = \E^{S,S'}\Big[ e^{\lambda_2(\beta) \sum_{i=1}^\infty \ind_{\{S_i=S_i'\}} } \Big].
\end{align*}
Denoting the right-hand side by $\mathfrak{E}$ and $\tau:=\inf\{n\geq 1 \colon S_n=S_n'\}$ and using the strong Markov property
we have that 
\begin{align*}
\mathfrak{E}=1+\P^{S,S'}(\tau<\infty) \, e^{\lambda_2(\beta)} \, \mathfrak{E},
\end{align*}
from which it follows that $\mathfrak{E}<\infty$ if $\P^{S,S'}(\tau<\infty) \, e^{\lambda_2(\beta)}<1$. Since $\tau$ has the same distribution 
as that of the return time to $0$ of a single random walk, we have that 
$\P^{S,S'}(\tau=\infty)=\P(S \, \text{does {\it not} return to zero})=:\pi_d$, which is strictly positive in $d\geq 3$. 
Therefore, $\P^{S,S'}(\tau<\infty) =1-\pi_d <1$ and, thus, there exists $\beta>0$ such that $\mathfrak{E}<\infty$. In particular,
this is the case if $e^{\lambda_2(\beta)}<\frac{1}{1-\pi_d}$.
\end{proof}
The question of whether $\beta_2<\beta_c$, i.e. whether the $L^2$ regime coincides with the weak
disorder regime concerned the physics literature \cite{ED92, MG06a, MG06b}. 
We now know that $\beta_2<\beta_c$ in $d\geq 3$
\cite{BGH10, BS10, BT10, BS11} but we will detail on this in the next section. 
We will also discuss recent characterisations
of the weak disorder regime via moments in Section \ref{sec:momchar}. 
For the moment let us record the central
limit behaviour of the polymer path in the weak disorder regime.
\begin{theorem}
Assume weak disorder, that is, $\beta<\beta_c$. Then for every bounded, continuous
$f\colon \bbR^d\to\bbR$ it holds that
\begin{align*}
\sum_{x\in\bbZ^d} f(x/\sqrt{n}) \,\mu_n^\beta(x) \xrightarrow[n\to\infty]{\bbP} \bE[f(\cN_d)],
\end{align*}
where the convergence is in probability and 
 $\cN_d$ is a multivariate normal variable on $\bbR^d$ with covariance matrix $\frac{1}{d} \sfI$.
 Moreover, if $\beta<\beta_2$ the above convergence is a.s.
\end{theorem}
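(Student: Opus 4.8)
The plan is to reduce the statement to a pointwise statement about characteristic functions, to prove that by a second moment computation when $\beta<\beta_2$, and to indicate how the hypothesis is relaxed to $\beta<\beta_c$ at the price of replacing a.s.\ convergence by convergence in probability. For the reduction, set $\chi_n(\theta):=\sum_{x\in\bbZ^d}e^{\mathrm{i}\,\theta\cdot x/\sqrt n}\,\mu_n^\beta(x)$, the characteristic function of $S_n/\sqrt n$ under $\mu_n^\beta$. By a routine argument — Lévy's continuity theorem applied along subsequences, together with the standard control of the tail mass of a probability measure by its characteristic function near the origin — it suffices to show that for every fixed $\theta\in\bbR^d$,
\begin{align}\label{eq:cf}
\chi_n(\theta)\xrightarrow[n\to\infty]{\bbP}e^{-|\theta|^2/(2d)}
\end{align}
(respectively a.s.\ when $\beta<\beta_2$). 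Now $\chi_n(\theta)=N_n(\theta)/W_n^\beta$ with $N_n(\theta):=\E\big[e^{\mathrm{i}\,\theta\cdot S_n/\sqrt n+H_n^\beta(S)}\big]$, and $W_n^\beta\to W_\infty^\beta>0$ a.s.\ in the weak disorder regime; moreover $\bbE[N_n(\theta)]=\E\big[e^{\mathrm{i}\,\theta\cdot S_n/\sqrt n}\big]=:c_n\to e^{-|\theta|^2/(2d)}$ by the classical CLT for the simple random walk (using $\bbE[e^{H_n^\beta(S)}]=1$), and $\bbE[W_n^\beta]=1$. Hence it is enough to prove that
\begin{align}\label{eq:goal}
\Delta_n(\theta):=N_n(\theta)-c_n\,W_n^\beta=\E\big[\big(e^{\mathrm{i}\,\theta\cdot S_n/\sqrt n}-c_n\big)\,e^{H_n^\beta(S)}\big]\longrightarrow 0
\end{align}
in probability (resp.\ a.s.); note $\bbE[\Delta_n(\theta)]=0$.

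\emph{The case $\beta<\beta_2$.} Exactly as in the proof of Theorem~\ref{thm:L2regime}, introducing an independent copy $S'$ of $S$ and integrating out the disorder first,
\begin{align}\label{eq:2m}
\bbE\big[\,|\Delta_n(\theta)|^2\,\big]=\E^{S,S'}\Big[\big(e^{\mathrm{i}\,\theta\cdot S_n/\sqrt n}-c_n\big)\,\overline{\big(e^{\mathrm{i}\,\theta\cdot S_n'/\sqrt n}-c_n\big)}\;e^{\lambda_2(\beta)\sum_{i=1}^n\ind_{\{S_i=S_i'\}}}\Big].
\end{align}
For $\beta<\beta_2$ the weight converges in $L^1(\E^{S,S'})$: indeed $\E^{S,S'}\big[e^{\lambda_2(\beta)L_\infty}\big]<\infty$ with $L_\infty:=\sum_{i\ge1}\ind_{\{S_i=S_i'\}}$, by the same geometric series as in the proof of Theorem~\ref{thm:L2regime} (this is precisely the condition $\sup_n\bbE[(W_n^\beta)^2]<\infty$). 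Since $S-S'$ is transient in $d\ge3$, the two walks meet only finitely often; so, replacing in \eqref{eq:2m} the weight by its truncation $e^{\lambda_2(\beta)\sum_{i\le T}\ind_{\{S_i=S_i'\}}}$, at the cost of an error bounded by $4\,\E^{S,S'}\big[e^{\lambda_2(\beta)L_\infty}\ind_{\{S_i=S_i'\text{ for some }i>T\}}\big]$ (negligible for $T$ large, uniformly in $n$), one is left with an expectation in which the truncated weight is a function of the two walks up to time $T$, while given those walks the two bracketed factors are independent — each depends only on the future of one of the two walks — with conditional expectations converging to $e^{-|\theta|^2/(2d)}-c_n\to0$; dominated convergence (first $n\to\infty$ at fixed $T$, then $T\to\infty$) yields $\bbE\big[|\Delta_n(\theta)|^2\big]\to0$, hence \eqref{eq:cf} in probability. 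For the a.s.\ statement one runs the same computation quantitatively: splitting at $T\asymp n^{1/4}$ and using $\sum_{i>m}p_{2i}(0)\asymp m^{1-d/2}$ together with a local CLT bound for the conditional factors yields $\bbE[|\Delta_n(\theta)|^2]=O(n^{-\gamma})$ for some $\gamma=\gamma(d,\beta)>0$; along a polynomially growing subsequence $n_k$ with $\sum_k n_k^{-\gamma}<\infty$ one then gets $\sum_k|\Delta_{n_k}(\theta)|^2<\infty$ a.s., so $\Delta_{n_k}(\theta)\to0$ and $\chi_{n_k}(\theta)\to e^{-|\theta|^2/(2d)}$ a.s.; a routine interpolation between consecutive $n_k$ — controlling the change of scaling $\sqrt n\mapsto\sqrt{n_k}$ and the $o(n_k)$ extra steps of the path through the diffusivity $\E^{\mu_n^\beta}[|S_n|^2]=O(n)$ of Imbrie--Spencer \cite{IS88}, and using $n_{k+1}/n_k\to1$ — upgrades this to convergence along the full sequence. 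This recovers Bolthausen's theorem \cite{B89}.

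\emph{The case $\beta_2\le\beta<\beta_c$.} Here \eqref{eq:2m} is of no use, since $\bbE[(W_n^\beta)^2]\to\infty$ throughout this range (recall $\beta_2<\beta_c$), and one must argue directly with the martingale limit, following Comets--Yoshida \cite{CY06}. The idea is to start from the propagator identity \eqref{linear2}, $W_\infty^\beta=\sum_{x}W_n^\beta(0,x)\,W_{(n,\infty)}^\beta(x)$, in which the factors $W_{(n,\infty)}^\beta(x)$ are independent of $\cF_n$ and have mean $1$, to decompose the path at an intermediate time $L$ so as to express $\chi_n(\theta)$ — up to an $O(L/\sqrt n)$ error, since $|S_L|\le L$ — through the same object for a shorter polymer in a fresh environment, and then to show that $\chi_n(\theta)$ concentrates as $n\to\infty$ around the Gaussian value $e^{-|\theta|^2/(2d)}$. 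This concentration estimate is the crux: it has to bypass the missing second moment bound, instead exploiting the positivity $W_\infty^\beta>0$ to keep the denominator of $\chi_n(\theta)=N_n(\theta)/W_n^\beta$ under control while expanding the environmental fluctuations, and using the characterisation of weak disorder by the tightness of the replica overlap $\sum_{i\le n}\ind_{\{S_i=S_i'\}}$ (Carmona--Hu \cite{CH02}, Comets--Shiga--Yoshida \cite{CSY03}), which forces two independent replicas to meet only $O(1)$ times and hence to have asymptotically independent rescaled endpoints.

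The step I expect to be the main obstacle is precisely this last one: outside the $L^2$ region there is no direct moment control on $W_n^\beta$, the clean two-replica computation \eqref{eq:2m} collapses, and one is thrown back on a delicate concentration/decorrelation analysis of the ratio $N_n(\theta)/W_n^\beta$ — which is also the reason only convergence in probability, and not an a.s.\ statement, is presently known in the whole region $\{\beta<\beta_c\}$.
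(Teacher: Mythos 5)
First, a point of comparison: the paper itself does not prove this theorem — it states it and attributes the $\beta<\beta_2$ case to Bolthausen \cite{B89} and the full weak disorder case to Comets--Yoshida \cite{CY06} — so there is no in-paper argument to measure you against; your proposal has to stand on its own. For the $L^2$ regime it essentially does: the reduction to characteristic functions, the decomposition $\Delta_n(\theta)=N_n(\theta)-c_nW_n^\beta$, and the two-replica identity for $\bbE[|\Delta_n(\theta)|^2]$ with the truncation at a finite meeting horizon is a correct Bolthausen-style argument and gives the convergence in probability for $\beta<\beta_2$. Even there, the a.s.\ upgrade is the least convincing part: with a rate $\bbE[|\Delta_n(\theta)|^2]=O(n^{-\gamma})$ one typically gets $\gamma<1/2$ (e.g.\ optimizing $T^2/n$ against $T^{1-d/2}$ in $d=3$), and then Borel--Cantelli forces a subsequence $n_k$ with gaps $n_{k+1}-n_k$ much larger than $\sqrt{n_k}$, so the crude bound $|S_n-S_{n_k}|\le n-n_k$ does not close the interpolation; your appeal to a.s.\ diffusivity of the polymer measure to control the extra steps is itself an a.s.\ statement that needs proof, and it also ignores that $\mu_n$ restricted to time $n_k$ is not $\mu_{n_k}$ (the future partition function reweights the path). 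The cleaner route, and essentially what \cite{B89} does, is to exploit that for fixed $\theta$ and fixed target scale the tilted, renormalised partition functions form complex martingales, and to use a maximal/Doob-type inequality together with the second-moment bound, rather than subsequences plus interpolation.

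The genuine gap, however, is the regime $\beta_2\le\beta<\beta_c$, which is the actual content of the statement as formulated. There your proposal is a programme, not a proof: you describe a splitting via the identity $W_\infty^\beta=\sum_x W_n^\beta(0,x)W_{(n,\infty)}^\beta(x)$, invoke positivity of $W_\infty^\beta$ and the a.s.\ summability of the replica overlap $\sum_n I_n$ from \cite{CH02,CSY03}, and then state that one must show the ratio $N_n(\theta)/W_n^\beta$ concentrates around the Gaussian value — and you explicitly concede that this concentration/decorrelation estimate is the missing step. Since without any second-moment control this is precisely where all the work of \cite{CY06} lies (their argument turns the a.s.\ finiteness of the overlap into a quantitative decorrelation of the two replicas' rescaled endpoints under the product polymer measure), the main assertion of the theorem is not established by your argument; as written, you have proved the $\beta<\beta_2$ part and reduced the rest to the result you were asked to prove.
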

The a.s. convergence in the $L^2$ regime is the original result of Bolthausen \cite{B89}, while 
the extension of the central limit behaviour in the whole weak disorder regime (even if we do not
know $\beta_c$ itself) was proved by Comets-Yoshida \cite{CY06}. 
An extension to an a.s. central limit theorem in the whole weak disorder regime 
 for a Brownian disorder has been proved by Junk in \cite{J23b}. 

Let us comment on the existence of a strong disorder regime and a formulation of localisation.
We have
\begin{theorem}\label{thm:SD}
In dimensions $d=1,2$ strong disorder takes place for all $\beta>0$, that is $\beta_c=0$.
Moreover, strong disorder takes place also in $d\geq 3$, if $\beta$ is large enough so that
$\beta\lambda'(\beta)-\lambda(\beta)>\log (2 d)$.
\end{theorem}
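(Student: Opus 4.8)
The plan is to treat the two assertions separately, since they rest on different mechanisms. For the statement in $d\ge3$ with $\beta\lambda'(\beta)-\lambda(\beta)>\log(2d)$ I would use the \emph{fractional moment method}: fix $\theta\in(0,1)$, condition on the first step of the walk, and use the subadditivity $(\sum_i a_i)^\theta\le\sum_i a_i^\theta$, the independence of $\{\omega_{1,\cdot}\}$ from the later disorder, and $W^\beta_{(1,n+1]}(x)\stackrel{d}{=}W_n^\beta$, to get
\begin{align*}
\bbE\big[(W_{n+1}^\beta)^\theta\big]\le\Big(\sum_{x\colon|x|=1}\Big(\tfrac{1}{2d}\Big)^\theta\Big)\,\bbE\big[e^{\theta(\beta\omega-\lambda(\beta))}\big]\,\bbE\big[(W_n^\beta)^\theta\big]=e^{\psi(\theta)}\,\bbE\big[(W_n^\beta)^\theta\big],
\end{align*}
where $\psi(\theta):=(1-\theta)\log(2d)+\lambda(\theta\beta)-\theta\lambda(\beta)$; iterating from $W_0^\beta=1$ gives $\bbE[(W_n^\beta)^\theta]\le e^{n\psi(\theta)}$. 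Since $\psi(1)=0$ and $\psi'(1)=\beta\lambda'(\beta)-\lambda(\beta)-\log(2d)>0$ under the hypothesis, one can pick $\theta<1$ with $\psi(\theta)<0$, so $\bbE[(W_n^\beta)^\theta]\to0$; as $(W_n^\beta)^\theta$ is bounded in $L^{1/\theta}$ (because $\bbE[W_n^\beta]=1$), hence uniformly integrable, $\bbE[(W_\infty^\beta)^\theta]=\lim_n\bbE[(W_n^\beta)^\theta]=0$, so $W_\infty^\beta=0$ a.s. This bound is valid in every dimension; it simply does not reach small $\beta$, which is why a different argument is needed for $d\le2$.

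For $d=1,2$ and all $\beta>0$ I would argue by contradiction, assuming weak disorder, $\bbP(W_\infty^\beta>0)=1$, and extract a contradiction from an overlap estimate. Consider $X_n:=\sqrt{W_n^\beta}$, a nonnegative supermartingale bounded in $L^2$ ($\bbE[X_n^2]=1$); its Doob decomposition $X_n=M_n+A_n$ ($A$ predictable, nonincreasing, $A_0=0$) has $\bbE[A_n]=\bbE[X_n]-1\ge-1$, so $A_n\downarrow A_\infty>-\infty$ a.s., i.e.\ $\sum_{n\ge1}\big(X_{n-1}-\bbE[X_n\mid\cF_{n-1}]\big)<\infty$ a.s. Writing the one-step recursion $W_n^\beta=W_{n-1}^\beta\,\xi_n$, with $\xi_n:=\sum_z\hat\mu_{n-1}^\beta(z)e^{\beta\omega_{n,z}-\lambda(\beta)}$ and $\hat\mu_{n-1}^\beta(z):=\tfrac{1}{2d}\sum_{y\sim z}\mu_{n-1}^\beta(S_{n-1}=y)$ (so $\bbE[\xi_n\mid\cF_{n-1}]=1$), one has the identity $X_{n-1}-\bbE[X_n\mid\cF_{n-1}]=\tfrac{1}{2}\sqrt{W_{n-1}^\beta}\,\bbE[(\sqrt{\xi_n}-1)^2\mid\cF_{n-1}]$, and a short lemma on weighted i.i.d.\ sums gives $\bbE[(\sqrt{\xi_n}-1)^2\mid\cF_{n-1}]\ge c(\beta)\sum_z\hat\mu_{n-1}^\beta(z)^2$ for some $c(\beta)>0$ (expand near the diffuse regime where $\sum_z\hat\mu_{n-1}^\beta(z)^2$ is small; on its complement the left side is bounded below, and $\sum_z\hat\mu_{n-1}^\beta(z)^2\le1$). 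Hence $\sum_n\sqrt{W_{n-1}^\beta}\sum_z\hat\mu_{n-1}^\beta(z)^2<\infty$ a.s., and since under weak disorder $W_{n-1}^\beta\to W_\infty^\beta>0$ is bounded away from $0$ for large $n$,
\begin{align*}
\sum_{n\ge1}\sum_{z}\hat\mu_{n-1}^\beta(z)^2<\infty\quad\text{a.s.}
\end{align*}

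The main obstacle is then to contradict this in $d\le2$, i.e.\ to prove that the replica overlap $\sum_n\sum_z\hat\mu_{n-1}^\beta(z)^2$ diverges almost surely precisely when $d\le2$. Under the standing weak-disorder assumption the polymer is diffusive (the central limit theorem stated above), so $\mu_n^\beta$ is tight on the scale $\sqrt n$; a probability vector placing at least half its mass on $O(n^{d/2})$ sites has squared $\ell^2$-norm $\gtrsim n^{-d/2}$, so $\sum_z\hat\mu_{n-1}^\beta(z)^2\gtrsim n^{-d/2}$ holds on an event of probability bounded away from $0$, and $\sum_n n^{-d/2}=\infty$ exactly for $d\le2$. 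Upgrading this ``positive probability / in expectation'' statement to almost-sure divergence of the partial sums is the delicate point, and the place where low-dimensional recurrence is genuinely exploited; I would import it from the literature, where it is obtained either via a variance/decorrelation bound on the overlap at well-separated times or, particularly in $d=2$, via a majorization of $W_n^\beta$ by a multiplicative cascade for which $\beta_c=0$ is known.
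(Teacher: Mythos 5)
Your treatment of the $d\ge 3$ claim is correct and is essentially the paper's own argument: conditioning on the first step, applying $(\sum_i a_i)^\theta\le\sum_i a_i^\theta$ and the independence of the first layer of disorder gives exactly the recursion $\bbE[(W_{n+1}^\beta)^\theta]\le r(\theta)\,\bbE[(W_n^\beta)^\theta]$ with $\log r(\theta)=\psi(\theta)=(1-\theta)\log(2d)+\lambda(\theta\beta)-\theta\lambda(\beta)$, and the hypothesis $\beta\lambda'(\beta)-\lambda(\beta)>\log(2d)$ is precisely $\psi'(1)>0$, so some $\theta<1$ has $\psi(\theta)<0$. Your way of concluding (exponential decay of $\bbE[(W_n^\beta)^\theta]$ plus uniform integrability of $(W_n^\beta)^\theta$, which is bounded in $L^{1/\theta}$) is a legitimate alternative to the paper's Borel--Cantelli remark, and is the same mechanism the paper itself uses in the monotonicity discussion below \eqref{criticalb}.

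For $d=1,2$ the paper proves nothing beyond this sketch: it refers to \cite{CSY03, CSY04} for the extension to all $\beta>0$. Your reduction --- weak disorder would force $\sum_n I_n<\infty$ a.s., obtained from the $\sqrt{W_n^\beta}$ supermartingale, its Doob decomposition, and the lemma $\bbE[(\sqrt{\xi_n}-1)^2\mid\cF_{n-1}]\ge c(\beta)\sum_z\hat\mu_{n-1}^\beta(z)^2$ --- is sound and is essentially one direction of Theorem \ref{thm:vsdloc} (itself quoted from \cite{CH02, CSY03}), so up to that point you have reconstructed the skeleton of the cited proof rather than found a new route. The genuine gap is the step you yourself flag: you never establish that $\sum_n I_n$ diverges (even with positive probability) in $d\le2$ under the assumed weak disorder. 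The CLT-based heuristic only yields, for each fixed $n$, that $I_n\gtrsim n^{-d/2}$ with probability bounded below, and this does not suffice: the exceptional times could align along a sparse subsequence over which $n^{-d/2}$ is summable, so no contradiction with a.s. summability follows without, e.g., a second-moment/decorrelation estimate on the partial sums $\sum_{k\le N}I_k$, which is exactly the low-dimensional input. Outsourcing it means the $d=1,2$ half of the theorem is not actually proved by your proposal --- although, to be fair, the paper's own proof outsources the same half to the references. Two smaller points: the multiplicative-cascade majorization of \cite{CV06} is a $d=1$ device, while $d=2$ is handled in the literature either by the overlap argument of \cite{CSY03} or by the fractional-moment-plus-change-of-measure method of \cite{L10, BL17} outlined in Section \ref{sec:frac}; and invoking the Comets--Yoshida CLT under a hypothetical weak disorder in $d\le2$ is a heavier (and not obviously dimension-free) import than the cited proofs need, since they obtain the overlap lower bound without it.
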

This result was proved in \cite{CH02, CSY03} (see also \cite{CSY04}). We will refer there for
 the details of the proof. Here we will give a sketch of the main points. In particular we want to 
 highlight the estimates on fractional moments - an approach that has, more recently,
  been developed significantly 
and has been very fruitful. The fully developed 
fractional moment method will be detailed in Section \ref{sec:frac}. 
Before getting to the sketch of the proof of the above theorem,
let us comment that the set of $\beta$ such that $\beta\lambda'(\beta)-\lambda(\beta)>\log (2d)$ is empty
 when the disorder $\omega$ has finite support and gives mass $p$ to the right boundary of its support such that $\log\frac{1}{p}<\log (2d)$. 
\begin{proof}[Proof (main points) of Theorem \ref{thm:SD}]
A first main point is to establish the existence of numbers $\alpha_n$ diverging to infinity such that
\begin{align}\label{fraclog}
\limsup_{n\to\infty} \frac{1}{\alpha_n} \log \bbE[(W_n^\beta)^\gamma] <0,
\end{align}
for some fractional power $\gamma\in (0,1)$. This would imply that $W_n$ converges to zero in probability and 
since we know that $W_n$ converges a.s., the a.s. convergence to $0$ follows.
The second main point is then to estimate the
fractional moments to obtain the above limiting behaviour.  
The use of fractional moment will become a recurring theme in these notes.

The first step here is to
use the linearity \eqref{linear1} of the partition function and the elementary inequality 
$(\sum_{i=1}^k a_i)^\gamma \leq  \sum_{i=1}^k a_i^\gamma $ for $\gamma\in (0,1)$
and $a_i$ non-negative numbers, to obtain that
\begin{align*}
(W_n^\beta)^\gamma
&=\Big(
\frac{1}{2d}\sum_{x\colon |x|=1} e^{\beta\omega_{1,x}-\lambda(\beta)} \, W^\beta_{(1,n]}(x)
\Big)^\gamma \\
&\leq \frac{1}{(2d)^\gamma}\sum_{x\colon |x|=1} 
e^{\gamma\beta\omega_{1,x}-\gamma\lambda(\beta)} \, \big(W^\beta_{(1,n]}(x)\big)^\gamma ,
\end{align*}
and by taking expectation this leads to 
\begin{align*}
\bbE\big[ (W_n^\beta)^\gamma \big] \leq r(\gamma) \bbE\big[ (W_{n-1}^\beta)^\gamma\big],
\quad \text{with} \quad r(\gamma):= (2d)^{1-\gamma} e^{\lambda(\gamma\beta)-\gamma\lambda(\beta)}.
\end{align*}
Condition $\beta\lambda'(\beta)-\lambda(\beta) > \log (2d)$ implies that 
$\frac{\dd r(\gamma)}{\dd \gamma}\Big|_{\gamma=1}>0$ and this, together with the log-convexity
of $r(\gamma)$ and its continuous differentiability, implies the existence of a $\gamma\in (0,1)$,
such that $r(\gamma)<1$. Then, in this case, \eqref{fraclog} is satisfied with a choice of 
$\alpha_n=n$.

We refer on how to extend this method to all $\beta>0$ in dimensions one and two to \cite{CSY03, CSY04}.
\end{proof}
We now mention the first reference to localisation as this was established in \cite{CH02, CSY03}
(see also \cite{CSY04})
\begin{theorem}\label{thm:vsdloc}
Let 
\begin{align}\label{overlap}
I_n:=\sum_{x\in\bbZ^d} \mu_{n-1}^\beta (S_n=x)^2
\end{align}
then a.s. we have that
\begin{align}\label{SDoverlap}
\big\{ W_\infty^\beta =0 \big\} = 
\Big\{ \sum_{n\geq 1} I_n =\infty\Big\}
\end{align}
Moreover, if strong disorder holds, then there exist constants $c_1,c_2 \in (0,\infty)$ such that a.s.
\begin{align}\label{logoverlap}
-c_1\log W_n^\beta  \leq \sum_{k=1}^n I_k \leq -c_2 \log W_n^\beta,
\end{align}
for all large $n$.
\end{theorem}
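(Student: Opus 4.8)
The plan is to exploit the martingale structure of $W_n^\beta$ together with a Doob decomposition. First I would write the multiplicative increment: from the Markov property (as in \eqref{linear1} applied one step at a time), $W_n^\beta = W_{n-1}^\beta \cdot \xi_n$, where $\xi_n := \sum_{x} \mu_{n-1}^\beta(S_n=x)\, e^{\beta\omega_{n,x}-\lambda(\beta)}$ is, conditionally on $\cF_{n-1}$, an average of i.i.d. weights with $\bbE[\xi_n\mid\cF_{n-1}]=1$. Taking logarithms, $\log W_n^\beta = \sum_{k=1}^n \log\xi_k$. The key computation is the conditional variance: since the weights appearing in $\xi_k$ are independent with mean $1$ and variance $e^{\lambda_2(\beta)}-1=\sigma(\beta)^2$, one gets $\var(\xi_k\mid\cF_{k-1}) = \sigma(\beta)^2 \sum_x \mu_{k-1}^\beta(S_k=x)^2 = \sigma(\beta)^2 I_k$. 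A second-order Taylor expansion of $\log$ around $1$ then suggests $\bbE[\log\xi_k\mid\cF_{k-1}] \asymp -\tfrac12\var(\xi_k\mid\cF_{k-1}) \asymp -I_k$, up to controlling the error term. This is the heart of the matter: one wants two-sided bounds $-c_1' I_k \le \bbE[\log\xi_k\mid\cF_{k-1}] \le -c_2' I_k$ for constants depending only on $\beta$ (and on the bound $I_k\le 1$).

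Granting such bounds, \eqref{SDoverlap} follows from a martingale-convergence/zero-one argument: write $\log W_n^\beta = M_n + A_n$ where $M_n = \sum_{k\le n}(\log\xi_k - \bbE[\log\xi_k\mid\cF_{k-1}])$ is a martingale and $A_n = \sum_{k\le n}\bbE[\log\xi_k\mid\cF_{k-1}]$ is the (negative, predictable) compensator with $-A_n \asymp \sum_{k\le n} I_k$. On the event $\{\sum_k I_k < \infty\}$ the compensator converges and (since the increments $\log\xi_k$ have controlled conditional variance, again $\asymp I_k$) the martingale $M_n$ converges a.s.; hence $\log W_n^\beta$ converges to a finite limit, so $W_\infty^\beta>0$. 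Conversely on $\{\sum_k I_k = \infty\}$ one has $A_n\to-\infty$, and a strong-law-of-large-numbers argument for martingales (the increments of $M_n$ being controlled by those of $-A_n$, e.g. via the Dubins--Freedman / strong law for $L^2$-martingales with $\sum \var/\text{(bracket)}^2<\infty$) gives $M_n/(-A_n)\to 0$, so $\log W_n^\beta \to -\infty$ and $W_\infty^\beta=0$. This proves \eqref{SDoverlap}, and along the way \eqref{logoverlap} drops out: under strong disorder $\sum_k I_k=\infty$ a.s., $M_n = o(A_n)$, so $\log W_n^\beta = A_n(1+o(1))$, and combining with $-A_n\asymp\sum_{k\le n}I_k$ yields the two-sided bound \eqref{logoverlap} with suitable $c_1,c_2$ (possibly after absorbing the $o(1)$ into the constants, valid for $n$ large, then adjusting to hold for all $n$).

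The main obstacle is the two-sided estimate $\bbE[-\log\xi_k\mid\cF_{k-1}]\asymp I_k$ uniformly in $k$. The upper bound $\bbE[-\log\xi_k\mid\cF_{k-1}]\le c_1' I_k$ is essentially Jensen plus a careful second-order bound and is not hard since $\xi_k\ge 0$. The lower bound is more delicate: $-\log$ is convex, so Jensen gives the wrong direction, and one must genuinely use that $\xi_k$ concentrates near $1$ at scale $\sqrt{I_k}$ — one needs a quantitative reverse estimate showing the fluctuation actually costs at least order $I_k$ in $\bbE[-\log\xi_k]$. Here one uses that $I_k\le 1$ (so the step is not too wild) together with the finite exponential moments \eqref{omegaass}, which control the contribution of atypically large weights; a convenient route is to bound $\log\xi_k$ below by a truncated quadratic, $\log(1+u)\ge u - Cu^2$ on a good event, and to control the bad event $\{|\xi_k-1|$ large$\}$ using the exponential moment assumption. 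I would then cite \cite{CH02, CSY03} for the technical execution, as the statement already indicates, and present the above as the conceptual skeleton.
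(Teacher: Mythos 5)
The paper itself gives no proof of this theorem: it states it and defers to \cite{CH02,CSY03} (see also \cite{CSY04}), and your skeleton is precisely the argument of those references — write $\log W_n^\beta=\sum_{k\le n}\log\xi_k$ with $\xi_k=W_k^\beta/W_{k-1}^\beta$, perform the Doob decomposition $\log W_n^\beta=M_n+A_n$, prove $-\bbE[\log\xi_k\mid\cF_{k-1}]\asymp I_k$ together with $\bbE[(\log\xi_k)^2\mid\cF_{k-1}]\le C I_k$, and conclude via martingale convergence on $\{\sum_k I_k<\infty\}$ and the $L^2$-martingale strong law on $\{\sum_k I_k=\infty\}$. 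So the approach is the same and the outline is correct.

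Two caveats on the details you sketch. First, you have the difficulty of the key two-sided estimate backwards: the bound you call ``not hard'', namely $\bbE[-\log\xi_k\mid\cF_{k-1}]\le c_1' I_k$, is the delicate one. Jensen only gives $\bbE[-\log\xi_k\mid\cF_{k-1}]\ge 0$, and a second-order expansion is useless near $\xi_k=0$, where $-\log$ blows up; the point of the estimate is to control $\bbE[-\log\xi_k;\,\xi_k\ \text{small}\mid\cF_{k-1}]$, which requires left-tail control of $\xi_k$ (equivalently of $e^{\beta\omega-\lambda(\beta)}$, using that $\omega$ has two-sided exponential moments and that $\max_x\mu_{k-1}^\beta(S_k=x)\le\sqrt{I_k}$) — this is exactly where the work in \cite{CSY03} lies, and the fact that $\xi_k\ge0$ is the problem, not the cure. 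The reverse bound $\ge c_2' I_k$ is the comparatively routine one: use $x-1-\log x\ge c_K(x-1)^2$ on $\{x\le K\}$ (recall $\bbE[\xi_k-1\mid\cF_{k-1}]=0$, so $-\bbE[\log\xi_k\mid\cF_{k-1}]=\bbE[\xi_k-1-\log\xi_k\mid\cF_{k-1}]$) together with a Chebyshev/fourth-moment bound on the truncated part, which needs only $I_k\le1$ and finite exponential moments. The same tail estimates also give the conditional second-moment bound you implicitly use for the bracket of $M_n$; only the upper bound $\le CI_k$ is needed there. Second, the inequality \eqref{logoverlap} cannot be made to hold for \emph{all} $n$ by adjusting constants, since $W_n^\beta>1$ (hence $-\log W_n^\beta<0<\sum_{k\le n}I_k$) occurs with positive probability; as in \cite{CSY03} the statement should be read as holding a.s. for all large $n$, which is exactly what your $\log W_n^\beta=A_n(1+o(1))$ argument delivers.
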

Let us observe that $I_n$ can also be written in the form 
$I_n=\mu_{n-1}^{\beta, \otimes 2}(S_n=S_n')$, where
\begin{align*}
\mu_{n-1}^{\beta, \otimes 2}(S,S'):=\frac{1}{(W_{n-1}^\beta)^2} \,
e^{\sum_{i=1}^{n-1} (\beta \omega_{i,S_i} +\beta\omega_{i,S'_{i}} -2\lambda(\beta)) }\, \P(S)\P(S')
\end{align*}
is the joint law of two polymer paths run in the same environment $\omega$.
In other words, $I_n$ is the probability that two independent 
polymer paths, which run in the random environment $\omega$, meet at time $n$. 
Relation \eqref{SDoverlap} says that in the strong disorder regime, the  number
of times that two independent polymers in the same environment meet is infinite. This is
in contrast to what would happen if considering two independent simple random walks in $d\geq 3$.
This has a flavour of localisation as the two independent paths will tend to meet at sites at the 
vicinity of which disorder is favourable.
Relation \eqref{logoverlap} can provide a stronger localisation statement if one could
show that $\lim_{n\to\infty}\frac{1}{n} \log W_n^\beta <0$. This would mean that 
the C\'esaro means limit of $I_n$ is positive, which
could be interpreted (but not directly implying) that two independent
polymers will actually meet a positive fraction of times. This motivates the following definition
\begin{definition}\label{def:vSD}
We say that very strong disorder {\rm (vSD)} holds if
\begin{align}\label{eq:vSD}
\sff(\beta):=\lim_{n\to\infty}\frac{1}{n}\log W_n^\beta <0.
\end{align}
\end{definition}
The fact that the limit in \eqref{eq:vSD} exists and that it is, in fact, self-averaging i.e. equal to 
$\lim_{n\to\infty}\frac{1}{n}\bbE\log W_n^\beta$, was established in \cite{CH02,CSY03}.
In  \cite{CV06} it was established that in $d=1$ very strong disorder holds for all $\beta>0$.
In \cite{L10} based on an upscaled {\it fractional moment method} it was proved that very strong
disorder also holds in dimension $2$. Moreover, asymptotics on $\sff(\beta)$ for $\beta$ small 
were derived. 
The question of whether strong disorder coincides with very strong disorder in dimensions $d\geq 3$, equivalently 
whether the decay of $W_n$ to $0$ always happens at an exponential rate, was a longstanding question. In a recent breakthrough
Junk-Lacoin \cite{JL24a} have resolved this question in the case of bounded disorder:
\begin{theorem}[\cite{JL24a}]\label{thm:JL}
In $d\geq 3$ and for bounded disorder $\omega$, strong disorder and very strong disorder are equivalent.
Furthermore, weak disorder holds at the critical temperature $\beta_c$.
\end{theorem}
One would expect that the boundedness assumption is not necessary but let us state this as a question:
\begin{question}
Are strong and very strong disorder equivalent for unbounded disorder $\omega$ ?
\end{question} 
We will further discuss the points of strong and very strong disorder in Section \ref{sec:frac}.

Under the assumption of very strong disorder and additionally that $\sff(\beta)$ is differentiable and $\sff'(\beta)< 0$,
Bates \cite{B19,B21} and Bates-Chatterjee \cite{BC20} have given quantitative estimates on localisation in terms
of overlaps and in terms of C\'esaro means of end-point distributions \cite{BC20b}. In particular, they have identified the existence of a finite number of trajectories with which a directed polymer
path will have a positive fraction of intersections. We refer to \cite{B19,BC20, B21} for details. The
relation between the derivative of the free energy and localisation properties of the polymer was first observed  
Carmona-Hu \cite{CH02}. In relation to the discussion above about strong and very strong disorder we can also pose the question:
\begin{question}
Does $\sff'(\beta)<0$ hold in the whole strong disorder regime ? 
\end{question}
The most detailed result on localisation is probably due to Comets-Nguyen \cite{CN16} for the exactly solvable, one-dimensional,
  log-gamma polymer. Without presenting the specifics of the log-gamma setting (as this will require a digression) let us state (in rough terms) the main content of the Comets-Nguyen localisation theorem (we refer to \cite{CN16} for the details and some of the
  terminology):
  \begin{theorem}
  For the one-dimensional, stationary log-gamma polymer measure $\mu_n^{\rm log-gamma}$, 
  with equilibrium boundary condition, denote by
$l_n:={\rm argmax} \{ \mu_n^{\rm log-gamma}(x) \colon x\in \Z  \} $. Then
\begin{align*}
\frac{l_n}{n}\xrightarrow[n\to\infty]{d} {\rm argmax}_{t\in[0,1]} B_t,
 \end{align*}
 where $B$ is a one-dimensional Brownian motion with certain diffusion coefficient depending on the parameters of the log-gamma
 polymer.
\end{theorem}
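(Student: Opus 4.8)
The plan is to exploit the \emph{Burke property} of the stationary log-gamma polymer, due to Sepp\"al\"ainen, which turns the random endpoint profile $x\mapsto \mu_n^{\rm log-gamma}(x)$ into an object with an essentially i.i.d.\ increment structure. Recall that for the log-gamma polymer equipped with equilibrium boundary weights one has explicit, mutually independent ratio variables (of inverse-gamma type) attached to the edges along any down-right path; in particular, along the anti-diagonal level that carries the endpoint, the successive log-ratios $\log \mu_n^{\rm log-gamma}(x+1)-\log \mu_n^{\rm log-gamma}(x)$ form, up to boundary corrections of lower order, a stationary sequence of \emph{independent} random variables. First I would make this representation precise and identify the law of a single increment, recording the two key facts: its expectation vanishes for the equilibrium boundary condition observed in the characteristic direction, and its variance is a finite constant $\sigma^2$, expressible through the trigamma function of the log-gamma parameter.

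Next I would pass to the diffusive scaling limit. Writing $x=\lfloor nt\rfloor$ for $t\in[0,1]$ (respecting the range of the endpoint) and centering at $x=0$, the profile $t\mapsto n^{-1/2}\big(\log\mu_n^{\rm log-gamma}(\lfloor nt\rfloor)-\log\mu_n^{\rm log-gamma}(0)\big)$ is, modulo negligible errors, a rescaled random walk with mean-zero, variance-$\sigma^2$ increments, so Donsker's invariance principle yields convergence in distribution in $C[0,1]$ to $\sigma B$, where $B$ is a standard Brownian motion. This step requires tightness and, crucially, control of the two endpoint regions $t\approx 0$ and $t\approx 1$, where the boundary weights enter and the increments cease to be i.i.d.; one needs uniform-integrability and moment bounds on those extreme ratio variables to show they do not affect the limit.

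The conclusion then follows from the continuous mapping theorem: the functional $f\mapsto \mathrm{argmax}_{[0,1]}f$ is continuous at every $f\in C[0,1]$ with a unique maximizer, and $\sigma B$ almost surely has a unique maximizer, so $l_n/n=\mathrm{argmax}_{t}\,\log\mu_n^{\rm log-gamma}(\lfloor nt\rfloor)$ converges in distribution to $\mathrm{argmax}_{t\in[0,1]}B_t$ (the scaling constant $\sigma$ is irrelevant for the argmax itself but fixes the diffusion coefficient in the statement).

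The main obstacle is the rigorous bookkeeping around the boundary: one must show that replacing the true finite-$n$ profile by the clean i.i.d.\ random walk introduces errors that are \emph{uniform} in the spatial variable and of order $o(\sqrt n)$, and that the profile is tight up to the endpoints of $[0,1]$, so that the argmax --- a functional sensitive to the global shape of the profile rather than to any fixed marginal --- genuinely converges. A secondary but essential point is verifying that the increment drift is \emph{exactly} zero in the equilibrium/characteristic setup; any nonzero drift would pin $l_n$ at $0$ or at $n$ and destroy the Brownian argmax limit.
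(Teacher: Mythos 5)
The paper itself gives no proof of this statement---it is quoted ``in rough terms'' from Comets--Nguyen \cite{CN16}---and your sketch follows essentially the same route as that cited proof: the Burke property of the equilibrium boundary conditions makes the log of the quenched endpoint profile an i.i.d.-increment random walk along the terminal anti-diagonal (mean zero in the characteristic/equilibrium setup, variance given by trigamma functions), so Donsker's invariance principle together with the argmax continuous-mapping theorem (using a.s. uniqueness of the Brownian maximizer) yields $l_n/n \Rightarrow \mathrm{argmax}_{[0,1]}B$. If anything, your hedging about boundary corrections is overly cautious, since the Burke property gives exact mutual independence of the ratio variables along the down-right staircase adjacent to the terminal line, so the profile is exactly a random walk rather than one up to $o(\sqrt{n})$ errors.
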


\subsection{Random walk pinning model and non-coincidence of $\beta_2$ and $\beta_c$.}
\label{sec:RWpin}
The question of whether the $L^2$ regime characterises the weak disorder regime has been 
a subject of interest since the early directed polymer era. In this section we will see that this is not
the case in $d\geq 3$, through linking to intersection properties of random walks. In particular, in the
second part of this section we will introduce an interesting link to a study of intersections of 
random walks, where one path trajectory is considered fixed. This will be the {\it random walk pinning
model}.

Before proceeding, let us record:
\begin{align}\label{def:beta2}
\beta_2:=\sup\{\beta\colon \sup_{n\geq 1} \bbE[(W_n^\beta)^2]<\infty \}.
\end{align}
Probably the first study of the relation 
between the $L^2$ and weak disroder regimes is that of Evans-Derrida 
\cite{ED92}. There they attempted to estimate fractional moments of 
$(W_n^\beta)^{2\gamma}$ and obtained the following result:
\begin{theorem}\label{thm:ED}
Let $S,S'$ be two independent, simple random walks on $\bbZ^d$ and denote by $P^{S,S'}$ their
joint law. Weak disorder holds for $\beta$ such that
\begin{align}\label{eq:EDcond}
e^{\lambda(2\gamma \beta)-2\gamma\lambda(\beta)} 
\sum_{n\geq 1,\, x\in \bbZ^d} 
\P^{S,S'}(\text{$S$ and $S'$ meet for the {\it first time} at site $(n,x)$} )^\gamma
<1,
\end{align}
for some $\gamma<1$.
\end{theorem}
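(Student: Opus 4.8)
The plan is to show that the condition \eqref{eq:EDcond} forces $\sup_{n} \bbE[(W_n^\beta)^{2\gamma}] < \infty$ for the stated $\gamma < 1$, and then to deduce weak disorder. The second implication is immediate: if $\sup_n \bbE[(W_n^\beta)^{2\gamma}]$ is finite for some $\gamma$ with $2\gamma$ possibly larger than $1$ — or more to the point, if we can secure $\sup_n \bbE[(W_n^\beta)^{p}] < \infty$ for some $p > 1$ — then $(W_n^\beta)_{n\geq 1}$ is bounded in $L^p$, hence uniformly integrable, hence (by Proposition \ref{ui}) weak disorder holds. So the heart of the matter is to bound the moment $\bbE[(W_n^\beta)^{2\gamma}]$. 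The natural route, the one the Evans--Derrida heuristic follows, is to write out $(W_n^\beta)^{2}$ as a sum over pairs of random walk trajectories as in \eqref{eq:secondmom}, decompose the pair interaction according to the successive \emph{renewal} epochs at which $S$ and $S'$ coincide, and then apply the concavity inequality $(\sum_i a_i)^\gamma \le \sum_i a_i^\gamma$ to the resulting series of nonnegative terms.

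Concretely, I would first record, as in the proof of Theorem \ref{thm:L2regime}, that
\begin{align*}
\bbE[(W_n^\beta)^2] = \E^{S,S'}\Big[ e^{\lambda_2(\beta)\sum_{i=1}^n \ind_{\{S_i = S_i'\}}}\Big],
\end{align*}
and then split the collision structure into the first collision time/place $(\tau_1, x_1)$, the second $(\tau_2, x_2)$, and so on. By the Markov property applied at each collision, and using that after a collision at $x$ the pair restarts, the second moment is a renewal-type series whose $k$-fold term is a product over $j=1,\dots,k$ of the ``first-meeting'' kernel
\begin{align*}
q(n,x) := \P^{S,S'}\big(\text{$S$ and $S'$ meet for the first time at $(n,x)$}\big),
\end{align*}
weighted by $e^{\lambda_2(\beta)}$ at each renewal (one extra collision factor per meeting). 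The point of working with the fractional power $2\gamma$ rather than the full second moment is that $\bbE[(W_n^\beta)^{2\gamma}] = \bbE[((W_n^\beta)^2)^\gamma]$, and applying $(\sum a_i)^\gamma \le \sum a_i^\gamma$ to the renewal expansion of $(W_n^\beta)^2$ converts the $k$-fold product $\prod_j (e^{\lambda_2(\beta)} q(\tau_j-\tau_{j-1}, x_j - x_{j-1}))$ into $\prod_j (e^{\lambda_2(\beta)} q(\cdots))^\gamma$, so that summing over $k$ yields a geometric series with ratio
\begin{align*}
e^{\gamma\lambda_2(\beta)}\sum_{n\ge 1,\,x} q(n,x)^\gamma = e^{\lambda(2\gamma\beta)-2\gamma\lambda(\beta)}\sum_{n\ge 1,\,x}\P^{S,S'}\big(\text{first meeting at }(n,x)\big)^\gamma,
\end{align*}
up to the identification of $\gamma\lambda_2(\beta)$ with $\lambda(2\gamma\beta)-2\gamma\lambda(\beta)$; this last identification is where one must be slightly careful, since $\gamma(\lambda(2\beta)-2\lambda(\beta))$ is not literally $\lambda(2\gamma\beta)-2\gamma\lambda(\beta)$, and one has to argue that the correct disorder weight picked up by raising to the power $\gamma$ and re-expanding, or by a Hölder step against the disorder, is in fact $e^{\lambda(2\gamma\beta)-2\gamma\lambda(\beta)}$ per renewal. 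When \eqref{eq:EDcond} holds this common ratio is strictly less than $1$, the geometric series converges uniformly in $n$, and we obtain $\sup_n \bbE[(W_n^\beta)^{2\gamma}] < \infty$.

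The main obstacle I anticipate is exactly the bookkeeping around the disorder exponent under the fractional power: one wants the expansion of $(W_n^\beta)^2$ into a renewal series to interact correctly with $x\mapsto x^\gamma$, and the clean statement requires either (i) first integrating out the disorder to get the random-walk functional $e^{\lambda_2(\beta)\sum\ind}$, then expanding the product $\prod(1+(e^{\lambda_2(\beta)}-1)\ind_{\{S_i=S_i'\}})$ exactly as in Theorem \ref{thm:L2regime} but grouping terms by the first collision of each new ``block'', then applying subadditivity of $x\mapsto x^\gamma$; or (ii) keeping the disorder and doing a Hölder/Jensen step to pull the power $\gamma$ through before taking $\bbE$. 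Route (i) is cleanest: after integrating out $\omega$ one has an honest nonnegative series $\sum_{k}\sum_{\text{renewal data}} (\cdots)$, and $(\sum a_i)^\gamma\le\sum a_i^\gamma$ applies verbatim — but then the factor per renewal is $(e^{\lambda_2(\beta)}-1)^\gamma$ or $e^{\gamma\lambda_2(\beta)}$ rather than $e^{\lambda(2\gamma\beta)-2\gamma\lambda(\beta)}$, and one must check that $e^{\gamma\lambda_2(\beta)} \le e^{\lambda(2\gamma\beta)-2\gamma\lambda(\beta)}$, i.e.\ that $\gamma\lambda_2(\beta)\le \lambda_2(\gamma\beta)$ — wait, this is $\gamma(\lambda(2\beta)-2\lambda(\beta))$ versus $\lambda(2\gamma\beta)-2\gamma\lambda(\beta)$, and by convexity of $\lambda$ the latter dominates, so \eqref{eq:EDcond} is indeed the weaker (hence sufficient) condition. (If one prefers the sharper weight, route (ii): raise $(W_n^\beta)^2$ to the $\gamma$ keeping disorder, use that $(\sum_x a_x)^\gamma\le\sum_x a_x^\gamma$ on the spatial sum in \eqref{linear1}-type decompositions, and bound each disorder factor $e^{\gamma(2\beta\omega - 2\lambda(\beta))}$ in expectation by $e^{\lambda(2\gamma\beta)-2\gamma\lambda(\beta)}$.) Either way, once the per-renewal factor and the first-meeting kernel are correctly matched to the left side of \eqref{eq:EDcond}, summability of the geometric series gives the $L^{2\gamma}$ bound, and weak disorder follows from uniform integrability via Proposition \ref{ui}. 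For the reader's benefit I would also remark that since \eqref{eq:EDcond} at $\gamma=1$ reduces (after resumming the first-meeting kernel) to the $L^2$ condition $\sigma(\beta)^2\tfrac{1-\pi_d}{\pi_d}<1$ of Theorem \ref{thm:L2regime}, Theorem \ref{thm:ED} is a genuine a priori enlargement of the known weak-disorder region, even though — as the following discussion in the paper explains — it still does not reach $\beta_c$.
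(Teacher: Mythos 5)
Your overall skeleton --- bound $\bbE[(W_n^\beta)^{2\gamma}]$ for $\gamma\in(\tfrac12,1)$ via a renewal decomposition at the successive meeting times of two replicas, sum a geometric series whose ratio is the left-hand side of \eqref{eq:EDcond}, and conclude weak disorder from uniform integrability via Proposition \ref{ui} --- is the same as the paper's. But the implementation you call ``cleanest'' (route (i): integrate out $\omega$ first to get $\E^{S,S'}\big[e^{\lambda_2(\beta)\sum_{i\le n}\ind_{\{S_i=S_i'\}}}\big]$ as in \eqref{eq:secondmom}, expand over first-meeting renewals, then apply $(\sum_i a_i)^\gamma\le\sum_i a_i^\gamma$) does not prove the stated theorem, and the convexity step you invoke to match the weights is backwards. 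Annealing first makes each renewal carry the weight $e^{\gamma\lambda_2(\beta)}=e^{\gamma(\lambda(2\beta)-2\lambda(\beta))}$, and since $\lambda$ is convex with $\lambda(0)=0$ one has $\lambda(2\gamma\beta)\le\gamma\lambda(2\beta)$, hence $e^{\lambda(2\gamma\beta)-2\gamma\lambda(\beta)}\le e^{\gamma\lambda_2(\beta)}$ --- the opposite of what you assert. Consequently hypothesis \eqref{eq:EDcond}, which involves the \emph{smaller} weight, does not force your route-(i) geometric ratio below $1$: route (i) yields weak disorder only under the strictly stronger condition $e^{\gamma\lambda_2(\beta)}\sum_{n,x}\P^{S,S'}(\cdots)^\gamma<1$. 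This is not mere bookkeeping; the entire gain of the fractional moment comes from taking the power $\gamma$ \emph{before} annealing the disorder that the two replicas share at their meeting sites.

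The correct implementation is the one you relegate to a parenthesis (route (ii)), and it needs precisely the steps you omit. Keeping the disorder, write $(W_n^\beta)^2=\E^{S,S'}\big[e^{\sum_{i=1}^n(\beta\omega_{i,S_i}+\beta\omega_{i,S_i'}-2\lambda(\beta))}\big]$ and decompose this $S,S'$-average over the \emph{exact} intersection pattern $\{(n_1,x_1),\dots,(n_k,x_k)\}$ of the two walks (not over a one-step decomposition of type \eqref{linear1}); apply $(\sum_i a_i)^\gamma\le\sum_i a_i^\gamma$ to this sum, and only then take $\bbE$, using independence of the disorder over disjoint time strips to factorise the expectation over the excursions between consecutive meetings. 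At each meeting site the disorder enters doubled, as $e^{2\beta\omega_{n_j,x_j}-2\lambda(\beta)}$, and its $\gamma$-th power anneals to exactly $e^{\lambda(2\gamma\beta)-2\gamma\lambda(\beta)}$; on each excursion the two walks occupy distinct sites, so Jensen, $\bbE[X^\gamma]\le(\bbE[X])^\gamma$, together with $\bbE[e^{\beta\omega-\lambda(\beta)}]=1$, reduces that factor to $\P^{S,S'}\big(\text{first meeting at }(n_j,x_j)\big)^\gamma$. This Jensen step on the excursions, which your sketch never mentions, is where the first-meeting kernel of \eqref{eq:EDcond} is actually produced; with it the geometric ratio is exactly the left-hand side of \eqref{eq:EDcond} and the argument closes as you describe. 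Finally, you are right (and the statement is equally implicit) that one should take $\gamma\in(\tfrac12,1)$, so that $2\gamma>1$ and boundedness in $L^{2\gamma}$ gives uniform integrability.
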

\begin{proof}
The idea is to estimate moments of $W_n^\beta$ between $1$ and $2$.
To this end we start by writing
\begin{align*}
\bbE[(W_n^\beta)^{2\gamma}] 
&= \bbE\Big[ \Big( \E^{S,S'}\big[ e^{\sum_{i=1}^n(\beta\omega_{i,S_i} +\beta\omega_{i,S'_i}-2\lambda(\beta) )}  \big] \Big)^\gamma \Big] .
\end{align*}
Next, we identify the set of space-time points where $S,S'$ intersect. 
For $(n_1,x_1),...,(n_k,x_k) \in \bbN\times\bbZ^d$, we denote by 
\begin{align*}
\sfA_{(n_1,x_1),...,(n_k,x_k)} :=\big\{ S\cap S' =\{ (n_1,x_1),...,(n_k,x_k) \}\big\},
\end{align*}
and by 
\begin{align*}
\sfA_{(n_{j-1},x_{j-1})}^{(n_j,x_j)}:= \{ 
S_{n_{j-1}}=S'_{n_{j-1}}=x_{j-1}, \, 
S_{n_{j}}=S'_{n_{j}}=x_{j},\,\,
\text{and $S_i\neq S'_i$ for $n_{j-1}<i<n_j$ } \},
\end{align*}
and write
\begin{align*}
\bbE[(W_n^\beta)^{2\gamma}] 
&=\bbE\Big[ \Big( \sum_{k\geq 0} \sumtwo{0<n_1<\cdots<n_k\leq n}{x_1,...,x_k\in\bbZ^d} 
\E^{S,S'}\big[ e^{\sum_{i=1}^n(\beta\omega_{i,S_i} +\beta\omega_{i,S'_i}-2\lambda(\beta) )}
\,;\, \sfA_{(n_1,x_1),...,(n_k,x_k)} 
 \big]   \Big)^\gamma \Big] .
 \end{align*}
 Using, again, the fractional moment inequality 
 $(\sum_i a_i)^\gamma \leq \sum_i a_i^\gamma$, for $\gamma\in (0,1)$, we have
 \begin{align*}
 &\bbE[(W_n^\beta)^{2\gamma}] \\
&  \leq \sum_{k\geq 0} \sumtwo{0<n_1<\cdots<n_k\leq n}{x_1,...,x_k\in\bbZ^d} 
\bbE\Big[ \E^{S,S'}\big[ e^{\sum_{i=1}^n(\beta\omega_{i,S_i} +\beta\omega_{i,S'_i}-2\lambda(\beta) )}
\,;\, \sfA_{(n_1,x_1),...,(n_k,x_k)}  \big]^\gamma \Big] \\
& = \sum_{k\geq 0} \sumtwo{0<n_1<\cdots<n_k\leq n}{x_1,...,x_k\in\bbZ^d} 
 \prod_{j=1}^k \bbE\Big[
\E^{S,S'}_{(n_{j-1}, x_{j-1})} 
\big[ e^{\sum_{i={n_{j-1}+1}}^{n_j}
(\beta\omega_{i,S_i} +\beta\omega_{i,S'_i}-2\lambda(\beta) )}  \, ;\,
 \sfA_{(n_{j-1},x_{j-1})}^{(n_j,x_j)}  \big]^\gamma \Big].
\end{align*}
Singling out from $E^{S,S'}$ the disorder $e^{2\beta \omega_{n_j,x_j}-2\lambda(\beta)}$ 
on the sites where the two walks meet, we can write the above as
\begin{align}\label{eq:ED1}
& \sum_{k\geq 0} \sumtwo{0<n_1<\cdots<n_k\leq n}{x_1,...,x_k\in\bbZ^d} 
 \prod_{j=1}^k \bbE\Big[
e ^{2\gamma\beta \omega_{n_j,x_j}-2\gamma\lambda(\beta)} 
\E^{S,S'}_{(n_{j-1}, x_{j-1})} \big[ e^{\sum_{i={n_{j-1}+1}}^{n_j-1}
(\beta\omega_{i,S_i} +\beta\omega_{i,S'_i}-2\lambda(\beta) )} \,;\,
 \sfA_{(n_{j-1},x_{j-1})}^{(n_j,x_j)}  \big]^\gamma \Big] 
\end{align}
Now compute  $\bbE e ^{2\gamma\beta \omega_{n_j,x_j}-2\gamma\lambda(\beta)} 
= e^{\lambda(2\gamma\beta) -2\gamma\lambda(\beta)}$ 
and we use H\"older in 
$\bbE \big[ \E^{S,S'}_{(n_{j-1}, x_{j-1})} \big[\cdots\big]^\gamma \big]$
and the fact that 
\begin{align*}
&\bbE \big[ \E^{S,S'}_{(n_{j-1}, x_{j-1})} \big[e^{\sum_{i={n_{j-1}+1}}^{n_j-1}
(\beta\omega_{i,S_i} +\beta\omega_{i,S'_i}-2\lambda(\beta) )} 
 \ind_{\{S_i\neq S'_i,\,\,\, \text{for $n_{j-1}< i < n_j \, 
\, S_{n_j}=S'_{n_j}=x_j$\}}} \big] \Big]^\gamma \\
&= \P^{S,S'}_{(n_{j-1},x_{j-1})}\big( S_i\neq S'_i,\,\,\, \text{for $n_{j-1}< i < n_j \, 
\, S_{n_j}=S'_{n_j}=x_j$ }\big)^\gamma
\end{align*}
to obtain that \eqref{eq:ED1} is bounded by
\begin{align*}
\sum_{k\geq 0} \sumtwo{0<n_1<\cdots<n_k\leq n}{x_1,...,x_k\in\bbZ^d} 
 \prod_{j=1}^k  e^{\lambda(2\gamma\beta)-2\gamma\lambda(\beta)}  
\,  \P^{S,S'}_{(n_{j-1},x_{j-1})}\big( S_i\neq S'_i,\,\,\, \text{for $n_{j-1}< i < n_j \, 
\, S_{n_j}=S'_{n_j}=x_j$}\big)^\gamma.
\end{align*}
Letting $n\to\infty$ we can decouple the spatial and temporal summations and have that in the limit
$n\to \infty$ the above equals
\begin{align*}
\sum_{k\geq 0} \Big( \sum_{n\geq 1, x\in \bbZ^d} 
  e^{\lambda(2\gamma\beta)-2\gamma\lambda(\beta)}  
\,  \P^{S,S'}\big( S_i\neq S'_i,\,\,\, \text{for $0< i < n \, 
\, S_{n}=S'_{n}=x$}\big)^\gamma \Big)^k,
\end{align*}
which is finite if condition \eqref{eq:EDcond} is satisfied.
\end{proof}
Evans and Derrida in \cite{ED92} provided numerical evidence that condition \eqref{eq:EDcond}
extends the $L^2$ regime. There has also been further discussion in the physics literature 
\cite{MG06a, MG06b} as to whether the Evans-Derrida condition provides a good estimate on 
$\beta_c$. This is still open but let us remark that just a crude use of fractional inequality
and H\"older without taking into account the correlations between the walk and the disorder 
is known in several circumstances not to be sharp. 
Below we outline another approach to the comparison between $\beta_2$ and $\beta_c$
that was put forward in \cite{B04, BS10, BS11, BT10}. 
This approach share some features to the approach of Evans-Derrida but it more refined.
 In particular, it also considers the intersections of two independent random walks, with a reward 
 assigned each time the walks intersect, but, crucially, the framework is in a ``quenched'' setting. 
 This means that averaging is taking place with respect to only one of the two walks, while the other is
 fixed and one is interested in determining the growth of a (certain) partition for a.e. trajectory of the fixed
 random walk. Proceeding with such estimates, they use the full power of the refined moment method, which we will describe in Section \ref{sec:frac}. Below, let us 
 outline the main framework of the approach of \cite{B04, BS10, BS11, BT10}.
\vskip 2mm
{\bf Random walk pinning model.}
For a simple random walk $S$ denote by $\bbP_S$ the law on $\omega=(\omega_{n,x})_{n\geq 0, x\in \bbZ^d}$ under which 
$(\omega_{n,x})_{n\geq 0, x\in \bbZ^d}$ are independent and with a law for each individual $\omega_{n,x}$ given by
\begin{align}\label{sizebiasO}
\dd \bbP_S(\omega_{n,x})= \exp\Big\{\beta\omega_{n,x}-\lambda(\beta)) \,\ind_{\{S_n=x\}} \Big\} \dd \bbP(\omega_{n,x}).
\end{align}
In other words, the environment admits an exponential tilt on the sites which are visited by the
trajectory of the random walk $S$. 
Note that we have already encountered this random walk - tilted measure in the proof of the monotonicity properties of the partition functions in Proposition \ref{prop:mon}.
For a given random walk trajectory $S$,
we will also denote the realisation of the disorder sampled from \eqref{sizebiasO} by 
$\omega^S:=(\omega^S_{n,x})_{n\geq 0, x\in \bbZ^d}$. The partition function in a disorder tilted along $S$, will be denoted by 
\begin{align}\label{sizebiasZ}
W_n^{\beta, S} 
&:=\E^{S'}\Big[ e^{ \sum_{i=1}^n ( \beta \omega^{S}_{i,S'_i}   -\lambda(\beta) )} \Big] \notag \\
&= \E^{S'}\Big[ e^{ \sum_{i=1}^n\big( \beta \omega_{i,S'_i} \ind_{\{S'_i\neq S_i\}} + \beta \omega^S_{i,S'_i} \ind_{\{S'_i=S_i\}}  -\lambda(\beta) \big)} \Big]
\end{align}
where in the right-hand-side the expectation is with respect to the random walk $S'$, which is independent of $S$. We also used the fact that on sites $(i,x)$ not visited by $S$, we have
$\omega^S_{(i,x)}=\omega_{(i,x)}$.

For a random variable $X$ with law $\bbP$ and mean equal to $1$, we call the distribution $\dd \bbP_X:=X\dd \bbP$ the {\it size-bias law} of $X$.
We will be able to relate the size-bias law of $W_n^\beta$ to the law of $W_n^{\beta,S}$ as the following simple computation shows:
\begin{align}\label{sizebiasZ2}
\bbE[W_n^\beta f(W_n^\beta)] 
&= \bbE \Big[ \E\big[ e^{\sum_{i=1}^n (\beta \omega_{i,S_i} - \lambda(\beta)) }\big] \, f(W_n^\beta) \Big]
\notag \\
&= \E\Big[ \bbE\big[ e^{\sum_{i=1}^n (\beta \omega_{i,S_i} - \lambda(\beta)) }\big] \, f(W_n^\beta) \Big] 
\notag \\
&=\E\Big[ \bbE_{S}\big[ \, f(W_n^{\beta, S}) \big] \Big] \notag \\
&=:\E\otimes \bbE_{S}\big[ \, f(W_n^{\beta, S}) \big].
\end{align} 
The above identity can then be interpreted as that
\begin{align*}
\text{the size-bias law of $W_n^\beta$ is identical to the law of $W_n^{\beta, S}$ 
under the measure $\P\otimes \bbP_S$.}
\end{align*}
The significance of this fact, with regards to weak disorder, is that it relates, 
via choosing functions $f$ that grow to infinity, the uniform integrability
of $W_n^\beta$ under $\bbP$  to the tightness of $W_n^{\beta, S}$ under $\P\otimes \bbP_S$.
Using this fact, Birkner-Sun \cite{BS10,BS11} were able to show that the weak disorder regime extends beyond the $L^2$ regime in dimensions $d\geq 3$. A similar result was also derived
in $d=3$ by Berger-Toninelli \cite{BT10} (see also \cite{JL24a}, Theorem B).
In order to state the result, we need to introduce the {\it random walk pinning} partition function:
Given a path of a simple random walk $S$, define
\begin{align}\label{RWpin}
W_n^{\beta, {\rm pin}(S)}
:=\E^{S'}\Big[ e^{\big(\lambda(2\beta)-2\lambda(\beta)\big)\sum_{i=1}^n \ind_{\{S_i=S_i'\}}} \Big].
\end{align}
The random walk pinning partition arises from $W_n^{\beta, S}$ by integrating out the disorder, with respect to $\bbP_S$ 
 (again, the random walk trajectory $S$ is fixed), as: 
\begin{align}\label{RWpinemerge}
\bbE_S\big[W_n^{\beta, S}\big]
&= \bbE_S\Big[ \E^{S'} \Big[ e^{ \sum_{i=1}^n\big( \beta \omega_{i,S'_i}
 -\lambda(\beta) \big)} \Big] \Big] \notag \\
&= \E^{S'}\Big[ \bbE_S \Big[ e^{ \sum_{i=1}^n\big( \beta \omega_{i,S'_i} \ind_{\{S'_i\neq S_i\}} + \beta \omega^S_{i,S'_i} \ind_{\{S'_i=S_i\}}  -\lambda(\beta) \big)} \Big] \Big] \notag \\
&= \E^{S'}\Big[ \bbE_S \Big[ e^{ \sum_{i=1}^n\big( \beta \omega^S_{i,S'_i}  
 -\lambda(\beta) \big)\ind_{\{S'_i=S_i\}}} \Big] \Big] \notag \\
&=\E^{S'}\Big[ e^{\big(\lambda(2\beta)-2\lambda(\beta)\big)\sum_{i=1}^n \ind_{\{S_i=S_i'\}}} \Big] = W_n^{\beta, {\rm pin}(S)},
\end{align}
where in the third equality we used that for sites $(i,S'_i)$, which are not visited by $S$, we
have that $\bbE_S\big[e^{\beta\omega_{i,S'_i} -\lambda(\beta)}\big] =1 $, while in the fourth that
on sites $(i,S'_i)$, which are visited by $S$, we have 
$\bbE_S\big[e^{(\beta\omega^S_{i,S'_i} -\lambda(\beta))\ind_{S_i=S'_i}}\big] =
\bbE\big[e^{(2\beta\omega_{i,S'_i} -2\lambda(\beta) ) \ind_{\{S_i=S'_i}\} }\big] = 
e^{(\lambda(2\beta)-2\lambda(\beta))\ind_{\{S_i=S'_i\}}}$.

We note that averaging $W_n^{\beta,{\rm pin}(S)}$ with respect to $S$ gives
\begin{align*}
\E^S\big[ W_n^{\beta,{\rm pin}(S)}\big] 
= \E^{S,S'}\Big[ e^{\big(\lambda(2\beta)-2\lambda(\beta)\big)\sum_{i=1}^n \ind_{\{S_i=S_i'\}}} \Big] ,
\end{align*}
which coincides with $\bbE[(W_n^\beta)^2]$. 
Averaging with respect to some form of disorder is often called {\it annealing} and, therefore, the above indicates that 
the annealed critical temperature of the random walk pinning partition
coincides with the $L^2$ critical temperature $\beta_2$ of $W_n^\beta$. The results of Birkner-Sun \cite{BS10} for $d\geq 4$ and of \cite{BT10, BS11} for $d=3$ are summarised in
the following theorem:
\begin{theorem}\label{thm:BS}
Let $d\geq 3$, $\beta_2$ be the $L^2$ critical temperature as in \eqref{def:beta2} and  
\begin{align}\label{RWcrit}
\beta_c^{\rm RWpin}:= \sup\big\{\beta \colon \sup_{n\geq 1} W_n^{\beta, {\rm pin(S)}} < \infty, \quad \text{for $a.e.$ random walk $S$} \big\},
\end{align}
then $\beta_2< \beta_c^{\rm RWpin}$. 
\end{theorem}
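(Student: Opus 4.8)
\textbf{Proof strategy for Theorem \ref{thm:BS}.}
The plan is to leverage the size-bias identity \eqref{sizebiasZ2}, which says that the size-bias law of $W_n^\beta$ coincides with the law of $W_n^{\beta,S}$ under $\P\otimes\bbP_S$, together with the fact (Proposition \ref{ui}) that weak disorder is equivalent to uniform integrability of $(W_n^\beta)_{n\geq1}$. The first step is to make precise the link suggested after \eqref{sizebiasZ2}: uniform integrability of $W_n^\beta$ under $\bbP$ is implied by tightness of $W_n^{\beta,S}$ under $\P\otimes\bbP_S$. Indeed, if the size-biased family is tight, then for any $\epsilon>0$ there is $K$ with $\P\otimes\bbP_S(W_n^{\beta,S}>K)<\epsilon$ for all $n$, and by \eqref{sizebiasZ2} with $f=\ind_{(K,\infty)}$ this reads $\bbE[W_n^\beta\ind_{\{W_n^\beta>K\}}]<\epsilon$, which is exactly uniform integrability. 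So it suffices to show that $W_n^{\beta,S}$ is tight for $\beta$ in a range strictly larger than $(0,\beta_2)$.

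The second step is to pass from $W_n^{\beta,S}$ to the random walk pinning partition function $W_n^{\beta,\mathrm{pin}(S)}$. By \eqref{RWpinemerge} we have $\bbE_S[W_n^{\beta,S}]=W_n^{\beta,\mathrm{pin}(S)}$, so conditionally on the fixed trajectory $S$, the random variable $W_n^{\beta,S}$ has $\bbP_S$-mean equal to $W_n^{\beta,\mathrm{pin}(S)}$. The idea is then: if for $\P$-a.e.\ trajectory $S$ one has $\sup_{n\geq1}W_n^{\beta,\mathrm{pin}(S)}<\infty$ — which holds precisely for $\beta<\beta_c^{\mathrm{RWpin}}$ — then this quenched $L^1(\bbP_S)$ bound on $W_n^{\beta,S}$, combined with the non-negative-martingale structure of $n\mapsto W_n^{\beta,S}$ under $\bbP_S$ (it is a partition function of the same type, with disorder $\omega^S$), yields $\bbP_S$-a.s.\ convergence and, with a little more work (e.g.\ an $L^1$-type or second-moment argument in the pinning environment, or the same linear-systems dichotomy invoked in Proposition \ref{ui}), tightness of $W_n^{\beta,S}$ under $\P\otimes\bbP_S$. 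Putting the two steps together gives $\beta_c\geq\beta_c^{\mathrm{RWpin}}$, so the theorem reduces to the strict inequality $\beta_2<\beta_c^{\mathrm{RWpin}}$.

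The third and genuinely hard step is therefore the strict inequality $\beta_2<\beta_c^{\mathrm{RWpin}}$ itself, i.e.\ showing that the \emph{quenched} critical point of the random walk pinning model strictly exceeds its \emph{annealed} critical point. Here one must exploit disorder fluctuations: the fixed walk $S$ plays the role of quenched disorder for the $S'$-partition function, and the comparison $\E^S[W_n^{\beta,\mathrm{pin}(S)}]=\bbE[(W_n^\beta)^2]$ shows the annealed threshold is exactly $\beta_2$. To gain a strictly larger quenched region one implements the refined fractional moment method (as developed in \cite{B04,BS10,BS11,BT10} and sketched in Section \ref{sec:frac}): one estimates $\E^S[(W_n^{\beta,\mathrm{pin}(S)})^\gamma]$ for a suitable $\gamma\in(0,1)$ over a coarse-grained block decomposition of the trajectory, using a change of measure on the fixed walk $S$ to tilt it away from configurations that make the pinning partition large, and shows this fractional moment stays bounded for $\beta$ slightly above $\beta_2$ — which via Borel--Cantelli type arguments upgrades to the a.s.\ boundedness defining $\beta_c^{\mathrm{RWpin}}$. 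The main obstacle is precisely this coarse-graining plus change-of-measure estimate: controlling the cost of the tilt against the gain in the fractional moment, which requires sharp bounds on intersection local times of two random walks in $d\geq3$ (and is most delicate in the critical case $d=3$, which is why it was handled separately in \cite{BT10,BS11}). I would quote this estimate from the cited works rather than reprove it, so the role of the present proof is to assemble the size-bias reduction (steps one and two) cleanly and to state where the analytic input from \cite{BS10,BS11,BT10} enters.
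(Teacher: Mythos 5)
Your proposal matches the paper's treatment: the review states Theorem \ref{thm:BS} as a summary of \cite{BS10,BS11,BT10} and does not reprove the strict inequality, which---exactly as you describe---rests on the fractional moment method with coarse graining and a change of measure for the random walk pinning model, whose annealed critical point is precisely $\beta_2$. Your first two steps (the size-bias identity and the quenched $L^1$ bound yielding $\beta_c^{\rm RWpin}\leq\beta_c$) reproduce the paper's surrounding discussion rather than the statement itself, and you correctly isolate the quenched-versus-annealed gap as the sole content to be imported from the cited works.
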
 
Given the discussion around \eqref{sizebiasZ2}, we have that $\beta_c^{\rm RWpin} \leq \beta_c$ and the above result
shows that $\beta_2< \beta_c^{\rm RWpin}\leq \beta_c$, thus the weak disorder regime extends beyond the $L^2$ regime.
The fact that the weak disorder regime extends beyond the $L^2$ regime in dimensions $d\geq 5$ was earlier established
by Birkner-Greven-den Hollander \cite{BGH10} (see also \cite{BGH23}), 
but in a less quantitative form
and via different methods, based on large deviations. 
The approaches of both Birkner-Sun and Berger-Toninelli relied on the fractional moment method. In fact, they were able to establish that
for a $\gamma\in (0,1)$ it holds that
\begin{align}\label{BSfrac}
\sup_{n\geq 1}  \E^S\big[ (W_n^{\beta, {\rm pin}(S)})^\gamma \big] <\infty, \qquad \text{for a.e. realisation of $S$.}
\end{align}
Taking $f(x)=x^\gamma$ in \eqref{sizebiasZ2} and via a simple Jensen we obtain via \eqref{sizebiasZ2} and \eqref{RWpinemerge}
that
\begin{align*}
\bbE\big[ \big( W_n^{\beta}\big)^{1+\gamma} \big] 
&=\bbE\big[ W_n^{\beta} \cdot \big( W_n^{\beta}\big)^{\gamma} \big] 
= \E^S\otimes \bbE_S \big[ \big( W_n^{\beta}\big)^{\gamma}\big] \\
&\leq \E^S\big[ \big( \bbE_S W_n^{\beta}\big)^{\gamma}\big]
=  \E^S\big[ (W_n^{\beta, {\rm pin}(S)})^\gamma \big] <\infty
\end{align*}
for $\beta<\beta_c^{\rm RWpin}$.
Therefore, the relation to the random walk pinning model already provides a regime of $\beta$ beyond the $L^2$,
where $W_n^\beta$ has moments strictly greater than $1$.

Birkner-Sun \cite{BS10, BS11} were also able to give quantitative bounds on the gap between the random walk pinning critical temperature and the $L^2$ one in the case where the
simple, $d$-dimensional  random walks $S,S'$ are replaced by continuous, $d$-dimensional random walks $\cS^{(1)}, \cS^{(\rho)}$
with jump intensities $1$ and $\rho>0$, respectively, and identical, symmetric and irreducible transition probabilities. In particular, if $\tilde\beta_c^{\,\rm RWpin}$ and $\tilde\beta_2$ are the 
corresponding critical temperatures in this continuous setting, then 
\begin{align*}
\tilde\beta_c^{\,\rm RWpin} - \tilde\beta_2 \geq 
\begin{cases}
a\rho, &\qquad \text{if $d\geq 5$ and for some constant $a$}, \\
a_\delta \rho^{1+\delta}, &\qquad \text{if $d=4$ and for any $\delta>0$ and suitable constant $a_\delta$}, \\
e^{-a'_\delta \rho^{-\delta}},  &\qquad \text{if $d=3$ and for any $\delta>2$ and suitable constant $a'_\delta$}.
\end{cases}
\end{align*}
\begin{question}
Can one obtain upper bounds matching the lower bounds ? 
\end{question}

\subsection{Moment characterisation of the weak disorder regime}\label{sec:momchar}
The understanding of the weak disorder regime beyond the $L^2$ regime received substantial 
boost recently via the very insightful works of Junk \cite{J22a, J22b, J23}, Fukushima-Junk \cite{FJ23} and 
more recently Junk-Lacoin \cite{JL24a, JL24b}.
What was first
shown in \cite{J22a} was that in $d\geq 3$, for any $0<\beta<\beta_c$  the partition function $W_n^\beta$ admits
moments strictly greater than $1$, which are obviously strictly less than $2$ if $\beta>\beta_2$.

Before going into details let us define:
\begin{align}\label{pstar}
p^*(\beta):=\sup\big\{ p\geq 1 \colon \sup_n \bbE\big[ (Z_{n}^{\beta})^p\big] < \infty \big\}.
\end{align}
The original result of Junk \cite{J22a} was, in rough terms and under certain assumptions,
 that for every $\beta<\beta_c$, we have $p^*(\beta)>1$. This result opened new paths in the
 understanding of properties of the directed polymer model beyond the $L^2$ regime. Some of these
have already been explored in \cite{J22b,J23, JL24a, JL24b} but the understanding of the 
moment properties of the partition function offers tools for further exploration.  

The results in this section will be based on one of the following two assumptions.
\begin{assumption}[upper-bounded]\label{uBd}
Disorder $\omega$ is upper bounded. That is, there exists $M>0$, such $\bbP(\omega_{i,x}>M)=0$
for every $(i,x)\in \bbN \times \bbZ^d$. 
\end{assumption}
 
\begin{assumption}[controlled overshoot]\label{overshoot}
For all $\beta>0$, there exist $A_1=A_1(\beta)>1$ and $C=C(\beta)>0$ such that for all $A>A_1$
\begin{align}\label{eq:overshoot}
\bbE\big[ e^{\beta \omega} \,\big| \, \omega >A \big] \leq C e^{\beta A}.
\end{align}
\end{assumption}
Clearly, Assumption \ref{overshoot} implies Assumption \ref{uBd} but some of the results in this
section currently require the stronger upper bound of Assumption \ref{uBd}.
The significance of this assumption lies on imposing some control on the overshoot that
certain martingales can have when crossing a level. This plays a role in being able to 
obtain certain {\it maximal inequalities}, i.e. relating to $\sup_n M_n$, of martingales. We will 
come to this point later.

Some of the insights in the first works of Junk came from revisiting the martingale nature of
$W_n^\beta$ and drawing inspiration from classical (but forgotten) works from the 
onset of martingale theory \cite{G69}. In particular, the relations between moments 
of martingales which possess a linear structure such as the one encountered in \eqref{linear1}
and the integrability of their supremum. These relations were then applied to reveal the links
between moments of $W_n^\beta$ and integrability of $\sup_n W_n^\beta$, i.e. the understanding of {\it maximal inequalities}. Let us state
Junk's theorem \cite{J22a}:
\begin{theorem}\label{p-mom}
Let $\beta$ be such that weak disorder holds. Then the following hold:
\begin{itemize}
\item[{\rm (i)}] $\bbE\big[ \sup_{n\geq 1} W_n^\beta \big] <\infty$
\item[{\rm (ii)}] If Assumption \ref{uBd} holds and $p^*(\beta)$ is as defined in \eqref{pstar}, 
then $p^*(\beta) >1$.
\end{itemize}
\end{theorem}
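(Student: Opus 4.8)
The plan is to exploit the linear structure \eqref{linear1}--\eqref{linear2} of the martingale $W_n^\beta$ together with the equivalence between moment bounds and maximal inequalities that goes back to \cite{G69}. For part (i), I would argue as follows. Since weak disorder holds, Proposition \ref{ui} gives that $(W_n^\beta)_{n\ge1}$ is uniformly integrable, hence $\bbE[W_\infty^\beta]=1$. The key observation is the self-similar identity $W_{n+m}^\beta=\sum_{x}W_n^\beta(0,x)\,W_{(n,m]}^\beta(x)$, in which the inner factors $W_{(n,m]}^\beta(x)$ are independent of $\cF_n$ and have mean $p_n(x)$ (in the normalization where $\sum_x W_n^\beta(0,x)=W_n^\beta$). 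Conditioning on $\cF_n$ and letting $m\to\infty$, one gets $\bbE[W_\infty^\beta\mid\cF_n]=W_n^\beta$, which just re-expresses the martingale property; the real input is a \emph{one-sided} control: for $m>n$, conditionally on $\cF_n$ one has $W_m^\beta=\sum_x W_n^\beta(0,x)W_{(n,m]}^\beta(x)$, and by a comparison (exploiting that the environment is upper bounded, Assumption \ref{uBd}, so the increments $e^{\beta\omega_{i,x}-\lambda(\beta)}$ are bounded above) one can dominate $\sup_{m\ge n}W_m^\beta$ in terms of $W_n^\beta$ and a supremum of the ``fresh'' pieces $\sup_m W_{(n,m]}^\beta(x)$, which are i.i.d.\ copies of $\sup_m W_m^\beta$ started from $x$. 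Running this at the first time $n$ that $W_n^\beta$ exceeds a large threshold $K$ and using the uniform integrability to bound the overshoot, one obtains a bound of the form $\bbP(\sup_n W_n^\beta>K)\le C/K$ with integrable tail after the standard Doob-type bootstrap; more carefully one gets $\bbE[\sup_n W_n^\beta]<\infty$ directly from the $L\log L$-type estimate that the linear structure provides. (This is essentially Gundy's argument adapted to the polymer.)

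For part (ii) I would leverage (i) to upgrade $L^1$-boundedness of the supremum into an $L^p$ bound for some $p>1$. The mechanism is again the linear identity: write, for a suitable scale $N$,
\begin{align*}
W_{2N}^\beta=\sum_{x}W_N^\beta(0,x)\,\widehat W_N^\beta(x),
\end{align*}
where the $\widehat W_N^\beta(x)$ are, conditionally on $\cF_N$, independent with mean $p_N(x)$. Taking the $(1+\gamma)$-th moment for small $\gamma>0$ and using the von Bahr--Esseen / Marcinkiewicz--Zygmund inequality for sums of conditionally independent centered-ish terms, one bounds $\bbE[(W_{2N}^\beta)^{1+\gamma}]$ by a constant (depending on $N$) times $\bbE[(W_N^\beta)^{1+\gamma}]$ plus a correction controlled by $\bbE[\sup_x \widehat W_N^\beta(x)^{1+\gamma}]$ — and here Assumption \ref{overshoot} is exactly what is needed to control the overshoot of the relevant martingale across its running maximum, yielding the finiteness of the $(1+\gamma)$-th moment of the supremum of a \emph{single} point-to-point partition function. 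One then chooses $N$ large (but fixed) so that the recursive constant on $\bbE[(W_{nN}^\beta)^{1+\gamma}]$ is $\le1$, which is possible because weak disorder gives decay/contraction of the relevant quantity at large scales (this is where $\beta<\beta_c$, as opposed to merely $\beta<\beta_2$, enters — the contraction is measured against $W_\infty^\beta>0$ rather than against a second-moment series). Iterating along the geometric sequence of scales $N,2N,4N,\dots$ and interpolating for intermediate $n$ gives $\sup_n\bbE[(W_n^\beta)^{1+\gamma}]<\infty$, i.e. $p^*(\beta)\ge 1+\gamma>1$. Passing between $W_n^\beta$ and $Z_n^\beta$ in the definition \eqref{pstar} is harmless since they differ by the deterministic factor $e^{n\lambda(\beta)}$.

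The main obstacle is the control of overshoots and the associated maximal inequality in part (ii): one needs that the martingale built from the point-to-point partition function, when it first crosses a level $A$, does not overshoot by more than a bounded factor in $L^{1+\gamma}$, and this is genuinely false without an upper tail assumption on $\omega$ — a single site with $\omega_{i,x}$ huge makes the increment $e^{\beta\omega_{i,x}-\lambda(\beta)}$ huge. Assumption \ref{overshoot} (the ``controlled overshoot'' hypothesis) is precisely the minimal-looking condition that tames this, via the estimate $\bbE[e^{\beta\omega}\mid\omega>A]\le Ce^{\beta A}$, letting one replace the overshoot by a bounded multiple of the level. The second, more structural difficulty is arranging the self-improving recursion so that the multiplicative constant is genuinely $<1$ at large scales using only $\beta<\beta_c$; this requires input from the martingale convergence $W_n^\beta\to W_\infty^\beta>0$ and a continuity/uniform-integrability argument to transfer ``$W_\infty^\beta>0$ a.s.'' into a quantitative contraction for $\bbE[(W_{nN}^\beta)^{1+\gamma}]$ as $\gamma\downarrow0$ — morally, $\bbE[(W_\infty^\beta)^{1+\gamma}]\to\bbE[W_\infty^\beta]=1$, so for $\gamma$ small the $(1+\gamma)$-th moment stays close to $1$ uniformly, which is what closes the induction.
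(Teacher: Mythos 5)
Your proposal has the right ingredients in spirit (linear structure, stopping at a crossing time, overshoot control, Gundy's circle of ideas), but both halves have genuine gaps where the key mechanism is missing. For part (i), the route you sketch does not close: a bound $\bbP(\sup_n W_n^\beta>K)\le C/K$ is exactly Doob's maximal inequality for a mean-one nonnegative martingale and its tail is \emph{not} integrable, while the ``$L\log L$-type estimate that the linear structure provides'' is precisely the thing that would have to be proved (and you do not prove it); note also that (i) requires no upper bound on the environment, so invoking Assumption \ref{uBd} there is a sign the argument is off track. The idea that actually makes (i) work (Theorem \ref{thm:martingale}(i) in the paper) is to use weak disorder itself as the input: since $\bbP(W_\infty^\beta>0)=1$, one can pick $\epsilon,\delta$ so that $\inf_k\bbE[f_{\delta,\epsilon}(W_k^\beta)]=\eta>0$ for the convex cutoff $f_{\delta,\epsilon}(x)=(\delta(x/\epsilon-1))\wedge1$; then at the first time $\tau$ the martingale crosses a level $t$, the Jensen-type self-similarity \eqref{convexity} shows the conditional law of the future ratio $W_n^\beta/W_\tau^\beta$ dominates a fresh copy in the sense that $\bbE[f_{\delta,\epsilon}(W_n^\beta/W_\tau^\beta)\mid\cF_\tau]\ge\eta$, whence $\bbP(W_n^\beta>\epsilon t)\ge\eta\,\bbP(\max_{k\le n}W_k^\beta>t)$. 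Integrating in $t$ converts the $L^1$ bound on $W_n^\beta$ directly into $\bbE[\sup_n W_n^\beta]\le(\epsilon\eta)^{-1}$. This transfer of the maximal tail to the tail of $W_n^\beta$ itself is the missing step in your sketch.

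For part (ii), your closing mechanism — a scale recursion with constant $\le1$ justified by ``$\bbE[(W_\infty^\beta)^{1+\gamma}]\to1$ as $\gamma\downarrow0$, so the moments stay close to $1$ uniformly'' — is circular: uniform boundedness of $\bbE[(W_n^\beta)^{1+\gamma}]$ is exactly what is to be proved, and moments being close to $1$ does not produce a contraction in a von Bahr--Esseen recursion. The argument that works is again a crossing-time bootstrap, now fed by part (i): since $\bbE[\sup_nW_n^\beta]<\infty$, one has $\bbP(\sup_nW_n^\beta>t)\le t^{-1}\bbE[\sup_nW_n^\beta\,;\,\sup_nW_n^\beta>t]=o(1/t)$, which is strictly better than Doob; the bounded one-step ratio \eqref{Mupper} (here $W_{n+1}^\beta/W_n^\beta\le e^{\beta M}$, or its analogue under Assumption \ref{overshoot}) controls the overshoot at $\tau$ by $W_\tau^\beta\le Kt$; restarting via \eqref{convexity} and using that $(W_n^\beta)^p$ is a submartingale yields $\bbE[(W_n^\beta)^p]\le t^p+(Kt)^p\,\bbP(\sup_nW_n^\beta>t)\,\bbE[(W_n^\beta)^p]$, and one first fixes $t$ large so that the prefactor is at most $1/2$ and then takes $p>1$ close enough to $1$. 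No contraction over scales $N,2N,4N,\dots$ and no appeal to $\beta<\beta_2$-type decay is needed; the only quantitative input from weak disorder is the maximal inequality of part (i). You correctly identified why an upper-tail assumption on $\omega$ is needed in (ii), but without the $o(1/t)$ maximal tail and the stopping-time restart your recursion does not close.
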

We will present the proof of Theorem \ref{p-mom} under the more restrictive boundedness
Assumption  \ref{uBd} as originally done in \cite{J22a}. The proof under the more general 
Assumption \ref{overshoot} was provided in \cite{FJ23}, to which we refer. 
Before presenting the proof of the above theorem, let us state some properties of $p^*(\beta)$
as these were presented and proved in \cite{J23}. We note that the weak monotonicity part of 
Property (ii) is a consequence of the
monotonicity proved in Proposition \ref{prop:mon}. We will also
discuss the proof of Property (i)  after the proof of the main theorem. For the proofs of Properties
(iii) and (iv) we refer to \cite{J23}, Theorem 1.2.
\begin{theorem}[Properties of $p^*(\beta)$] \label{thm:propp*}
Let Assumption \ref{uBd} hold true and also assume weak disorder. The $L^p$-critical exponent $p^*(\beta)$ has the
following properties:
\begin{itemize}
\item[{\rm (i)}]  it holds that $p^*(\beta)\geq 1+\frac{2}{d}$.

\item[{\rm (ii)}]  the function $\beta \to p^*(\beta)$ is strictly decreasing, if $\bbP$ has finite support.

\item[{\rm (iii)}] if $\beta$ is such that $p^*(\beta)\in (1+2/d, 2]$, then the function  $p^*(\cdot)$
 is right-continuous at $\beta$.
 
 \item[{\rm (iv)}]  if $\beta > \beta_2$,  and $p^*(\beta)>1$, then $p^*(\cdot)$ is left-continuous at $\beta$.
\end{itemize}
\end{theorem}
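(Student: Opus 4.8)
The four assertions will be handled by exploiting throughout the self-similar decomposition \eqref{linear1}--\eqref{linear2} and the a priori maximal bound $\bbE[\sup_{n}W_n^\beta]<\infty$ supplied by Theorem~\ref{p-mom}(i). The non-strict half of (ii) is immediate: for $p\ge 1$ the map $x\mapsto x^p$ is convex, so by Proposition~\ref{prop:mon} the map $\beta\mapsto\bbE[(W_n^\beta)^p]$ is non-decreasing for each $n$, hence so is $\beta\mapsto\sup_n\bbE[(W_n^\beta)^p]$, and therefore $p^*$ is non-increasing. To get strict monotonicity I would argue by contradiction: if $p^*(\beta_1)=p^*(\beta_2)=:p$ with $\beta_1<\beta_2<\beta_c$, then using that passing from $\beta_1$ to $\beta_2$ genuinely injects disorder --- quantified by Gaussian interpolation in the Gaussian case, and in general by sharpening the FKG step in the proof of Proposition~\ref{prop:mon}, where the covariance produced is \emph{strictly} positive because the polymer is genuinely correlated with $\omega$ --- one shows that $\sup_n\bbE[(W_n^{\beta_2})^{p-\varepsilon}]<\infty$ forces $\sup_n\bbE[(W_n^{\beta_1})^{p'}]<\infty$ for some $p'>p$, a contradiction. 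The delicate point, here and below, is to make such a gain quantitative and, crucially, uniform in $n$.

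For the lower bound (i), the idea is that, conditionally on $\cF_{n-k}$, the decomposition \eqref{linear1} presents $W_n^\beta$ as a weighted superposition of the increments $W_{(n-k,n]}^\beta(x)$, which spread over a diffusive box of $\asymp k^{d/2}$ sites with weights of size $\asymp p_k(x)\asymp k^{-d/2}$ by the local limit theorem. This averaging, iterated over scales, should upgrade integrability in steps, with the geometric gain saturating precisely at the exponent $1+2/d$; the upper bound on $\omega$ (Assumption~\ref{uBd}) is used to tame the freshly resampled last layers and, more importantly, to control the overshoot of the relevant non-negative martingales when they first cross a high level, which is what allows a moment \emph{recursion} to be closed rather than merely iterated. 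I expect the main obstacle to be handling the correlations between the increments $W_{(n-k,n]}^\beta(x)$ for different $x$ (they are built from overlapping disorder), so that the naive ``$k^{d/2}$ independent terms'' heuristic has to be replaced by a genuine chaining/concentration estimate with $n$-uniform constants.

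Both one-sided continuity statements are ``no-jump'' assertions for the non-increasing function $p^*$, proved by perturbing a quantitative estimate valid in the relevant regime. For (iii), given $p^*(\beta)\in(1+2/d,2]$, for each $p<p^*(\beta)$ we have $\sup_n\bbE[(W_n^\beta)^p]<\infty$, and in the range $p\le 2$ this finiteness is governed by a renewal-type recursion (of the second-moment / fractional-moment type as in Theorems~\ref{thm:L2regime} and \ref{thm:ED}) whose convergence ratio is a continuous, monotone function of $\beta$; since $p<p^*(\beta)$ leaves the ratio strictly below~$1$, it stays below~$1$ at $\beta+\varepsilon$, so $p\le p^*(\beta+\varepsilon)$, and letting $p\uparrow p^*(\beta)$ and using monotonicity gives right-continuity. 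For (iv), with $\beta>\beta_2$ and $p^*(\beta)>1$, one needs the reverse implication --- persistence of the \emph{divergence} of the $p$-th moment as $\beta$ is slightly decreased --- and this is where the upper-boundedness in Assumption~\ref{uBd} is essential: invoking the maximal inequalities for linear martingales in the spirit of \cite{G69}, which under that assumption turn moment estimates on $W_n^\beta$ into two-sided statements about $\sup_n W_n^\beta$, together with continuity in $\beta$ of these quantities, one rules out a jump of $p^*$ from the left. The recurring technical difficulty in both parts is, once again, that the $\beta$-perturbation must be controlled uniformly in $n$, which is exactly why mere exponential moments of $\omega$ do not suffice for (iv).
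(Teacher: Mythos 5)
Your proposal does not follow the paper's route for part (i), and the route you sketch has a gap you yourself flag. The paper derives $p^*(\beta)\geq 1+\tfrac{2}{d}$ not from a multiscale averaging of the decomposition \eqref{linear1}, but from the field-level results of Theorem \ref{thm:flucd3}: in the whole weak disorder phase the averaged field satisfies $W_n^\beta(\phi)\to 0$ in $L^1$ (part (i) there), while for $\beta>\beta_2$ under Assumption \ref{uBd} part (iii) pins down the size of $W_n^\beta(\phi)$ as $n^{-\xi(\beta)\pm\epsilon}$ with $\xi(\beta)=\tfrac{d}{2}-\tfrac{2+d}{2p^*(\beta)}$; consistency of the two forces $\xi(\beta)\geq 0$, i.e.\ $p^*(\beta)\geq 1+\tfrac{2}{d}$. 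In your sketch, the claim that conditioning on $\cF_{n-k}$ and averaging $\asymp k^{d/2}$ increments with weights $\asymp k^{-d/2}$ ``upgrades integrability in steps, saturating at $1+2/d$'' is not substantiated: the increments $W_{(n-k,n]}^\beta(x)$ are built from overlapping disorder and are strongly correlated in $x$ (you acknowledge this), no mechanism is given to close a moment recursion uniformly in $n$, and there is no argument for why the gain would stop exactly at $1+2/d$ — in the actual proof that exponent emerges from the interplay between the moment exponent and the decay rate of the spatially averaged field, not from a site-counting heuristic.

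For (ii)--(iv) the paper itself only observes that weak monotonicity follows from Proposition \ref{prop:mon} (as you also note) and refers to \cite{J23} for strict monotonicity and the two continuity statements, so there is no in-paper argument to match; judged on their own, your sketches for these parts are not proofs. For strict monotonicity you assert that the FKG step can be made ``strictly positive'' and converted into a uniform-in-$n$ gain from exponent $p-\varepsilon$ at $\beta_2$ to some $p'>p$ at $\beta_1$, but no quantitative mechanism is offered — that uniform gain is the entire difficulty. For (iii) your argument rests on the claim that, for $1<p\le 2$, finiteness of $\sup_n\bbE[(W_n^\beta)^p]$ is governed by a renewal-type recursion whose convergence ratio is continuous in $\beta$; this is true only at $p=2$ (Theorem \ref{thm:L2regime}), whereas for fractional $p$ the Evans--Derrida estimate (Theorem \ref{thm:ED}) is merely a sufficient condition, not a characterization, so ``the ratio stays below $1$ at $\beta+\varepsilon$'' does not yield $p\le p^*(\beta+\varepsilon)$. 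It is also telling that your argument never uses the hypothesis $p^*(\beta)>1+2/d$, which appears in the statement and signals that the real proof engages different estimates. For (iv), invoking maximal inequalities and ``continuity in $\beta$ of these quantities'' restates the desired conclusion rather than proving it: what must be shown is that divergence of the $p$-th moment persists under a small decrease of $\beta$, and nothing in your proposal produces such a stability estimate. These parts genuinely require the quantitative arguments of \cite{J23} (see also \cite{J22b}), which are neither reproduced nor replaced here.
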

The value of $p^*(\beta_c)$ at the critical temperature $\beta_c$ has been identified more recently in \cite{JL24a}:
\begin{theorem}[\cite{JL24a}, Corollary 2.2]
Under Assumption \ref{uBd}, it holds that in $d\geq 3$,
  \begin{align*}
  p^*(\beta_c)=\lim_{\beta\uparrow \beta_c} p^*(\beta)=1+\frac{2}{d}.
  \end{align*}
\end{theorem}
The last theorem is a consequence of the results summarised in Theorem \ref{thm:propp*} and Theorem \ref{thm:JL}. Indeed,
by the latter theorem we know that weak disorder holds at $\beta_c$ and by (i) of Theorem \ref{thm:propp*} 
that $p^*(\beta_c)\geq 1+\frac{2}{d}$. If $p^*(\beta_c) > 1+\frac{2}{d}$, then by (iii) of Theorem \ref{thm:propp*}, $p^*(\cdot)$ would have
to be right-continuous at $\beta_c$ but this is not the case as for $\beta>\beta_c$ the partition function only has first moments uniformly bounded. So we must have that $p^*(\beta_c) = 1+\frac{2}{d}$. The left-continuity follows form (iv) of  Theorem \ref{thm:propp*}.

A second critical exponent was defined in \cite{J22b}:
\begin{align*}
q^*(\beta):=\inf\big\{ p\geq 1 \colon \lim_{n\to\infty}\frac{1}{n} \log \bbE\big[ (W_n^\beta)^p\big]>0 \big\}.
\end{align*}
The following theorem proved in \cite{J22b}, Theorem 1.5,
showed that in the weak disorder regime, the $p$ moment of $W_n^\beta$ grows exponentially
for $p>p^*(\beta)$:
\begin{theorem}\label{thm:q*}
Assume $d\geq 3$, weak disorder, $\beta>\beta_2$ and that the disorder is upper bounded 
as in Assumption \ref{uBd}. Then for every $p>p^*(\beta)$, with $p^*(\beta)$ as in \eqref{pstar}, 
there exist constants $C_1>0$ and $C_2>1$, such that
\begin{align*}
\bbE\big[ \big( W_n^\beta\big)^p\big] \geq C_1 C_2^n.
\end{align*}
\end{theorem}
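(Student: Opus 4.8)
Set $a_n:=\bbE\big[(W_n^\beta)^p\big]$; under Assumption \ref{uBd} one has $W_n^\beta\le e^{n(\beta M-\lambda(\beta))}$, so each $a_n$ is finite. The plan is to show that $(a_n)$ grows at least geometrically. Two easy facts come first. Since $p>1$ and $W_n^\beta$ is a non-negative martingale, $\big((W_n^\beta)^p\big)_{n\ge 1}$ is a submartingale, so $(a_n)$ is non-decreasing and $a_n\ge(\bbE W_n^\beta)^p=1$ by Jensen; moreover, since $p>p^*(\beta)$, the definition \eqref{pstar} gives $\sup_n a_n=\infty$, hence $a_n\uparrow\infty$.

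The second ingredient is a block lower bound coming from the linear structure \eqref{linear1}. Decomposing $W_{n+m}^\beta$ at time $n$ and using $\big(\sum_i b_i\big)^p\ge\sum_i b_i^p$ for $p\ge 1$ and $b_i\ge 0$,
\begin{align*}
(W_{n+m}^\beta)^p=\Big(\sum_{x\in\bbZ^d}W_n^\beta(0,x)\,W_{(n,n+m]}^\beta(x)\Big)^p\ \ge\ \sum_{x\in\bbZ^d}\big(W_n^\beta(0,x)\big)^p\,\big(W_{(n,n+m]}^\beta(x)\big)^p.
\end{align*}
For each $x$ the factor $W_{(n,n+m]}^\beta(x)$ depends only on the disorder at times in $(n,n+m]$, so it is independent of $\cF_n$ and has the law of $W_m^\beta$; since $W_n^\beta(0,\cdot)$ is $\cF_n$-measurable, taking $\bbE$ gives, with $\cI_n^{(p)}:=\sum_{x\in\bbZ^d}\mu_n^\beta(x)^p$,
\begin{equation}\label{eq:qstar-split}
a_{n+m}\ \ge\ S_n\,a_m,\qquad S_n:=\sum_{x\in\bbZ^d}\bbE\big[(W_n^\beta(0,x))^p\big]=\bbE\big[(W_n^\beta)^p\,\cI_n^{(p)}\big].
\end{equation}
Since $\cI_n^{(p)}\le 1$ we only know a priori that $S_n\le a_n$, so \eqref{eq:qstar-split} alone does not force geometric growth: the whole argument reduces to showing that one fixed block already makes $S$ exceed $1$.

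The key point I would aim to prove is that there is a fixed $n_0=n_0(d,\beta,p)$ with $S_{n_0}>1$ --- for instance via $S_n\ge c\,a_n$ for some $c=c(d,\beta,p)>0$ and all $n$, combined with $a_n\uparrow\infty$ from the first paragraph. This is where weak disorder, $\beta>\beta_2$ and the upper bound on the disorder are used, and I expect it to be the main obstacle. The quantity $S_n=\bbE[(W_n^\beta)^p\,\cI_n^{(p)}]$ is the $p$-th moment weighted by the self-overlap $\cI_n^{(p)}$ of the polymer. In weak disorder the polymer is diffusive, so $\cI_n^{(p)}\to 0$ a.s., and for $p<p^*(\beta)$, where $\big((W_n^\beta)^p\big)_n$ is uniformly integrable, one even has $S_n\to 0$. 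The decisive feature of the range $p>p^*(\beta)$ is the failure of uniform integrability: the divergence $a_n\to\infty$ is carried by rare, localisation-type events on which $W_n^\beta$ is atypically large and on which $\cI_n^{(p)}$ stays bounded away from $0$, so that a fixed proportion of the mass of $(W_n^\beta)^p$ lives on configurations with $\cI_n^{(p)}\gtrsim 1$, forcing $S_n\gtrsim a_n$. Making this heuristic quantitative is the technical core of \cite{J22b}; it uses Assumption \ref{uBd} to control the overshoot of the martingales involved (in the spirit of Theorem \ref{thm:martingale}) and the hypothesis $\beta>\beta_2$ to stay strictly beyond the $L^2$ regime, and I would follow \cite{J22b} for this step.

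Granted such an $n_0$, set $\rho:=S_{n_0}>1$ and iterate \eqref{eq:qstar-split} with first argument equal to $n_0$:
\begin{align*}
a_{kn_0+r}\ \ge\ \rho\,a_{(k-1)n_0+r}\ \ge\ \cdots\ \ge\ \rho^{\,k}\,a_r\ \ge\ \rho^{\,k},\qquad k\ge 0,\ \ 0\le r<n_0,
\end{align*}
using $a_r\ge 1$. For arbitrary $n$, writing $n=kn_0+r$ with $k=\lfloor n/n_0\rfloor\ge n/n_0-1$ gives $a_n\ge\rho^{\lfloor n/n_0\rfloor}\ge\rho^{-1}\big(\rho^{1/n_0}\big)^{n}$, which is the claimed inequality with $C_1=\rho^{-1}>0$ and $C_2=\rho^{1/n_0}>1$.
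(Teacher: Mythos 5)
Your reductions are all correct as far as they go: from the linear decomposition \eqref{linear1}, the inequality $(\sum_i b_i)^p\ge\sum_i b_i^p$ for $p\ge1$, and independence of $W_n^\beta(0,\cdot)$ from the post-$n$ disorder, you correctly get $a_{n+m}\ge S_n\,a_m$ with $S_n=\sum_x\bbE[(W_n^\beta(0,x))^p]=\bbE[(W_n^\beta)^p\sum_x\mu_n^\beta(x)^p]$, and the iteration from a single block with $S_{n_0}>1$, together with $a_r\ge1$, does give the stated geometric lower bound. But the step you yourself flag as the key point — the existence of a fixed $n_0$ with $S_{n_0}>1$, say via $S_n\ge c\,a_n$ — is exactly where weak disorder, $\beta>\beta_2$, $p>p^*(\beta)$ and Assumption \ref{uBd} must enter, and you do not prove it: everything you do establish (submartingale monotonicity, Jensen, superadditivity) is soft and equally available for $p<p^*(\beta)$, where the conclusion fails. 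The heuristic you offer for the missing step is itself doubtful: in the weak disorder phase the endpoint distribution spreads diffusively, so $\sum_x\mu_n^\beta(x)^p$ is typically of order $n^{-d(p-1)/2}$, and there is no a priori reason that the rare environment configurations driving the divergence of $\bbE[(W_n^\beta)^p]$ also pin the endpoint so that this overlap stays of order one — a favourable stretch of environment can inflate $W_n^\beta$ while the walk still delocalises by time $n$. A weaker bound such as $S_n\ge n^{-c}a_n$ would not rescue the argument, since it produces $S_{n_0}>1$ only if $a_n$ already grows super-polynomially, which is what is to be proved; and since your inequality only goes one way, the claim $S_{n_0}>1$ does not even follow from the truth of the theorem without a separate comparison between endpoint-constrained and free moments. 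So there is a genuine gap, and it is the entire analytic content of the statement.

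For comparison: the paper gives no proof of this theorem either; it cites \cite{J22b} and notes that the argument there follows a martingale approach (in the circle of ideas of Theorem \ref{thm:martingale}, where Assumption \ref{uBd} controls the jump ratios $W_{n+1}^\beta/W_n^\beta$), which is a different structure from your endpoint decomposition. Consequently you cannot simply delegate "your" missing lemma, in the form $S_{n_0}>1$, to \cite{J22b}: if you want to follow your route you must supply a proof of that lemma yourself, whereas if you want to rely on \cite{J22b} you should reproduce its martingale argument rather than graft its citation onto a differently organised proof.
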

In \cite{JL24b}, Corollary 2.9, the following, stronger result was proved:
\begin{theorem}
For $\beta$ in the weak disorder regime, we have $p^*(\beta)=q^*(\beta)$.
\end{theorem}
The question of whether this theorem extends to the strong disorder regime is currently open for general disorder. We refer
to \cite{JL24b}, Remark 2.10 for a discussion.

\vskip 2mm
Let us now move towards the proof of Theorem \ref{p-mom}. As already mentioned, the proof of that
theorem relies on the understanding of maximal inequalities for martingales with a linear structure.
The general, relevant theorem proved in \cite{J22a} is the following:
\begin{theorem}\label{thm:martingale}
Let $(M_n)_{n\geq 0}$ with $M_0=1$ 
be a non-negative martingale with respect to a filtration $(\cF_n)_{n\geq 1}$ and with the property that
for every $k,\ell \in \bbN$ and convex $f\colon \bbR_+\to \bbR$, almost surely on $M_k>0$, it holds that
\begin{align}\label{convexity}
\bbE\Big[ f\Big( \frac{M_{k+\ell}}{M_k}\Big) \, \Big| \, \cF_k\Big] \leq \bbE [f(M_\ell)].
\end{align}
Denote by $M_n^*:=\sup_{0\leq k \leq n} M_k$, by $M_\infty:=\lim_{n\to \infty} M_n$ and by 
$M^*_\infty:=\lim_{n\to \infty} M^*_n$. Then the following hold:
\begin{itemize}

\item[{\rm (i)}] if $\bbP(M_\infty>0)>0$, then $\bbE[M_\infty^*]<\infty$.

\item[{\rm (ii)}] if $\bbP(M_\infty>0)>0$ and there exists $K>1$ such that 
\begin{align}\label{Mupper}
\bbP(M_{n+1} \leq K M_n) = 1, \qquad \text{for all $n\in \bbN$}, 
\end{align}
then there exists $p>1$ such that 
\begin{align}\label{eq:lp}
\sup_{n\geq 1} \bbE[ M_n^p ] <\infty.
\end{align}
The interval of $p>0$ satisfying \eqref{eq:lp} is open.
\item[{\rm (iii)}] if $\bbP(M_\infty=0)=1$ and \eqref{Mupper} holds, then\
footnote{the original bound in \cite{J22a} was $\frac{1}{4K^2t}$. 
The improved bound here and its proof was suggested by H. Lacoin.}
\begin{align}\label{Mstar-tail}
\bbP(M^*_\infty > t) >\frac{1}{Kt}, \qquad \text{for all $t>1$}.
\end{align} 
\item[{\rm (iv)}] if $\bbP(M_\infty >0)=1$ and there exists $K>1$ such that
\begin{align}
\bbP(M_{n+1}\geq M_n/K)=1, \qquad \text{for all $n\in \bbN$},
\end{align} 
then there exists $p>0$ such that 
\begin{align}\label{Mlower}
\sup_n \bbE[M_n^{-p}]<\infty.
\end{align}
The interval of $p>0$ satisfying \eqref{Mlower} is open.
\end{itemize}
\end{theorem}
\begin{proof}
(i) The proof is simple and ingenious. It hinges upon cleverly 
choosing a suitable concave test function that takes advantage of property
 \eqref{convexity}. The desired function is
\begin{align*}
f_{\delta, \epsilon}(x):= \Big(\delta\big(\frac{x}{\epsilon}-1\big)\Big) \wedge 1.
\end{align*}
We notice that for all $x\geq 0$ the inequalities 
\begin{align}\label{fineq}
\ind_{(\epsilon,\infty)} \geq f_{\delta, \epsilon}(x)
 \geq \ind_{[ (\delta^{-1}+1)\epsilon, \infty)}(x)- \delta \ind_{[0, \epsilon)}(x),
\end{align}
hold. For $t>0$, consider the crossing time $\tau:=\inf\{n\in \bbN \colon M_n>t\}$. 
On the even $\tau<\infty$ we, naturally,  have that $M_\tau\geq t >0$ and, thus,
\begin{align}\label{Mnlower}
\bbP(M_n>t\epsilon) 
& \geq \bbP(\tau \leq n, \frac{M_n}{M_\tau} >\epsilon) \notag \\
& = \sum_{k=1}^n
 \bbE\Big[ \ind_{\{\tau=k\}} \,\bbE\Big[ \ind_{\frac{M_n}{M_k} >\epsilon }\Big| \cF_k \Big] \Big] \notag \\
&\geq  \sum_{k=1}^n
 \bbE\Big[ \ind_{\{\tau=k\}} \, \bbE\Big[ f_{\delta, \epsilon}\Big( \frac{M_n}{M_k}\Big)\Big| \cF_k \Big] \Big] \notag \\
 &\geq  \sum_{k=1}^n
 \bbE\Big[ \ind_{\{\tau=k\}} \, \bbE\big[ f_{\delta, \epsilon}\big( M_{n-k} \big) \big] \Big] \\
 &\geq \bbP(\tau\leq n) \, \inf_{k} \bbE[  f_{\delta, \epsilon}\big( M_{k} \big) ],
 \end{align}
 where in the second inequality we used \eqref{fineq} and in the third \eqref{convexity}.
 Using now the second inequality in \eqref{fineq} we have that
 \begin{align*}
  \inf_{k} \bbE[  f_{\delta, \epsilon}\big( M_{k} \big) ] 
  &\geq \bbE[ \inf_{k} f_{\delta, \epsilon}\big( M_{k} \big) ] \\
  &\geq \bbP(\inf_{k}M_k \geq (\delta^{-1}+1)\epsilon) ) - \delta \bbP(\inf_{k} M_k <\epsilon)\\
  &\xrightarrow[\epsilon\to 0]{} \bbP(M_\infty>0)- \delta \bbP(M_\infty =0),
 \end{align*}
 where the limit is a consequence of the fact that $0$ is an absorbing state for the non-negative martingale $M_n$ and so $\{M_\infty>0\} = \{\inf_n M_n >0 \}$.
 Given the assumption that $\bbP(M_\infty>0)>0$ and the above estimate, we can choose
 $\epsilon, \delta$ small enough such that 
 \begin{align*}
 \bbE \big[ f_{\delta,\epsilon}(M_k) \big] =:\eta >0.
 \end{align*}
 Going back to \eqref{Mnlower} we have that
 \begin{align*}
 \bbP(M_n>t\epsilon) 
 \geq \eta \bbP(\tau\leq n)
 = \eta \bbP(M^*_n>t) 
 \end{align*}
 and the result follows upon integrating over $t\in \bbR_+$, giving
 \begin{align*}
 \bbE[M^*_n] \leq \frac{1}{\epsilon \eta} \bbE[M_n] = \frac{1}{\epsilon \eta} ,
 \end{align*}
 and the monotone convergence theorem.
 \vskip 2mm
 {\rm (ii)} For $t>0$ that will be suitably chosen later, consider the crossing time
 $\tau=\inf\{ n \colon M_n > t\}$. The assumption that a.s. $M_{n+1}\leq K M_{n}$ for all $n\in \bbN$
 implies that, on $\{\tau\leq n\}$, it holds 
 \begin{align*}
 M_\tau = \frac{M_\tau}{M_{\tau-1}} M_{\tau-1} \leq K t,
 \end{align*}
 and, thus, $M_n\leq tK \frac{M_n}{M_\tau}$. The latter implies the following estimate for any
  $p>1$:
 \begin{align}\label{Mnepsilon}
 \bbE[M_n^{p}] 
 &\leq  t^{p} +  \bbE[M_n^{p} \, \ind_{\{\tau\leq n\}}] \notag \\
 &\leq t^{p} + (Kt)^{p} 
    \sum_{k=1}^n \bbE\Big[ \Big( \frac{M_n}{M_\tau}\Big)^{p} \, \ind_{\{\tau =k\}} \Big] \notag\\
 & =t^{p} + (Kt)^{p} 
    \sum_{k=1}^n \bbE\Big[\ind_{\{\tau =k\}} \, 
    \bbE\Big[ \Big( \frac{M_n}{M_\tau}\Big)^{p} \,\Big| \, \cF_k\Big]\, \Big] \notag\\
 &\leq  t^{p} + (Kt)^{p} 
    \sum_{k=1}^n \bbE\Big[\ind_{\{\tau =k\}} \, 
    \bbE\big[ \big( M_{n-k}\big)^{p}  \big] \Big]  \notag\\
 &\leq     t^{p} + (Kt)^{p} \, \bbP(\tau\leq n) \, \bbE [ M_n^{p} ] \notag\\
 &\leq     t^{p} + (Kt)^{p}  \, \bbP(M^*_\infty>t) \,\bbE [ M_n^{p} ] 
 \end{align}
 where in the third inequality we used assumption \eqref{convexity}, in the fourth that
 $\bbP(\tau\leq n)= \bbP(M^*_n>t)\leq  \bbP(M^*_\infty>t)$ 
and in the last the fact for $p>1$, $(M_n^{p})_{n\geq 1}$ is a sub-martingale. 
Since by Chebyshev inequality
\begin{align}\label{maximal}
\bbP(M^*_\infty>t) \leq \frac{1}{t} \bbE[ M^*_\infty ; M^*_\infty >t],
\end{align}
and $\bbE[M^*_\infty] <\infty$, we can choose, using monotone convergence,
 $t$ large enough such that, given $K$ as in \eqref{Mupper}, we have
\begin{align}\label{maxchoice}
\bbP(M^*_\infty>t) \leq \frac{1}{4K^p t}.
\end{align}
 Inserting this into 
\eqref{Mnepsilon} and choosing $p$ sufficiently close to $1$, such that $t^{p-1}\leq 2$, we have that
\begin{align*}
\bbE[M_n^{p}]  \leq t^{p} +\frac{1}{2} \bbE[M_n^{p}] ,
\end{align*} 
thus, implying that $\sup_n \bbE[M_n^{p}] \leq 2 t^{p}$ for $t$ and $p>1$ chosen as above.

The fact that the interval of $p>1$ such that $\sup_n\bbE[M_n^p]<\infty$ is open goes as follows.
Let $p>1$ such that $\sup_n\bbE[M_n^p]<\infty$ and let $q>p$ to be chosen. Repeat the steps leading to \eqref{Mnepsilon} with $p$ there replaced by $q>p$. Instead of \eqref{maximal} use
the $L^q$ inequality to conclude
\begin{align}\label{maximal-p}
\bbP(M^*_\infty>t) \leq \frac{1}{t^p} \bbE[ (M^*_\infty)^p ; M^*_\infty >t] \leq \frac{1}{4K^q t^p},
\end{align}
which leads to 
\begin{align*}
\bbE[M_n^q] \leq t^q + \frac{t^{q-p}}{4} \bbE[M_n^q],
\end{align*}
and it remains to choose $q>p$ close enough to $p$ such that $t^{q-p}<2$.
\vskip 2mm
(iii) 
Let $\tau:=\inf\{ n\colon M_n>t\}$. By the martingale property we have that $1=\bbE[ M_{n\wedge \tau} ]$ and by letting $n\to\infty$, 
dominated convergence (using that $M_{n\wedge \tau}\leq Kt$) and that $M_\infty=0$  a.s., we have that
\begin{align*}
1=\bbE[ M_\tau\,;\, \tau<\infty] \leq Kt \,\P(\tau<\infty),
\end{align*}
which implies the result.
\vskip 2mm
(iv) The existence of negative moments follows similar steps as the proof of (ii). We refer to \cite{J22a}
for details.
\end{proof}
We are now ready to provide the proof of Theorem \ref{p-mom}.
\begin{proof}[Proof of Theorem \ref{p-mom}]
It suffices to apply Theorem \ref{thm:martingale} for the martingale $W_n^\beta$ and, thus, it
suffices to check conditions \eqref{convexity} and  \eqref{Mupper}. For the latter we have
\begin{align*}
\frac{W^\beta_{n+1}}{W^\beta_n}= \sum_{x\in \bbZ^d} \mu_n^\beta(x) 
\frac{1}{2d}\sum_{y\in \bbZ^d \colon |y-x|=1} e^{\beta \omega_{n+1,y}-\lambda(\beta)}
\leq e^{\beta M},
\end{align*}
and, therefore, \eqref{Mupper} is satisfied with $K=e^{\beta M}$. Similarly, 
for any $f\colon \bbR_+\to\bbR$ it holds that
\begin{align*}
\bbE\Big[ f\Big( \frac{W_{k+\ell}}{W_\ell} \Big) \,\Big| \, \cF_k\Big]
&=\bbE\Big[ f\Big( \sum_{x\in \bbZ^d} \mu_k^\beta(x) W_{(k,k+\ell]}^\beta(x)  \Big) \,\Big| \, \cF_k\Big] \\
&\leq \sum_{x\in \bbZ^d} \mu_k^\beta(x)  \bbE\Big[ f\Big(W_{(k,k+\ell]}^\beta(x)  \Big) \,\Big| \, \cF_k\Big] \\
&= \sum_{x\in \bbZ^d} \mu_k^\beta(x) \bbE\big[ f\big(W_\ell^\beta \big)\big]  \\
&= \bbE\big[ f\big(W_\ell^\beta \big)\big] ,
\end{align*}
where the inequality is due to Jensen. Therefore, condition \eqref{convexity} is also satisfied.
\end{proof}
Let us now discuss the bound $p^*(\beta)\geq 1+\frac{2}{d}$ from Theorem \ref{thm:propp*}, (i).
This follows from the asymptotic behaviour of $(W_n^\beta(x))_{x\in\bbZ^d}$ when viewed as
a field indexed by the starting point $x\in\bbZ^d$ of the polymer path. The fact that
 the moments of $W_n^\beta$ are related to
the correlations of the field $(W_n^\beta(x))_{x\in\bbZ^d}$ is probably not a surprise as moments
capture some tail properties of $W_n^\beta(x)$ and can encode localisation and, thus, correlations.
Still, this interplay is very interesting and warrants further investigation. 

We need the following theorem, which combines results from three works. Part (i) was proved in
\cite{CNN22}, Part (ii) in \cite{CNN22} in the continuous setting and in \cite{LZ22} in the 
discrete and Part (iii) in \cite{J22b}.

\begin{theorem}\label{thm:flucd3}
For a test function $\phi\in C_c(\bbR^d)$, we define the averaged field as
\begin{align*}
W_n^\beta(\phi):=\frac{1}{n^{d/2}} \sum_{x\in \bbZ^d} \phi\big(\tfrac{x}{\sqrt{n}}\big) \big( W_n^{\beta}(x)-1\big).
\end{align*}
Let $d\geq 3$. Then
\begin{itemize}
\item[{\rm (i)}] If weak disorder holds, then for every $\phi\in C_c(\bbR^d)$,
\begin{align}\label{homog}
W_n^\beta(\phi)\xrightarrow[n\to\infty]{L^1} 0.
\end{align}
\item[{\rm (ii)}] If $\beta < \beta_2$, then for every $\phi\in C_c(\bbR^d)$,
\begin{align}\label{flucW3}
n^{\frac{d-2}{4}} W_n^\beta(\phi) \xrightarrow[n\to \infty]{d} \cN(0,\sigma_\phi^2(\beta)),
\end{align}
where $\cN(0,\sigma_\phi^2(\beta))$ is a Gaussian random variable with mean zero and
variance 
\begin{align*}
\sigma_\phi^2(\beta)&
= \cC_\beta\int_0^1\int_{\bbR^d\times \bbR^r} \phi(x) g_{\tfrac{2t}{d}}(x-y) \phi(y) \,\dd x\dd y \quad \text{with} \\
&\,\quad  \cC_\beta=\big( e^{\lambda(2\beta)-2\lambda(\beta)}-1\big) \bbE\big[ (W_\infty^\beta)^2\big].
\end{align*}
\item[{\rm (iii)}]
Assume that weak disorder {\rm (WD)} holds and $\beta>\beta_2$ and, further, assume 
Assumption \ref{uBd}.
For $\xi(\beta):=\tfrac{d}{2}-\tfrac{2+d}{2p^*(\beta)}$ and every $\phi\in C_c(\bbR^d)$, we then have that
\begin{align}\label{WDbeyond}
\lim_{n\to \infty} \bbP\big( n^{-\xi(\beta)-\epsilon} \leq W_n^\beta(\phi) \leq n^{-\xi(\beta)+\epsilon}\big) =1.
\end{align}
\end{itemize}
\end{theorem}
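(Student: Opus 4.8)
The three statements call for genuinely different arguments, which I would take up in turn.

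\smallskip
\textbf{Part (i).} The plan is to replace $W_n^\beta(x)$ by its $L^1$-limit and then invoke the multidimensional ergodic theorem. Since weak disorder holds, Proposition~\ref{ui} gives that $(W_n^\beta)_{n\geq1}$ is uniformly integrable, hence $W_n^\beta\to W_\infty^\beta$ in $L^1(\bbP)$; by spatial translation invariance the same convergence, at the same rate, holds for $W_n^\beta(x)\to W_\infty^\beta(x)$, so
\[
\bbE\Big[\,\Big|\,W_n^\beta(\phi)-\tfrac1{n^{d/2}}\textstyle\sum_x\phi\big(\tfrac{x}{\sqrt n}\big)\big(W_\infty^\beta(x)-1\big)\Big|\,\Big]
\le\Big(\tfrac1{n^{d/2}}\textstyle\sum_x\big|\phi\big(\tfrac{x}{\sqrt n}\big)\big|\Big)\,\bbE\big|W_n^\beta-W_\infty^\beta\big|\longrightarrow 0 .
\]
It then remains to show $\tfrac1{n^{d/2}}\sum_x\phi(x/\sqrt n)\big(W_\infty^\beta(x)-1\big)\to0$, which follows from the ergodic theorem: the field $(W_\infty^\beta(x))_{x\in\bbZ^d}$ is stationary and ergodic under $\bbZ^d$-shifts (being a fixed measurable functional of the shifted disorder, whose shift action is ergodic) with mean $1$, so approximating $\phi$ by step functions and applying the $L^1$ ergodic theorem on blocks of side $\asymp\sqrt n$ gives $\tfrac1{n^{d/2}}\sum_x\phi(x/\sqrt n)W_\infty^\beta(x)\to(\int\phi)\,\bbE[W_\infty^\beta]=\int\phi$, which cancels $\tfrac1{n^{d/2}}\sum_x\phi(x/\sqrt n)\to\int\phi$. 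No integrability beyond weak disorder is needed.

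\smallskip
\textbf{Part (ii).} I would run the martingale central limit theorem on, for each fixed $n$, the $(\cF_i)_{0\le i\le n}$-martingale $M_i:=n^{-(d+2)/4}\sum_x\phi(x/\sqrt n)\big(W_i^\beta(x)-1\big)$, whose terminal value is $M_n=n^{(d-2)/4}W_n^\beta(\phi)$ because $\bbE[W_n^\beta(x)\mid\cF_i]=W_i^\beta(x)$ (and $M_0=0$). A one-step expansion gives the increments
\[
D_i=n^{-(d+2)/4}\sum_{z}\xi_{i,z}\Big(\sum_x\phi\big(\tfrac{x}{\sqrt n}\big)q_{i-1}(x,z)\Big),\qquad
\xi_{i,z}:=e^{\beta\omega_{i,z}-\lambda(\beta)}-1,\quad q_{i-1}(x,z):=\tfrac1{2d}\sum_{z'\sim z}W^\beta_{(0,i-1]}(x,z'),
\]
with $(\xi_{i,z})_z$ i.i.d., centred, of variance $\sigma(\beta)^2=e^{\lambda_2(\beta)}-1$, and independent of $\cF_{i-1}$. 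One must verify (a) that the conditional quadratic variation $\langle M\rangle_n=\sigma(\beta)^2 n^{-(d+2)/2}\sum_{i=1}^n\sum_z\big(\sum_x\phi(x/\sqrt n)q_{i-1}(x,z)\big)^2$ converges in probability to $\sigma_\phi^2(\beta)$, and (b) the conditional Lindeberg condition $\sum_i\bbE[D_i^2\ind_{|D_i|>\epsilon}]\to0$. For (a), $\bbE[\langle M\rangle_n]$ is computed from the two-point function $\bbE\big[W^\beta_{(0,i-1]}(x,z')W^\beta_{(0,i-1]}(y,w')\big]$; conditioning the two replica walks on their last visit to their common endpoint, using that in $d\ge3$ the overlap they accumulate thereafter equals, in the limit, $\bbE[(W_\infty^\beta)^2]$ (finite because $\beta<\beta_2$), together with Chapman--Kolmogorov $\sum_z p_{i-1}(z-x)p_{i-1}(z-y)=p_{2(i-1)}(x-y)$ and the local limit theorem $p_{2m}(\sqrt n\,w)\sim n^{-d/2}g_{2t/d}(w)$ for $m\sim tn$, one obtains $\bbE[\langle M\rangle_n]\to\sigma(\beta)^2\bbE[(W_\infty^\beta)^2]\int_0^1\!\!\iint\phi(u)g_{2t/d}(u-v)\phi(v)\,du\,dv\,dt=\sigma_\phi^2(\beta)$, which also identifies the constant $\cC_\beta=(e^{\lambda(2\beta)-2\lambda(\beta)}-1)\bbE[(W_\infty^\beta)^2]$. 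Concentration ($\var(\langle M\rangle_n)\to0$) and the Lindeberg bound follow from four-walk moment estimates and the heat-kernel bound $\bbE[q_{i-1}(x,z)]\lesssim p_{i-1}(z-x)$, all summable precisely because $\sigma(\beta)^2\tfrac{1-\pi_d}{\pi_d}<1$ in the $L^2$ regime; the martingale CLT then gives the claim.

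\smallskip
\textbf{Part (iii), and the main obstacle.} For $\beta\in(\beta_2,\beta_c)$ one has, under Assumption~\ref{uBd}, $p^*(\beta)\in(1,2)$ by Theorem~\ref{thm:propp*}, and the governing picture is a single-big-region one: among the $\asymp n^{(d+2)/2}$ space-time cells seen by a length-$n$ polymer the largest favorable region has strength $\asymp n^{(d+2)/(2p^*(\beta))}$ (tail exponent $p^*(\beta)$), and since it feeds coherently into $W_n^\beta(x)$ for the $\asymp n^{d/2}$ starting points $x$ routed through it (each with heat-kernel weight $\asymp n^{-d/2}$), it produces a contribution to $W_n^\beta(\phi)$ of order $n^{-d/2}\cdot n^{(d+2)/(2p^*(\beta))}=n^{-\xi(\beta)}$. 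To make this rigorous one truncates: for the upper bound write $W_n^\beta(x)-1=(W_n^\beta(x)-1)\ind_{W_n^\beta(x)\le T}+(\cdots)\ind_{W_n^\beta(x)>T}$ with $T=n^\theta$, control the truncated-below part in $L^2$ via $\bbE[((W_n^\beta)\wedge T)^2]\lesssim T^{2-p}\sup_n\bbE[(W_n^\beta)^p]$ for $p<p^*(\beta)$ and the diffusive correlation structure of the field, bound the truncated-above part by a union/first-moment estimate using the tail $\bbP(W_n^\beta(x)>T)\lesssim T^{-p}$, and optimize $\theta$; the matching lower bound is a Paley--Zygmund/second-moment argument at the critical scale $n^{(d+2)/(2p^*(\beta))}$, giving $\lim_n\bbP\big(n^{-\xi(\beta)-\epsilon}\le W_n^\beta(\phi)\le n^{-\xi(\beta)+\epsilon}\big)=1$. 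Parts (i)--(ii) are, modulo bookkeeping, classical (an ergodic theorem; a martingale CLT whose variance computation elaborates the second-moment expansion already used for Theorem~\ref{thm:L2regime}); the genuinely hard step is (iii), which requires the \emph{sharp} polynomial tail asymptotics of $W_n^\beta$ with exponent exactly $p^*(\beta)$ (a regularly-varying-tail statement, essentially the content of Junk's analysis together with the openness in Theorem~\ref{thm:martingale}(ii)), a quantitative description of how the atypically large values of the field are created by localized favorable regions, and control of the spatial dependence of those rare events --- it is precisely here that Assumption~\ref{uBd} is used, to tame the overshoot of the martingales $W_n^\beta$ when they cross a high level and hence to secure the maximal inequalities underlying both the tail estimate and the truncation bounds.
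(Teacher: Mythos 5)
First, a point of comparison: the paper itself does not prove Theorem~\ref{thm:flucd3} --- it quotes it, with part (i) and the continuum case of (ii) taken from \cite{CNN22}, the discrete case of (ii) from \cite{LZ22}, and part (iii) from \cite{J22b} --- so your proposal can only be measured against what those arguments actually require. On those terms, your part (i) is correct: uniform integrability (Proposition~\ref{ui}) gives $W_n^\beta\to W_\infty^\beta$ in $L^1$, and the $L^1$ ergodic theorem for the stationary, ergodic field $(W_\infty^\beta(x))_{x\in\bbZ^d}$ with mean $1$ finishes the job. Your part (ii) has the right skeleton (martingale CLT in the time filtration, two-replica computation of the conditional variance, Chapman--Kolmogorov plus the local limit theorem, and the correct constant $\cC_\beta=\sigma(\beta)^2\,\bbE[(W_\infty^\beta)^2]$), but the one-line disposal of the two decisive checks is not sound: you assert that $\bbvar(\langle M\rangle_n)\to 0$ and the Lindeberg condition ``follow from four-walk moment estimates\dots summable precisely because $\sigma(\beta)^2\tfrac{1-\pi_d}{\pi_d}<1$''. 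Four-replica quantities are \emph{not} controlled by the $L^2$ condition --- $\sup_n\bbE[(W_n^\beta)^4]=\infty$ for $\beta$ close to $\beta_2$ --- and showing that the spatially averaged quadratic variation nevertheless concentrates for \emph{every} $\beta<\beta_2$ is exactly the technical content that makes the full-$L^2$ statement nontrivial in \cite{LZ22}. So (ii) is the right strategy with a real gap at its key step.

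The serious gap is in part (iii). Your scheme needs two inputs that are not available: (a) sharp two-sided tail asymptotics $\bbP(W_n^\beta>t)\asymp t^{-p^*(\beta)}$ uniformly in $n$ (a regular-variation statement), and (b) enough independence between the contributions of the $\asymp n^{(d+2)/2}$ space--time cells to justify a union bound and a Paley--Zygmund argument at the scale of the maximum. Concerning (a), the moment characterization (Theorems~\ref{p-mom} and~\ref{thm:propp*}) gives only $\sup_n\bbE[(W_n^\beta)^p]<\infty$ for $p<p^*(\beta)$ and unboundedness above; this yields upper tail bounds $t^{-p}$ for each $p<p^*(\beta)$ but no matching lower bound and no regular variation, and declaring the sharp tail behaviour to be ``essentially the content of Junk's analysis'' begs the question: extracting the two-sided statement \eqref{WDbeyond} from moment information alone is precisely the substance of \cite{J22b}. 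Concerning (b), the variables $W_n^\beta(x)$ and $W_n^\beta(y)$ with $|x-y|\ll\sqrt n$ share most of their disorder, so the cells are strongly dependent and both your truncation/union-bound upper estimate and your second-moment lower estimate need a decoupling argument the sketch does not supply; note also that Paley--Zygmund cannot be applied to $W_n^\beta(\phi)$ itself, since for $\beta>\beta_2$ its second moment grows exponentially in $n$ (cf.\ Theorem~\ref{thm:q*}), so it must be run on a truncated object whose control again requires the missing tail information. The exponent $\xi(\beta)=\tfrac d2-\tfrac{2+d}{2p^*(\beta)}$ your heuristic produces is the right one, and the picture is valuable, but as written part (iii) is a plausibility argument rather than a proof.
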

The derivation of the bound $p^*(\beta)\geq 1+\frac{2}{d}$ comes, now,
 from the combination of (i) and
(iii) of the above theorem. In particular, the conclusion that $W_n^\beta(\phi)\to 0$ from (i) is 
consistent with the conclusion of (iii) only if $\xi(\beta)\geq 0$, which then translates to the
bound $p^*(\beta)\geq 1+\frac{2}{d}$.

Let us remark that results analogous to \eqref{flucW3} for the log of $W_n^{\beta}$ and the 
KPZ equation in $d\geq 3$ have been proved in \cite{CNN22,  CCM18, CCM20, LZ22}. 
A result of the form \eqref{WDbeyond} for the $\log W_n^{\beta}$ or the KPZ equation is still
missing.

\subsection{Very strong disorder and the fractional moment method}\label{sec:frac}
In this section we will discuss the phenomenon of {\it very strong disorder} as this was presented in Definition \ref{def:vSD}. 
That is, the question of whether the partition function
$W_n^\beta$ decays exponentially fast at a rate given by the free energy
\begin{align*}
\sff(\beta):=\lim_{n\to\infty}\frac{1}{n}\log W_n^\beta.
\end{align*}
We talk about very strong disorder if $\sff(\beta)<0$. As we already saw in 
Theorem \ref{thm:vsdloc}, very strong disorder is significant as it translates to localisation
estimates. 

Very strong disorder was first established by Comets-Vargas \cite{CV06} in dimension $1$ and for every $\beta>0$ via computation of fractional moments, in spirit similar to what
we saw in the proof of Theorem \ref{thm:SD}. 
Lacoin \cite{L10} proved that very strong disorder also holds in dimension 2 by 
making use of an upgraded formulation of the fractional moment, which constitutes a method,
now known as {\it the fractional moment method}. 
This method is very powerful, allowing for very precise estimate on free energies and
 other critical point shifts and has found several applications. 
 Recently, it was shown in \cite{JL24a} that strong disorder is equivalent to very strong disorder in any dimension. For this
 result the fractional moment method was combined with martingale tools similar to what were used in Section \ref{sec:momchar}
 and motivated by those objectives.
Outlining the factional moment method is the main objective of this section. 
Before doing so, let us mention the following theorem.
\begin{theorem}
Very strong disorder holds in dimensions $1,2$ for every $\beta>0$. Moreover, the 
following asymptotics holds:
\begin{itemize}
\item[\rm (i)] in dimension $1$, we have that
\begin{align}\label{eq:naka}
\lim_{\beta\to0} \frac{\sff(\beta)}{\beta^4} =-\frac{1}{6}.
\end{align}
\item[\rm (ii)]in dimension $2$, we have that 
\begin{align}\label{eq:BL}
\lim_{\beta\to0} \beta^2 \log \sff(\beta) =-\pi.
\end{align}
\end{itemize} 
\end{theorem}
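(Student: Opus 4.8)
The statement combines a qualitative claim — $\sff(\beta)<0$ for all $\beta>0$ when $d=1,2$ — with the two sharp asymptotics \eqref{eq:naka} and \eqref{eq:BL}, and I would attack them in that order. For the qualitative part the plan is the coarse-grained fractional moment method of Comets--Vargas \cite{CV06} ($d=1$) and Lacoin \cite{L10} ($d=2$). Fix $\gamma\in(0,1)$; the one-step bound behind Theorem~\ref{thm:SD} is useless, since $r(\gamma)=(2d)^{1-\gamma}e^{\lambda(\gamma\beta)-\gamma\lambda(\beta)}\to(2d)^{1-\gamma}>1$ as $\beta\to0$ in $d\le2$. Instead one fixes a large coarse-graining scale $L=L(\beta)$, decomposes $W_{kL}^\beta$ via the linear identity \eqref{linear1} over the sequence of boxes of side $\sqrt L$ occupied by the walk at times $L,2L,\dots,kL$, applies $(\sum_i a_i)^\gamma\le\sum_i a_i^\gamma$, and controls the single-block fractional moment by a change of measure: with $\widetilde\bbP=g\,\bbP$ for an exponential disorder tilt $g$ localized in a space-time box of size $L\times(\sqrt L)^d$ chosen to depress the disorder where the walk concentrates, Hölder gives
\[
\bbE\big[(W_L^\beta)^\gamma\big]\ \le\ \big(\bbE[g^{-\gamma/(1-\gamma)}]\big)^{1-\gamma}\big(\bbE[g\,W_L^\beta]\big)^{\gamma},
\]
and one tunes $g$ so that the entropic cost $(\bbE[g^{-\gamma/(1-\gamma)}])^{1-\gamma}$ stays bounded while $\bbE[g\,W_L^\beta]$ is small — possible precisely once $L$ passes the $L^2$-correlation length, because the self-intersection sum then enters with weight $\lambda_2(\beta)$ (cf.\ \eqref{l2}) times $\asymp\sqrt L$ (in $d=1$), resp.\ $\asymp\log L$ (in $d=2$), expected meetings. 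Absorbing the combinatorial factor counting coarse-grained box sequences into the geometric decay yields $\limsup_n\tfrac1n\log\bbE[(W_n^\beta)^\gamma]<0$, and by concavity of $x\mapsto x^\gamma$ together with the existence of $\sff(\beta)=\lim_n\tfrac1n\bbE[\log W_n^\beta]$, $\sff(\beta)\le\tfrac1\gamma\limsup_n\tfrac1n\log\bbE[(W_n^\beta)^\gamma]<0$.

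For the asymptotics one needs matching two-sided control of $|\sff(\beta)|$ at the right scale, which is read off from the second moment computation \eqref{eq:secondmom}: $\bbE[(W_n^\beta)^2]=1+\sum_{k\ge1}\big(\sigma(\beta)^2\sum_{i\le n}p_{2i}(0)\big)^k$ stays bounded as long as $\sigma(\beta)^2\sum_{i\le n}p_{2i}(0)<1$, which in $d=1$ requires $n\lesssim\beta^{-4}$ and in $d=2$ requires $n\lesssim e^{\pi/\beta^2}$ (using $\sum_{i\le n}p_{2i}(0)\asymp\sqrt n$, resp.\ $\sim\tfrac1\pi\log n$, and $\sigma(\beta)^2\sim\beta^2$). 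For the lower bound on $\sff(\beta)$ — equivalently the upper bound on $|\sff(\beta)|$ — I would take $n_\beta=\varepsilon\beta^{-4}$ in $d=1$, resp.\ $n_\beta=e^{(\pi-\varepsilon)/\beta^2}$ in $d=2$, so that $\bbE[(W_{n_\beta}^\beta)^2]\le C_\varepsilon$; Paley--Zygmund gives $\bbP(W_{n_\beta}^\beta\ge\tfrac12)\ge c_\varepsilon>0$, and since each $\omega_{i,x}$ perturbs $\log W_{n}^\beta$ by at most $\beta\mu_{n}^\beta(S_i=x)$ one has $\var(\log W_{n_\beta}^\beta)\lesssim\beta^2\sum_{i\le n_\beta}p_{2i}(0)=O(1)$; combining these two facts forces $\bbE[\log W_{n_\beta}^\beta]\ge-C_\varepsilon$, and then the superadditivity of $n\mapsto\bbE[\log W_n^\beta]$ (which follows from \eqref{linear1} and Jensen, giving $\sff(\beta)=\sup_n\tfrac1n\bbE[\log W_n^\beta]$) yields $\sff(\beta)\ge-C_\varepsilon/n_\beta$. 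This already gives $|\sff(\beta)|\le C\beta^4$ in $d=1$ and $\beta^2\log|\sff(\beta)|\le-\pi+o(1)$ in $d=2$; for the sharp constant in $d=1$ one reruns this in the full diffusive window $n=\tau\beta^{-4}$, rescaling space by $\beta^{-2}$, so that $\beta^{-4}\sff(\beta)$ gets sandwiched by quantities converging to the free energy per unit time of the continuum $1+1$ directed polymer.

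The matching lower bound on $|\sff(\beta)|$ is the delicate half and is again the coarse-grained fractional moment method, but now the coarse-graining scale $L$ must be pushed to within a factor $1+o(1)$ in $d=1$, resp.\ a factor $e^{o(1/\beta^2)}$ in $d=2$, of the $L^2$-threshold above, which forces the change of measure $g$ to be asymptotically optimal — in $d=2$ a near-critical tilt on a box of side $e^{(1+o(1))\pi/(2\beta^2)}$ (the Berger--Lacoin estimate), and in $d=1$ a tilt whose cost and gain are tracked to leading order. I expect this tight control of the change of measure — and the verification that the upper and lower bounds carry the same leading constant — to be the main obstacle. In $d=2$ no continuum input is needed: the constant $\pi$ enters both the second moment and the change-of-measure estimates only through the logarithmic return-probability asymptotic $\sum_{i\le n}p_{2i}(0)\sim\tfrac1\pi\log n$, so once the machinery is tight the constant $-\pi$ in \eqref{eq:BL} is forced. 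In $d=1$, by contrast, the precise value $-\tfrac16$ in \eqref{eq:naka} must be identified by evaluating the free energy of the limiting continuum polymer — the logarithm of the solution of the stochastic heat equation on $\bbR$ at unit noise strength — via an explicit moment/replica computation, after which one checks that both the refined second moment lower bound and the sharpened fractional moment upper bound converge to this value.
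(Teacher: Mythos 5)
The theorem as stated is not proved in the paper: it is recorded as the culmination of \cite{CV06}, \cite{L10}, \cite{BL17} and \cite{N19}, and what the paper supplies is the outline in Section \ref{sec:frac} of the upper-bound machinery (fractional moments, coarse graining at the correlation length, change of measure via H\"older), together with the one-line description of the lower bounds (``second-to-first moment estimates and concentration'') and of the route to the constant $-1/6$ (intermediate disorder scaling). Your skeleton follows exactly this architecture, with the correct correlation lengths $\beta^{-4}$ and $e^{(1+o(1))\pi/\beta^2}$, the role of $\lambda_2(\beta)$ and of $\sum_{i\le n}p_{2i}(0)$, Paley--Zygmund plus superadditivity for the lower bounds, and a continuum comparison for \eqref{eq:naka}. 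However, three of your steps are genuine gaps rather than routine details.

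First, for \eqref{eq:BL} you describe the change of measure as an exponential tilt depressing the disorder in the visited coarse-grained boxes. A mean-shift tilt of this kind gives in $d=2$ only a free-energy bound of order $e^{-c/\beta^4}$ (this is what \cite{L10} obtained) and falls far short of the sharp threshold $e^{-(1+o(1))\pi/\beta^2}$: as the paper stresses at the end of Section \ref{sec:frac} and in Remark \ref{comparison}, near the critical correlation length the fluctuations of $W_\ell^\beta$ are carried by chaos components of diverging order, so the penalized statistic $\sfX$ must itself be a multilinear chaos functional of growing degree ($k\asymp(\log\log\ell)^2$ in \cite{BL17}, see \eqref{Xchoice}); the first-order (mean-shift) choice suffices only in $d=1$. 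Second, in your lower bound the estimate $\bbvar(\log W_{n_\beta}^{\beta})\lesssim \beta^2\sum_{i\le n_\beta}p_{2i}(0)$ silently replaces the disorder average of the quenched overlap $\sum_x \mu_{n}^{\beta}(S_i=x)^2$ by the free overlap $p_{2i}(0)$; this is a size-biased quantity involving negative moments of $W_n^\beta$, it is not controlled by boundedness of the second moment alone, and the published proofs avoid it (second moments of restricted/truncated partition functions, left-tail control of $\log W_n^\beta$ by block concatenation), so this step as written is unjustified even though the Paley--Zygmund-plus-superadditivity logic around it is sound. Third, for \eqref{eq:naka} a ``moment/replica computation'' cannot identify the continuum free energy, because the moments of the stochastic heat equation grow too fast to determine $\bbE[\log u]$, and the fractional moment method does not produce the constant $1/6$; the identification in \cite{N19} goes through intermediate disorder scaling, and the real difficulty --- untouched in your sketch --- is the interchange of the limits $\beta\to0$ and $n\to\infty$ needed to sandwich $\beta^{-4}\sff(\beta)$ between quantities converging to the long-time free energy of the continuum polymer, whose value itself requires exactly solvable input rather than a replica argument.
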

The above theorem is the summary and culmination of several works concerning dimensions $1,2$. Lacoin \cite{L10}
first proved very strong disorder in both dimensions $1$ and $2$ and obtained upper
bounds on the free energy of order $O(\beta^4)$ and $O(e^{-c/\beta^4})$, in $d=1,2$, respectively, using (and developing) the fractional moment method. Lower bounds were also obtained in \cite{L10}. 
In $d=1$ and Gaussian disorder the lower bound was
of the form $O(\beta^4)$ and for $d=2$ and general disorder it was of the form $O(e^{-c/\beta^2})$.
The lower bounds were obtained using the so-called second-to-first moment estimates and concentration.
 The very precise asymptotic \eqref{eq:naka} was obtained by Nakashima in \cite{N19}
 making use of another method, called the intermediate disorder scaling, which we will
 discuss in Section \ref{sec:interm}. Asymptotic \eqref{eq:BL} was obtained by Berger-Lacoin
 in \cite{BL17}, developing further the fractional moment method for the upper bounds
 and performing sharp second-to-first moment estimates for the lower bound. 
\cite{BL17} refined in the directed polymer setting
 previous work of the authors \cite{BL16} on the study of the critical point shift of another
 important polymer model, called the {\it disordered pinning model}. 
  It is worth mentioning that the framework of the fractional moment method
  was actually born out of the attempt to study the critical points of the disordered pinning model, 
 in a series of works which started from \cite{T08, DGLT09, GLT11}.
  
In the following, we first outline the more standard fractional moment framework and afterwards we discuss the recent variation of Junk-Lacoin
\cite{JL24a}, which was used in order to establish Theorem \ref{thm:JL}, that strong and very strong disorder are equivalent in any dimension. 

{\bf Outline of the fractional moment method}.
The pillars of the fractional moment method are the following:
\begin{itemize}
\item[{\bf A.}] Estimate fractional moments.
\item[{\bf B.}] Perform coarse graining by identifying a proper correlation length.
\item[{\bf C.}] Change of measure.
\end{itemize}
Let us remark that the change of measure in Step C will share features with the 
{\it path tilted measures} we have seen in the discussion of the Random Walk Pinning Model
in Section \ref{sec:RWpin} as well as some crucial features with 
the material that we will discuss in Section \ref{sec:interm}. 

The discussion that follows is restricted to dimensions $1$ and $2$.
\vskip 2mm
Let us start with the following elementary computation:
\begin{align}\label{freegamma}
\sff(\beta)
= \lim_{n\to\infty} \frac{1}{n} \log \bbE \log W_n^\beta 
= \lim_{n\to\infty} \frac{1}{n \gamma} \log \bbE \log (W_n^\beta)^\gamma 
\leq  \lim_{n\to\infty} \frac{1}{n \gamma} \log \ \bbE(W_n^\beta)^\gamma ,
\end{align}
where first we trivially inserted a fractional power $\gamma\in (0,1)$ before applying Jensen's inequality.
The first crucial idea is to split, in a sense, the length $n$ of the polymer into 
{\it correlations lengths}. In other words, we will need to identify a particular length scale $\ell=\ell_{\rm corr.}(\beta)$
and split $n=m\ell$ where $m$ will be the number of correlation lengths inside horizon $n$.
The point then will be to identify the correlation length $\ell_{\rm corr.}$ so that (roughly)
$\bbE[(W_\ell^\beta)^\gamma] <e^{-c}$, for some $c>0$ and, therefore (roughly, again)
$\bbE[(W_n^\beta)^\gamma] \leq e^{- c m }$, which would then imply via \eqref{freegamma}
that
\begin{align}\label{frac_result}
\sff(\beta) \leq -\frac{c}{\gamma \, \ell_{\rm corr.}(\beta)}.
\end{align}
Identifying the correlation length is important in order to obtain a sharp estimate on the free energy.
In $d=1$ the correlation length turns out to be of order $\beta^{-4}$, while in dimension $d=2$ it is of order $e^{-\pi/\beta^2}$ \cite{L10, BL17}. 

Estimating a fractional moment $\gamma$ for $\gamma <1$ offers, on the one hand,
a more convenient route than estimating the expected value of the logarithm. Moreover, fractional moments
provide sharper estimates as compared to moments of order $p\geq 1$ as the latter are dominated by the atypical 
high values of the partition function $W_n^\beta$. 
The idea that allows to discount atypical fluctuations in the disorder is to 
{\it change the law} of the disorder at locations visited by the polymer path. This change of measure will
also facilitate estimating the fractional moment as then it will be the right moment to make use of H\"older inequality
(any attempt to use H\"older before a suitable change of measure is doomed to fail). However, we also do not want to change the measure
along all paths since this will have a high cost and here comes the idea of {\it coarse graining}, which is to change the
measure at only macroscopic blocks visited by the path. Let us start going through the details of this strategy.
\vskip 2mm
For $y\in \Z^d$ (for the considerations here $d$ will be either $1$ or $2$) let us define the spatial coarse-grained blocks
\begin{align*}
\Lambda_y:= y\sqrt{\ell} +\Big(-\frac{1}{2}\sqrt{\ell}, \frac{1}{2}\sqrt{\ell} \,\Big]^d
\qquad \text{for $y\in \bbZ^d$}.
\end{align*}
The scale of the coarse-graining is diffusive in the correlation length $\ell=\ell_{\rm corr.}$ (to keep notation simpler
we will drop the ``corr.'' and just use $\ell$). 
Given a vector $\cY:=(y_1,...,y_m)\in (\bbZ^d)^m$ define the event $\cE_\cY$ on the random walk trajectory $S$:
\begin{align*}
\cE_\cY:=\{ S_{i\ell} \in \Lambda_{y_i}, \text{for all $i=1,...m$}\}
\end{align*}
In other words, $\cE_\cY$ is the event that the trajectory of the random walk crosses boxes $\Lambda_{y_i}$ 
at times $i\ell$ for $i=1,...,m$. Let us decompose
\begin{align}\label{CGW}
W_n^\beta = \sum_{\cY\in (\bbZ^d)^m} \E \big[ e^{H^\omega_n(S)} \, \ind_{\{S\in \cE_\cY\}} \big] =: \sum_{\cY\in (\bbZ^d)^m}W^\beta_{n, \cY}
\end{align}
Using the inequality $(\sum_i a_i)^\gamma  \leq \sum_i a_i^\gamma$ for $\gamma \in (0,1)$ and $(a_i)_{i\geq 1}$ non-negative numbers, 
we have 
\begin{align}\label{fracest0}
\bbE [ (W_n^\beta)^\gamma ]  \leq \sum_{\cY\in (\bbZ^d)^m} \bbE[ (W^\beta_{n,\cY})^\gamma ]
\end{align}
The use of the elementary fractional inequality is the first technical advantage of working with powers  $\gamma\in (0,1)$ and we have already seen this in the proof of Theorem \ref{thm:SD}. 
Let us now define the space-time coarse grained blocks over which we will perform the
change of measure. For $R>0$, which will be taken large, we define
\begin{align*}
\tilde\Lambda_y&:= y\sqrt{\ell} +(-R\sqrt{\ell}, R\sqrt{\ell} \,]\cap \bbZ^d \quad 
\text{and} \quad \sfB_{(n,x)}:= [(n-1)\ell, n\ell\,] \times \tilde\Lambda_y
\end{align*}
The choice of $R$ large enough is in order to guarantee that, once the block is visited, the path will keep
moving, with high probability, within this block where disorder will be modified.

Next, come the details of how the change of measure is performed.
We will need to choose a random variable $\sfX(\omega) = \sfX(\{\omega_{n,x}\}_{(n,x)\in \sfB_{(1,0)}})$,
which depends locally only on the disorder inside box $\sfB_{(1,0)}$. The random variable 
$\sfX$ will be chosen so that it has
mean zero and $\bbE[\sfX^2] \leq 1$ but the most important feature will be to capture the correlation structure of the interaction 
of the polymer path with the environment, which leads to atypical fluctuations. 
We will come to this point and the precise choice of $\sfX$ later.  For the moment, let us also introduce 
the functions $\sfX_{i,y}:=\sfX\circ \theta_{(i \ell, y\sqrt\ell)}$, where $\theta$ is the shift operator,
 as well as the functions
\begin{align}\label{g}
g_{i,y}(\omega) = e^{- K \, \ind_{\{{\sfX}_{i,y}(\omega) \geq e^{K^2}\}}} \quad \text{and} \quad
g_\cY(\omega):=\prod_{i=0}^m g_{i,y_i}(\omega) 
 \end{align}
 where in the left $y\in \bbZ^d$ is a generic lattice point and $y_1,...,y_m$ in the right are the elements of the 
 vector $\cY$. The constant $K>0$ will be chosen large so that to penalise atypically large configurations $\sfX$.
 We insert the function $g_{\cY}(\omega)$ in the expectation $\bbE[(W_n^\beta)^\gamma]$ and 
 now is the time to use H\"older to obtain
\begin{align}\label{fracHolder}
\bbE \big[ (W_n^\beta)^\gamma \big] 
&= \bbE \big[ (W_n^\beta)^\gamma \, g_\cY(\omega)^\gamma \, g_{\cY}(\omega)^{-\gamma} \big] 
\leq  \bbE \big[ W_n^\beta \, g_\cY(\omega) \big]^\gamma 
   \, \bbE\big[ g_{\cY}(\omega)^{-\gamma/ (1-\gamma)} \big]^{1-\gamma} 
\end{align}
Using that $\sfX$ is chosen to satisfy $\bbE[\sfX^2]\leq 1$, we have via Chebyshev that
\begin{align}\label{RNbound}
\bbE\big[ g_{\cY}(\omega)^{-\gamma/ (1-\gamma)} \big] 
&\leq 1 +e^{\tfrac{\gamma}{1-\gamma}K} \bbP(\sfX\geq e^{K^2})  \notag \\
&\leq 1 +e^{\tfrac{\gamma}{1-\gamma}K - 2K^2} \leq 2 ,
\end{align}
where the last inequality is obtained by choosing $K$ large enough. 

Let us prepare for the more subtle analysis of the first term in \eqref{fracHolder}.
We start by rewriting it as
\begin{align}\label{change_est1}
 \bbE \big[ W_n^\beta \, g_\cY(\omega) \big]
 = \E \Big[ \bbE \big[ e^{H_n^\beta(S)} \, g_\cY(\omega) \big] \, \ind_{\cE_\cY} \,\Big]
 = \E \Big[ \prod_{i=1}^m \bbE_S \big[ \, g_{i,y_i}(\omega) \big] \,\big] \, \ind_{\cE_\cY} \,\Big],
\end{align}
where in the last we use again the measure $\bbP_S$, which tilts $\bbP$ by the factor $e^{H_n^\beta(S)}$ as defined in \eqref{sizebiasO}. 
We would now like to bring the product outside the expectation. This cannot be done as an identity
since the factors have Markovian dependence. However, we can make use of the Markov property of  $S$ at the entrance
points of block $\Lambda_{y_i}$ and dominate by considering the maximum over these entrance points. In this
way, we can obtain the upper bound
\begin{align}
&\bbE_S \big[ \, g_{i,y_i}(\omega) \big] \,\big] \, \ind_{\cE_\cY} \,\Big] \\
& \leq
 \prod_{i=1}^m \max_{x\in \Lambda_{y_{i-1}}} \E\Big[ \bbE_S\big[ g_{i,y_i}(\omega) \,\big] \ind_{S_{i\ell} \,\in \, \Lambda_{y_{i}}} 
\,\big| \, S_{(i-1)\ell} = x\Big]
=\Big( \max_{x\in \Lambda_{y_{0}}} \E_x\Big[ \bbE_S\big[ g_{0,0}(\omega) 
\,\big] \ind_{\{S_{\ell} \,\in \, \Lambda_{y_{1}}\}} 
\Big] \Big)^m, \notag
\end{align}
where in the last equality we used the shift invariance of $g_{i,y}(\omega)$.
Inserting bounds \eqref{change_est1} and \eqref{RNbound} into \eqref{fracHolder} and then into 
\eqref{fracest0} we obtain
\begin{align}\label{frac_main_est}
\bbE [ (W_n^\beta)^\gamma ]  \leq
\Big( 2^{1-\gamma}
\sum_{y\in \bbZ^2} \max_{x\in \Lambda_{y_{0}}} \E_x\Big[ \bbE_S\big[ g_{0,0}(\omega) \,\big] 
\ind_{\{S_{\ell} \,\in \, \Lambda_{y}\}}  \Big]^\gamma \Big)^m
\end{align}
it only remains to show that, when $\ell$ is suitably chosen, there is a constant $c>0$
\begin{align}\label{frac_desire2}
2^{1-\gamma}
\sum_{y\in \bbZ^2} \max_{x\in \Lambda_{y_{0}}} \E_x\Big[ \bbE_S\big[ g_{0,0}(\omega) \,\big] 
\ind_{\{S_{\ell} \,\in \, \Lambda_{y}\}}  \Big]^\gamma  
\leq e^{-c}.
\end{align}
which then implies the desired estimate \eqref{frac_result}.

The main estimate \eqref{frac_desire2} is obtained in the following way.
First, since $g\leq 1$ we have that
\begin{align*}
\sum_{|y|> A} \max_{x\in \Lambda_{y_{0}}} \E_x\Big[ \bbE_S\big[ g_{0,0}(\omega) \,\big]
\ind_{\{S_{\ell} \,\in \, \Lambda_{y}\}} \Big]
\leq \sum_{|y|> A} \max_{x\in \Lambda_{y_{0}}} \P_x\big(  S_{\ell} \,\in \, \Lambda_{y}  \big)
\leq  \sum_{|y|> A}  e^{-c|y|^2},
\end{align*}
by standard random walk estimates.
The latter can be made small by choosing $A$ large enough. 
Choose such an $A$ and fix it and then proceed with bounding
the remainder sum. Towards this task, we now drop the indicator $\ind_{\{S_{\ell} \,\in \, \Lambda_{y}\}}$
(as it does not offer any further help) and so 
\begin{align*}
\sum_{|y|\leq A} \max_{x\in \Lambda_{y_{0}}} \E_x\Big[ \bbE_S\big[ g_{0,0}(\omega) \,\big] \ind_{\{S_{\ell} \,\in \, \Lambda_{y}\}} \Big]
\leq A^d \max_{x\in \Lambda_{y_{0}}} \E_x\Big[ \bbE_S\big[ g_{0,0}(\omega) \,\big]  \Big].
\end{align*}
It turns out that with high probability with respect to the random walk $S$, the function $\sfX(\omega)$ has high expectation and high concentration with respect to $\bbP_S$. In other words, it turns out that for any $\delta>0$ and $R>0$, there exists $\beta_0(\delta, K)$ such that for $\beta\leq \beta_0$, uniformly in 
$x\in \Lambda_{y_0}$, we have
\begin{align}\label{Xest}
\P_x\big( \bbE_S[\sfX] >2e^{K^2} \big) \geq 1-\delta, \quad \text{and} \quad \P_x\big( \bbV ar_S[\sfX] \ll e^{K^2} \big) \geq 1-\delta.
\end{align}
These conditions would then imply that with high probability with respect to the random and uniformly 
over its starting point $x\in \Lambda_{y_0}$, we have
\begin{align*}
\bbE_S[g_{0,0}(\omega)] 
&\leq e^{-K} + \bbP_S\big( \sfX(\omega) \leq e^{K^2}\big) \\
&\leq e^{-K} + \bbP_S\big( \sfX(\omega) -\bbE_S[\sfX ]\leq -e^{K^2}\big) \\
&\leq e^{-K} + \bbP_S\big( \big|\, \sfX(\omega) -\bbE_S[\sfX ] \,| \geq e^{K^2}\big) \\
&= e^{-K} + e^{-K^2} \, \bbV ar_S[\sfX]
\end{align*}
and the above can be made arbitrarily small by choosing $K$ large enough. We now remain with the most crucial 
task, which is how to choose $\sfX$ in such a way that requirements $\bbE[\sfX]=0$, 
$\bbE[\sfX^2]\leq 1$ and most crucially \eqref{Xest} are satisfied.
\vskip 2mm
{\bf Choice of $\sfX$.} As we mentioned, the driving idea is to manage to discount configurations of the disorder $\omega$
which cause atypical fluctuations in the partition function $W_n^\beta$. This is implemented via the function
$g_\cY =\prod_{i=1}^m \exp\big(-K \ind_{\{ \sfX_{i,y_i} \geq e^{K^2}\} } \big)$ and the computation of 
$\bbE\big[ W_n^\beta g_\cY(\omega) \big]$ in \eqref{change_est1}.

Bearing in mind the coarse-graining that we performed above, it would seem natural to choose $\sfX$ to actually be
the partition function $W^\beta_\ell$ over a single correlation length $\ell$, itself. Then the function $g_\cY$ would
penalise by a factor $e^{-K}$ the expectation $\bbE\big[ W_n^\beta g_\cY(\omega) \big]$  
each time the partition $W^\beta_\ell \circ\theta_{(i\ell,y_i\sqrt{\ell})}$ takes exceptionally high values. 
However, for such a choice, estimates \eqref{Xest} are not easy. What has proven useful is to consider, instead,
an approximation of $W^\beta_\ell$ that carries substantial contributions to its total fluctuations. The suitable functional 
has proven to be of the form
\begin{align}
\sfX(\omega)
:=&\frac{1}{\sqrt{{\rm Vol}(B_{1,0})} \,  \sfR_{\ell}^{k/2}}
\sum_{\substack{x_0,x_1,...,x_k \in \tilde \Lambda_0 \label{Xchoice}\\ 
1\leq n_0< n_1<\cdots < n_k \leq \ell}}
\prod_{j=1}^k  q_{n_i-n_{i-1}}(x_i-x_{i-1}) \, \omega_{n_i,x_i}, \\
&\text{with} \quad \sfR_\ell:=\sum_{i=1}^\ell \sum_{x\in \bbZ^d} q_i(x)^2 
= \sum_{i=1}^\ell \E^{S,S'}\big[ \ind_{\{S_i=S_i'\}}\big] \label{overlap}
\end{align}
and $\text{Vol}(B_{1,0})$ denoting the Euclidean volume of $B_{1,0}$. $\sfR_\ell$ is the expected collision time of 
two independent simple random walks in a time horizon $\ell$, which by the local limit theorem is known to have the
asymptotic behaviour $\sfR_\ell \sim \sqrt{2\ell/\pi} $ is $d=1$ and  $\sfR_\ell \sim \log \ell/\pi$ in $d=2$.
\vskip 2mm
Let us remark that the functional $\sfX$ as defined in \eqref{Xchoice} is (essentially) the $k^{th}$ term in the so called {\it chaos expansion}
of the partition function, suitably normalised so that it satisfies $\bbE[\sfX^2]\leq 1$. We will discuss more about 
chaos expansion and its significance in the analysis of the partition function in Section
 \ref{sec4methods} (see Remark \ref{comparison}) but, for the moment,
one can essentially think of $\sfX$ as the expectation with respect to $\E$ of $k^{th}$ term in the Taylor expansion 
of the exponential 
\begin{align*}
e^{\sum_{i=1}^\ell (\beta \omega_{i,S_i}-\lambda(\beta)) }
= e^{\sum_{i=1}^\ell \sum_{x\in \bbZ^d}(\beta \omega_{i,x}-\lambda(\beta)) \, \ind_{\{S_i=x\}}}.
\end{align*}
normalised by $\sqrt{{\rm Vol}(B_{1,0})} \,  \sfR_{\ell}^{k/2}$.
It turns out \cite{L10} that in dimension one, $\sfX$ carries a significant portion of the fluctuations of $W^\beta_\ell$ even for $k=1$, which is eventually a sufficient choice. 
However, in dimension two, which is the {\it critical dimension}, the main contribution is carried by terms that correspond to $k$ that grows with $\ell$. We will also discuss this point in Remark \ref{comparison} in the next section.
 For a length scale 
$\ell=\ell(\beta)= \exp\big( \tfrac{1+\epsilon}{\pi} \beta^{-2}\big)$, which is near the critical correlation length, corresponding to 
$\epsilon=0$, it turns out to suffice to take $k=k_\ell:=(\log\log \ell)^2$ and this choice was made in \cite{BL17}. 
 To facilitate the estimates, Berger-Lacoin \cite{BL17} chose, moreover, a tailored version of $\sfX$.
 We denote it by $\sfX_{\rm BL}$ and for 
$\ell_\epsilon:=\ell^{1-\epsilon^2}$ it takes the form 
\begin{align*}
\sfX_{\rm BL}(\omega)
&:=\frac{1}{\sqrt{{\rm Vol}(B_{1,0})} \,  \sfR_{\ell_{\epsilon}}^{k/2}}
\sum_{\substack{x_0,x_1,...,x_k \in \tilde \Lambda_0 \\ 
1\leq n_0< n_1<\cdots < n_k \leq \ell_\epsilon \\ n_j-n_{j-1}\leq \ell_{\epsilon} \,,\, j=1,...,k}}
\prod_{j=1}^k \hat q_{n_i-n_{i-1}}(x_i-x_{i-1}) \, \omega_{n_i,x_i}, \\
&\text{with $\hat q_{n}(x) :=q_{n}(x) \ind_{\{|x|\leq \rho(t)} $ and with $\rho(t):=\min\big( \tfrac{t}{2}, (\log t) \sqrt{t}\big)$}.
\end{align*}

The fact that with high probability with respect to $S$, the mean $\bbE_S[\sfX_{\rm BL}]$ is high, is a consequence of the fact that
choosing $R$ large enough in the definition of $\tilde \Lambda_y$, we guarantee that with high probability the random walk path
$S$, which lands in $\tilde \Lambda_y$ will stay there for the whole time $\ell$. Thus, it will be moving inside a tilted 
disorder environment, leading to a high value of $\bbE_S[\sfX_{\rm BL}]$. The computations are subtle
and we refer to \cite{BL17} for the details.
\vskip 2mm
{\bf Outline of the Junk-Lacoin variation \cite{JL24a}.}  The starting point in this variation is also to find a scale $n$ such that
a fractional moment of $W_n^\beta$ is small. In \cite{JL24a} they require $\bbE\big[\sqrt{W_n^\beta}\big]< (2n+1)^{-d}$. 
A main difference with what we outlined above is at the change of measure required to obtain this estimate. Here the change of measure
is via the partition function $W_n^\beta$ itself instead of a chaos approximation proxy. In particular, they consider the size-biased change of measure
$\tilde\bbP(\dd\omega) = W_n^\beta \dd \bbP(\dd \omega)$. Lemma 3.2 in \cite{JL24a} shows that for any event $A$ of the disorder $\omega$, it holds that
\begin{align*}
\bbE\Big[\sqrt{W_N^\beta}\Big] \leq \sqrt{\bbP(A)} +\sqrt{\tilde \bbP(A^c)}.
\end{align*}
The task then is reduced to finding an event $A$ which has small probability with respect to $\bbP$ but high probability with respect
to $\tilde\bbP$. The intuition towards this is something we already discussed at the beginning of the previous change-of-measure outline,
which is to exploit events where the partition function has atypically large fluctuations. The event chosen in \cite{JL24b} is (we use slightly 
different notation)
\begin{align*}
A_{n,s}:=\Big\{ \exists \, (m,y)\in \{1,...,n\}\times\{-n,...,n\}^d  \colon \theta_{m,y} W_s^\beta \geq n^{4d}  \Big\},
\end{align*}
where $\theta$ is the shift operator i.e. $\big(\theta_{m,y}\,  \omega \big)_{n,x} = \omega_{n+m,x+y}$. It is then proved in Proposition 4.1 
in \cite{JL24a} that 
\begin{align*}
&\text{there exist $C>0, n_0\in \N$, such that $\forall n\geq n_0$ there exists $s=s_n\in [0, C\log n]$ for which} \\
& \hskip 3.5cm \bbP(A_{n,s})\leq \frac{1}{8(2n+1)^d}  \quad\text{and}\quad  \bbP(A_{n,s}^c)\leq \frac{1}{8(2n+1)^d}
\end{align*}
A further novelty in \cite{JL24a} is the approach to establishing the existence of this event and its probability estimates. This uses
martingale, stochastic calculus techniques and overshoot probability estimates. We refer directly to \cite{JL24a} for details.

\section{Intermediate disorder and phase transition in dimension $2$}\label{sec:interm}
\subsection{The general framework.}
We have so far seen that in dimensions $1$ and $2$, for every $\beta>0$, strong disorder
takes place, which naturally suggests the absence of a weak disorder regime in these dimensions. However, this is not the full picture. As we will discuss in this section,
in dimension $2$ a weak disorder phase can be exhibited but in order to observe it one
has to zoom into the critical temperature $\beta_c=0$ in a very particular fashion,
which is named {\it intermediate disorder regime}. The precise zooming is related to the
choice of $\beta$ as a function of the polymer length $n$ as
\begin{align}\label{def:betan} 
\beta_n:=\frac{\hat\beta }{ \sqrt{\sfR_n}},\qquad \text{with}\qquad 
\sfR_n:=\sum_{i=1}^n\sum_{x\in\bbZ^2} q_i(x)^2 =\frac{\log n}{\pi}(1+o(1)).
\end{align}
We note that $\sfR_n=\E[\sum_{i=1}^n \ind_{\{S_i=S'_i\}}]$ and, thus, it represents the
expected number of collisions of two independent simple random walks. A quantity
of this sort plays an important role in other disordered system models and often
goes by the name {\it ``replica overlap''}. The fact that the replica overlap grows like 
a slowly varying function is crucial towards seeing a weak-to-strong disorder phase transition.
The phase transition happens when changing the parameter $\hat\beta$. This phenomenon
was first observed in \cite{CSZ17b} and is presented in the following theorem:
\begin{theorem}\label{2dsubcritical}
Let $d=2$ and $W_n^{\beta_n}$ be the partition function of the $2d$ directed polymer at intermediate 
disorder regime with $\beta_n=\hat\beta / \sqrt{\sfR_n}$ as in \eqref{def:betan}.
 We then have
 \begin{equation} \label{conv-subZ}
	W_{n}^{\beta_n}\xrightarrow[\,n\to\infty\,]{d} 
	\begin{cases}
	\exp\big( \sigma_{\hat\beta} \, \cN-
	\frac{1}{2} \sigma_{\hat\beta}^2
	\,\big)
	& \text{if } \hat \beta < 1 \\
	0 & \text{if } \hat\beta \ge 1
	\end{cases} \,.
\end{equation}
where the convergence is in distribution and $\cN$ is a standard normal variable and $ \sigma_{\hat\beta}^2  = \log (1-\hat\gb^2)^{-1}$.
\end{theorem}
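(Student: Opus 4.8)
The natural first move is the \emph{polynomial chaos expansion} of the partition function. Writing $e^{\beta_n\omega_{i,x}-\lambda(\beta_n)}=1+\sigma_n\eta_{i,x}$ with $\sigma_n^2:=e^{\lambda(2\beta_n)-2\lambda(\beta_n)}-1$ and $(\eta_{i,x})$ i.i.d.\ centred with unit variance, and multiplying out the product in \eqref{defW}, one obtains
\begin{align*}
W_n^{\beta_n}=\sum_{k=0}^{n}\sigma_n^{k}\sumtwo{0<i_1<\dots<i_k\le n}{x_1,\dots,x_k\in\bbZ^2}q_{i_1}(x_1)\,q_{i_2-i_1}(x_2-x_1)\cdots q_{i_k-i_{k-1}}(x_k-x_{k-1})\prod_{j=1}^{k}\eta_{i_j,x_j}=:\sum_{k\ge0}\Theta_{n,k}.
\end{align*}
By a Lindeberg-type universality principle for low-degree polynomial chaos it is enough, for the distributional statement, to treat Gaussian disorder, so that the $\Theta_{n,k}$ live in the $k$-th Wiener chaos and are $L^2(\bbP)$-orthogonal; observe also that $\sigma_n^2\sim\beta_n^2=\hat\beta^2/\sfR_n$ uses only $\bbE[\omega]=0$, $\bbV ar(\omega)=1$, which is the source of universality.

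Next I would carry out the \emph{second-moment bookkeeping}. Chapman--Kolmogorov gives $\bbE[\Theta_{n,k}^2]=\sigma_n^{2k}\sum_{0<i_1<\dots<i_k\le n}\prod_{j=1}^{k}q_{2(i_j-i_{j-1})}(0)$ with $i_0:=0$, and since $\sfR_n=\sum_{i\le n}q_{2i}(0)\sim\tfrac1\pi\log n$ is slowly varying one checks that the constrained $k$-fold sum is $\sim\sfR_n^{\,k}$, whence $\bbE[\Theta_{n,k}^2]\to\hat\beta^{2k}$ and
\begin{align*}
\bbE\big[(W_n^{\beta_n})^2\big]\xrightarrow[n\to\infty]{}\sum_{k\ge0}\hat\beta^{2k}=\frac1{1-\hat\beta^2}\ \ (\hat\beta<1),\qquad \bbE\big[(W_n^{\beta_n})^2\big]\to\infty\ \ (\hat\beta\ge1).
\end{align*}
For $\hat\beta<1$ this yields $L^2$-boundedness, hence tightness of $(W_n^{\beta_n})$, and together with $\bbE[W_n^{\beta_n}]=1$ it already matches the first two moments of the conjectured limit, since $e^{\sigma_{\hat\beta}^2}=(1-\hat\beta^2)^{-1}$. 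The regime $\hat\beta\ge1$ I would dispatch separately: there $n$ has reached ($\hat\beta=1$) or exceeded ($\hat\beta>1$) the model's correlation length, so $W_n^{\beta_n}\to0$ is expected; by the concavity part of Proposition~\ref{prop:mon} the map $\hat\beta\mapsto\bbE[(W_n^{\beta_n})^\gamma]$ is non-increasing for every $\gamma\in(0,1)$, reducing everything to $\hat\beta=1$, where a fractional-moment estimate in the spirit of Section~\ref{sec:frac} — but with the coarse-graining scale taken comparable to $n$ itself — gives $\bbE[(W_n^{\beta_n})^\gamma]\to0$ and hence $W_n^{\beta_n}\to0$ in probability.

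For $\hat\beta<1$ it remains to identify the limit of the tight sequence. Since $\|W_n^{\beta_n}-\sum_{k\le K}\Theta_{n,k}\|_2^2=\sum_{k>K}\bbE[\Theta_{n,k}^2]\to\sum_{k>K}\hat\beta^{2k}$, which vanishes as $K\to\infty$ uniformly in $n$, the problem reduces to the convergence in distribution of the fixed-degree chaos $\sum_{k\le K}\Theta_{n,k}$. Here the plan is a \emph{multi-replica moment analysis}: for integers $h\ge1$,
\begin{align*}
\bbE\big[(W_n^{\beta_n})^{h}\big]=\bbE^{S^{(1)},\dots,S^{(h)}}\Big[\prod_{1\le a<b\le h}e^{\sigma_n^2\sum_{i\le n}\ind_{\{S^{(a)}_i=S^{(b)}_i\}}}\Big],
\end{align*}
and the heart of the matter is that, in the marginal scaling, the pairwise collision local times at different logarithmic time-scales \emph{asymptotically decouple} — configurations where three or more replicas meet, or where the collisions of distinct pairs interleave across scales, are of lower order — so that each of the $\binom h2$ pairs contributes an asymptotically independent factor $\sum_{k\ge0}\hat\beta^{2k}=(1-\hat\beta^2)^{-1}$. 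This gives $\bbE[(W_n^{\beta_n})^h]\to(1-\hat\beta^2)^{-\binom h2}$, precisely the moments of $\exp(\sigma_{\hat\beta}\cN-\tfrac12\sigma_{\hat\beta}^2)$ with $\sigma_{\hat\beta}^2=\log(1-\hat\beta^2)^{-1}$. Since the log-normal is moment-indeterminate, I would then pin down the subsequential limits by an independent argument — e.g.\ a CLT for $\log W_n^{\beta_n}$ obtained by telescoping along a geometric partition of the time-scale, writing $\log(W_{n_j}/W_{n_{j-1}})$ as a martingale increment minus an It\^o-type correction whose conditional brackets sum to $\sigma_{\hat\beta}^2$ and corrections to $-\tfrac12\sigma_{\hat\beta}^2$, or by matching with the known log-normal limit of the subcritical mollified $2d$ stochastic heat equation.

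The main obstacle is exactly this decoupling: one must show that, although $\sfR_n$ is spread over all logarithmic scales so that the replicas interact at every scale, only the non-interleaved ``ladder'' collision patterns survive the limit — these resum, pair by pair, into the geometric series $(1-\hat\beta^2)^{-1}$ — while every pattern that would create genuine higher-order chaos, and thereby block the resummation into an exponential, is asymptotically negligible, uniformly in $n$ and with enough quantitative control to upgrade the moment (or coarse-block) asymptotics to convergence in distribution. The critical value $\hat\beta=1$ in the dichotomy step is the other delicate point, since there the $L^2$ method fails and one is forced to run the fractional-moment machinery at the borderline scale where it still applies.
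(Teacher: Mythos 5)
Your setup (chaos expansion, the second--moment bookkeeping giving $\bbE[\Theta_{n,k}^2]\to\hat\beta^{2k}$, and the $L^2$ reduction to a bounded chaos degree for $\hat\beta<1$) is sound and matches the paper's starting point, but both of your concluding steps have genuine gaps. For $\hat\beta<1$ you rely on a multi-replica moment analysis: the limiting moments $\bbE[(W_n^{\beta_n})^h]\to(1-\hat\beta^2)^{-\binom h2}$ are indeed correct (this is the content of \cite{CZ21,LZ23}), but, as you yourself note, the log-normal law is not determined by its moments, so this cannot close the argument; the repair you gesture at (a CLT for $\log W_n^{\beta_n}$ by telescoping over a geometric time partition, or ``matching'' with the mollified $2d$ SHE) is not a missing detail but an entire alternative proof --- it is essentially the route of Caravenna--Cottini \cite{CC22} --- and nothing in your sketch controls the It\^o-type corrections or the conditional brackets. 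The paper never touches moments of order larger than $2$ of $W_n^{\beta_n}$: it exploits the separation of logarithmic time scales ($n^{t}$, not $tn$), coarse-grains the time indices into blocks $(n^{(i-1)/M},n^{i/M}]$, factorizes each chaos term along its running maxima, applies the Fourth Moment Theorem \ref{thm:fourth} to each block $\Theta^{n,M}_{\boldsymbol i^{(j)}}$ and the Lindeberg principle \ref{thm:Lind} to replace the blocks by independent Gaussians, and then resums the resulting series explicitly into a Wick exponential of a Wiener integral, which simultaneously identifies $\sigma_{\hat\beta}^2=\log(1-\hat\beta^2)^{-1}$ and yields convergence in distribution.

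The second gap is at $\hat\beta\ge1$. Reducing to $\hat\beta=1$ by monotonicity of the fractional moments is fine, but your proposed direct fractional-moment estimate ``with coarse-graining scale comparable to $n$'' is exactly the regime where the machinery of Section \ref{sec:frac} is not available: the correlation length $\exp\big(\tfrac{1+\epsilon}{\pi}\beta^{-2}\big)$ used in \cite{L10,BL17} exceeds $n$ when $\beta=\beta_n$ sits in the critical window, and no change-of-measure argument at exact criticality is known along these lines, so this step would not go through as stated. The paper avoids it entirely with a soft argument that you are missing: since $\bbE[W_n^{\beta_n}]=1$, the variables $(W_n^{\beta_n})^\gamma$ are bounded in $L^{1/\gamma}$ and hence uniformly integrable, so the already-established log-normal limit for any $\hat\beta'<1$ gives $\lim_n\bbE[(W_n^{\beta_n(1)})^\gamma]\le\lim_n\bbE[(W_n^{\beta_n(\hat\beta')})^\gamma]=e^{\frac{\gamma(\gamma-1)}{2}\sigma^2_{\hat\beta'}}$ by Proposition \ref{prop:mon}, and letting $\hat\beta'\uparrow1$ this bound tends to $0$ because $\sigma^2_{\hat\beta'}\to\infty$; the case $\hat\beta\ge1$ then follows from monotonicity. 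In short, the supercritical statement is a corollary of the subcritical one, not a separate hard estimate.
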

The transition between weak and strong disorder phase in the intermediate disorder regime happen at the
critical value $\hat\beta_c=1$. Below this, $W_n^{\beta_n}$ converges in distribution to a
strictly positive random variable, while at and above it to $0$. A difference between the
weak and strong disorder, as this was defined earlier is that the limit  $W_n^{\beta_n}$, is
{\it in distribution} while earlier we were talking about $a.s.$ limits. 
Another very important difference between weak-to-strong disorder transition in $d\geq 3$
and the transition observed in $d=2$ is that the critical point $\hat\beta_c$ is 
explicitly determined as $\hat\beta_c=1$ and moreover it coincides with the $L^2$ critical
point. In fact, it is worth drawing a contrast to the moment characterisation described in Section \ref{sec:momchar}: in $d=2$ and intermediate disorder regime, {\it all}
moments exist for $\hat\beta<\hat\beta_c=1$. This has been proved in \cite{CZ21, CZ23, LZ22, LZ23}. Furthermore, in \cite{CZ23} it has been proved that $W_n^{\beta_n}$
has moments even of growing order, up to $C(\hat\beta) \sqrt{\log n}$ for a particular
constant $C(\hat\beta)$. Such considerations are important towards understanding 
of properties of the field of partition functions, indexed by their starting point, as 
pointed in \cite{CZ23}.

\vskip 2mm
{\bf On the intermediate disorder regime and (marginal) disorder relevance.}
The notion of intermediate disorder was first introduced by Alberts-Khanin-Quastel 
\cite{AKQ14} for the one-dimensional polymer, with temperature scaling 
$\beta_n=\hat\beta n^{-1/4}$. There it was shown that in $d=1$ and with such 
intermediate disorder scaling, $W_n^{\beta_n}$ converges in distribution to 
the solution of the one-dimensional stochastic heat equation (SHE)
\begin{align}\label{eq:SHE2}
\partial_t u=\frac{1}{2}\Delta u +\sqrt{2}\hat\beta \xi u,\qquad t>0, x\in \bbR,
\qquad \text{and}\qquad u(0,x)\equiv 1, \quad \text{for all $x\in\bbR$}. 
\end{align}
with $\xi$ being the space-time white. Contrary to what is observed in dimension two, in 
dimension one there no phase transition of weak-to-strong disorder type, as for any $\hat\beta>0$ the solution to the stochastic heat equation is strictly positive. 
The fact that one sees a phase transition in dimension two is a consequence of the fact
  that dimension two  is a {\it critical dimension} in the sense of renormalisation. 
  To get an idea of what this means, consider the rescaled solution to the SHE 
  $u_\epsilon(t,x):=u(\frac{t}{\epsilon^2}, \frac{x}{\epsilon})$ in
  general dimension $d\geq 1$. Using the scaling
    invariance of the white noise, $u_\epsilon$ then solves the equation
    \begin{align*}
\partial_t u_\epsilon=\frac{1}{2}\Delta u_\epsilon + \epsilon^{\frac{d-2}{2}} \sqrt{2}\hat\beta \xi u,
\end{align*}
and we see that, as $\epsilon\to 0$, the noise term $\epsilon^{\frac{d-2}{2}} \sqrt{2}\hat\beta \xi u$ (formally) converges to $0$ in $d\geq 3$
 (indication of weak disorder as the effect of noise is attenuated) and 
 it blows up when $d=1$ (indication of strong disorder), while it remains invariant when 
 $d=2$. The latter indicates that dimension two is critical and 
 whether disorder is relevant (strong disorder) or irrelevant (weak disorder) 
 cannot be concluded from a renormalisation procedure as under this transform 
 the noise term has neither a clearly dominant effect nor it is clearly negligible. 
 In statistical physics and renormalisation this situation is often termed 
 {\it marginal disorder} \cite{H74, G11}. In a situation like this one has to look at the
 system in a much finer detail, in order to draw conclusions, and this is the framework 
 that the two-dimensional polymer falls into, as also confirmed by Theorem \ref{2dsubcritical}
 
 Theorem \ref{2dsubcritical} was presented in \cite{CSZ17b} as a particular case of 
 a more general framework, which concerned {\it marginally relevant disordered systems},
 with a time direction. Let us record a form of this result here
  \begin{theorem}\label{thm:marginal}
  Let $Z_{n,\gb_n}^{\rm marginal}$ be a partition function of a marginally relevant disordered system, 
  which admits an expansion as a multilinear polynomial of the form
  \begin{align}\label{marginalpart}
  Z_{n,\gb_n}^{\rm marginal} = 1+\sum_{k=1}^n\,\,\gb_n^k\,\,\sumtwo{1\leq i_1<\cdots <i_k\leq n}{x_0=0, x_1,...,x_k\in \bbZ^d} \,\,\prod_{j=1}^k 
  p_{i_j-i_{j-1}}(x_j-x_{j-1}) \,\,\xi_{i_j,x_j},
  \end{align}
  where
   $(\xi_{i,x})_{i\in \N, x\in \bbZ^d}$ is a collection of i.i.d. mean zero, variance one random variables with exponential moments
  and the kernel $(p_{i}(x))_{i\in \N,x\in \bbZ^d}$ satisfies a condition which can be recast as equivalent to marginal relevance:
  \begin{align}\label{RN}
  \sfR_n:=\sum_{i=1}^n\sum_{x\in \bbZ^d} p_i(x)^2 \quad \text{grows to infinity as a slowly varying function}.
  \end{align}
  Assume, also, that the kernel $q_n(x)$ satisfies a type of local limit theorem, i.e.
  \begin{equation} \label{q-LLT}
	\sup_{x\in\bbZ^d} \left\{ n^\gamma \,p_n(x) -
	g\big(\frac{x}{n^a} \big) \right\}
	\xrightarrow[n\to\infty]{} 0 \,,
\end{equation}
for a sufficiently smooth density $g(\cdot)$ and exponents $a,\gamma>0$
  Then, if $\beta_n:=\hat\beta / \sqrt{\sfR_n}$, it holds that 
  \begin{equation} \label{eq:conve0}
	Z_{n,\beta_n}^{\rm marginal} \xrightarrow[\,n\to\infty\,]{d} 
	\begin{cases}
	\exp\big( \sigma_{\hat\beta} \cN-
	\frac{1}{2} \sigma_{\hat\beta}^2
	\,\big)
	& \text{if } \hat \beta < 1 \\
	0 & \text{if } \hat\beta \ge 1
	\end{cases} \,.
\end{equation}
where $\cN$ is a standard normal variable and $ \sigma_{\hat\beta}^2  = \log (1-\hat\gb^2)^{-1}$.
\end{theorem}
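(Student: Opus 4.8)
The plan is to follow the route developed for marginally relevant disordered systems: a second-moment analysis of the polynomial-chaos expansion \eqref{marginalpart} which, thanks to the marginal-relevance hypothesis \eqref{RN}, produces a uniform $L^2$ bound when $\hat\beta<1$ together with quantitative control of the tail of the expansion; then a reduction to finitely many chaos terms, a Lindeberg replacement of the disorder by Gaussian noise and a scale-by-scale passage to the continuum using the local limit theorem \eqref{q-LLT} to identify the limit; and finally a separate comparison argument for $\hat\beta\ge1$.

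\emph{Step 1 (second moments and $L^2$-control).} Write $Z_n:=Z_{n,\beta_n}^{\rm marginal}=1+\sum_{k\ge1}\Theta_n^{(k)}$, where $\Theta_n^{(k)}$ is the degree-$k$ homogeneous part of \eqref{marginalpart}. Since the $\xi_{i,x}$ are centred with unit variance, chaoses of different orders are orthogonal, and summing out the spatial variables (with $v_m:=\sum_{x\in\bbZ^d}p_m(x)^2$ and $i_0:=0$) gives
\[
\bbE\big[(\Theta_n^{(k)})^2\big]=\beta_n^{2k}\!\!\!\sum_{1\le i_1<\cdots<i_k\le n}\prod_{j=1}^k v_{i_j-i_{j-1}}=\frac{\hat\beta^{2k}}{\sfR_n^{\,k}}\!\!\!\sum_{1\le i_1<\cdots<i_k\le n}\prod_{j=1}^k v_{i_j-i_{j-1}}.
\]
The essential input is the elementary fact that, because $\sfR_n=\sum_{m\le n}v_m$ is slowly varying and diverges, $\sum_{1\le i_1<\cdots<i_k\le n}\prod_{j=1}^k v_{i_j-i_{j-1}}=(1+o(1))\,\sfR_n^{\,k}$ — equivalently, the number of ``collisions'' $L_n$ (with $\bbE[L_n]=\sfR_n$), normalised by its mean, converges in law to an $\mathrm{Exp}(1)$ variable; this is the quantitative form of marginal relevance, and the factor $1/k!$ that one would get for a summable overlap is absent. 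Hence $\bbE[(\Theta_n^{(k)})^2]\to\hat\beta^{2k}$, so for $\hat\beta<1$ one obtains $\sup_n\bbE[Z_n^2]=\sup_n\big(1+\sum_{k\ge1}\bbE[(\Theta_n^{(k)})^2]\big)<\infty$ together with the uniform tail bound $\sup_n\sum_{k>K}\bbE[(\Theta_n^{(k)})^2]\le\sum_{k>K}(\hat\beta^2+o_K(1))^k\xrightarrow[K\to\infty]{}0$. A parallel computation with $p$ independent walks gives, when the moments are finite, $\bbE[Z_n^p]\to(1-\hat\beta^2)^{-\binom{p}{2}}=\bbE[\Lambda^p]$ with $\Lambda:=\exp(\sigma_{\hat\beta}\cN-\tfrac12\sigma_{\hat\beta}^2)$ — the $p$-walk interaction splitting into $\binom{p}{2}$ pairwise ones because $\beta_n\to0$ suppresses triple (and higher) collisions, each pair contributing a factor tending to $(1-\hat\beta^2)^{-1}$ — which identifies the candidate limit; but since the log-normal is not determined by its moments, this only guides the identification carried out in Step 2.

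\emph{Step 2 (identification of the limit for $\hat\beta<1$).} One works on truncations. Fix $K$ and put $Z_n^{\le K}:=1+\sum_{k=1}^K\Theta_n^{(k)}$. A telescoping Lindeberg swap replaces each $\xi_{i,x}$ by a standard Gaussian, the error being controlled by the third and higher moments of $\xi$ (finite by the exponential-moment assumption) times the maximal coordinate influence, which is $o(1)$ uniformly by \eqref{q-LLT} and $\beta_n\to0$; so one may assume Gaussian disorder. For Gaussian disorder one rescales time logarithmically, so that comparable time-increments carry comparable overlap mass, and passes $\Theta_n^{(k)}$ term by term — using \eqref{q-LLT} for the local structure — to an explicit functional of a white noise on the compactified log-time interval, whose joint law over $k\le K$ is the sought limit $\Lambda^{\le K}$, with $\Lambda^{\le K}\to\Lambda$ in $L^2$ as $K\to\infty$. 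I expect this to be the main obstacle: the Nualart--Peccati fourth-moment (contraction) criterion for $\Theta_n^{(k)}$ to be asymptotically Gaussian \emph{fails}, because the contraction of the degree-$k$ kernel re-closes two time-indices and, precisely since the overlap is only logarithmically divergent, this does not vanish but feeds the lower-order chaoses; thus $\Theta_n^{(k)}$ does not converge to a single Wiener chaos, and the whole vector $(\Theta_n^{(1)},\dots,\Theta_n^{(K)})$ must be tracked jointly, its non-vanishing contractions being exactly what populates all Wiener chaoses and assembles the log-normal. Granting $Z_n^{\le K}\xrightarrow{d}\Lambda^{\le K}$, one concludes by a standard three-$\epsilon$ argument: by Step 1, $\limsup_n\bbE[(Z_n-Z_n^{\le K})^2]=\limsup_n\sum_{k>K}\bbE[(\Theta_n^{(k)})^2]\to0$ as $K\to\infty$, so $Z_n\xrightarrow{d}\Lambda$, which is \eqref{eq:conve0} for $\hat\beta<1$.

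\emph{Step 3 ($\hat\beta\ge1$).} Now $\bbE[Z_n^2]\to\infty$ and mass escapes, so one shows $Z_n\to0$ in probability (i.e.\ in distribution to the constant $0$). One route: fix $\hat\beta'<1$, set $\beta_n'=\hat\beta'/\sqrt{\sfR_n}$, and apply the monotonicity of $\beta\mapsto\bbE[\phi(W_n^\beta)]$ for the bounded, concave, increasing $\phi(x)=1-e^{-x}$ (Proposition \ref{prop:mon}, when applicable to the model at hand), obtaining $\bbE[\phi(Z_n)]\le\bbE[\phi(Z_{n,\beta_n'}^{\rm marginal})]\to\bbE[\phi(\Lambda_{\hat\beta'})]$ by Step 2; since $\sigma_{\hat\beta'}^2=\log(1-(\hat\beta')^2)^{-1}\to\infty$ as $\hat\beta'\uparrow1$, $\Lambda_{\hat\beta'}\to0$ in probability, hence $\bbE[\phi(\Lambda_{\hat\beta'})]\to0$, and letting $\hat\beta'\uparrow1$ forces $\bbE[\phi(Z_n)]\to0$. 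Absent such monotonicity, one argues from Step 2 at finite $K$: $Z_n^{\le K}\to\Lambda^{\le K}$ while $\bbE[\Lambda^{\le K}]=1$ and $\bbE[(\Lambda^{\le K})^2]=1+\sum_{k\le K}\hat\beta^{2k}\to\infty$, which together with the continuum description forces $\Lambda^{\le K}\to0$ in probability, and the same three-$\epsilon$ scheme gives $Z_n\to0$. In all of this the delicate point remains Step 2: the failure of the fourth-moment criterion precludes reducing the limit to a sum of independent Gaussians, and the limiting joint law of the polynomial chaoses — which is what builds the log-normal — has to be obtained through a careful coarse-graining over logarithmic time-scales together with the local limit theorem \eqref{q-LLT}.
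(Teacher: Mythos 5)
Your skeleton (chaos expansion, second-moment control and $L^2$ tail bound for $\hat\beta<1$, Lindeberg replacement, logarithmic time coarse-graining, comparison argument at $\hat\beta\ge1$) is the same as the paper's, but the heart of the theorem is exactly the part you defer. The identification of the limit — and in particular of the value $\sigma_{\hat\beta}^2=\log(1-\hat\beta^2)^{-1}$ — is not carried out: in Step 2 you write ``granting $Z_n^{\le K}\xrightarrow{d}\Lambda^{\le K}$'' for an undefined ``explicit functional of a white noise on the compactified log-time interval'', which is precisely what has to be proved. Moreover your framing of the Fourth Moment Theorem as an obstacle mis-locates the difficulty. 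It is true that the fixed-order chaos $\Theta_n^{(k)}$ over the whole time horizon is not asymptotically Gaussian and spreads over Wiener chaoses of orders $1,\dots,k$; but the paper's proof does not need it to be. The argument decomposes each $k$-tuple of times over the log-scale blocks $I_{i_j}=(n^{(i_j-1)/M},n^{i_j/M}]$ as in \eqref{thetas}, groups the labels according to their running maxima, shows the asymptotic factorization \eqref{theta_deco} into dominated blocks $\Theta^{n,M}_{\boldsymbol{i}^{(r)}}$, and then applies Theorem \ref{thm:fourth} to prove that each such block (of arbitrary length, thanks to the domination of the leading time index) is asymptotically a standard Gaussian; the Lindeberg principle (Theorem \ref{thm:Lind}) replaces them by exact Gaussians, and the resummation over block sizes in \eqref{bsZhb3}--\eqref{Xiencode} produces independent Gaussian increments of variance $\approx\frac{\hat\beta^2}{M(1-\hat\beta^2t)}$, whose Wick exponential gives the log-normal with $\sigma^2_{\hat\beta}=\int_0^1\frac{\hat\beta^2}{1-\hat\beta^2 t}\,\dd t=\log(1-\hat\beta^2)^{-1}$. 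None of this (the running-maxima decomposition, the Gaussianity of the blocks, the resummation fixing the variance) appears in your proposal, so the central step is asserted rather than proved.

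For $\hat\beta\ge1$, your primary route is the paper's: monotonicity of $\beta\mapsto\bbE[\phi(W_n^\beta)]$ for concave $\phi$ (Proposition \ref{prop:mon}; the paper takes $\phi(x)=x^\gamma$ with uniform integrability, your bounded increasing concave $\phi$ works as well, though for the general model \eqref{marginalpart} the FKG-based proof of that monotonicity still needs to be checked, since the weights $1+\beta\xi$ need not be increasing tilts of the disorder). Your fallback argument, however, does not work: for $\hat\beta\ge1$ the truncation error $\sup_n\sum_{k>K}\bbE[(\Theta_n^{(k)})^2]$ does not tend to $0$ as $K\to\infty$, so the ``same three-$\epsilon$ scheme'' cannot transfer information from $Z_n^{\le K}$ to $Z_n$; and the claim that $\bbE[\Lambda^{\le K}]=1$ together with $\bbE[(\Lambda^{\le K})^2]\to\infty$ ``forces'' $\Lambda^{\le K}\to0$ in probability is unjustified (a diverging second moment with unit mean does not imply degeneration without the multiplicative structure that the monotonicity/fractional-moment argument is there to supply).
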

We will refer to \cite{CSZ17b, CSZ18+} 
for more discussion about this theorem and its relations
to disorder relevance and the so-called {\it Harris criterion}, \cite{H74}. 
Here, {let us point that \eqref{RN} and \eqref{q-LLT} imply the relation $\gamma>\frac{1+ad}{2}$, which is equivalent to the
scaling relation proposed by Harris \cite{H74}. 
Let us also mention that the above theorem encompasses, besides the two-dimensional
directed polymer model, a one-dimensional directed polymer where the underlying walk
is in the domain of attraction of the Cauchy distribution, the disordered pinning model \cite{G11} and the two-dimensional stochastic heat equation. The extension to statistical
mechanics models which do not have a time directions (e.g. Parabolic Anderson model
or Ising type models) is an interesting open problem.

The extension to the two-dimensional KPZ equation has been accomplished in
\cite{CSZ20, G18}
and there has also been a very important extension of this framework to a certain class of
 seminlinear stochastic heat equations by Dunlap-Gu \cite{DG22} and 
 Dunlap-Graham \cite{DG23}
 where further phenomena have been revealed. We refer to the above works for details.

 \subsection{Field correlations}\label{sef:fieldcorr}
 In this subsection we want to outline the properties of the field $(W_n^{\beta_n}(x))_{x\in\bbZ^2}$, in analogy with what was presented for $d\geq 3$ in
 Theorem \ref{thm:flucd3}. This is important as it reveals the correlation structure
 of the polymer and how localisation can emerge. The following theorem was proved
 in {\cite{CSZ17b}, Theorem 2.13, in greater generality than what is stated below, that includes fluctuations of a wider class of marginally relevant models}.
\begin{theorem}[Partition function of $2d$ polymer as a field]\label{fieldDPRM}
 Let $(W_{n}^{\beta_n}(x))_{x\in\bbZ^2}$ be the field of partition functions of the directed polymer in $d=2$, indexed by the starting point $x\in\bbZ^2$. 
 Let $\beta_n$ be chosen as
 \begin{align}\label{choice_beta}
 \beta_n:=\frac{\hat\beta}{\sqrt{\sfR_n}}, \quad \text{with}
 \quad \sfR_n:=\sum_{i\leq n\,,\,x\in Z^2} q_i(x)^2 = \frac{\log n}{\pi}(1+o(1)).
 \end{align}
Let also $\phi\in C_b(\R^2)\cap L^1(\R^2)$ be a test function. Then, for $\hat\beta<1 $,
\begin{align}\label{eq:Wnphi}
 W_{n}^{\beta_n}(\phi):=\frac{\sqrt{\sfR_n}}{n} \sum_{x\in \bbZ^d} \big( W_{n}^{\beta_n}(x) -1\big) \phi\big( \tfrac{x}{\sqrt{n}}\big),
 \end{align}
 converges to a Gaussian variable with mean zero and variance
 \begin{align*}
 \sigma^2_{\hat\beta, \phi}= \frac{\hat\beta^2}{1-\hat\beta^2} \int_{R^2}\int_{R^2} \phi(x) K(x,y) \phi(y)\, \dd x\dd y, \qquad
 \text{with} \qquad K(x,y)=\int_0^{1} \frac{1}{2\pi t} e^{-\frac{|x-y|^2}{2t}} \,\dd t.
 \end{align*}
 \end{theorem}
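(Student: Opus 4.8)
I will follow the polynomial-chaos approach of \cite{CSZ17b}. Write $\zeta_{i,x}:=e^{\beta_n\omega_{i,x}-\lambda(\beta_n)}-1$, so that the $\zeta_{i,x}$ are i.i.d.\ centred with $v_n:=\bbE[\zeta_{i,x}^2]=e^{\lambda_2(\beta_n)}-1\sim\beta_n^2=\hat\beta^2/\sfR_n$ (using \eqref{l2}). Expanding the exponential in $W_n^{\beta_n}(x)$ term by term gives the chaos expansion
\begin{align*}
W_n^{\beta_n}(x)-1=\sum_{k\ge1}\ \sumtwo{0<i_1<\cdots<i_k\le n}{z_1,\dots,z_k\in\bbZ^2}q_{i_1}(z_1-x)\prod_{j=2}^{k}q_{i_j-i_{j-1}}(z_j-z_{j-1})\prod_{j=1}^{k}\zeta_{i_j,z_j},
\end{align*}
so that $W_n^{\beta_n}(\phi)$ is an (a priori infinite) sum of multilinear chaoses in the $\zeta$'s, with mean zero since $\bbE[W_n^{\beta_n}(x)]=1$. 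The scheme is: (i) identify the limiting variance; (ii) upgrade to a CLT by the method of moments, the uniform-in-$n$ $L^p$ bounds for $W_n^{\beta_n}$ at $\hat\beta<1$ (\cite{CZ21,CZ23,LZ22,LZ23}) being used to justify the $L^p$ manipulations of the chaos expansion.

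\textbf{Second moment.} By the replica identity (cf.\ \eqref{eq:secondmom}), since in the second moment the two chains must occupy the same space-time vertices,
\begin{align*}
\bbE\big[(W_n^{\beta_n}(x)-1)(W_n^{\beta_n}(y)-1)\big]=\sum_{k\ge1}v_n^{k}\ \sumtwo{0<i_1<\cdots<i_k\le n}{z_1,\dots,z_k}q_{i_1}(z_1-x)\,q_{i_1}(z_1-y)\prod_{j=2}^{k}q_{i_j-i_{j-1}}(z_j-z_{j-1})^2.
\end{align*}
Summing out $z_2,\dots,z_k$ and writing $m:=i_1$ factorises this as $\sum_{m=1}^{n}q_{2m}(x-y)\,\frac{v_n}{1-v_n\sfR_{n-m}}(1+o(1))$: the factor $q_{2m}(x-y)$ is the cost of the first coincidence at time $m$ (using $q$ symmetric), and the renewal series over the vertices after $(m,z_1)$ sums to $1/(1-v_n\sfR_{n-m})$ because $\sfR_N=\sum_{i\le N}q_{2i}(0)$ is slowly varying, so $\sum_{0<j_1<\cdots<j_r\le N}\prod_l q_{2(j_l-j_{l-1})}(0)\sim\sfR_N^{\,r}$ and $v_n\sfR_{n-m}\to\hat\beta^2<1$ uniformly for $m\le(1-\eta)n$. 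Inserting this into
\begin{align*}
\bbE\big[W_n^{\beta_n}(\phi)^2\big]=\frac{\sfR_n}{n^2}\sum_{x,y\in\bbZ^2}\phi\big(\tfrac x{\sqrt n}\big)\phi\big(\tfrac y{\sqrt n}\big)\,\bbE\big[(W_n^{\beta_n}(x)-1)(W_n^{\beta_n}(y)-1)\big],
\end{align*}
the prefactor $\sfR_n$ cancels the $1/\sfR_n$ carried by the single pivotal $v_n$, and a local-CLT/Riemann-sum computation yields $\frac1{n^2}\sum_{x,y}\phi(\tfrac x{\sqrt n})\phi(\tfrac y{\sqrt n})\sum_{m=1}^{n}q_{2m}(x-y)\to\int\int\phi(x)K(x,y)\phi(y)\,\dd x\,\dd y$ with $K$ exactly as in the statement (pairs with $|x-y|=O(1)$ contribute $O(\sfR_n/n)=o(1)$). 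Hence $\bbE[W_n^{\beta_n}(\phi)^2]\to\sigma^2_{\hat\beta,\phi}$.

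\textbf{Asymptotic normality.} I would then show $\bbE[W_n^{\beta_n}(\phi)^{k}]\to m_k$, the $k$-th moment of $\cN(0,\sigma^2_{\hat\beta,\phi})$, for every $k$. Expanding $W_n^{\beta_n}(\phi)^{k}$ into $k$ chains and grouping by how space-time vertices are shared, the leading contribution for $k=2p$ is the Wick sum $(2p-1)!!\,(\sigma^2_{\hat\beta,\phi})^{p}$, coming from configurations where the $2p$ chains split into $p$ disjoint pairs, each pair occupying a common vertex set and thus reproducing the second-moment computation; odd moments vanish, and every ``connected'' topology tying together three or more chains (a vertex of multiplicity $\ge3$, or two pairs sharing a vertex) carries an extra summation constraint costing a factor $O(1/\sfR_n)\to0$ or a lost free spatial sum, so it is negligible. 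Since $\cN(0,\sigma^2_{\hat\beta,\phi})$ is moment-determinate, the method of moments gives $W_n^{\beta_n}(\phi)\to\cN(0,\sigma^2_{\hat\beta,\phi})$ in distribution. An alternative is a Lindeberg replacement reducing to Gaussian disorder, after which $W_n^{\beta_n}(\phi)$ is a sum of Wiener chaoses and one invokes the contraction / fourth-moment criterion for sums of chaoses.

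\textbf{Main obstacle.} The delicate point is the normality step: in $d=2$ no single chaos order dominates --- all $k$ contribute to $\sigma^2_{\hat\beta,\phi}$ --- so one cannot truncate at a fixed order. This forces one to control the slowly-varying renewal asymptotics $\sum_{0<j_1<\cdots<j_r\le N}\prod_l q_{2(j_l-j_{l-1})}(0)=\sfR_N^{\,r}(1+o(1))$ with errors summable in $r$, and, in the $2p$-th moment, to rule out the three-chain-collision diagrams uniformly over chaos orders; this combinatorial bookkeeping is the real work. The local-CLT error terms, and the endpoint regimes $m\approx0$, $m\approx n$ of the time integral, also need care, but are ultimately harmless because $K$ has only an integrable logarithmic singularity on the diagonal.
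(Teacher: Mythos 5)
Your proposal is correct in outline, and it takes a genuinely different route from the one the paper indicates. The paper proves Theorem \ref{fieldDPRM} ``along the same lines'' as Theorem \ref{2dsubcritical}: chaos expansion, separation of the logarithmic time scales $n^{t}$ via coarse-graining in $\log$-time, factorisation along running maxima, the Lindeberg principle (Theorem \ref{thm:Lind}) together with the Fourth Moment Theorem (Theorem \ref{thm:fourth}), and a resummation into Wiener integrals. You instead compute the covariance of the field directly from the replica/renewal series $\sum_{k\ge1} v_n^k\sum q_{2i_1}(x-y)\prod_{j\ge2}q_{2(i_j-i_{j-1})}(0)$, which correctly produces both the factor $\hat\beta^2/(1-\hat\beta^2)$ (from $v_n\sfR_{n}\to\hat\beta^2<1$ and slow variation of $\sfR_n$) and the kernel $K$, with the diagonal terms of order $\sfR_n/n$ as you say; you then get normality either by the method of moments (Wick pairings dominate, higher-multiplicity and switch diagrams suppressed by extra factors $1/\sfR_n$ or lost spatial sums) or by Lindeberg reduction plus a fourth-moment criterion. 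Both closing arguments are viable and are essentially how closely related statements are proved in \cite{CZ21,LZ22,LZ23}. What the paper's multiscale route buys is a unified treatment that also yields the log-normal limit of the un-averaged $W_n^{\beta_n}$ in Theorem \ref{2dsubcritical}, where scale separation is essential; your route exploits the extra spatial averaging, which makes the computation $L^2$-transparent and delivers the variance formula with minimal machinery.

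Two remarks. First, your ``main obstacle'' is less severe than you state: the chaos-order-$k$ contribution to $\bbE[W_n^{\beta_n}(\phi)^2]$ is bounded by $C(v_n\sfR_n)^k=C(\hat\beta^2+o(1))^k$ uniformly in $n$ (bound the spatial sums by $\|\phi\|_\infty\|\phi\|_1 n$ and the time sums by $n\,\sfR_n^{k-1}$), so you may truncate the expansion at a fixed chaos order $K$ with an $L^2$ error $O(\hat\beta^{2K})$ uniform in $n$, prove the CLT for the truncated finite chaos via Theorem \ref{thm:fourth} or Lindeberg, and then let $K\to\infty$; this removes the need for diagrammatic bookkeeping ``uniformly over chaos orders''. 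Second, odd moments of $W_n^{\beta_n}(\phi)$ do not vanish identically (the disorder need not be symmetric); they only vanish in the limit, which is what your pairing argument actually establishes.
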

 The analogue of this theorem in the case of $(\log W_n^{\beta_n}(x))_{x\in\bbZ^2}$ and
 for the solution of the KPZ equation has been proved in \cite{CSZ20, G18}. 
 The analogue for fields coming from solutions to semilinear stochastic her equation has
 been achieved in \cite{T22}.
 
We remark that the amplification in \eqref{eq:Wnphi} by the factor $\sqrt{R_n}\approx \sqrt{\log n}$, indicates that for $\hat\beta<1$, 
the polymer partition functions become asymptotically
uncorrelated. This is also an indication of a delocalisation of the polymer. Strong correlations
start to emerge at the critical temperature $\hat\beta_c=1$. This is the content of the
next theorem, proved in \cite{CSZ23}, which essentially says that the polymer partition,
viewed as a random measure, has a non-trivial scaling limit (without need of renormalisation)
at a {\it critical window} around $\hat\beta_c=1$, more precisely for $\beta_n$ such that
\begin{align}\label{def:critbeta}
\sigma(\beta_n)^2:=e^{\lambda(2\beta_n)-2\lambda(\beta_n)}-1=\frac{1}{\sfR_n}
\Big( 1+\frac{\vartheta+o(1)}{\log n}\Big),\qquad \text{for $\vartheta\in \bbR$}.
\end{align}
To formulate the result, recall the definition of the point-to-point partition function 
\eqref{eq:Zab1}, \eqref{eq:Zab2}.
To deal with parity issues, for $x \in \R^2$
we denote by $[\![x]\!]$ the closest point $z \in \Z^2_{\rm even}
:=\{(z_1, z_2)\in \Z^2: z_1+z_2 \mbox{ even}\}$;
for $s \in \R$ we define the even approximation
$[\![s]\!] := 2 \, \lfloor s/2 \rfloor \in \Z_{\rm even} := 2\Z$.
We then introduce the field
\begin{equation} \label{eq:rescZmeas}
	\cW^{\beta_n}_n = \bigg(W^{\beta_n}_{n;\, s,t}(\dd x, \dd y) :=
	\frac{n}{4} \, W_{[\![ns]\!], [\![nt]\!]}^{\beta_n,\,\omega}([\![\sqrt{n} x]\!],
	[\![\sqrt{n} y]\!]) \, \dd x \, \dd y
	\bigg)_{0 \le s \le t < \infty}.
\end{equation}
where $\dd x \, \dd y$ denotes the Lebesgue measure on $\R^2 \times \R^2$.
The rescaling $\frac{n}{4}$ is just to take into account the Lebesque scalings 
and, thus, it is not of a particular substance. 
The next theorem is mainly proved in \cite{CSZ23}; part (v) is proved in \cite{CSZ23b}, while (iv)
in \cite{CSZ24+}.
 \begin{theorem}[Critical 2d Stochastic Heat Flow]\label{th:mainSHF}
Fix $\beta_n$ in the critical window \eqref{def:critbeta}, for some $\vartheta\in \R$.
As $n\to\infty$, the family of random measures
$\cW^{\beta_n}_n =
(W^{\beta_n}_{n;\, s,t}(\dd x, \dd y))_{0\le s \le t < \infty}$ defined in \eqref{eq:rescZmeas}
converges in the sense of measures to a {\it unique} limit
\begin{equation*}
	\mathscr{Z}^\vartheta = (\mathscr{Z}_{s,t}^\vartheta(\dd x , \dd y))_{0 \le s \le t <\infty} \,,
\end{equation*}
which we call the \emph{Critical $2d$ Stochastic Heat Flow}. This is a  universal process, in that it does not depend on the law of the disorder $\omega$, and 
 has the following properties:
\begin{itemize}
\item[\rm (i)] it is translation invariant in law:
\begin{equation*}
	(\mathscr{Z}_{s+\sfa, t+\sfa}^\vartheta(\dd (x+\sfb) , \dd (y+\sfb)))_{0 \le s \le t <\infty}
	\stackrel{\rm dist}{=}
	(\mathscr{Z}_{s,t}^{\vartheta}(\dd x , \dd y))_{0 \le s \le t <\infty}
	\quad \ \forall \sfa \ge 0, \ \forall \sfb \in \R^2 \,,
\end{equation*}
\item[\rm(ii)] it satisfies the scaling covariant relation:
\begin{equation}\label{eq:scaling}
	(\mathscr{Z}_{\sfa s, \sfa t}^\vartheta(\dd (\sqrt{\sfa} x) , \dd (\sqrt{\sfa} y)))_{0 \le s \le t <\infty}
	\stackrel{\rm dist}{=}
	(\sfa\, \mathscr{Z}_{s,t}^{\vartheta+ \log \sfa}(\dd x , \dd y))_{0 \le s \le t <\infty}
	\quad \ \forall \sfa > 0 \,.
\end{equation}
\item[\rm (iii)] it is a log-correlated field, with covariance function as in \eqref{SHFcov} below,
\item[\rm (iv)] it is a.s. singular with respect to the Lebesque measure,
\item[\rm (v)] it is neither a Gaussian field nor an exponential of a Gaussian field,
i.e. it is not a Gaussian Multiplicative Chaos (GMC - see \cite{RV14} for a review on this model).
\end{itemize}
\end{theorem}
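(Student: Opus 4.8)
The plan is to prove convergence of the random measures by the method of moments combined with tightness, and then to extract the five properties from the explicit structure of the limiting moments. The starting point is the polynomial chaos expansion of the point-to-point partition functions: writing $\eta_{i,z} := e^{\beta_n \omega_{i,z} - \lambda(\beta_n)} - 1$, which are i.i.d., mean zero, with $\bbE[\eta_{i,z}^2] = \sigma(\beta_n)^2$, one expands $W^{\beta_n,\omega}_{(m,n]}(x,y)$ as a series over ordered space-time points carrying products of random-walk kernels $q$ times the $\eta$'s. The single relevant coupling constant is the scalar $\sigma(\beta_n)^2$, which in the critical window \eqref{def:critbeta} satisfies $\sigma(\beta_n)^2 \sfR_n = 1 + (\vartheta + o(1))/\log n$, with $\sfR_n \sim \log n / \pi$ as in \eqref{choice_beta}; this is where $\vartheta$ enters and where the $\log n$-scale cancellations that make the critical window nontrivial originate.

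The core analytic step is to show that, for any finite family of times $0 \le s_r \le t_r$ and test functions $\phi_r, \psi_r \in C_c(\bbR^2)$, the joint moments
\begin{align*}
\bbE\Big[ \prod_{r=1}^{h} \int W^{\beta_n}_{n;\, s_r, t_r}(\dd x, \dd y)\, \phi_r(x)\, \psi_r(y) \Big]
\end{align*}
converge as $n \to \infty$, uniformly enough to give moment bounds. Taking $\bbE$ over $\omega$ turns the $h$-fold product into an expectation over $h$ independent random-walk replicas weighted by a sum over their pairwise coincidence diagrams. For $h = 2$ this is a genuine two-body problem: the diagram sum is a renewal series driven by the local limit theorem for the replica overlap, and (this is the input already available from \cite{CSZ17b}) it converges to an explicit Green's function $G_\vartheta$ carrying a logarithmic singularity on the diagonal. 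For $h \ge 3$ one must control the mutual interactions of three or more replicas; the route I would take is to recast the diagram sums operator-theoretically as iterated kernels built from the $h=2$ Green's function and prove convergence of the associated operators on a suitable function space by a spectral/continuity argument, together with matching uniform-in-$n$ upper bounds. \emph{This many-body moment analysis in the critical window --- in particular getting past the third moment, where the clean two-body renewal picture breaks down --- is the main obstacle}, and it is the technical heart of \cite{CSZ23}.

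With uniform moment bounds in hand I would deduce tightness of the family $\cW^{\beta_n}_n$ in the space of (locally finite) measures on $\bbR^2 \times \bbR^2$ indexed by $(s,t)$, so that every subsequence has a weak limit, and moment convergence forces all subsequential limits to have the same joint moments. To upgrade this to a \emph{unique} limit I would show that these limiting moments determine the law --- either directly, via a Carleman-type growth bound on the moments in $h$, or, if the moments grow too fast for that, by exploiting the Chapman--Kolmogorov (flow/semigroup) composition property inherited from the discrete partition functions, which together with the one- and two-point structure rigidifies the law. Consistency of the finite-dimensional moments under Kolmogorov extension then produces a single measure-valued process $\mathscr{Z}^\vartheta$, and universality (independence of the law of $\omega$) is automatic, since the limiting moments depend on the disorder only through $\bbE[\omega^2] = 1$, higher cumulants being washed out by a Lindeberg replacement.

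Finally, the listed properties follow from the construction. Translation invariance (i) and scaling covariance (ii) are inherited from the exact symmetries of the discrete model, with the shift $\vartheta \mapsto \vartheta + \log \sfa$ in \eqref{eq:scaling} produced by $\sfR_{\sfa n} = \sfR_n + \tfrac{\log \sfa}{\pi} + o(1)$ feeding into \eqref{def:critbeta}. Property (iii) is precisely the logarithmic singularity of the kernel $G_\vartheta$ identified in the $h=2$ computation. For singularity with respect to Lebesgue measure (iv) I would use a second-moment (energy) estimate showing the would-be density is too rough to be absolutely continuous, following \cite{CSZ24+}. For (v), comparing the third (and higher) moments of $\mathscr{Z}^\vartheta$ with those that a Gaussian multiplicative chaos built from the same covariance $G_\vartheta$ would have exhibits a genuine mismatch, so $\mathscr{Z}^\vartheta$ is neither Gaussian nor a GMC, which is the content of \cite{CSZ23b}.
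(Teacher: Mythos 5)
There is a genuine gap at the identification/uniqueness step, and it is exactly the point the paper warns about immediately after Theorem \ref{th:mainSHF}: the mixed moments of the Critical 2d SHF grow too fast in the order $h$ (lower bounds of the form $e^{ch^2}$ are known \cite{CSZ23b}, with conjectured growth $e^{e^h}$), so they do \emph{not} determine the law. Your primary route --- prove convergence of all joint moments (which is indeed possible, cf.\ \cite{GQT21, CSZ19}), get tightness, and then conclude uniqueness of the subsequential limits by a Carleman-type criterion --- therefore cannot close: equal moments of subsequential limits does not force equal laws in this regime. Your fallback, that the Chapman--Kolmogorov flow structure together with the one- and two-point functions ``rigidifies the law,'' is not an argument but a restatement of what would need to be proved; no such characterisation is known, and indeed the paper poses the axiomatic characterisation of the SHF as an open question. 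So the proposal as written establishes moment convergence and tightness but does not prove convergence to a unique limit, which is the heart of the theorem.

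The actual route of \cite{CSZ23}, as sketched in Section \ref{sec4methods} of the paper, is structurally different: at criticality the exponential separation of time scales used below $\hat\beta_c$ is lost, the fluctuations are carried by noise at time scales of order $n$ and by chaoses of order $\log n$ in the expansion \eqref{eq:poly}, and one cannot discount high chaos orders since $\hat\beta=1$. The proof therefore approximates $W^{\beta_n}_n$ in $L^2$ by a coarse-grained model --- a multilinear polynomial in coarse-grained partition-function variables whose variance sits at the same critical scaling \eqref{def:critbeta} as the microscopic disorder --- and applies the Lindeberg principle at the level of these coarse-grained variables; uniqueness and universality of the limit then come from the fact that the coarse-grained approximation is uniform in $n$ and its limit depends on the disorder only through its first two moments, not from a moment-determinacy argument. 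Your use of Lindeberg only to ``wash out higher cumulants of the limiting moments'' misses this: Lindeberg must be deployed on the coarse-grained decomposition to substitute for the unavailable method of moments. The peripheral items are fine in outline ((i)--(iii) from discrete symmetries, $\sfR_{\sfa n}=\sfR_n+\frac{\log \sfa}{\pi}+o(1)$ giving the $\vartheta\mapsto\vartheta+\log\sfa$ shift, (iv) from \cite{CSZ24+}, (v) from \cite{CSZ23b}), but they presuppose the existence and uniqueness of $\mathscr{Z}^\vartheta$, which your argument does not deliver.
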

The covariance function of the Stochastic Heat Flow is given by
\begin{align}\label{SHFcov}
&\bbcov[\mathscr{Z}^\vartheta_{s,t}(\dd x, \dd y), \mathscr{Z}^\vartheta_{s,t}(\dd x', \dd y')]
	= \tfrac{1}{2} \, K_{t-s}^\vartheta(x,x'; y, y') \, \dd x \, \dd y \, \dd x' \, \dd y' \,, \qquad \text{with} \notag \\
	\\
&K_{t}^{\vartheta}(z,z'; w,w')
	\,=\, \pi \, \iiiint\limits_{\substack{0 < s < u < t \\ a,b \in \R^2}}
	\big\{ g_{\frac{s}{2}}(a-z) \, g_{\frac{s}{2}}(a-z') \big\}
	 \, G_\vartheta(u-s) \, g_{\frac{u-s}{4}}(b-a)   \notag \\
	& \qquad\qquad\qquad\qquad
	\times \big\{ g_{\frac{t-u}{2}}(w-b)\, g_{\frac{t-u}{2}}(w'-b) \big\} \,
	\dd s \, \dd u \, \dd a \, \dd b, \,  \notag \\
	\notag \\
	&G_\vartheta(t) = \int_0^\infty \frac{e^{(\vartheta-\gamma)s} \, s \, t^{s-1}}{\Gamma(s+1)} \, \dd s 
	\quad\text{and $g_t(x)$ the two-dimensional heat kernel.}\notag
\end{align}
The covariance kernel $K_{t-s}^\vartheta(x,x'; y, y')$
first appeared in the setting of the $2d$ stochastic heat equation \cite{BC98}.
Explicit but complicated expressions for the higher, mixed moments of the Critical 2d SHF also exist \cite{GQT21, CSZ19}.
A point to be remarked is that the $k^{th}$ moment grows too fast in $k$ (current lower bounds are of the 
form $e^{ck^2}$ \cite{CSZ23b}, 
while the conjectured growth is $e^{e^k}$) and, thus, cannot be used to determine the distributional 
properties of the field.  
A question of interest is:
\begin{question}
Provide an axiomatic characterisation of the Critical 2d SHF. Study its fine properties as a random measure and
explore its universality scope.
\end{question}

\subsection{Some points of the methods.}\label{sec4methods}
The approach to Theorems \ref{thm:marginal}, \ref{fieldDPRM} and \ref{th:mainSHF}
starts with a representation of the partition function in terms of an expansion in multilinear
polynomials of disorder. Such an expansion often  goes by the name {\it chaos expansion}
in stochastic analysis or {\it high temperature expansion} in statistical physics. Let us start describing it.
Denote by
 \begin{align}
 	\xi_{t,x} := \sigma(\beta)^{-1}	
	\big( e^{\beta \omega_{t,x} -\gl(\gb)} - 1 \big)  \quad \text{with} \quad
\sigma(\beta):=\sqrt{ e^{\lambda(2\beta) - 2\lambda(\beta)}-1 },
\end{align}
with $\sigma(\beta)$ chosen so that $\bbV ar(\xi_{t,x})=1$. 
Using the fact that 
\begin{align*}
e^{(\beta \omega_{t,x} -\gl(\gb))\ind_{\{S_t=x\}}} - 1 = 
\Big(e^{\beta \omega_{t,x} -\gl(\gb)} - 1\Big) \ind_{\{S_t=x\}},
\end{align*}
we can then write
\begin{align*}
\prod_{t=1}^n e^{(\beta \omega_{t,x}-\lambda(\beta) ) \ind_{\{S_t=x\}}}
&=\prod_{t=1}^n \Big( 1+ \big( e^{(\beta \omega_{t,x}-\lambda(\beta) ) \ind_{\{S_t=x\}}} -1\big)  \Big) \\
&=\prod_{t=1}^n \Big( 1+ \sigma(\beta) \xi_{t,x} \ind_{\{S_t=x\}}  \Big)\\
&= \sum_{k=1}^n \, \sumtwo{1\leq t_1<\cdots <t_k < n}{x_1,...,x_k\in \bbZ^d} \,
 \sigma(\beta)^k \prod_{i=1}^k    \xi_{t_i,x_i} \, \ind_{\{S_{t_i}=x_i\}}.
\end{align*}
Inserting this into the original expression for $W_n^\beta(x)$, we obtain its
chaos or high temperature expansion:
\begin{equation}\label{eq:poly}
\begin{split}
	W_{n}^\beta(x) = 1 + \sum_{k=1}^N \sigma(\beta)^k
	\sumtwo{0 =t_0 < t_1 < \ldots < t_k \le n}
	{x_0=x, \ x_1, \ldots, x_k \in \bbZ^2}
	\prod_{i=1}^k q_{t_i - t_{i-1}}(x_i - x_{i-1}) \, \xi_{t_i, x_i}, 
\end{split}
\end{equation}
	The above expansion for $W_n^{\beta_n}$ has the form of a multilinear polynomial. 
	This structure plays an important role in the approach as it allows for a robust limit theory.
	In particular, it allows for the use of two powerful tools, the {\it Lindeberg principle} and the 
	{\it Fourth Moment Theorem}. We will expose these below but first let us introduce the more 
	general form of a multilinear polynomial (a particular case of which is expansion \eqref{eq:poly}).
\begin{definition}[Multilinear polynomials]\label{def:mul-pol}
Let $S$ be an index set and $\xi:=(\xi_x \colon x\in S)$ be a family of i.i.d. random variables.
Consider also a kernel $\psi$ defined as a real function on the subsets of $S$. 
We define the multilinear polynomial $\Psi$ associated to the family $\xi$ and the kernel $\psi$, to be
\begin{align*}
\Psi(\xi):= \sum_{I\subset S, \,|I|<\infty} \psi(I) \prod_{x\in I} \xi_x.
\end{align*}
We also define the degree of $\Psi$, ${\rm deg}(\Psi):=\max\{ |I| \colon I\subset S \,\,\text{such that $\psi(I)\neq 0$} \}$.
\end{definition}
We also need the important notion of {\it influence}, which roughly quantifies how much a function
of several random variables depends on any individual one of them.
\begin{definition}[Influence]\label{def:influence}
Let $S$ be an index set, $\xi=(\xi_x\colon x\in S)$ be a family of i.i.d., real random variables and
$f\colon \R^S\to \R$ a function of $\xi$. The influence of the individual random variable $\xi_x, x\in S$
on $f$ is defined as
\begin{align*}
{\rm Inf}_x(f):=\bbE \big[ \bbV ar(f(\xi)) \,\big| \, \{\xi_y\}_{y\neq x} \, \big].
\end{align*} 
In the case that $f$ is a multilinear polynomial $\Psi$ with kernel $\psi$, it is easy to conclude that
the influence takes the form
\begin{align*}
{\rm Inf}_x(\Psi)
= \bbV ar\Big( \frac{\partial \Psi(\xi)}{\partial \xi_x}\Big)
=\sum_{I\ni x} \psi(I)^2.
\end{align*}
\end{definition}
Controlling the influence allows to control how much the distribution of a multilinear polynomial 
$\Psi$ will change, if the input changes from one family of random variables $\xi$ to another one $\zeta$.
The relevant result is the Lindeberg principle. 
\begin{theorem}[Lindeberg principle]\label{thm:Lind}
Let $\Psi$ a multilinear polynomial and $\xi=(\xi_x\colon x\in S)$ and $\zeta=(\zeta_x\colon x\in S)$
two independent families of i.i.d. random variables with mean $0$, variance $1$ and uniformly 
integrable second moments ( i.e. $\xi_x^2$ and $\eta_x^2$ are uniformly integrable). 
Then for any function $f\in C_b^3(\R;\R)$ and every $\epsilon>0$
 there exist $C_\epsilon$, depending on the sup-norm of the derivatives of $f$ and the degree of $\Psi$, ${\rm deg}(\Psi)$ 
 (see Definition \ref{def:mul-pol}), such that
 \begin{align*}
 \big| \bbE\big[f(\Psi(\xi)) \big] -\bbE\big[f(\Psi(\zeta)) \big] \big| \leq \epsilon + C_\epsilon \sqrt{\max_{x\in S} {\rm Inf}_x(\Psi)}.
 \end{align*}
\end{theorem}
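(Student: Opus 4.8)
The plan is to run the classical Lindeberg swapping argument: replace the variables $\xi_x$ by the variables $\zeta_x$ one at a time, following some fixed enumeration $x_1, x_2, \ldots$ of $S$, and control the error introduced at each swap by a third-order Taylor expansion. Fix $f \in C_b^3(\R;\R)$. At the $j$-th step we compare two configurations that agree on all coordinates except the $j$-th, where one uses $\xi_{x_j}$ and the other uses $\zeta_{x_j}$; everything to the left of $j$ has already been switched to $\zeta$, everything to the right is still $\xi$. Writing $\Psi$ as $\Psi = \Psi_{\hat x} + \xi_x \, \partial_{\xi_x}\Psi$, where $\Psi_{\hat x}$ and $\partial_{\xi_x}\Psi$ do not depend on the coordinate at $x$ (this is exactly multilinearity), we Taylor-expand $f$ around the point $\Psi_{\hat x}$. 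The zeroth and first-order terms match in expectation because $\xi_{x_j}$ and $\zeta_{x_j}$ have the same mean $0$, and the second-order terms match because they have the same variance $1$; the genuine error is the third-order remainder.

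The key computation is then to bound the sum over $j$ of these third-order remainders. Each remainder term is bounded by $\tfrac{1}{6}\|f'''\|_\infty$ times $\bbE\big[\,|\,\xi_{x_j}\,|^3 \, (\partial_{\xi_{x_j}}\Psi)^3\,\big]$ (and the analogous term with $\zeta$), and since $\partial_{\xi_{x_j}}\Psi$ is independent of the coordinate at $x_j$, this factorises. The quantity $\bbE\big[(\partial_{\xi_{x_j}}\Psi)^2\big]$ is, by Definition~\ref{def:influence}, precisely $\mathrm{Inf}_{x_j}(\Psi)$. To turn a third moment into a second moment one bounds $(\partial_{\xi_{x_j}}\Psi)^3 \le \big(\max_x \sqrt{\mathrm{Inf}_x(\Psi)}\big)\cdot (\partial_{\xi_{x_j}}\Psi)^2$ on the event where $|\partial_{\xi_{x_j}}\Psi|$ is not too large, and treats the complementary event using the uniform integrability of the second moments of $\xi,\zeta$ — this is where the truncation at level $\epsilon$ enters. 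Summing over $j$, the $\sum_j \mathrm{Inf}_{x_j}(\Psi) = \sum_{I} |I|\,\psi(I)^2$ is controlled by the total variance of $\Psi$ (finite, hence $O(1)$), so the net bound is $\epsilon + C_\epsilon\sqrt{\max_x \mathrm{Inf}_x(\Psi)}$, with $C_\epsilon$ depending only on $\|f''\|_\infty, \|f'''\|_\infty$ and the truncation scale.

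The main obstacle is the third-moment control: a priori $\xi_x$ need only have a finite, uniformly integrable second moment, not a third, so one cannot simply write $\bbE|\xi_x|^3 < \infty$. The fix is the standard two-regime split — truncate $\xi_x$ (and $\zeta_x$) at a threshold $T_\epsilon$ chosen so that the tail contribution $\bbE[\,\xi_x^2 \ind_{|\xi_x|>T_\epsilon}\,]$ is uniformly below $\epsilon$ (possible precisely by uniform integrability), use the bounded range to extract the extra factor $\sqrt{\max_x \mathrm{Inf}_x(\Psi)}$ from the truncated part, and bound the tail part crudely using $\|f''\|_\infty$ and a first-order expansion rather than a third-order one. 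A secondary technical point is that $S$ may be infinite, so one first proves the estimate for polynomials depending on finitely many coordinates and then passes to the limit using that $\Psi(\xi) \in L^2$ and $f$ is bounded and uniformly continuous; the bound is uniform in the truncation of the index set, so this causes no difficulty.
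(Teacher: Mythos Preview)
The paper does not give a proof of this theorem; it states the result and refers to \cite{CSZ17a} (extending \cite{MOO10}). Your overall architecture --- coordinate-by-coordinate Lindeberg swapping, using multilinearity to write $\Psi = \Psi_{\hat x} + \eta_x\,\partial_{\eta_x}\Psi$, third-order Taylor expansion with the first two orders cancelling by moment matching, and truncation of $\xi_x,\zeta_x$ at a level $T_\epsilon$ chosen via uniform integrability to handle the lack of third moments --- is exactly the right one and matches the argument in the cited references.

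The gap is in how you pass from the cubic quantity $\bbE\big[|\partial_{\xi_{x_j}}\Psi|^3\big]$ to the influence ${\rm Inf}_{x_j}(\Psi)=\bbE\big[(\partial_{\xi_{x_j}}\Psi)^2\big]$. You write that on the event where $|\partial_{\xi_{x_j}}\Psi|$ is ``not too large'' one has $(\partial_{\xi_{x_j}}\Psi)^3 \le \sqrt{\max_x{\rm Inf}_x(\Psi)}\cdot (\partial_{\xi_{x_j}}\Psi)^2$, and that the complementary event is handled by the uniform integrability of $\xi_x^2,\zeta_x^2$. But $\sqrt{{\rm Inf}_x(\Psi)}$ is an $L^2$-norm, not a pointwise bound on $\partial_{\xi_x}\Psi$, so the event where that pointwise inequality fails has no reason to be small; and the uniform integrability hypothesis is a statement about the single-site marginals $\xi_x,\zeta_x$, not about the polynomial $\partial_{\xi_x}\Psi$ in the \emph{other} coordinates --- it gives no information on the tails of $\partial_{\xi_x}\Psi$. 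You are conflating the truncation of $\xi_x$ (which is correct and produces the $\epsilon$-term) with a control on $\partial_{\xi_x}\Psi$ (which it does not provide).

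The missing ingredient is a hypercontractive estimate $\bbE\big[|\partial_{\xi_x}\Psi|^3\big] \le C\,{\rm Inf}_x(\Psi)^{3/2}$, which immediately yields the factor $\sqrt{\max_x{\rm Inf}_x(\Psi)}$ after pulling out one half-power and summing $\sum_x{\rm Inf}_x(\Psi)$. Such a bound is available \emph{once the variables have been truncated} (hence are bounded), via Bonami-type inequalities for multilinear polynomials of bounded degree; this is how \cite{MOO10, CSZ17a} close the argument. Note also that your final step --- controlling $\sum_x{\rm Inf}_x(\Psi)=\sum_I|I|\,\psi(I)^2$ by the variance of $\Psi$ --- requires $\Psi$ to have bounded degree, an assumption implicit in all the applications here but not made explicit in the statement as recorded.
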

Lindeberg principles have a long history. The philosophy was introduced by Lindeberg as direct way to 
prove the central limit theorem. More recently this principle has found a wide range of applications. 
The above theorem was established in \cite{CSZ17a}, Theorem 2.6 extending relevant work from \cite{MOO10}.
The significance of the result is that, if one has a sequence of multilinear functionals $\Psi_n$ and
one can show that asymptotically the influence is negligible, i.e. $\lim_{n\to \infty} \max_{x} {\rm Inf}_x(\Psi)\to 0$, as $n\to \infty$, then the asymptotic distribution of $\Psi_n(\xi)$ is stable and, thus, one can replace
$\xi$ by another set of variables that have more convenient distribution to work with, e.g. normal variables. 

We also need to record the following powerful theorem, which goes under the name {\it Fourth Moment Theorem} and identifies when asymptotic limits of multilinear polynomials are Gaussian.
The origins of the fourth moment theorem can be traced to \cite{dJ87, dJ90, NP05} but there has 
been a huge number of extensions and applications 
(see also \cite{CSZ17b}, Theorem 4.2 for  bit more general setting than what is presented below).
\begin{theorem}[Fourth moment theorem]\label{thm:fourth}
Let $\Psi_n$ be a sequence of 
multilinear polynomials on a family $\xi=(\xi_x\colon x\in S)$ of i.i.d. random variables
with mean $0$. Assume also $\Psi_n$ has variance $1$ and the each variable $\xi_x$ has asymptotically 
 negligible influence on $\Psi_n$.
Then the sequence $\Psi_n$ converges in distribution to a
standard normal random variable $\cN$ if $\bbE\big[ \Psi_n(\xi)^4 \big]\to 3$, as $n\to\infty$.
If, moreover, the family $\xi$ is a family of normal random variables, then this is an if-and-only-if statement.  
\end{theorem}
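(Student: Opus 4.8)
The plan is to run the two--step scheme behind the fourth moment phenomenon: first settle the Gaussian case, then transfer it to arbitrary i.i.d.\ inputs via the Lindeberg principle of Theorem~\ref{thm:Lind}. Throughout I would normalise so that $\psi_n(\emptyset)=0$ and $\sigma_n^2:=\bbE[\Psi_n(\xi)^2]=\sum_{I}\psi_n(I)^2\to 1$, take the degrees of the $\Psi_n$ to be uniformly bounded by some $d$ (so that the hypercontractivity estimates below apply; in the applications one truncates the chaos expansion to reduce to this case), and use --- besides $\bbE[\Psi_n(\xi)^4]\to 3$ --- the companion condition $\max_{x}\Inf_x(\Psi_n)\to 0$. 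The latter is genuinely needed for a central limit theorem (e.g.\ $\Psi_n\equiv\xi_1$ has $\bbE[\Psi_n^4]\to 3$ as soon as $\bbE[\xi_1^4]=3$, yet is not asymptotically normal unless $\xi_1$ already is), and in the settings of this review it is precisely the quantity one checks for the multilinear forms that arise.

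First I would compare fourth moments with the Gaussian reference model. Let $\zeta=(\zeta_x)_{x\in S}$ be i.i.d.\ $\cN(0,1)$. Expanding $\bbE[\Psi_n(\xi)^4]=\sum_{I_1,\dots,I_4}\psi_n(I_1)\cdots\psi_n(I_4)\prod_{x}\mu_{m_x}$, where $m_x:=\#\{j:x\in I_j\}$ and $\mu_0=\mu_2=1$, $\mu_3=\bbE[\xi^3]$, $\mu_4=\bbE[\xi^4]$ (and the same expansion for $\zeta$ with $\mu_3=0$, $\mu_4=3$), the difference $\bbE[\Psi_n(\xi)^4]-\bbE[\Psi_n(\zeta)^4]$ is supported on tuples in which some coordinate has multiplicity $3$ or $4$. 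Grouping such tuples by this ``heavy'' coordinate $x$ and using Cauchy--Schwarz together with $\sum_x\Inf_x(\Psi_n)^2\le \big(\max_x\Inf_x(\Psi_n)\big)\sum_x\Inf_x(\Psi_n)\le d\,\sigma_n^2\max_x\Inf_x(\Psi_n)$, one bounds this difference by $C\big(\max_x\Inf_x(\Psi_n)\big)^{1/2}\to 0$. Hence $\bbE[\Psi_n(\zeta)^4]\to 3$ as well.

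Next I would invoke the Gaussian fourth moment theorem: since $\Psi_n(\zeta)$ lies in a finite sum of Wiener chaoses of bounded order with $\bbE[\Psi_n(\zeta)^2]\to 1$ and $\bbE[\Psi_n(\zeta)^4]\to 3$, the Nualart--Peccati criterion \cite{NP05} yields $\Psi_n(\zeta)\to\cN(0,1)$ in distribution. (Mechanism: by the multiple--integral product formula the fourth moment of a fixed chaos equals $3$ times its squared variance plus a sum of non-negative contraction norms $\|f_n\otimes_r f_n\|^2$, and the cross terms between distinct chaoses are likewise non-negative; so $\bbE[\Psi_n(\zeta)^4]\to 3$ forces all contractions to vanish, and a Malliavin--Stein bound --- or the equivalence in \cite{NP05} --- converts this into normal convergence, with asymptotic independence of the homogeneous components.) Then, for $f\in C_b^3(\R;\R)$, Theorem~\ref{thm:Lind} applied to $\xi$ and $\zeta$ gives $|\bbE f(\Psi_n(\xi))-\bbE f(\Psi_n(\zeta))|\le\epsilon+C_\epsilon\big(\max_x\Inf_x(\Psi_n)\big)^{1/2}$, which tends to $\epsilon$; letting $\epsilon\downarrow 0$ and using that $C_b^3$ is convergence--determining gives $\Psi_n(\xi)\to\cN(0,1)$. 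For the converse in the Gaussian case, if $\Psi_n(\zeta)\to\cN(0,1)$ then hypercontractivity on the bounded--degree chaos bounds $\sup_n\bbE[\Psi_n(\zeta)^4]$ in terms of $\sup_n\bbE[\Psi_n(\zeta)^2]<\infty$, so $\{\Psi_n(\zeta)^4\}_n$ is uniformly integrable and convergence in law upgrades to $\bbE[\Psi_n(\zeta)^4]\to\bbE[\cN^4]=3$.

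The main obstacle I expect is twofold. Conceptually, one must recognise that the bare hypothesis $\bbE[\Psi_n(\xi)^4]\to 3$ has to be read together with $\max_x\Inf_x(\Psi_n)\to 0$, and that it is the influence condition one verifies in practice. Technically, the effort is concentrated in the comparison step: organising the $O(|S|^4)$--term expansion of the fourth moment, matching the Gaussian and non-Gaussian contributions coordinate by coordinate, and showing the mismatch collapses to the ``heavy--coordinate'' sums that are controlled by influences. Once these are in hand, the Gaussian statement is the black box \cite{NP05} and the passage to general $\xi$ is a one--line application of Theorem~\ref{thm:Lind}.
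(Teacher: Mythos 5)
The paper does not actually prove Theorem \ref{thm:fourth}: it is quoted as a classical black box, with pointers to \cite{dJ87, dJ90, NP05}, so there is no in-text argument to compare yours against. Your reconstruction follows the standard modern route — universality of homogeneous sums in the spirit of \cite{NPR10}: transfer the fourth-moment condition to Gaussian inputs via an influence estimate, apply the Nualart--Peccati criterion there, and return to general inputs with the Lindeberg principle (Theorem \ref{thm:Lind}) — which is exactly consistent with how the review organises its toolbox. Your observation that the statement, read literally, needs the companion hypotheses (unit-variance normalisation, bounded degree, finite fourth moments of $\xi$, and above all $\max_x \Inf_x(\Psi_n)\to 0$, with the counterexample $\Psi_n\equiv\xi_1$) is correct and worth making: the precise results of de Jong and of \cite{NPR10} carry exactly these assumptions, and the vanishing-influence condition is what one actually verifies in the applications of Section 3.

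Two steps in your plan need more care. First, your Gaussian step extends \cite{NP05} from a single Wiener chaos to a finite sum of chaoses of different orders by asserting that ``the cross terms between distinct chaoses are likewise non-negative''; this is the one genuine gap, since that positivity is not immediate. The standard repairs are either to argue that the vanishing of the total fourth cumulant forces the fourth cumulant of each homogeneous component to vanish and then to invoke the multivariate (Peccati--Tudor type) fourth moment theorem to get joint normality and asymptotic independence of the components, or to bypass the Gaussian detour altogether by citing de Jong's theorem \cite{dJ87, dJ90}, which handles fixed-degree multilinear forms with general inputs in one stroke under precisely the influence and fourth-moment hypotheses you listed. Second, the comparison of fourth moments between $\xi$ and $\zeta$ needs $\bbE[\xi^4]<\infty$ and a quantitative bound on the ``heavy coordinate'' sums; the exponent $1/2$ you claim is plausible for bounded degree (Cauchy--Schwarz plus hypercontractivity), but the constant depends on the degree and on the third and fourth moments of $\xi$, and this is where the bounded-degree truncation is genuinely used. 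With these points made precise, the plan is sound, and your treatment of the converse in the Gaussian case (hypercontractivity giving uniform integrability of the fourth powers) is fine.
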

Let us now use the above toolbox to give a sketch of the proof of Theorem \ref{2dsubcritical}. 
An alternative proof has been provided by 
Caravenna-Cottini \cite{CC22} and Cosco-Donadini \cite{CD24}, who looked directly at $\log W_N^{\hat\beta}$ and provided
approximations for which the classical Lindeberg central limit theorem could be applied. 

\begin{proof}[{\bf  Proof of Theorem \ref{2dsubcritical} (Sketch)}]
Let us first discuss why the limit is $0$ for $\hat\beta\geq 1$. This will follow by approximating the partition function at 
critical value through its log-normal limit for $\hat\beta <1$ and using the monotonicity of the fractional moments as this follows from Proposition \ref{prop:mon}.
More precisely, let $\gamma\in (0,1)$. By the fact that for $\gamma\in(0,1)$ the fractional moment $\beta \to \bbE[(W_n^{\beta})^\gamma]$ is decreasing as a function of $\beta$, we have that for 
$\beta_n=\hat\beta /\sqrt{\sfR_n}$ with $\hat\beta<1$ :
\begin{align*}
\lim_{n\to\infty}\bbE[(W_n^{1/\sqrt{\sfR_n}})^\gamma] 
\leq \lim_{n\to\infty}\bbE[(W_n^{\hat\beta/\sqrt{\sfR_n}})^\gamma] 
= \bbE\big[ e^{\gamma\sigma_{\hat\beta} \cN- \tfrac{1}{2} \gamma\sigma_{\hat\beta}^2}\big]
= e^{\tfrac{(\gamma-1)\gamma}{2}\sigma^2_{\hat\beta}},
\end{align*}
where in the first equality we used the fact that $(W_N^{\beta_n})^\gamma$ is uniformly integrable for 
$\gamma\in(0,1)$ (since $\bbE\big[ W_N^{\beta_n}\big]=1$) and, thus, we can interchange limits and expectations. The last equality is then an explicit computation of the moment generating function of the normal distribution. Letting now $\hat\beta\uparrow 1$ and using the fact that $\sigma_{\hat\beta} \to \infty$ as $\hat\beta\uparrow 1$, we obtain that $\lim_{N\to\infty}\bbE[(W_n^{1/\sqrt{\sfR_n}})^\gamma)]=0$,
By monotonicity, we obtain that also the fractional moment of any $W_n^{\beta_n}$ corresponding to $\hat\beta\geq 1$ converges to $0$.
The convergence of the fractional moment to $0$ implies that $W_n^{\beta_n}$ tends to $0$ in distribution for all $\beta_n$ corresponding
to $\hat\beta\geq 1$. 
\vskip 2mm
Let us now sketch the log-normal convergence in the regime $\hat\beta<1$.
An important first observation has to do with the correct {\it {\bf time scale}} upon which one observes a change in the
fluctuations. To determine this time scale, one may look at the partition function of a system of length $tn$ for arbitrary $t>0$. 
Computing the variance of $W_n^{\beta_n}$ using the chaos expansion \eqref{eq:poly} and the orthogonality of its terms, one obtains that it is asymptotically independent of $t$ when $n\to\infty$. 
 One can be easily convinced about this fact by looking, for example, at the variance of the first
term in the chaos expansion, which behaves as 
\begin{align*}
\sigma(\beta_n)^2 \,\bbvar\Big( \sumtwo{1\leq t_1 \leq tn}{x_1\in \Z^2} q_{t_1}(x_1) \xi_{t_1,x_1}\Big) &= 
 \frac{\hat\beta^2}{\sfR_n}  \sumtwo{1\leq t_1 \leq tn}{x_1\in \Z^2} q_{t_1}(x_1)^2 = \hat\beta^2 \frac{R_{tn}}{R_n}
 \xrightarrow[n\to\infty]{} \hat\beta^2,
\end{align*}
which is independent of $t$, since $R_n$ is a slowly varying function.
Moreover, a similar computation shows that the contribution to the fluctuations from disorder $\xi_{i,x}$ sampled in the time interval
$[tn,n]$ is negligible, for any $t>0$ fixed.  
On the other hand, a similar computation shows a change in the fluctuations when at time scales
$n^t$, with $t>0$. These facts dictate that the meaningful time scale is not $tn$ but $n^t$ and that the partition function 
$W_n^{\beta_n}$ essentially depends only on disorder $\xi_{i,x}$ with $i/n\to 0$, as $n\to\infty$. 
We remark that this observation plays a crucial role in the study of asymptotic limits below the critical
temperature, not only for the directed polymer model but also for the other marginal models or stochastic
PDEs that fall in this framework \cite{CSZ18+, CSZ20, G18, DG22, DG23}.

To quantify the observation on the time scale,
 we decompose the summations over $t_1,...,t_k$ in the multilinear expansion \eqref{eq:poly}, over intervals
 $t_j-t_{j-1}\in I_{i_j}$, with $I_{i_j}=\big(n^{\frac{i_j-1}{M}}, n^{\frac{i_j}{M}} \big]$, $i_j\in\{1,...,M\}$ with $M$ being 
 a coarse graining parameter (which will eventually tend to infinity). 
 We can then rewrite the $k$-th term in the expansion \eqref{eq:poly} as  
\begin{align}\label{thetas}
&\frac{\hat\beta^k}{M^{k/2}}\sum_{1\leq i_1,...,i_k\leq M}  \Theta^{n,M}_{i_1,...,i_k} \qquad \text{where}\\
&\Theta^{n,M}_{i_1,...,i_k} :=
\left(\frac{M}{\sfR_N}\right)^{k/2}\sumtwo{t_j-t_{j-1}\in\, I_{i_j}\,\,\text{for }\,j=1,...,k}{x_1,...,x_k \in Z^d}\,\,\, \prod_{j=1}^k q_{t_j-t_{j-1}}(x_j-x_{j-1}) \,\xi_{t_j,x_j}.
\notag
\end{align}
For technical reasons, that should become obvious below, we are led to restrict the summation in \eqref{thetas} to the subset 
\begin{align*}
&i_1,...,i_k \in \{1,...,M\}_\sharp \qquad \text{with} \qquad \\
& \{1,...,M\}_\sharp := \{\boldsymbol{i}=(i_1,...,i_k) \colon |i_j-i_{j'}|\geq 2 ,\,\, \text{for all $j\neq j'$}  \}.
\end{align*}  
It is not difficult to justify, via an $L^2(\bbP)$ estimate, that this restriction has negligible error but we will omit the details.

We now observe that if an index $i_j$ is a running maximum for the $k$-tuple $\boldsymbol{i}:=(i_1,...,i_k)$, i.e. $i_j> \max\{i_1,...,i_{j-1}\}$ 
then $\big(n^{\frac{i_j-1}{M}}, n^{\frac{i_j}{M}} \big] \ni t_j \gg 
 t_{r} \in \big(n^{\frac{i_r-1}{M}}, n^{\frac{i_{r}}{M} }\big] $, for all
$r<j$, when $n\to\infty$.
This is the point where we also use the restriction into  $\{1,...,M\}_\sharp$, which imposes that indices
$i_j$ are well separated.
This  implies that
$ q_{t_j-t_{j-1}}(x_j-x_{j-1}) \approx q_{t_{j}}(x_j)$ for $t_j\in I_{i_j}$ and $t_{j-1}\in I_{i_{j-1}}$, where the
drop out of the spatial term $x_{j-1}$ makes use of the diffusive properties of the random walk. 
Thus, we will decompose the sequence $\boldsymbol{i}:=(i_1,...,i_k)$  according to its running maxima, i.e.
 $\boldsymbol{i}=(\boldsymbol{i}^{(1)},...,\boldsymbol{i}^{(\mathfrak{m})})$ identified by the properties: 
 \begin{itemize}
\item $\boldsymbol{i}^{(r)}:=(i_{\ell_{r}},...,i_{\ell_{r+1}-1})$ with
 $i_{1}=i_{\ell_1}<i_{\ell_2}<\cdots<i_{\ell_{\mathfrak{m}}}$ being the successive running maxima,
 \item $i_{\ell_{r}} > i_{\ell_{r}+1},...,i_{\ell_{r+1}-1}$, for $r=1,...,\mathfrak{m}$.
 \end{itemize}
  It can then be shown 
 that  \eqref{thetas} asymptotically factorizes for large $n$ and takes the form
\begin{align}\label{theta_deco}
 \frac{\hbeta^k}{M^{\frac{k}{2}}} \sum_{\boldsymbol{i}\in \{1,\ldots, M\}^k_\sharp}
\Theta^{n;M}_{\boldsymbol{i}^{(1)}} \Theta^{n;M}_{\boldsymbol{i}^{(2)}}\cdots \Theta^{n;M}_{\boldsymbol{i}^{(\mathfrak{m})}}.
\end{align}
The heart of the matter is to show that all the $\Theta^{n;M}_{\boldsymbol{i}^{(j)}}$ converge jointly,
 when $n\to\infty$ to standard normal variables. This is where we use the Fourth Moment Theorem
 \ref{thm:fourth}.
  Assuming this, let us see how we can obtain the convergence to the log-normal distribution in \eqref{eq:conve0} when $\hat\beta<1$. 
 
 We can start by replacing, using the Lindeberg Principle \ref{thm:Lind}, 
 the $\Theta^{N;M}_{\boldsymbol{i}^{(j)}}$ variables in \eqref{theta_deco} by standard normals, which we denote by $\zeta_{\boldsymbol{i}^{(j)}}$. Recall that $\boldsymbol{i}^{(j)}=(i_{\ell_{j}},...,i_{\ell_{j+1}-1})$ and since $i_{\ell_{j}}, j\geq 1$ are
 the running maxima we have $i_{\ell_{j}} > i_{\ell_{j}+1},...,i_{\ell_{j+1}-1}$.
 Then denoting by
  \begin{align*}
 \zeta_r(a):= \sum_{(a_2,...,a_r)\in \{1,...,a-1\}^{r-1}} \zeta_{(a,a_2,...,a_r)} ,
  \end{align*}
we have that $W_n^{\beta_n}$ is approximately (in the large $n$ limit)
 \begin{align*}
 W_{n}^{\beta_n} \approx 1+\sum_{k=1}^\infty \sum_{m=1}^k  \frac{\hat\beta^k}{M^{\tfrac{k}{2}}}
 \sumtwo{1\leq \ell_1<\cdots < \ell_m\leq k}{1\leq a_1<a_2<\cdots<a_m\leq M} \prod_{j=1}^m \zeta_{\ell_{j+1}-\ell_j}(a_j),
 \end{align*}
 where $m$ denotes the number of running maxima in the sequence $\boldsymbol{i}=(i_1,...,i_k)$ in \eqref{thetas} (thus determining the number of dominated sequences),
  $\ell_1,...,\ell_m$ denotes the location of the running maxima in $\boldsymbol{i}$ and $a_1,...,a_m$ denote the values of $i_{\ell_1},...,i_{\ell_m}$. We can continue by rewriting the
  above, via the change of variables $r_j:=\ell_{j+1}-\ell_j$ as
  \begin{align*}
  &1+
\sum_{k=1}^\infty\sum_{m=1}^k \frac{\hat\gb^k}{M^{\frac{k}{2}}}\,
\sumtwo{r_1,\dots r_m\in \bbN}{ r_1+\cdots+ r_m=k }\,\, \sum_{1\leq a_1<a_2<\cdots <a_m\leq M}
 \prod_{j=1}^m \,\,\zeta_{r_{j}}(a_j)  \notag
 \end{align*}
 and interchanging the $k$ and $m$ summations and using that $k=r_1+\cdots +r_m$, 
 we can write it as
 \begin{align} \label{bsZhb3}
&\quad 1 + \sum_{m=1}^\infty \
	\sum_{r_1, \ldots, r_m \in \N}
	\sum_{1 \le a_1 < a_2 < \ldots < a_m \le M} \  \prod_{j=1}^m
	 \frac{\hbeta^{r_j}}{M^{\frac{r_j}{2}}}\ \zeta_{r_j}(a_j) 
	  \notag\\
=&\quad 1 + \sum_{m=1}^\infty \
	\sum_{r_1, \ldots, r_m \in \N}
	\sumtwo{0 < t_1 < t_2 < \ldots < t_m \le 1}{t_1, \ldots, t_m \in \frac{1}{M}\N}
	\ \prod_{j=1}^m \frac{\hbeta^{r_j}}{M^{\frac{r_j}{2}}}\ \zeta_{r_j}(M t_j) 
	 \notag\\
=& \quad 1 + \sum_{m=1}^\infty
	\sumtwo{0 < t_1 < t_2 < \ldots < t_m \le 1}{t_1, \ldots, t_m \in \frac{1}{M}\N}
 	\ \prod_{j=1}^m \bigg\{\sum_{r\in\bbN}\frac{\hbeta^{r}}{M^{\frac{r}{2}}}\ \zeta_{r}(M t_j)\bigg\},
  \end{align}
In the above summation manipulations, the assumption $\hat\beta<1$, that ensures convergence in $L^2(\bbP)$
and the allows the interchange of summations, plays an important role.
  Since $(\hat\gb/\sqrt{M})^r \zeta_r(Mt)$ are normal random variables, independent for different values of $r\in\bbN$ and $t\in M^{-1}\bbN$, we have that the random variables
\[
\Xi_{M,t} := \sum_{r\in\bbN}\frac{\hbeta^{r}}{M^{\frac{r}{2}}}\ \zeta_{r}(M t), \qquad
t\in (0,1]\cap \frac{1}{M}\N,
\]
are also independent normal with mean zero and variance
$$
{\mathbb V}{\rm ar}(\Xi_{M,t}) =  \sum_{r\in\bbN}\frac{\hbeta^{2r}}{M^r}
{\mathbb V}{\rm ar}(\xi_r(Mt)) = \sum_{r\in\N} \frac{\hbeta^{2r}}{M^r} (Mt-1)^{r-1} = \frac{\hbeta^2}{M} \cdot \frac{1+\epsilon_M(t)}{1-\hbeta^2 t},
$$
with the error $\epsilon_M(t)$ easily seen to converge to $0$, uniformly in $t\in [0,1]$, as $M\to\infty$ for $\hbeta<1$.
We can, therefore, represent $\Xi_{M,t}$ in terms of a standard, one dimensional Wiener process $W$:
 \begin{align}\label{Xiencode}
\Xi_{M,t} = \frac{\hbeta (1+\epsilon_M(t))}{\sqrt{1-\hbeta^2 \, t}}
 \int_{t-\frac{1}{M}}^{t} \dd W_s
 = \int_{t-\frac{1}{M}}^{t}  \frac{\hbeta (1+\epsilon_M(t))}{\sqrt{1-\hbeta^2 \, t}} \dd W_s, 
 \qquad t\in [0,1]\cap \frac{1}{M}\N.
\end{align}
and we can rewrite \eqref{bsZhb3} as
\begin{align}\label{hatZMchaos}
1 + \sum_{m=1}^\infty
	 \sumtwo{0 < t_1 < t_2 < \ldots < t_m \le 1}{t_1, \ldots, t_m \in \frac{1}{M}\N}  \,\,\prod_{j=1}^m
	  \int_{t_{j}-\frac{1}{M}}^{t_j} \frac{\hbeta(1+\epsilon_M(t))}{\sqrt{1-\hbeta^2\, t_j}} \dd W_s.
\end{align}
So, for $\hbeta<1$, we have that \eqref{Xiencode} converges in $L^2(\bbP)$, for $M\to\infty$
(here, again, the condition $\hbeta<1$ is crucial), to
\begin{align*}
& 1 + \sum_{m=1}^\infty \ \
	\idotsint\limits_{0 < t_1 < \ldots < t_m < 1} \
	\prod_{j=1}^m \frac{\hbeta}{\sqrt{1-\hbeta^2\, t_j}} \dd W_{t_j} \\
&\hskip 3cm	=  \exp \Bigg\{ \int_0^1  \frac{\hbeta}{\sqrt{1-\hbeta^2\, t}} \dd W(t) -\frac{1}{2} \bbE\Big[ \Big( \int_0^1  \frac{\hbeta}{\sqrt{1-\hbeta^2\, t}} \dd W(t)\Big)^2\Big] \Bigg\} ,
\end{align*}
where the last equality holds by the properties of the Wick exponential \cite[\S 3.2]{J97}. Since $\int_0^1  \frac{\hbeta}{\sqrt{1-\hbeta^2\, t}} \dd W(t)$ is a Gaussian variable
with variance $\int_0^1\tfrac{\hat \beta^2 \,\dd t}{1-\hat\beta^2} = \log(1-\hat\beta^2)$, the result follows.
\end{proof}
The proof of Theorem \ref{fieldDPRM} follows the same lines as the proof just presented. 
However, the approach towards Theorem \ref{th:mainSHF} deviates significantly as at the 
critical temperature the exponential separation of the time scales is lost. 
In this case, the main contributions to the fluctuations
do not come  any more from noise $\{\omega_{i,x}\}$ with $i=o(n)$. Rather, fluctuations are driven by the
noise at scales of order $n$. Another important difference is that looking at the chaos expansion
\eqref{eq:poly} it is not any more the case that terms in chaos of order $k\leq n$ (meaning the $k^{th}$ term in expansion \eqref{eq:poly}) are discounted by a factor $\hat\beta^k$, as $\hat\beta=1$
at the critical case. 
On the contrary, it turns
out that the main contribution to the fluctuations comes from terms in the chaos of order $\log n$.
The fact that the main contribution to the fluctuations comes from chaoses of increasing order
appears to be related to the phenomenon of {\it noise sensitivity} \cite{G11, GS14}. This is also related to the question of whether
the limit (here in particular the Critical 2d SHF) is independent of the original driving noise. This is an interesting question for the
Critical 2d SHF, which is currently open.

 The methods that are employed 
towards Theorem \ref{th:mainSHF} still make use
of the Lindeberg principle but combined with a crucial coarse-graining decomposition. 
It is also crucial in the derivation of Theorem  \ref{th:mainSHF} that the variance of the coarse-grained disorder
has the {\it same} scaling as the microscopic critical scaling $\hat\beta=1$. We refer to
\cite{CSZ23} for details.

\begin{remark}\label{comparison}
{\rm
Let us close this section with a comparison to the choice of the change of measure in \eqref{Xchoice}.
The similarity with the chaos expansion \eqref{eq:poly} is apparent (note that for $\beta=\beta_n$,
it turns out that $\sigma(\beta_n)^2\approx \hat\beta^2/\sf R_n$). The choice of $\sfX$ in \eqref{Xchoice}
with $k$ of order $\log\log\ell$ in $d=2$ (as done in \cite{BL17}) 
is closely related to the fact that in $d=2$ and at the critical
temperature $\hat\beta=1$, the main contribution to the fluctuations of $W_n^{\beta_n}$ comes
from terms in the chaos expansion with growing order (in fact of order $\log n$ as discussed in the previous paragraph). 
The $\log\log$ in the choice of $\sfX$ in \eqref{Xchoice},
as opposed to a single log which captures the main contribution at criticality, 
is due to the fact that \cite{BL17} works very slightly off the critical temperature.
}
\end{remark}
\section{Heavy tail disorder}\label{sec:heavy}
In this section we will review a phase transition diagram, much of which is still conjectural, in dimension $1$ and in the
case when disorder has heavy tails determined by a tail exponent $\alpha \in (0,\infty)$ and suitable intermediate disorder
scaling $\beta_n=\hat\beta n^{-\gamma}$, with $\gamma\in [0,\infty)$. The phase diagram proposes a decomposition of the
parameter space $(\gamma, \alpha)\in [0,\infty)\times (0,\infty]$\footnote{the inclusion of $\alpha=\infty$ intends to also cover the 
case of disorder with exponential moments for different intermediate disorder scalings} 
to regimes and level lines corresponding to different fluctuation exponents $\chi,\xi$,
which are, roughly, defined to determine the asymptotic behaviours:
\begin{align*}
\log Z_n^{\beta_n} -\sff(\beta_n) \approx n^{\chi}\qquad \text{and} \qquad
\bbE \,\E^{\mu_n^{\beta_n}}\big[ |S_n| \big] \approx n^{\xi},
\end{align*}
as $n\to\infty$ and for suitable centering constant $\sff(\beta_n)$. Note that in this section we will be working with the unnormalised 
partition function $Z_n^{\beta_n}$, instead of $W_n^{\beta_n}$ as the log-moment generating function $\lambda(\beta)$,
which is used in the normalisation $W_n^{\beta_n}$, is not defined for heavy tail disorder.
\subsection{Heavy tails and intermediate disorder phase diagram.}
\begin{assumption}[Heavy tail disorder]\label{ass:heavy}
We assume that for every $(i,y)\in\N\times\Z$, disorder $\omega_{i,y}$ has the following properties:
\begin{itemize}
\item[{\rm (i)}] The cumulative distribution function $F(x):=\bbP(\omega_{i,y}\leq x)$ has regularly varying right tail, in the sense that 
\begin{align}\label{aF}
	\bar{F}(x):=1-F(x)=x^{-\ga}L(x),\quad \text{ for all } \,x>0,
\end{align}
for some exponent $\alpha>0$ and a slowly varying function $L(x)$. The latter means that for any $t>0$ 
we have $L(tx)/L(x)\to 1 $, as $x\to\infty$. 

\item[{\rm (ii)}] When $\alpha> 2$, we will assume that 
\begin{align*}
\bbE[\omega_{i,y}]=0,\quad\bbE[\omega_{i,y}^2]=1.
\end{align*}

\item[{\rm (iii)}] When $\alpha \leq 2$, we  assume that 
\begin{align*}
F(-x)=(c_-+o(1))\bar{F}(x),\quad \text{ as } \,x\to\infty,
\end{align*}
for some $c_-\ge 0$, meaning that, in this case, the left tail is dominated by the right tail. 
\end{itemize}
\end{assumption}
\begin{remark}
In the above set of assumptions $\alpha$ can extend to
$\alpha=\infty$, in which case, we assume finite exponential moments.
\end{remark}
We will also need the function 
\begin{align}\label{def:m}
m(t):=\inf\{ x \colon \bar F(x)\leq t \}
\end{align}
If $\bar F$ satisfies \eqref{aF} with an exponent $\alpha$, it is easy to derive that $m(t)$ has the
asymptotic behaviour $m(t)=t^{1/\alpha} L_0(t)$, for $t\to \infty$ for some slowly varying function 
$L_0(\cdot)$.

The phase space $(\gamma,\alpha)\in [0,\infty)\times (0,\infty]$ is divided into the following regions 
corresponding to different exponents:
\begin{align*}
\setlength\arraycolsep{0.01em}
\begin{array}{rlrl}
	R_1 &:= \big\{(\gamma,\alpha): \gamma>\frac14, \gamma\ge\frac{3}{2\alpha} \big\},
	&R_2 &:= \big\{(1/4,\ga): \alpha \geq 6 \big\}, \\[2mm]
	R_3 &:=\big\{(\gamma,\alpha): 0<\gamma<1/4, \ga \ge \frac{5-2\gamma}{1-\gamma}\big\}, 
	&R_4 &:= \big\{(0,\ga): \alpha> 5 \big\},\\[2mm]
	R_5 & := \big\{(\gamma,\ga): \alpha> 1/2,  \max\{0,\frac{2}{\alpha}-1, \frac{\ga-5}{\ga -  2}  \} < \gamma < \frac{3}{2\alpha} \big\}, \\[2mm]
	R_6 &:= \big\{(\gamma,\ga):  0< \alpha < 2, \gamma= \frac{2}{\alpha}-1 \big\} 
	\qquad\quad \text{     and }  
	&R_7 &:=  \big\{(\gamma,\ga): 0< \alpha < 2, 0 \le \gamma< \frac{2}{\ga}-1 \big\}.\\[2mm]
\end{array}
\end{align*}

The region $R_2\cup R_3\cup R_4\cup R_5\cup R_6$ 
can (conjecturally) be decomposed into level curves over which exponents $\xi$  and $\chi$ stay constant.
More precisely, for any fixed value of $\xi\in[1/2,1]$, the curve in the $(\gamma,\alpha)$ plane 
which gives rise to a specific value of $\xi$ and $\chi=2\xi-1$ is given by 
\begin{align}\label{eq:level}
\Big\{ (\ga,\gamma) \colon \xi= \frac{1+\ga(1-\gamma)}{2\ga-1}\,\,\text{and}\,\,\ga\le \frac{5-2\gamma}{1-\gamma} \Big\}
\,\,\bigcup \,\,\Big\{(\ga,\gamma) \colon \xi=\frac{2(1-\gamma)}{3}\,\,\text{and}\,\,\ga\ge \frac{5-2\gamma}{1-\gamma}  \Big\},
\end{align}
We also refer to Figure \ref{fig-heavy} for a depiction. The blue curve given by 
$\gamma=\max\{0,\frac{5-\alpha}{3\alpha} \}$ is conjecturally the curve in the $(\alpha,\gamma)$ diagram
where the exponents $(\chi,\xi)=(\frac{1}{3}. \frac{2}{3})$ should be attained, while the red curve
$\gamma=\max\{ \frac{1}{4}, \frac{3}{2\alpha}\}$ is (this has actually been proved and we will detail on this 
below) the curve that corresponds to a weak disorder regime with exponents $(\chi,\gamma)=(0,\frac{1}{2})$. Curves of similar shape, i.e. curved inside the red region and continuing straight inside the
blue, are conjecturally spanning the parameters $(\chi,\xi)$ with $\xi$ ranging between $[1/2,1]$ 
and $\chi=2\xi-1$. We note that the KPZ hyper-scaling relation $\chi=2\xi-1$ fails in region $R_1\cup R_7$:
 Inside $R_1$ we still have $\xi=\frac12$ but $\chi=\frac14-\gamma<2\xi-1$. 
 Similarly, in $R_7$ we have $\xi=1$ and $\chi= \frac{2}{\alpha}-\gamma>2\xi-1$. 
\begin{figure}
	\centering                                                        
	\includegraphics[height=70mm]{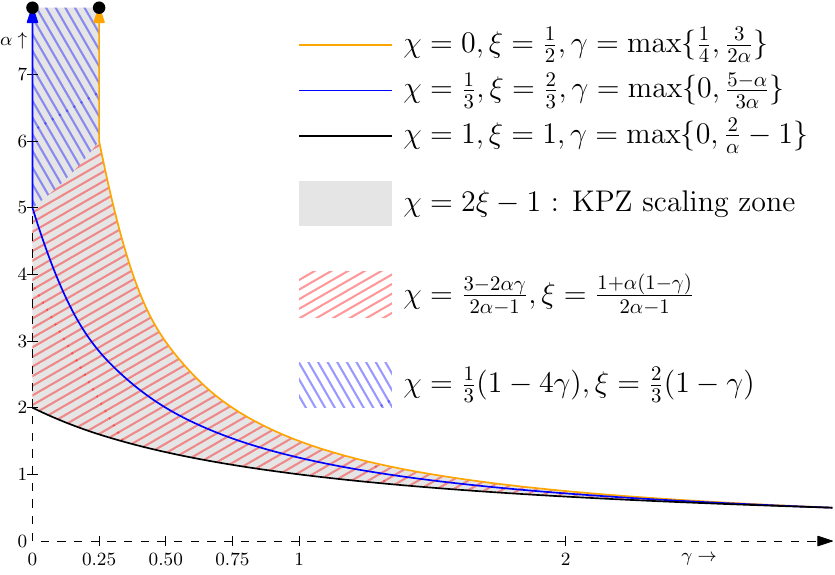}                                                 
	\caption{Phase diagram for fluctuation and transversal exponents} 
	\label{fig-heavy}                                                      
\end{figure}
\vskip 2mm
{\bf Heuristic derivation of the phase diagram.} We can distinguish three mechanisms that
drive fluctuations, which we will detail. The terminology was coined in the very interesting paper \cite{GDBR16} where we refer for conjectures, heuristics and numerics that go beyond what we present here.
We will discuss the mechanisms that drive fluctuations and give rise to the conjectured asymptotics
afterwards. The three regimes are:
\begin{itemize}
\item[{\bf I. Collective optimisation.}] In this case, the polymer is attracted to narrow corridors where 
the energy $H_n^\beta(S)$ is large. In this case, KPZ and Airy process mechanisms prevail.
\item[{\bf II. Elitist.}] This mechanism emerges when heavy tails of the disorder start having a
distinguishing effect and the polymer is attracted to a relatively small (but still significant) 
fraction of sites where disorder is exceptionally
high. In this situation energy starts being balanced by entropy. 
\item[{\bf III. Individual.}] Heavy tails become dominant and the polymer is attracted to a few 
(finite number of) sites where disorder takes large values. 
\end{itemize}

{\bf Heuristics on the Collective regime.}
Let us present the heuristic argument from \cite{AKQ10}.
As already mentioned, this is the situation that no sites carry exceedingly large disorder and fluctuations
are driven by a collective optimisation of the energy. This is very difficult to track and one 
resorts to the detailed information coming from one-dimensional KPZ asymptotics, which 
provide the process limit
\begin{align}\label{Airy_asym}
\Big\{ \frac{\log Z_n^\beta(0,x n^{2/3}) - n\sff(\beta)}{ c(\beta) n^{1/3}} \Big\}_{x\in \R}
\xrightarrow[n\to\infty]{d} \big\{{\rm Airy}_2(x)- x^2 \big\}_{x\in \R},
\end{align}
where the argument $(0,xn^{2/3})$ denotes a starting point $0$ and an end point $xn^{2/3}$
and ${\rm Airy}_2(\cdot)$ is the Airy-2 process. 
This asymptotic limit is largely open for polymer models at positive temperature, even in the integrable cases.
Currently, only the 1-point asymptotics have been established for the log-gamma and
O'Connell-Yor polymers  \cite{COSZ14, OC12, BCR13} and  formal asymptotics only for the
2-point function \cite{NZ17} (see also \cite{D23} for rigorous asymptotics for the 2-points function 
of the closely related six-vertex model). The analogue of \eqref{Airy_asym} for zero-temperature,
i.e. last passage percolation has been established for exponential or geometric random variables \cite{PS02, J03}.
Let us assume \eqref{Airy_asym}, and that it is valid in 
an intermediate disorder scaling $\beta_n=\hat\beta n^{-\gamma}$ where collective behaviour dominates.
Then we can expect
\begin{align*}
\log Z_n^{\beta_n}(0,x) - \log Z_n^{\beta_n}(0,0) 
\approx \beta_n \, n^{1/3}\big({\rm Airy}_2\big(\frac{x}{n^{2/3}} \big) -{\rm Airy}_2(0) \big) -\frac{x^2}{n}.
\end{align*}
We have looked at the difference $\log Z_n^{\beta_n}(0,x) - \log Z_n^{\beta_n}(0,0) $
since the Airy process is stationary and so we should have a reference point.
Taking advantage of the fact that the Airy process has locally the regularity of Brownian motion, we can
deduce that 
\begin{align*}
\log Z_n^\beta(x) - \log Z_n^\beta(0) 
\approx \beta_n \, x^{1/2} -\frac{x^2}{n}  = n^{-\gamma} x^{1/2} -\frac{x^2}{n} .
\end{align*}
Optimising over $x$ leads to $x\approx n^{\frac{2}{3}(1-\gamma)}$, which can be interpreted as the 
transversal fluctuation exponent being 
\begin{align}\label{collective-exp}
\xi=\frac{2}{3}(1-\gamma)\qquad \text{and, thus,} \qquad
\chi=2\xi-1 = \frac{1}{3}(1-4\gamma).
\end{align}

{\bf Heuristics on the Elitist regime.} 
We present here a heuristic proposed in \cite{DZ16}, which has the flavour of a {\it Flory type} argument
\cite{BBP07}.
Assume that $\bar F(x)= x^{-\alpha+o(1)}$ and that the polymer path fluctuates in a range
$n^\xi$ for some $\xi\in [\frac{1}{2},1]$. In other words, let us assume that the polymer path
spans the box $B_{n,\xi}:=\big([0,n]\times (-n^\xi,n^\xi)\big)\times \Z^2$. By standard results on order statistics
of heavy-tail random variables \cite{LLR83}, we have that, typically,
\begin{align}\label{max}
\max\{ \omega_v \colon v\in B_{n,\xi}\} \approx |B_{n,\xi}|^{\frac{1}{\alpha}} = n^{\frac{1+\xi}{\alpha}}. 
\end{align}
In fact, let us denote by $(\go^{(j)}, j=1,2,\ldots,|B_{n,\xi}|)$ the order statistics of the set $\{ \go_v\colon v\in B_{n,\xi} \} $, i.e. all the values of the disorder inside $B_{n,\xi}$ ordered from largest to smallest.
Let also $(t^{(j)}, x^{(j)})$ for $j=1,...,|B_{n,\xi}|$ be the site inside box $B_{n,\xi}$ 
where $j^{\text{th}}$ largest value of disorder is attained. Again, by standard results in order statistics,
 we have that, for any finite $k$
\begin{align}\label{eq:orderstat}
\{n^{-\frac{1+\xi}{\alpha} }\omega^{(j)}, \,n^{-1} t^{(j)}, \,n^{-\xi} x^{(j)}  \}_{j=1,...,k}
 \,\xrightarrow[n\to\infty]{d}\, \{ {\mvw}^{(j)},\, \mvt^{(j)}, \mvx^{(j)} \}_{j=1,...,k},
\end{align}
where $\mvw^{(j)}$ are non-trivial random variables and the locations 
$\mvt^{(j)}, \mvx^{(j)}$ are independent and uniformly distributed in $[0,1]^2$.
By moderate deviations for simple random walks, we have that
\begin{align}\label{moderate}
-\log \P(S_{n \mvt^{(j)}} = n^\xi \mvx^{(j)}) \approx n^{2\xi-1} \frac{(\mvx^{(j)})^2}{2 \mvt^{(j)}}, 
\qquad \text{for $n\to\infty$.}
\end{align}
If we assume that heavy tails drive fluctuations, the energy (from \eqref{max}) -entropy 
(from \eqref{moderate}) equilibrium gives
 \begin{align}\label{eq:en-en}
 \log Z_n^{\beta_n} \approx \beta_n n^{\frac{1+\xi}{\alpha}} - n^{2\xi-1}
 =n^{\frac{1+\xi}{\alpha}-\gamma}- n^{2\xi-1}.
 \end{align}
The optimum will be achieved if the exponent $\xi$ is such that the two terms in the right-hand
side balance out, which means that $\frac{1+\xi}{\alpha}-\gamma=2\xi-1$, giving the value for
the transveral exponent $\xi$:
\begin{align}\label{eq:xi-red}
\xi=\frac{1+\alpha(1-\gamma)}{2\alpha-1} .
\end{align} 
Given that, by definition of the exponent $\chi$, we have 
$n^{\chi+o(1)}\approx \log Z_n^{\beta_n}$,  \eqref{eq:en-en} 
also provides the relations
\begin{align}\label{eq:chi-red}
\chi=2\xi-1= \frac{3-2\alpha\gamma}{2\alpha-1}.
\end{align}

{\bf Transition between Collective and Elitist behaviours.}  When the disorder has gradually heavier
tails, i.e. decreases down from $\alpha=\infty$, 
there should be a point where the optimisation will favour one Elitist strategy over the Collective. 
The transition
point is determined when the exponents $\chi_{\rm collective}$ from \eqref{collective-exp} and
$\chi_{\rm elitist}$ from \eqref{eq:chi-red} are equal, leading to the curve separating the blue and the red
regions in Figure \ref{fig-heavy}, with the curve being given by
\begin{align*}
 (\alpha,\gamma) \colon \quad
\chi_{\rm collective}=\frac{1}{3}(1-4\gamma)= \frac{3-2\alpha\gamma}{2\alpha-1} = \chi_{\rm elitist}
\quad \Longleftrightarrow \quad
\alpha=\frac{5-2\gamma}{1-\gamma}.
\qquad 
\end{align*}
\vskip 4mm
Let us now state a theorem that concerns the behaviour in the collective regime but with an intermediate disorder scaling
that induces a diffusive behaviour for the polymer, i.e. it corresponds to an exponent $\xi=1/2$ and corresponds to the outer,
red curve of the diagram in Figure \ref{fig-heavy} for values of $\alpha>1/2$. The part of this curve for values 
of $\alpha<1/2$, which prompts into the individual regime will be described in Theorem \ref{thm:heavy}. 
Both theorems were proved in 
\cite{DZ16} and we refer there for the proofs and heuristics. Item (i) of Theorem \ref{thm:DZ} was conjectured in \cite{AKQ14}. 
\begin{theorem}\label{thm:DZ}
Let $\gamma>0$ and $\beta_n n^{\gamma} \to \hat \beta $, for some $\hat\beta\geq 0$.
Consider a heavy-tail disorder $\omega$ satisfying Assumptions \ref{ass:heavy} with certain exponent
$\alpha$. We have the following three asymptotic regimes for the partition $Z_n^{\beta_n}$:
\begin{itemize}
\item[\rm(i)] {\rm Regime $\alpha \geq 6$ and  $\gamma \geq \frac{1}{4}$}. 
If $\beta_n n^{1/4}\to \hat\beta $, with $\hat\beta>0$, then 
\begin{align*}
\log Z_n^{\beta_n} - n \log \bbE\Big[ e^{-\beta_n\omega_-} + \sum_{i=1}^4 \frac{\beta_n^i}{i!} \omega^i_+\Big] \xrightarrow[n\to\infty]{d} u_{\sqrt{2}\hat\beta}(0),
\end{align*}
with $u_{\sqrt{2}\hat\beta}(0)$ the solution to the stochastic heat equation \eqref{eq:SHE} evaluated at $0$ and with flat 
initial condition.
If $\beta_n n^{1/4} \to 0$, then 
\begin{align*}
\frac{1}{\beta_n n^{1/4}} 
\Big(
\log Z_n^{\beta_n} - n \log \bbE\Big[ e^{-\beta_n\omega_-} 
+ \sum_{i=1}^3 \frac{\beta_n^i}{i!} \omega^i_+\Big]
\Big)
 \xrightarrow[n\to\infty]{d} \cN(0,2\pi^{-1/2}),
\end{align*}
with $\cN(0,2\pi^{-1/2})$ a normal with mean $0$ and variance $2\pi^{-1/2}$.
\item[\rm (ii)] {\rm Regime $\alpha \in (2,6]$ and  $\gamma \geq \frac{3}{2\alpha}$}. 
If $\beta_n m(n^{3/2})\to\hat\beta \geq 0$, then 
\begin{align*}
\frac{1}{\beta_n n^{1/4}} 
\Big(
\log Z_n^{\beta_n} - n \log \bbE\Big[ e^{-\beta_n\omega_-} 
+ \sum_{i=1}^2 \frac{\beta_n^i}{i!} \omega^i_+\ + \frac{\beta_n^3}{3!} \omega^3_+ \ind_{\alpha>3} \Big]
\Big)
 \xrightarrow[n\to\infty]{d} \cN(0,2\pi^{-1/2}).
\end{align*}
\end{itemize}
\end{theorem}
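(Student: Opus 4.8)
The plan is to show that under the standing hypotheses the heavy tail of $\omega$ is too weak to create an ``elite'' site inside a diffusive space--time window, so that $Z_n^{\beta_n}$ reduces to a multilinear polynomial of the form \eqref{eq:poly} and behaves like the light-tailed intermediate-disorder polymer of \cite{AKQ14}. First I would confine the walk to the diffusive box $\cB_n:=[0,n]\times[-C\sqrt{n\log n},C\sqrt{n\log n}]$ --- the complementary contribution to $Z_n^{\beta_n}$ is super-polynomially small, uniformly in $\omega$, by standard random walk exponential bounds --- and truncate the disorder at a level $b_n$ of order $m(n^{3/2})\asymp n^{3/(2\alpha)}$ (up to slowly varying factors). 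By Assumption \ref{ass:heavy} together with the order statistics \eqref{eq:orderstat}, the number of sites of $\cB_n$ carrying $\omega_{i,x}>b_n$ is tight, and the condition $\gamma\ge\tfrac{3}{2\alpha}$ forces $\beta_n b_n=O(1)$, so these exceptional sites perturb $Z_n^{\beta_n}$ only by a multiplicative factor that is $O(1)$ and, once the centering is subtracted, negligible at the fluctuation scale $\beta_n n^{1/4}$; in regime (i), where $\alpha\ge6$, one may even take $b_n$ slightly smaller and conclude there are no such sites with probability tending to $1$. It then suffices to study the truncated partition function $Z_n^{\beta_n,\le}$ built from $\omega\ind_{\{\omega\le b_n\}}$, which has all exponential moments.

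Next I would let $L_n$ denote the centering $\log\bbE[\cdots]$ appearing in the statement (whose degree --- $2$, $3$ or $4$ --- is exactly the one dictated by which regime and sub-case we are in), set $\zeta_{i,x}:=\exp(\beta_n\omega_{i,x}\ind_{\{\omega_{i,x}\le b_n\}}-L_n)-1$, and expand
\begin{align*}
e^{-nL_n}Z_n^{\beta_n,\le} = 1 + \sum_{k\ge1}\ \sumtwo{0=t_0<t_1<\cdots<t_k\le n}{x_0=0,\ x_1,\dots,x_k}\ \prod_{j=1}^k q_{t_j-t_{j-1}}(x_j-x_{j-1})\,\zeta_{t_j,x_j},
\end{align*}
which is the multilinear form \eqref{eq:poly}. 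A direct computation shows that $\bbE[\zeta_{i,x}]$ is negligible at the relevant scale --- this is precisely what the choice of centering degree together with $\gamma\ge\tfrac14$ and $\gamma\ge\tfrac{3}{2\alpha}$ provide --- while $\alpha>2$ forces $\bbE[\omega^2]<\infty$, whence $\sigma_n^2:=\bbE[\zeta_{i,x}^2]=\beta_n^2(1+o(1))$. Since $\sfR_n=\sum_{i\le n,\,x}q_i(x)^2\sim\tfrac{2}{\sqrt{\pi}}\sqrt n$ for the simple random walk on $\Z$, the first chaos has variance $\sigma_n^2\sfR_n\sim\tfrac{2}{\sqrt{\pi}}(\beta_n n^{1/4})^2$, and a routine estimate bounds the $k$-th chaos by $(\sigma_n^2\sfR_n)^k/k!$; hence, at scale $\beta_n n^{1/4}$, only the $k=1$ term survives unless $\beta_n n^{1/4}\to\hat\beta>0$.

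To identify the limit: if $\beta_n n^{1/4}\to0$ (the sub-critical case of (i)) or throughout regime (ii), the rescaled partition function equals, up to an $L^2$-negligible error, $(\beta_n n^{1/4})^{-1}\sum_{i,x}q_i(x)\zeta_{i,x}$, a sum of i.i.d.\ terms whose fourth moment tends to $3\sigma^4$ with $\sigma^2=\lim\sigma_n^2\sfR_n/(\beta_n n^{1/4})^2=2\pi^{-1/2}$, so the Fourth Moment Theorem (Theorem \ref{thm:fourth}) gives convergence to $\cN(0,2\pi^{-1/2})$. If instead $\beta_n n^{1/4}\to\hat\beta>0$, all chaoses contribute: I would use the Lindeberg principle (Theorem \ref{thm:Lind}) to replace $(\zeta_{i,x})$ by an i.i.d.\ Gaussian family --- verifying $\max_{i,x}\sum_{t_\bullet\ni(i,x)}\prod q^2\to0$, which uses the diffusive decay of $q$ and $\gamma\ge\tfrac14$ --- and then recognise the resulting Gaussian chaos as the Wiener-chaos representation of the solution of the SHE \eqref{eq:SHE2} with noise strength $\sqrt{2}\hat\beta$, evaluated at $(t,x)=(1,0)$, i.e.\ $u_{\sqrt{2}\hat\beta}(0)$; this last step is the theorem of \cite{AKQ14}.

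The main obstacle is the bookkeeping linking the steps above: one must simultaneously show that sites with $\omega>b_n$ contribute a negligible correction \emph{and} that replacing $e^{\beta_n\omega_+}$ by its Taylor polynomial in $L_n$ is accurate to $o(\beta_n n^{1/4})$ after multiplication by $n$; these two demands push $b_n$ in opposite directions and are exactly balanced on the curves $\gamma=\tfrac{3}{2\alpha}$ and $\alpha=6$. Making this quantitative --- in particular controlling the ``borderline'' chaos terms whose order is close to $\alpha$, which are neither clearly negligible nor part of the Gaussian limit --- requires delicate truncated-moment estimates and is the technical heart of the argument.
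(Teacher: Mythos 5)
A preliminary remark: the review itself does not prove Theorem \ref{thm:DZ} --- it is quoted from \cite{DZ16} --- so your proposal can only be measured against that work's strategy. Your road map (restrict to a diffusive box, truncate the disorder at a level $b_n\asymp m(n^{3/2})$, expand the truncated partition function as a polynomial chaos with a Taylor-type centering, show the first chaos dominates in the Gaussian regimes, and invoke chaos universality \`a la \cite{AKQ14, CSZ17a} when $\beta_n n^{1/4}\to\hat\beta>0$) is indeed the route of \cite{DZ16}. However, three steps are either wrong as stated or left undone. First, your treatment of the sites with $\omega>b_n$: you argue they change $Z_n^{\beta_n}$ by an ``$O(1)$ multiplicative factor'' and are therefore negligible at scale $\beta_n n^{1/4}$; but an $O(1)$ multiplicative change of $Z_n^{\beta_n}$ is an $O(1)$ additive change of $\log Z_n^{\beta_n}$, which is \emph{not} $o(\beta_n n^{1/4})$, since that scale tends to $0$ throughout regime (ii) and in the second half of (i). What actually saves the argument is that the walk visits a given high site only with probability of order $n^{-1/2}$, so such a site contributes roughly $n^{-1/2}(e^{\beta_n\omega}-1)$ to the normalised partition function; this needs a genuine estimate in which $\alpha>2$ and $\gamma\ge\tfrac{3}{2\alpha}$ enter. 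Relatedly, in regime (i) you claim that taking $b_n$ ``slightly smaller'' gives no super-threshold sites with high probability --- it is the opposite: the expected number of sites in the box exceeding $m(n^{3/2})$ is of order a power of $\log n$, and one must \emph{raise} the threshold to kill them.

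Second, your CLT verification via the Fourth Moment Theorem fails on part of regime (ii): with truncation at $b_n\asymp n^{3/(2\alpha)}$ one has $\bbE[\zeta^4]\asymp\beta_n^4\, b_n^{4-\alpha}$ for $\alpha<4$, so the normalised fourth cumulant of the first chaos is of order $b_n^{4-\alpha}/n=n^{(12-5\alpha)/(2\alpha)}$, which does not vanish for $\alpha\le 12/5$; one must instead check Lindeberg's condition (or a Lyapunov condition with $2+\delta<\alpha$ moments), which does hold for every $\alpha>2$. Third, the quantitative matching of centerings --- showing that $n\log\bbE\big[e^{\beta_n\omega\ind_{\{\omega\le b_n\}}}\big]$ differs from the stated centering $n\log\bbE\big[e^{-\beta_n\omega_-}+\sum_i\tfrac{\beta_n^i}{i!}\omega_+^i\big]$ by $o(\beta_n n^{1/4})$ (respectively $o(1)$ in the SHE case), and that the residual mean of $\zeta$ is correspondingly negligible --- is precisely where the boundaries $\alpha\ge 6$, $\gamma\ge\tfrac14$, $\gamma\ge\tfrac{3}{2\alpha}$ and the degree $2$, $3$ or $4$ of the Taylor polynomial are used; you correctly identify this as the technical heart, but you do not carry it out, so the proposal is a plausible outline of the \cite{DZ16} argument rather than a proof. (A minor point: invoking \cite{AKQ14} for the SHE identification is legitimate only \emph{after} truncation, since that paper assumes exponential moments; extending it to $\alpha\ge6$ is exactly the content of regime (i).)
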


{\bf Individual regime.} In this regime the process of picking the largest weights becomes more pronounced.
To state the first result for $\ga\in(\frac12,2)$, we need to consider a Poisson point process $\cP$ on $(w,t,x)\in \cS:=\R\times [0,1]\times \R$ with intensity measure $\eta(\dd w \dd t \dd x) =\frac12\ga |w|^{-1-\ga}(\ind_{w>0}+c_-\ind_{w<0})\dd w \dd t \dd x$.

With $g_t(x)$ being the heat kernel we also define the random variables
\begin{align*}
\cW_{0}^{(\alpha)} =
\begin{cases}
 \int_{\cS} w g_t(x) (\cP-\eta)(\dd w \dd t \dd x) &\text{ if } \ga\in (1,2)\\
\int_{\cS\cap\{ |w|>1\}} w \rho(x,t) \cP(\dd w \dd t \dd x)  +\int_{\cS\cap\{ |w|\le 1\}} w g_t(x) (\cP-\eta)(\dd w \dd t \dd x) &\text{ if } \ga=1\\
\int_{\cS} wg_t(x)\cP(\dd w \dd t \dd x) &\text{ if } \ga\in (0,1),
\end{cases}
\end{align*}
and for $\gb\in(0,\infty)$,
\[
\cW_{\gb}^{(\ga)} 
= \frac{1}{\gb} \int_{\cS} (e^{\gb w}-1-\gb w)g_t(x)\cP(\dd w \dd t \dd x) + \cW_{0}^{(\ga)}.
\]
For every $\gb\ge 0, \ga\in(1/2,2)$, the random variables $\cW_{\gb}^{(\ga)}$ are finite a.s. and 
have stable distribution (see Lemma 1.3 in \cite{DZ16}).

The first result in the regime $\alpha\in(1/2,2)$ considers the range $\gamma\geq 3/2\alpha$ where the
polymer still maintains diffusive behaviour. This result was obtained in \cite{DZ16}.
Recall $m(t)$ from definition \eqref{def:m}.
\begin{theorem}[$1/2<\alpha<2, \gamma \geq 3/2\alpha$]\label{thm:heavy}
Under Assumption \ref{ass:heavy} (iii), i.e. $\alpha\in(1/2,2)$ and $c_-\geq 0$.
Let $\beta_{n}$ be a sequence of real numbers such that $\beta_{n} m(n^{3/2})$ converges to 
$\beta\in [0,\infty)$, as $n\to\infty$. Then 
\[
	\frac{\sqrt{n}}{\beta_n m(n^{3/2})}\Bigl(\log Z_{n}^{\beta_n} - n\gb_n\E\big[\omega
	\ind_{|\omega|\leq m(n^{3/2})}\big]\,\ind_{\{\alpha=1\}}\Bigr) \xrightarrow[n\to\infty]{d} 2\cW_{\gb}^{(\alpha)} .
\]
\end{theorem}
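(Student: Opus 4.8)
The plan is to prove Theorem~\ref{thm:heavy} via a truncation-and-point-process argument, isolating the contribution of the finitely many extreme disorder values and showing the rest is negligible. First I would fix a large threshold parameter and split the disorder inside the relevant space-time box $B_n := [0,n]\times\bbZ^2$ (or more precisely the diffusive box $[0,n]\times (-C\sqrt n, C\sqrt n)\cap\bbZ^2$, since the path is diffusive in this regime) into ``big'' values, those exceeding $\epsilon\, m(n^{3/2})$ for small $\epsilon>0$, and ``small'' values, the rest. The key input is the order-statistics convergence \eqref{eq:orderstat}: after rescaling $\omega^{(j)}$ by $m(n^{3/2})$, the site $t^{(j)}$ by $n$ and $x^{(j)}$ by $\sqrt n$, the collection of rescaled big values and their locations converges to the Poisson point process $\cP$ on $\cS = \R\times[0,1]\times\R$ with the stated $\alpha$-stable intensity. (Here one uses that $|B_n|\asymp n^{3/2}$ and that $m$ is regularly varying with index $1/\alpha$; the extra $\sqrt n$ factor over the naive $n\cdot\sqrt n$ comes from the $\bbZ^2$ spatial extent of the box at each time.)

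Next, I would expand $Z_n^{\beta_n} = \E\big[\prod_{i=1}^n e^{\beta_n\omega_{i,S_i}}\big]$ and analyze $\log Z_n^{\beta_n}$ by conditioning on the big values. For a random walk path $S$, write its energy as a sum over sites: the path can collect at most one big value per time layer, and when $\beta_n m(n^{3/2})\to\beta<\infty$, passing through a big value at rescaled position $(w,t,x)$ contributes a multiplicative factor $\approx e^{\beta w}$ relative to not passing through it, weighted by the random walk probability of being at $\sqrt n\,x$ at time $nt$, which by the local CLT is $\approx \frac1n\, g_t(x)$ (the moderate-deviation estimate \eqref{moderate} degenerates to the Gaussian kernel because $\xi=1/2$ here). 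Summing over which subset of big values is visited and over the entropic cost, one gets that $\log Z_n^{\beta_n}$, after removing the (deterministic, $\alpha=1$-only) centering $n\beta_n\E[\omega\ind_{|\omega|\le m(n^{3/2})}]$ coming from the small positive and negative tails, is asymptotically $\frac{m(n^{3/2})\beta_n}{\sqrt n}$ times a sum over big values of $\frac1\beta(e^{\beta w}-1-\beta w)\,g_t(x)$ plus the centered linear term $w\,g_t(x)$ — i.e. exactly $2\cW_\beta^{(\alpha)}$ once the $\sqrt n/(\beta_n m(n^{3/2}))$ prefactor is applied. The factor $2$ reflects the two-replica / parity structure of the $\bbZ^2$ walk (the return-probability normalization), and the centering $-1-\beta w$ inside is precisely what is needed to make the Poisson integral converge for $\alpha\in(1/2,2)$, since $\int w^2\eta(\dd w\dd t\dd x)<\infty$ near $w=0$ only after this second-order cancellation while the linear term is separately subtracted via the compensated measure $\cP-\eta$ (or kept as a genuine integral against $\cP$ when $\alpha<1$).

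The technical core is the error control, which splits into three pieces. (a) The ``small values'' contribution: one must show $\E\big[\prod e^{\beta_n\omega_{i,S_i}\ind_{|\omega|\le \epsilon m(n^{3/2})}}\big]$, after its deterministic centering, fluctuates at a scale $o(\beta_n m(n^{3/2})/\sqrt n)$ — this is a second-moment / variance computation using $\beta_n\to 0$ and the regular variation, essentially showing the truncated disorder is in the domain of a Gaussian that is subdominant (the borderline $\gamma=3/(2\alpha)$ is exactly where the Gaussian piece stops dominating, matching Theorem~\ref{thm:DZ}(ii)). (b) The interaction between distinct big values: one shows that paths visiting two or more big values contribute negligibly unless the values are ``well-separated'', and in the limit the Poisson structure makes multiple-visit terms factorize — this requires a uniform local-CLT bound $p_k(x)\le C/k$ and summability. (c) Tightness/uniform integrability to upgrade convergence in distribution of the truncated objects (as $\epsilon\to 0$ after $n\to\infty$) to convergence of the full $\log Z_n^{\beta_n}$; here one invokes that $\cW_\beta^{(\alpha)}$ is a.s.\ finite (Lemma~1.3 of \cite{DZ16}, which the excerpt grants us) so that the $\epsilon\to 0$ limit is well-defined and the double limit can be exchanged. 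I expect step (a)–(b), the quantitative separation of the Gaussian-negligible bulk from the Poissonian extremes with honest error bounds, to be the main obstacle: one needs the fluctuation scale $\beta_n m(n^{3/2})/\sqrt n$ to genuinely dominate the bulk Gaussian fluctuation scale $\beta_n \sqrt{\sfR_n/n}\sim \beta_n\sqrt{\log n}/\sqrt n$, i.e.\ $m(n^{3/2})\gg\sqrt{\log n}$, which holds for $\alpha<2$ but is delicate near $\alpha=2$ and at the boundary $\gamma=3/(2\alpha)$, and the cross terms in (b) must be shown not to conspire. The remaining steps — the point-process convergence and the identification of the limit as $2\cW_\beta^{(\alpha)}$ — are then relatively mechanical continuous-mapping arguments once the framework is set up.
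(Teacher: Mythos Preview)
The paper is a review and does not itself prove this theorem; it simply attributes the result to \cite{DZ16}. Your proposal is essentially the strategy carried out there: truncate the environment at level $\epsilon\, m(n^{3/2})$, use extreme-value theory to show the rescaled large values and their (diffusively rescaled) locations converge to the $\alpha$-stable Poisson point process $\cP$, weight each atom by the local-CLT heat kernel $g_t(x)$ coming from the probability the walk passes through that site, and control the bulk contribution by a variance estimate. So your overall architecture is correct.

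A few points to clean up. The setting is $1{+}1$ dimensional: the space-time lattice is $\bbN\times\bbZ$, not $\bbN\times\bbZ^2$ (the paper's own $B_{n,\xi}$ notation with ``$\times\,\Z^2$'' is a typo for ``$\cap\,\Z^2$'', meaning the integer points of the $2$-dimensional space-time rectangle). Your count $|B_n|\asymp n\cdot\sqrt n = n^{3/2}$ is right for this reason, not because of a $2$-dimensional spatial extent. The factor $2$ in the limit $2\cW_\beta^{(\alpha)}$ is the periodicity constant in the $1$d local limit theorem ($p_n(x)\sim \tfrac{2}{\sqrt{2\pi n}}e^{-x^2/2n}$ on the correct parity), not a ``two-replica'' effect. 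Finally, in \cite{DZ16} the mechanism is implemented through the polynomial chaos expansion \eqref{eq:poly} (linearising $e^{\beta_n\omega}-1$ and then isolating the first-order term carrying the extremes) rather than by literally ``conditioning on the big values''; this is a packaging difference, but the chaos-expansion route makes your error step (b) --- decoupling contributions from distinct large atoms --- considerably cleaner, since higher-order chaos terms are automatically of smaller order once the first chaos is shown to dominate. Your identification of step (a), the subdominance of the truncated-bulk Gaussian fluctuations at the boundary $\gamma=3/(2\alpha)$, as the delicate point is accurate.
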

The result of the above theorem says that the fluctuations of the partition function are energy dominated,
driven by catching the few largest weights. 
The situation below the curve $\gamma=\tfrac{3}{2\alpha}$ is governed by energy-entropy variational problems.
The following theorem was proved in \cite{BT18,BT19} and resolved a conjecture in \cite{DZ16}. 
First, we need to define the entropy of a path
in the class of a.e. differentiable curves by
\begin{align}\label{eq:Ent1}
{\rm Ent}(s):=\int_0^1 (\dot s(t))^2 \,\dd t .
\end{align}
\begin{theorem}
Consider Assumption \ref{ass:heavy} with $\alpha\in (1/2,2)$, $\beta_n = \hat\beta n^{-\gamma}$ with 
$\gamma\in (\tfrac{2}{\alpha}-1,\tfrac{3}{2\alpha})$ and $c_-=0$. If $\chi=2\xi-1$ and 
$\xi=\frac{1+\alpha(1-\gamma)}{2\alpha-1}$, which is in $(\frac{1}{2},1)$ for the considered range of 
$\gamma$, we have that
\begin{align*}
\frac{1}{n^\chi} \big( \log Z_n^{\beta_n} -n\beta_n \bbE[\omega]\ind_{\alpha\geq 3/2} \big)
\xrightarrow[n\to\infty]{d} \sup_{s\colon {\rm Ent}(s)<\infty} \Big\{ \beta\pi(s)- {\rm Ent}(s)\Big\},
\end{align*}
where, for the Poisson point process $\cP$ defined above, we define the energy
$\pi(s):=\sum_{(w,t,x)\in \cP} w\ind_{\{ (t,x)\in s \}}$.
\end{theorem}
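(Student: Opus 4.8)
The plan is to follow the standard three-step scheme for heavy-tail polymers: (A) truncate the disorder, isolating the finitely many ``macroscopic'' weights that drive the fluctuations; (B) show that the bulk of moderate weights contributes, after the deterministic centring, only a term that is $o(n^{\chi})$ in probability; and (C) analyse the truncated partition function as a discrete energy--entropy optimisation over random-walk paths and pass to the continuum variational problem. The scale matching that makes this work: writing $m(\cdot)$ as in \eqref{def:m} one has $m(t)=t^{1/\alpha+o(1)}$, so with the prescribed $\xi=\frac{1+\alpha(1-\gamma)}{2\alpha-1}$ the largest disorder values in a space-time box of volume $\asymp n^{1+\xi}$ are of order $m(n^{1+\xi})$ (cf.\ \eqref{max}), and $\beta_n\,m(n^{1+\xi})=\hat\beta\,n^{(1+\xi)/\alpha-\gamma+o(1)}=n^{\chi+o(1)}$ because $(1+\xi)/\alpha-\gamma=2\xi-1=\chi$; thus a single macroscopic weight, once rescaled by $n^{-\chi}$, sits at exactly the right order, and the continuum limit will be driven by finitely many of them.

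\textbf{Truncation and the moderate bulk.} Fix $\varepsilon>0$ (sent to $0$ last) and split $\omega_v=\bar\omega_v+\omega^{\le}_v$ with $\bar\omega_v:=\omega_v\,\ind_{\{\omega_v>\varepsilon\,m(n^{1+\xi})\}}$. Since a directed path visits one site per time-step, for each path $S$ one has the \emph{exact} pathwise factorisation $e^{\sum_{i=1}^n\beta_n\omega_{i,S_i}}=e^{\sum_{i=1}^n\beta_n\bar\omega_{i,S_i}}\cdot e^{\sum_{i=1}^n\beta_n\omega^{\le}_{i,S_i}}$. The content of Step B is then to prove
\[
\frac{1}{n^{\chi}}\Big(\log\E\big[e^{\sum_{i=1}^n\beta_n\,\omega^{\le}_{i,S_i}}\big]-n\,\beta_n\,\bbE[\omega]\,\ind_{\alpha\ge 3/2}\Big)\xrightarrow[n\to\infty]{\bbP}0,
\]
so that this bulk part drops out at scale $n^{\chi}$ after the centring prescribed in the theorem (the indicator $\ind_{\alpha\ge 3/2}$ reflects that $\bbE[\omega]$ is a legitimate non-fluctuating centring only for $\alpha\ge 3/2$; for $\alpha\in(1/2,3/2)$ the truncated contribution self-averages without subtracting anything). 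I would establish this by a hierarchical truncation of $\omega^{\le}$, cutting the range $(1,\varepsilon\,m(n^{1+\xi})]$ into dyadic blocks and bounding each block's maximal contribution along \emph{all} paths via a union bound together with exponential Chebyshev, arranged so that every block beyond the first is $o(n^{\chi})$; this is precisely where the lower endpoint $\gamma>\tfrac{2}{\alpha}-1$ of the admissible range is used. The assumption $c_-=0$ ensures there are no macroscopic negative weights, so paths never need to avoid deep ``wells''.

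\textbf{Continuum variational limit.} For the macroscopic factor I would enumerate paths by the finite ordered list of largest-weight sites they visit: a path through the sites carrying the $k$ largest weights $(\omega^{(j)},t^{(j)},x^{(j)})_{j\le k}$ in the box earns energy $\beta_n\sum_{j\le k}\omega^{(j)}$ and pays, by the moderate-deviation asymptotics $-\log\P(S_{nt}=n^{\xi}x)\sim n^{2\xi-1}\frac{x^2}{2t}$ (valid since $\xi\in(1/2,1)$), an entropic cost equal to $n^{\chi}\,\mathrm{Ent}(s)+o(n^{\chi})$ along the piecewise-linear interpolation $s$ of the rescaled visited points (up to the harmless normalisation constants in the definitions of $\beta$ and $\mathrm{Ent}$). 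By the order-statistics convergence \eqref{eq:orderstat}, the rescaled macroscopic weights and their space-time locations converge jointly to the Poisson point process $\cP$ of intensity $\tfrac12\alpha|w|^{-1-\alpha}(\ind_{w>0}+c_-\ind_{w<0})\,\dd w\,\dd t\,\dd x$; with $c_-=0$ only positive weights survive. A Varadhan/Laplace-principle argument --- upper bound by summing over the finitely many relevant values of $k$ together with an a priori tightness statement (widely separated points of $\cP$ are penalised quadratically, so only $O(1)$ of them can be relevant), lower bound by restricting $\E[\,\cdot\,]$ to a diffusive neighbourhood of the optimal configuration and invoking the bulk estimate of Step B along that path --- then yields
\[
\frac{1}{n^{\chi}}\log Z_n^{\beta_n}\xrightarrow[n\to\infty]{d}\sup_{s\colon\mathrm{Ent}(s)<\infty}\big\{\beta\,\pi(s)-\mathrm{Ent}(s)\big\},
\]
where the supremum runs over piecewise-linear curves through points of $\cP$ and $\pi(s)=\sum_{(w,t,x)\in\cP}w\,\ind_{\{(t,x)\in s\}}$. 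Finiteness and attainment of the limiting variational problem, which is exactly what the hypothesis $\alpha>1/2$ guarantees, follow by comparing the Poisson reward with the quadratic entropy cost; letting $\varepsilon\to0$ at the end removes the dependence on the truncation level.

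\textbf{Main obstacle.} The delicate part will be Step B: controlling the moderate bulk \emph{uniformly enough in the polymer path}. A naive second-moment bound diverges unless $\omega^{\le}$ is truncated from above as well, but the crude uniform-over-all-paths bound on $e^{\sum\beta_n\omega^{\le}_{i,S_i}}$ only gives $O(n)$ in the exponent, which is not $o(n^{\chi})$ since $\chi<1$; so the hierarchical truncation must be organised with care, and one must simultaneously check that no ``intermediate'' band of weights both fails to converge to $\cP$ and escapes absorption into the concentrated bulk. Keeping these estimates compatible with the path-enumeration bookkeeping of Step C, and reconciling the two competing requirements on the upper truncation level --- enough exponential moments for a concentration argument versus a truncation error that is $o(n^{\chi})$, which is what forces $\gamma>\tfrac{2}{\alpha}-1$ --- is the technical heart of the argument.
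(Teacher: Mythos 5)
You should know at the outset that the paper does not prove this theorem: it is quoted from Berger--Torri \cite{BT18,BT19} (resolving a conjecture of \cite{DZ16}), and the only in-paper material is the heuristic ``Elitist regime'' derivation built on \eqref{max}, \eqref{eq:orderstat} and \eqref{moderate}. Your proposal follows exactly that skeleton --- truncation at scale $\varepsilon\, m(n^{1+\xi})$, Poissonian convergence of the top order statistics, moderate-deviation (quadratic) entropy cost, and a Laplace-principle matching of upper and lower bounds --- and the scale computation $\beta_n m(n^{1+\xi})=n^{\chi+o(1)}$, equivalent to \eqref{eq:xi-red}, is correct. So the strategy is the right one and consistent with the cited proof.

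That said, as a proof it has genuine gaps beyond the one you flag. First, the a.s. finiteness (and attainment) of $\sup_{s:\mathrm{Ent}(s)<\infty}\{\beta\pi(s)-\mathrm{Ent}(s)\}$ is not a one-line ``comparison of Poisson reward with quadratic cost'': one must rule out accumulating unbounded reward from infinitely many moderate Poisson points at finite entropy cost, and this is precisely where $\alpha>1/2$ is sharp (for $\alpha\le 1/2$ the supremum is a.s. infinite, \cite{BT19}); without this lemma neither the tightness step nor the $\varepsilon\to 0$ limit of your truncation closes. Second, your attribution of the lower endpoint $\gamma>\tfrac{2}{\alpha}-1$ to the bulk estimate is imprecise: its primary role is to force $\xi<1$, i.e. to keep the optimal displacement in the moderate-deviation regime where $-\log\P(S_{nt}=n^{\xi}x)\sim n^{2\xi-1}x^2/(2t)$; at and below that line the entropy functional changes to the large-deviation rate $e(x)$ and the limit is the Auffinger--Louidor variational problem \cite{AL11}, so a proof must show that ballistic strategies are strictly suboptimal in the stated range, not merely that the truncation error is small. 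Third, Step B as sketched is not yet an argument: a union bound over all $\binom{n}{\cdot}$-many paths with exponential Chebyshev on dyadic bands needs the per-band exponential moments to beat the path entropy $n\log 3$, which is exactly the obstruction you name but do not resolve; in \cite{BT18,BT19} this intermediate-weight control is the bulk of the work and requires a finer decomposition (coarse-graining in space-time and control of the number of intermediate weights a path can collect), together with a separate treatment of the centring term for $\alpha\ge 3/2$ (your threshold $3/2$ is correct, but it should be justified by checking $n\beta_n=n^{1-\gamma}$ against $n^{\chi}$ over the admissible $\gamma$-range rather than by the informal ``self-averaging'' remark). Until these three points are supplied, the proposal is a faithful roadmap of the known proof rather than a proof.
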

Part of the conclusion in the above theorem is that the variational problem is well defined, i.e. finite, 
in the above range of $(\alpha,\gamma)$. As proven in \cite{BT19}, Theorem 2.4, this is not the case when 
$\alpha\in (0,1/2]$.
In this case, the limit is characterised by another variational problem, which takes into account the
fact that in this regime the path will move in a range of order $n$, i.e. ballistic, and the vatiational
problem involves the entropy induced from the large deviation behaviour, while \eqref{eq:Ent1} arises
from moderate deviations. The relevant theorem, which actually characterises the limit below the 
curve $\gamma=\frac{2}{\alpha}-1$ was proven in \cite{AL11} and we state it below. Before doing so, though, let us point that \cite{BT19} established several more phases in particular close to the line
$\gamma=\frac{2}{\alpha}-1$ and we refer there for details on the rich phenomenology. 

The theorem of Auffinger-Louidor \cite{AL11} is the following:
\begin{theorem}
Recall definition \eqref{def:m} and define
\begin{align*}
\widehat{\rm Ent}(s):=\int_0^1 e(\dot s(t)) \,\dd t, \quad \text{with}
\quad 
e(x):=\frac{1}{2}\big((1+x)\log(1+x)+(1-x)\log (1-x) \big).
\end{align*}
Under Assumption \ref{ass:heavy} with $\alpha\in (0,2)$ and $\beta_n$ such that
$\frac{1}{n} m(n^2)\beta_n\to\hat \beta \in (0,\infty]$, we have that
\begin{align*}
\frac{1}{m(n^2)\beta_n} \log Z_n^{\beta_n}\xrightarrow[n\to\infty]{d} 
 \sup_{s\colon {\rm Ent}(s)<\infty} \Big\{ \pi(s)- \frac{1}{\hat\beta}{\widehat {\rm Ent}}(s)\Big\}.
\end{align*} 
\end{theorem}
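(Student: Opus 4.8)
The plan is to follow the route of Auffinger--Louidor \cite{AL11}: decouple the disorder into a finite number of exceptionally large weights --- whose rescaled marks and locations converge to the Poisson point process $\cP$ --- together with a remainder that is negligible on the scale $m(n^2)\beta_n$, and then transfer the resulting finite-dimensional variational problem to the limit by a continuous mapping argument. The first step is the point-process input: since the relevant box $[1,n]\times(-n,n)\cap(\N\times\Z^2)$ contains $\asymp n^2$ sites, the largest weights are of order $m(n^2)$, and under Assumption \ref{ass:heavy}(iii) with $\alpha\in(0,2)$ one has the analogue of \eqref{eq:orderstat} but with the spatial coordinate rescaled by $n$ rather than $n^\xi$ (because the optimal path is now ballistic, not diffusive): for each fixed $k$, $\{m(n^2)^{-1}\omega^{(j)},\,n^{-1}t^{(j)},\,n^{-1}x^{(j)}\}_{j\le k}$ converges in distribution to $\{\mvw^{(j)},\mvt^{(j)},\mvx^{(j)}\}_{j\le k}$, the atoms of $\cP$ ordered by $|w|$.

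For the lower bound, fix $K$, keep only the $K$ largest weights, and for any piecewise-linear curve $s$ with $|\dot s|<1$ visiting (to leading order in $n$) a subsequence of the limiting sites $(\mvt^{(j)},\mvx^{(j)})$, build a bundle of random walk trajectories realising those passages. The probabilistic cost of forcing a ballistic segment of mean slope $v$ is $e^{-n\, e(v)(1+o(1))}$ by Cram\'er's theorem, so the total entropic cost of $s$ is $e^{-n\widehat{\rm Ent}(s)(1+o(1))}$; this is exactly why the large-deviation functional $\widehat{\rm Ent}$, rather than the moderate-deviation functional ${\rm Ent}$ of \eqref{eq:Ent1}, governs this regime. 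Discarding all energy except that collected at the $K$ retained sites and using $n/(m(n^2)\beta_n)\to\hat\beta^{-1}$ gives $\frac{1}{m(n^2)\beta_n}\log Z_n^{\beta_n}\ge \pi_K(s)-\hat\beta^{-1}\widehat{\rm Ent}(s)+o(1)$, where $\pi_K$ is the restriction of $\pi$ to the $K$ largest atoms; maximising over $s$ and then letting $K\to\infty$ yields the lower bound.

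The upper bound is the main obstacle: one must show that, after conditioning the path to stay in a ballistic window, the aggregate contribution of all but finitely many weights is $o(m(n^2)\beta_n)$ in $\log Z_n^{\beta_n}$. The natural device is a threshold split at level $\epsilon\, m(n^2)$: the finitely many super-threshold weights are handled by the point-process limit exactly as in the lower bound, while the sub-threshold weights are controlled uniformly over paths by a first-moment estimate --- here $\alpha<2$ is crucial, as it makes the truncated exponential moment $\bbE[e^{\beta_n\omega}\ind_{\omega\le\epsilon m(n^2)}]$ small on the exponential scale $m(n^2)\beta_n$ --- combined with the entropy lower bound that any competing path still pays $e^{-n\widehat{\rm Ent}(\cdot)(1+o(1))}$ to reach its chosen sites. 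In matching the two bounds the delicate points are: uniformity of the Cram\'er estimate over the $O(n)$-scale perturbations of the times and positions $(t^{(j)},x^{(j)})$; and upper semicontinuity of the functional $\cP\mapsto\sup_s\{\pi(s)-\hat\beta^{-1}\widehat{\rm Ent}(s)\}$, which holds because $e$ is convex and superlinear (indeed $e(x)=+\infty$ for $|x|>1$), so the supremum is attained at a piecewise-linear curve through finitely many atoms of $\cP$, and hence a Skorokhod-type argument transfers the point-process convergence to convergence of the variational problem.

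Finally, when $\hat\beta=\infty$ the penalty $\hat\beta^{-1}\widehat{\rm Ent}(s)$ vanishes, so the entropic cost becomes asymptotically free and the variational problem reduces to $\sup_s\pi(s)$ (the path may wiggle freely within its light cone at no exponential cost; this sum is a.s.\ finite for $\alpha\in(0,2)$ on the relevant bounded region), and the same threshold split shows $\frac{1}{m(n^2)\beta_n}\log Z_n^{\beta_n}$ converges to this quantity, in agreement with the stated formula. Assembling the matching lower and upper bounds and invoking the continuous mapping theorem along the point-process convergence completes the proof.
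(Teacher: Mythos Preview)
The paper does not give its own proof of this theorem: it is stated as a result of Auffinger--Louidor \cite{AL11} (noted as a positive-temperature generalisation of Hambly--Martin \cite{HM07}) and the paper simply cites that reference without reproducing the argument. Your proposal is therefore not comparable to a proof in the paper; rather, you have written a plausible outline of the approach in the cited source, which you explicitly say you are following.

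As a sketch of the Auffinger--Louidor strategy your outline is broadly sound: the decomposition into finitely many extreme weights plus a negligible remainder, the point-process limit for the rescaled extremes, the lower bound via piecewise-linear paths and Cram\'er's theorem (giving the large-deviation functional $\widehat{\rm Ent}$ rather than the moderate-deviation ${\rm Ent}$), and the upper bound via a threshold split with a first-moment control on sub-threshold disorder are all the right ingredients. Two small cautions. First, the upper bound in \cite{AL11} requires more care than your sketch suggests: one must control not a single path but a sum over exponentially many paths, and the ``entropy lower bound that any competing path still pays $e^{-n\widehat{\rm Ent}(\cdot)}$'' is not by itself enough --- one needs to coarse-grain paths into finitely many piecewise-linear skeletons and bound the contribution of each class, which is where the real work lies. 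Second, your spatial box should be one-dimensional ($\Z$, not $\Z^2$): this section of the paper is about the $(1{+}1)$-dimensional model.
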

We mention that the above theorem is a generalisation to a positive temperature setting of 
result of Hambly-Martin \cite{HM07}.

We close this section with the natural question:
\begin{question}
Derive the limiting behaviours in the rest of the phase diagram. Do Airy and KPZ asymptotics extend 
when $\gamma=0$ and $\alpha>5$ ? 
\end{question}
 
 \subsection{Heavy tails and weak-to-strong disorder}\label{sec:viv}
 In this section we report on an interesting phase transition for a variation of the directed polymer
 model with disorder in the domain of attraction of stable laws. The result here is from the work of
 Viveros \cite{V21}. Similar phase transition was observed earlier in the case of the disordered pinning 
 model \cite{L16}.
 
 More precisely let us consider the partition function
 \begin{align}\label{hatzeta}
 \hat Z_n^{\beta}:= \prod_{i=1}^n (1+\beta \xi_{i,S_i}).
 \end{align}
 It is worth comparing $ \hat Z_n^{\beta}$ with \eqref{eq:poly} and the derivation there.
The assumptions on the disorder
 $(\xi_{i,x} \colon i\in \N, x\in \Z^d)$ are that they form an i.i.d. field and that
 \begin{equation}\label{heavy-cond}
 \begin{split}
 \xi_{i,x} \geq -1,\quad \bbE[\xi_{i,x}]=0 \quad \text{and} \quad
 \bbP(\xi_{i,x}>u) \sim C x^{-\gamma}, \quad \text{for $\gamma\in(1,2)$}.
 \end{split}
 \end{equation}
 The assumption that the tail exponent $\gamma$ is strictly less than $2$, implies that the 
 components in a chaos decomposition \eqref{eq:poly} do not posses second moment.
 
 The interesting phenomenon observed is that there is a critical value of the parameter $\gamma$,
 equal to $1+\frac{2}{d}$, below which there is no weak disorder phase (in the same sense as 
 we have encountered so far in Definition \ref{def:weak}) while a weak disorder phase exists for 
 $\gamma>1+\frac{2}{d}$. The relevant theorem from \cite{V21} is the following:
 \begin{theorem}
Assume \eqref{heavy-cond} for the partition function $\hat Z_n^{\beta}$ in \eqref{hatzeta}. If
$\gamma \leq 1+\frac{2}{d}$, then very strong disorder holds for all $\beta>0$ in every dimension 
$d\geq 1$. Moreover, in $d\geq 3$, if $\gamma<1+\frac{2}{d}$ and $\hat\sff(\beta)$ the
free energy of $\hat Z_n^\beta$, then it holds that
\begin{align*}
\lim_{\beta\to 0} \frac{\log \hat \sff(\beta)}{\log \beta} = \frac{2}{d}\frac{\gamma }{1+\frac{2}{d}-\gamma},
\end{align*}
with the limit being infinity if $\gamma=1+\frac{2}{d}$. 
On the contrary, if $\gamma>1+\frac{2}{d}$, then there exists $\beta_c>0$ such that weak disorder 
holds for $\beta<\beta_c$ and $d\geq 3$. 
\end{theorem}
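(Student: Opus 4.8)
The argument splits according to the sign of $\gamma-(1+\tfrac2d)$, but is organised around a single analytic quantity: with $q_i(x)=\P(S_i=x)$ the random walk kernel, set $R_p:=\sum_{i\ge1}\sum_{x\in\bbZ^d}q_i(x)^p$. By the local limit theorem $\sum_xq_i(x)^p\asymp i^{-d(p-1)/2}$, so $R_p<\infty$ if and only if $p>1+\tfrac2d$; such a $p$ can be chosen strictly below $2\wedge\gamma$ precisely when $d\ge3$ and $\gamma>1+\tfrac2d$, and this is the origin of the threshold $\gamma=1+\tfrac2d$. Throughout one uses that, since $\bbE[\xi_{i,x}]=0$, the process $\hat Z_n^\beta$ is a non-negative $\cF_n$-martingale ($\cF_n=\sigma(\xi_{i,x}:i\le n)$), strictly positive for $\beta<1$, so it converges a.s.\ to $\hat Z_\infty^\beta$ and, exactly as in Definition~\ref{def:weak}, weak disorder is equivalent to uniform integrability of $(\hat Z_n^\beta)_{n\ge1}$.

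\textbf{Weak disorder for $\gamma>1+\tfrac2d$ and $d\ge3$.} Fix $p\in(1+\tfrac2d,\gamma\wedge2)$, so that $\bbE[|\xi|^p]<\infty$. From the elementary bound $(1+x)^p\le1+px+C_p|x|^p$ (valid for $x\ge-1$, $p\in(1,2)$), applied with $x=\hat Z_n^\beta-1$ and using $\bbE[\hat Z_n^\beta]=1$, one gets $\bbE[(\hat Z_n^\beta)^p]\le1+C_p\bbE[|\hat Z_n^\beta-1|^p]$, so it is enough to bound $\|\hat Z_n^\beta-1\|_p$ uniformly in $n$. Expand $\hat Z_n^\beta-1=\sum_{k\ge1}\Xi_k^{(n)}$ into its chaos expansion as in \eqref{eq:poly}, with $\Xi_k^{(n)}=\beta^k\sum_{0=i_0<\cdots<i_k\le n}\sum_{x_1,\dots,x_k}\prod_jq_{i_j-i_{j-1}}(x_j-x_{j-1})\,\xi_{i_j,x_j}$. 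Processing the time levels of $\Xi_k^{(n)}$ from last to first and applying, at each level and conditionally on the earlier disorder, the von Bahr--Esseen inequality $\bbE|\sum_ac_a\zeta_a|^p\le C_p\bbE|\zeta|^p\sum_a|c_a|^p$ for i.i.d.\ mean-zero $\zeta_a$ and $1\le p\le2$, one obtains by induction on $k$
\begin{align*}
\bbE\big[|\Xi_k^{(n)}|^p\big]\ \le\ (C_p\bbE[|\xi|^p])^k\,\beta^{kp}\sum_{0=i_0<\cdots<i_k\le n}\sum_{x_1,\dots,x_k}\prod_{j=1}^kq_{i_j-i_{j-1}}(x_j-x_{j-1})^p\ \le\ \big(C_p\bbE[|\xi|^p]\,\beta^p\,R_p\big)^k,
\end{align*}
the last inequality because the sum factorises over the increments and each increment sum is $\le R_p$. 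By Minkowski, $\|\hat Z_n^\beta-1\|_p\le\sum_{k\ge1}(C_p\bbE[|\xi|^p]\beta^pR_p)^{k/p}$, finite and independent of $n$ as soon as $C_p\bbE[|\xi|^p]\beta^pR_p<1$. Thus $(\hat Z_n^\beta)_{n\ge1}$ is $L^p$-bounded, hence uniformly integrable, hence weak disorder holds, and $\beta_c\ge\min\{1,(C_p\bbE[|\xi|^p]R_p)^{-1/p}\}>0$.

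\textbf{Very strong disorder and free-energy asymptotics for $\gamma\le1+\tfrac2d$.} Here one runs the fractional moment method of Section~\ref{sec:frac}, adapting \cite{L10,BL17} (and the pinning precursor \cite{L16}) to heavy tails: $\hat\sff(\beta)\le\frac1{n\theta}\log\bbE[(\hat Z_n^\beta)^\theta]$ for $\theta\in(0,1)$, split $n=m\ell$ into correlation lengths, and reduce to showing $\bbE[(\hat Z_\ell^\beta)^\theta]\le e^{-c}$ for the right $\ell=\ell(\beta)$. The correct scale is read off from the $\gamma$-moment analogue of the $L^2$ computation of Theorem~\ref{thm:L2regime}: $\|\hat Z_\ell^\beta-1\|_p^p\approx\sum_k(\beta^pR_p^{(\ell)})^k$ with $R_p^{(\ell)}=\sum_{i\le\ell}\sum_xq_i(x)^p\asymp\ell^{1-d(p-1)/2}$ when $p<1+\tfrac2d$, which becomes $O(1)$ at $\ell(\beta)\asymp\beta^{-2\gamma/(d+2-d\gamma)}$ (letting $p\uparrow\gamma$, up to slowly varying corrections), and $\tfrac{2\gamma}{d+2-d\gamma}=\tfrac2d\cdot\tfrac{\gamma}{1+2/d-\gamma}$. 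The upper bound $\hat\sff(\beta)\lesssim-1/\ell(\beta)$ then follows by coarse graining over diffusive blocks of side $\sqrt{\ell(\beta)}$ together with a change of measure that tilts the law of $\omega$ in each block so as to suppress its exceptionally large values --- the heavy-tail analogue of the Gaussian tilt in \cite{BL17}, with a functional $\sfX$ built, as in \eqref{Xchoice}, from a truncated low-order chaos over one correlation length --- after which H\"older yields the claimed decay. The matching lower bound $\hat\sff(\beta)\gtrsim-1/\ell(\beta)$ is obtained by a truncated second-moment estimate and concentration, again in the spirit of \cite{BL17}. The cases $d=1,2$ (where $1+\tfrac2d\ge2>\gamma$ automatically) and, for fixed $d$, the passage from small $\beta$ to all $\beta>0$ are handled by a cruder fractional moment argument (taking $\ell$ bounded for large $\beta$) together with the monotonicity of $\beta\mapsto\bbE[(\hat Z_n^\beta)^\theta]$ for $\theta<1$, established as in Proposition~\ref{prop:mon}; the borderline $\gamma=1+\tfrac2d$, where $\ell(\beta)$ grows faster than any power of $\beta^{-1}$ and hence $\log\hat\sff(\beta)/\log\beta\to+\infty$, is a limiting version of the same estimates.

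The chaos/von Bahr--Esseen bookkeeping and the random walk estimates for $R_p$ and $R_p^{(\ell)}$ are routine. The real difficulty, as always with the fractional moment method, is the sharp two-sided control of the free energy: one must choose $\sfX$ and the tilt on heavy-tailed disorder so that H\"older gives a gain of order $e^{-c}$ per correlation block while, simultaneously, the second moment of the tilted partition function is controlled tightly enough to deliver the \emph{same} exponent $\tfrac2d\cdot\tfrac{\gamma}{1+2/d-\gamma}$ from below; pinning down the constant in the exponent --- not merely its existence --- is what forces the full \cite{BL17}-type machinery, and I expect this, rather than the weak-disorder half, to be the crux.
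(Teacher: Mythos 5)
First, note that the review paper does not actually prove this theorem: it is quoted from Viveros \cite{V21}, and the only indication given of the proof is the sentence following the statement, namely that it ``follows the usual format of fractional moment method, as outlined in Section \ref{sec:frac}, and second-to-first moment estimates, in a spirit similar to \cite{L10, BL17}''. So your proposal can only be compared at the level of strategy, and at that level it coincides with the announced route. Your weak-disorder half ($d\ge 3$, $\gamma>1+\tfrac2d$) is correct and essentially complete: choosing $p\in(1+\tfrac2d,\gamma\wedge2)$, expanding $\hat Z_n^\beta-1$ in chaos and iterating von Bahr--Esseen gives $L^p$-boundedness as soon as $\beta^p R_p$ is small, the threshold $1+\tfrac2d$ coming exactly from $R_p<\infty\iff p>1+\tfrac2d$; this is the standard argument and (up to presentation) the one used in \cite{V21}. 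Your identification of the correlation length $\ell(\beta)\asymp\beta^{-2\gamma/(d+2-d\gamma)}$, and hence of the exponent $\tfrac2d\cdot\tfrac{\gamma}{1+2/d-\gamma}$, is also the right heuristic.

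The gap is in the half that carries the theorem's real content, the very strong disorder statement and the two-sided free-energy exponent for $\gamma\le 1+\tfrac2d$. There your text is a plan rather than a proof, and the one concrete device you invoke does not survive heavy tails as stated: for $d\ge3$ one always has $\gamma<1+\tfrac2d<2$, so the disorder has infinite variance, the functional $\sfX$ of \eqref{Xchoice} cannot be normalised by $\bbE[\sfX^2]\le1$, and the Chebyshev control of the H\"older cost in \eqref{RNbound} breaks down. The change of measure must instead be the heavy-tail one (as in \cite{L16} and \cite{V21}): penalise or remove the few largest disorder values in each coarse-grained block, with the H\"older cost controlled by the $\gamma$-tail of $\xi$ and the gain per block coming from the fact that, deprived of its extreme order statistics, the block partition function has small conditional expectation under the size-biased (walk-tilted) law; a truncated-chaos $\sfX$ can be made to work, but then the truncation level and the verification of the analogues of \eqref{Xest} with only $\gamma$-moments available is precisely the technical heart, and it is not carried out. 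Likewise the matching ``second-to-first moment'' lower bound needs the environment truncated at the scale of the block maximum before the second moment is finite, which you do not address. Two smaller points: since the theorem claims only the exponent $\tfrac{2\gamma}{d+2-d\gamma}$ and not a sharp constant, the full refinement of \cite{BL17} that you anticipate in your closing paragraph is more than is needed (block-by-block estimates with $\beta^{o(1)}$ losses suffice); and the monotonicity in $\beta$ of fractional moments that you borrow from Proposition \ref{prop:mon} is proved there for exponential weights $e^{\beta\omega-\lambda(\beta)}$ and would have to be re-derived (or replaced) for the multiplicative weights $1+\beta\xi$ of \eqref{hatzeta}.
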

The proof of this result follows the usual format of fractional moment method, as outlined in Section 
\ref{sec:frac}, and second-to-first moment estimates, in the a spirit similar to \cite{L10, BL17}.

Let us also mention a related work \cite{BCL23}, where the stochastic heat equation driven by a L\'evy noise
is studied. This can be viewed as a continuum model of directed polymer in heavy tail disorder.
The corresponding continuum polymer measure in a L\'evy environment was constructed in \cite{BL22}.

\begin{remark}{\rm 
It is very worth comparing the critical value $\gamma_c=1+\frac{2}{d}$ with the lower bound for 
$p^*(\beta)$ in Theorem \ref{thm:propp*}. We refer to a \cite{J22b} for an interesting discussion 
on some possible links between these two critical values. 
}
\end{remark}
\section{polymers on more general graphs}\label{sec:gengraph}

In this section we will mainly give pointers to studies of the directed polymer model on graph structures
other than $\Z^d$. Most of the graphs that we will discuss have a built-in renormalisation structure,
which allows for more independence and, thus, more exact computations. 
\vskip 2mm
{\bf Directed polymer on trees.}
We can consider the case of discrete, $d$-regular trees or continuous, Galton-Watson trees.
Tree structures can be thought of as a mean-filed limit of directed polymer on the lattice, when the
lattice dimension tends to infinity. One feature that trees have and which makes the study of polymers
easier is that two independent copies of the random walk will not meet again once they split and follow
different branches. 

Directed polymers on trees were studied by Derrida-Spohn \cite{DS88} and Chauvin-Rouault \cite{CR88}.
For a $b$-ary tree, i.e. a tree where every vertex branches out to $b$ subsequent vertices,
 the precise critical value $\beta_c$ has been determined as the solution to the equation
$\beta\lambda'(\beta)-\lambda(\beta) = \log b$, see \cite{C16}, Section 4.3. Chapter 4 in \cite{C16} also
discusses polymers on more general tree structures.

In the setting of continuous trees with binary branching at rate $\lambda$, 
the partition function of the directed polymer is related to Branching Brownian motion. 
The Laplace transform of the corresponding 
partition function $Z^{\beta,{\rm branch}}_t(x)$, 
$u_t(x):=\bbE\big[ \exp\big( -e^{-\beta x} Z^\beta_t(x)\big)\big]$,
solves the Kolmogorov-Petrovsky-Piscounov (KPP) equation
\begin{align*}
\partial_t u &= \frac{1}{2} \Delta u +\lambda(u^2-u), \quad t>0, \,\, x\in \R,\qquad
u(0,x)=e^{-\beta x}, \quad x\in \R.
\end{align*}
The KPP equation admits travelling wave solutions and it turns out the speed of the wavefront is related to the
free energy of the polymer partition function. We refer to \cite{DS88} for details. 
 \begin{figure}[t]
 \begin{center}
\begin{tikzpicture}[scale=.6]
\node[draw,circle,inner sep=1pt,fill] at (0,-2) {}; \node[draw,circle,inner sep=1pt,fill] at (0,2) {};
\draw (0,-2)--(0,2);
\draw[ultra thick, red] (0,-2)--(0,2);
\node[draw,circle,inner sep=1pt,fill] at (4,-2) {}; \node[draw,circle,inner sep=1pt,fill] at (4,2) {};
\node[draw,circle,inner sep=1pt,fill] at (2,0) {}; \node[draw,circle,inner sep=1pt,fill] at (6,0) {};
\draw (4,-2)--(2,0)--(4,2)--(6,0)--(4,-2);
\draw[ultra thick, red] (4,-2)--(2,0)--(4,2);
\node[draw,circle,inner sep=1pt,fill] at (10,-2) {}; \node[draw,circle,inner sep=1pt,fill] at (10,2) {};
\node[draw,circle,inner sep=1pt,fill] at (8,0) {}; \node[draw,circle,inner sep=1pt,fill] at (12,0) {};
 \node[draw,circle,inner sep=1pt,fill] at (10.5,-0.5) {};  \node[draw,circle,inner sep=1pt,fill] at (11.5,-1.5) {};
 \draw (12,0)--(11.5,-1.5)--(10,-2)--(10.5,-0.5)--(12,0);
  \node[draw,circle,inner sep=1pt,fill] at (9.5,-0.5) {};  \node[draw,circle,inner sep=1pt,fill] at (8.5,-1.5) {}; 
   \draw (10,-2)--(8.5,-1.5)--(8,0)--(9.5,-0.5)--(10,-2);
   \node[draw,circle,inner sep=1pt,fill] at (9.5,0.5) {};  \node[draw,circle,inner sep=1pt,fill] at (8.5,1.5) {}; 
   \draw (10,2)--(8.5,1.5)--(8,0)--(9.5,0.5)--(10,2);
    \node[draw,circle,inner sep=1pt,fill] at (10.5,0.5) {};  \node[draw,circle,inner sep=1pt,fill] at (11.5,1.5) {};
 \draw (12,0)--(11.5,1.5)--(10,2)--(10.5, 0.5)--(12,0);
\draw[ultra thick, red] (10,2)--(11.5,1.5)--(12,0)--(10.5, -0.5)--(10,-2);
\end{tikzpicture}
 \end{center}  
\caption{ \small The first three graphs in sequence of hierarchical lattices corresponding to $b=2, s=2$. The red paths depict the directed polymers on each lattice. }
\label{fig:hierarchy}
\end{figure}

\vskip 2mm
{\bf Directed polymer on hierarchical lattice.}
Hierarchical lattices present a graph structure which lies in between the solvable tree structure (where
independence is very pronounced) and the more complex $\Z^d$ lattice structure. It was originally introduced in the physics literature as a model amenable to renormalisation analysis in spin systems
\cite{M75, K76}.
Hierarchical lattices are constructed recursively in the following way. 
We start with two parameters: the branching parameter $b\in\{2,3,...\}$ and the segmenting parameter $s\in\{2,3,...\}$. The $0^{th}$ lattice $D_0^{b,s}$ is just an edge connecting two vertices, say $A,B$.
$D_1^{b,s}$ is obtained by replacing the singe edge in $D_0^{b,s}$ by $b$ branches from $A$ to $B$
and then splitting each branch into $s$ segments.
Assume we have constructed lattices $D_{n-1}^{b,s}$. The lattice $D_n^{b,s}$ is constructed 
by replacing each edge in $D_{n-1}^{b,s}$ by a copy of  $D_1^{b,s}$.
An example of a hierarchical lattice with $b=s=2$ is shown in Figure \ref{fig:hierarchy}.
The partition function $W_n^\beta$ is defined analogously to the $\Z^d$ case but now with the 
polymer running from $A$ to $B$ on lattice $D_{n}^{b,s}$.

The study of directed polymer on hierarchical lattices was initiated in works of 
Cook-Derrida \cite{CD89} and Derrida-Griffiths \cite{DG89}. In \cite{LM10} hierarchical lattice with site
disorder was studied and it was shown that: (i) strong disorder holds for all $\beta>0$, if $b\leq s$ and
(ii) a weak-to-strong phase transition takes place if $b>s$. The analogy between the relations between
hierarchical lattices with parameters $b,s$ and directed polymers on $\Z^d$ is that 
$b<s$ corresponds to $d=1$, $b=s$ corresponds to the
critical dimension $d=2$ and $b>s$ corresponds to $d\geq 3$. 

An intermediate disorder scaling for polymers on hierarchical lattices with critical set of parameters $b\leq s$, 
analogous to what was discussed in Section \ref{sec:interm} was initiated in \cite{ACK17, AC19, C20}.
In a very interesting series of work by Clark \cite{C21,C22,C23}, the critical case $b=s$ with a critical intermedaite
disorder scaling was studied and, in particular,
an analogue of the Critical 2d Stochastic Heat Flow was constructed in \cite{C22} and path properties
of the polymer in this setting were established in \cite{C21,C23}. 

We refer to \cite{LM10, ACK17, AC19, C20, C21,C22,C23} for details and further historical information.
\vskip 2mm
{\bf Directed polymer on more general graphs.} Work \cite{CoSeZe21} has initiated a general framework of
investigation of the phase transitions of directed polymer models on more irregular and exotic graphs.
These may include percolation clusters, biased Galton-Watson trees, canopy graphs\footnote{canopy graphs are infinite tree structures viewed from the top, eg consider a countable collection of points on the real line and for $d\in \N$ connect every $d$ consecutive points to a common parent a level above and then repeat the same procedure in all higher levels.} etc. 
 It is shown there that a rich phenomenology exists
in terms of existence or absence of weak-to-strong disorder regimes, or existence or absence of $L^2$
regimes depending on the graph structure, on the one hand, and properties of the underlying random walk, eg heat kernels, transience / recurrence etc., on the other.  A couple of interesting observations is that 
Proposition \ref{ui} on the equivalence of weak disorder and uniform integrability of the partition function
might fail in certain exotic graphs, while this is still the case in graphs which admit a Liouville property, i.e. 
bounded harmonic functions are constant. We refer directly to \cite{CoSeZe21} for details and open questions.


\begin{thebibliography}{10}
 
 \bibitem[AH23]{AH23}
 A. Aggarwal, J. Huang, 
 Strong Characterization for the Airy Line Ensemble,
 arXiv:2308.11908, (2023)
 
 \bibitem[AKQ10]{AKQ10}
T. ~Alberts, K.~ Khanin, J.~ Quastel,
 Intermediate disorder regime for directed polymers in dimension 1+ 1,
 {\em Phys. Rev. Letters}, 105(9), 090603, (2010).
 
 \bibitem[AKQ14]{AKQ14}
T. ~Alberts, K.~ Khanin, J.~ Quastel.
Intermediate Disorder for $1+1$ Dimensional Directed Polymers.
{\em Ann. Probab.} 42, 1212--1256, 2014.

\bibitem[AC19]{AC19}
T. Alberts, J. Clark
 Nested critical points for a directed polymer on a disordered diamond lattice,
 {\em J. Theor. Prob.}, 32, 64-89, (2019)

\bibitem[ACK17]{ACK17}
T. Alberts, J. Clark, S. Kocić, 
The intermediate disorder regime for a directed polymer model on a hierarchical lattice,
{\em Stoch. Proc. and their Appl.}, 127(10), 3291-3330,  (2017).

 \bibitem[AZ96]{AZ96}
S. Albeverio, X.Y. Zhou, 
A martingale approach to directed polymers in a random environment. Journal of Theoretical Probability, 9, 171-189,  (1996).


 
 \bibitem[AL11]{AL11}
 A. Auffinger, O. Louidor, 
 Directed polymers in a random environment with heavy tails. 
 {\em Comm. Pure and Appl. Math.}, 64(2), 183-204, (2011). 
 
 \bibitem[BDJ99]{BDJ99}
 J. Baik, P. Deift, K. Johansson, 
  On the distribution of the length of the longest increasing subsequence of random permutations,
  {\em  Journal  Amer. Math. Soc.} 12(4), 1119-1178, (1999).

\bibitem[BDS16]{BDS16}
J. Baik, P. Deift, T. Suidan, Combinatorics and random matrix theory, 
{\em American Mathematical Soc}. Vol. 172, (2016).

\bibitem[B19]{B19}
E. Bates,
Localization and free energy asymptotics in disordered statistical mechanics and random growth models,
{\em Ph.D. thesis, Stanford}, (2019)

\bibitem[B21]{B21}
E. Bates,
 Full-path localization of directed polymers,
 {\em Electron. J. Probab.} 26, no. 74, (2021)
 
 
 \bibitem[BC20]{BC20}
E. Bates, S. Chatterjee
 Localization in Gaussian disordered systems at low temperature,
 {\em Ann. Probab.} 48, no. 6, 2755–2806, (2020).
 
  \bibitem[BC20b]{BC20b}
 E. Bates, S. Chatterjee, The endpoint distribution of directed polymers,
 {\em Ann. Prob.}, 48(2), 817-871, (2020). 
 
 \bibitem[BCL23]{BCL23}
Q. Berger, C. Chong, H. Lacoin,
The Stochastic Heat Equation with multiplicative Lévy noise: Existence, moments, and intermittency,
{\em Commun. Math. Phys}., Vol. 402, 2215–2299, (2023).

\bibitem[BL16]{BL16}
Q. Berger,  H.  Lacoin, 
Pinning on a defect line: characterization of marginal disorder relevance and sharp asymptotics for the critical point shift,
 {\em Journal of the Institute of Mathematics of Jussieu}, 1-42, (2016).

\bibitem[BL17]{BL17}
Q. Berger,  H.  Lacoin, 
The high-temperature behavior for the directed polymer in dimension $1+ 2$,
{\em Ann. Inst. Henri Poincaré, Prob. et Stat.}
 Vol. 53, No. 1,  (2017)
 
 \bibitem[BL22]{BL22}
 Q. Berger, H. Lacoin, 
The continuum directed polymer in Lévy Noise, 
{\em J. \'Ec. Polytech.} , 9, 213-280, (2022).


\bibitem[BT10]{BT10}
Q. Berger, F Toninelli,  
On the critical point of the random walk pinning model in dimension d= 3,
Electronic Journal of Probability, 15, 654-683 (2010).

\bibitem[BT18]{BT18}
Q. Berger, N. Torri,  
Directed polymers in heavy-tail random environment,
{\em Ann. Prob.}, to appear, arXiv:1802.03355, (2018). 

\bibitem[BT19]{BT19}
Q. Berger, N. Torri,  
 Entropy-controlled last-passage percolation,
 {\em Ann. Appl. Prob.}, 29(3), 1878-1903, (2019).

\bibitem[BC95]{BC95}
L. Bertini and N. Cancrini.
The stochastic heat equation: Feynman-Kac formula and intermittence.
{\em J. Stat. Phys.} 78, 1377--1401, 1995.

\bibitem[BC98]{BC98}
L. Bertini and N. Cancrini.
The two-dimensional stochastic heat equation: renormalizing a multiplicative noise.
{\em J. Phys. A: Math. Gen.} 31, 615, 1998.

\bibitem[BG97]{BG97}
L. Bertini,  G.  Giacomin,
 Stochastic Burgers and KPZ equations from particle systems,
 {\em  Comm. Math. Phys}., 183(3), 571-607, (1997).
 
  \bibitem[B04]{B04}
 M. Birkner,  
 A condition for weak disorder for directed polymers in random environment,
 {\em Elect. J. Prob.}, 9, 22-25 (2004)
  
 \bibitem[BS10]{BS10}
 M. Birkner, R. Sun, 
  Annealed vs quenched critical points for a random walk pinning model,
  {\em  Annales Inst. Henri Poinc., Probabilités et Statistiques} 
  Vol. 46, No. 2, pp. 414-441, (2010).
  
  \bibitem[BS11]{BS11}
 M. Birkner, R. Sun,  
Disorder relevance for the random walk pinning model in dimension 3,
{\em  Annales Inst. Henri Poinc., Probabilités et Statistiques} ,
 Vol. 47, No. 1, pp. 259-293, (2011)
 
 \bibitem[BGH10]{BGH10}
 M. Birkner, A. Greven, F.D. Hollander, 
Quenched large deviation principle for words in a letter sequence. 
{\em Prob. Th. Rel. Fields}, 148, 403-456,  (2010). 

 \bibitem[BGH23]{BGH23}
 M. Birkner, A. Greven, F.D. Hollander, 
Correction to: Quenched large deviation principle for words in a letter sequence,
 {\em Prob. Th. Rel. Fields}, 187(1-2), 523-569,   (2023).
 
 \bibitem[BBP07]{BBP07}
G.~Biroli, J.P.~Bouchaud, M.~Potters,
Extreme value problems in Random Matrix Theory and other disordered systems
{\em J. Stat. Mech.} (2007) P07019
 
 \bibitem[B89]{B89}
 E. Bolthausen, 
 A note on the diffusion of directed polymers in a random environment,
 {\em Comm. Math. Phys.}, 123, 529-534, (1989). 
 
 \bibitem[BCR13]{BCR13}
 A. Borodin, I. Corwin, D. Remenik, 
 Log-gamma polymer free energy fluctuations via a Fredholm determinant identity,
 {\em Comm. Math. Phys.}, 324, 215-232, (2013).
 
 \bibitem[BG16]{BG16}
A.  Borodin, V.  Gorin,
 Lectures on integrable probability,
 {\em  Probability and statistical physics in St. Petersburg,} 91(155-214),  (2016).
 
  \bibitem[BP14]{BP14}
 A. Borodin, L. Petrov, 
  Integrable probability: From representation theory to Macdonald processes,
  {\em Probability Surveys}, 11, 1-58,  (2014).

\bibitem[CES19]{CES19}
G. Cannizzaro, D. Erhard, P.  Sch\"onbauer, 
2D Anisotropic KPZ at stationarity: scaling, tightness and non triviality. arXiv preprint arXiv:1907.01530, (2019). 

\bibitem[CC22]{CC22}
F. Caravenna,  F. Cottini,
Gaussian limits for subcritical chaos,
{\em  Electronic J. Prob.}, 27, 1-35, (2022).

\bibitem[CdHP12]{CdHP12}
F. Caravenna, F. den Hollander, N. Pétrélis,
Lectures on random polymers,
{\em Clay Mathematics Proceedings}, Vol. 15, pp. 319-393,  (2012).

\bibitem[CSZ17a]{CSZ17a}
F. Caravenna, R. Sun, N. Zygouras,
Polynomial chaos and scaling limits of disordered systems.
{\em J. Eur. Math. Soc.} 19 (2017), 1--65.

\bibitem[CSZ17b]{CSZ17b}
F. Caravenna, R. Sun, N. Zygouras,
Universality in marginally relevant disordered systems.
{\em Ann. Appl. Probab.} 27 (2017), 3050--3112.

\bibitem[CSZ18a]{CSZ18a}
F. Caravenna, R. Sun, N. Zygouras,
On the moments of the (2+1)-dimensional directed polymer and stochastic heat equation
in the critical window.
{\em Comm. Math. Phys.}, to appear
arXiv:1808.03586 (2018).

\bibitem[CSZ20]{CSZ20}
F. Caravenna, R. Sun, N. Zygouras,
The two-dimensional KPZ equation in the entire subcritical regime, 
{\em Ann. Prob.}  48(3), 1086-1127, (2020)

\bibitem[CSZ19]{CSZ19}
F. Caravenna, R. Sun, N. Zygouras,
 The Dickman subordinator, renewal theorems, and disordered systems,
 {\em Elect. J. Prob.} 24 (2019)

\bibitem[CSZ18+]{CSZ18+}
F. Caravenna, R. Sun, N. Zygouras,
Sacling limits of disordered systems and disorder relevance
{\em Proceedings of the XVIII International Congress of Mathematical Physics,} arxiv.1602.05825

\bibitem[CSZ23]{CSZ23}
F. Caravenna, R. Sun, N. Zygouras,
The critical 2d stochastic heat flow,
{\em  Inventiones Math.}, 1-136, (2023)

\bibitem[CSZ23b]{CSZ23b}
F. Caravenna, R. Sun, N. Zygouras,
The critical 2d stochastic heat flow is not a gaussian multiplicative chaos
{\em Ann. of Prob.} 51 (6), 2265-2300, (2023b)

\bibitem[CSZ24+]{CSZ24+}
F. Caravenna, R. Sun, N. Zygouras,
Properties of the Critical 2d Stochastic Heat Flow,
(2024+)

\bibitem[CTT17]{CTT}
F. Caravenna, F. L. Toninelli, N. Torri.
Universality for the pinning model in the weak coupling regime.
{\em Ann. Probab.} 45 (2017), 2154--2209.

\bibitem[CH02]{CH02}
P. Carmona, Y. Hu, 
 On the partition function of a directed polymer in a Gaussian random environment. Probability theory and related fields, 124(3), 431-457, (2002).

\bibitem[C09]{C09}
S. Chatterjee,
Fluctuations of eigenvalues and second order Poincar\'e inequalities,
{\em  Prob. Th. Rel. Fields}, 143, 1-40, (2009)

\bibitem[CD18]{CD18}
S. Chatterjee, A. Dunlap, 
Constructing a solution of the $(2+ 1) $-dimensional KPZ equation,
{\em Ann. Prob.}, to appear, arXiv:1809.00803 (2018).

\bibitem[CR88]{CR88}
B. Chauvin, A. Rouault, 
KPP equation and supercritical branching Brownian motion in the subcritical speed area. Application to spatial trees,
{\em Prob. Th. Rel. Fields}, 80(2), 299-314, (1988).

\bibitem[C20]{C20}
J. T.  Clark, 
Continuum directed random polymers on disordered hierarchical diamond lattices,
{\em Stoch. Proc. and their Appl.}, 130(3), 1643-1668, (2020)

\bibitem[C21]{C21}
J. T.  Clark, 
Weak-disorder limit at criticality for directed polymers on hierarchical graphs,
{\em Comm. Math.Phys.}, 386(2), 651-710, (2021)

\bibitem[C22]{C22}
J. T.  Clark, 
Continuum models of directed polymers on disordered diamond fractals in the critical case. 
{\em Ann. Appl. Prob.}, 32(6), 4186-4250, (2022)

\bibitem[C23]{C23}
J. T.  Clark, 
 The conditional Gaussian multiplicative chaos structure underlying a critical continuum random polymer model on a diamond fractal. 
{\em Ann. Inst.Henri Poincar\'e (B) Prob. Stat.},
Vol. 59, No. 3, pp. 1203-1222, (2023).

\bibitem[C16]{C16}
F. Comets,
Directed Polymers in Random Environments,
{\'Ecole d'\'Et\'e de Probabilit\'es de Saint-Flour}, XLVI–2016.

\bibitem[CN16]{CN16}
F. Comets, V. L. Nguyen,  
Localization in log-gamma polymers with boundaries,
{\em Prob. Th. Rel. Fields}, 166, 429-461, (2016).

\bibitem[CY06]{CY06}
F. Comets, T. Shiga, N. Yoshida, 
 Directed Polymers in Random Environment Are Diffusive at Weak Disorder,
 {\it Ann. Prob.}, 1746-1770, (2006)

\bibitem[CSY03]{CSY03}
F. Comets, T. Shiga, N. Yoshida, 
Directed polymers in a random environment: path localization and strong disorder. 
Bernoulli, 9(4), 705-723, (2003)

\bibitem[CSY04]{CSY04}
F. Comets, T. Shiga, N. Yoshida, 
Probabilistic analysis of directed polymers in a random environment: a review. 
{\em Stochastic analysis on large scale interacting systems Mathematical Society of Japan}, 115-142, (2004). 

\bibitem[CV06]{CV06}
F. Comets, V. Vargas, 
Majorizing multiplicative cascades for directed polymers in random media, 
{\em ALEA} 2, 267–277, (2006)


\bibitem[CCM20]{CCM20}
F. Comets, C. Cosco, C. Mukherjee,
Renormalizing the Kardar-Parisi-Zhang equation in $ d\geq 3$ in weak disorder,
{\em J. Stat. Phys.}, 179(3), 713-728, (2020)

\bibitem[CCM18]{CCM18}
F. Comets, C. Cosco, C. Mukherjee.
Fluctuation and rate of convergence for the stochastic heat equation in weak disorder.
arXiv:1807.03902 (2018).

\bibitem[CD89]{CD89}
J. Cook, B. Derrida,
Polymers on disordered hierarchical lattices: a nonlinear combination of random variables, 
{\em J. Stat. Phys.} 57, No. 1–2, 89–139 (1989).

\bibitem[C12]{C12}
I. Corwin.
The Kardar-Parisi-Zhang equation and universality class.
{\em Random Matrices: Theory and Applications}, 1 (2012), 1130001.

\bibitem[CH14]{CH14}
I. Corwin, A. Hammond, 
Brownian Gibbs property for Airy line ensembles,
{\em Inventiones Math}., 195, 441-508, (2014)

\bibitem[CH16]{CH16}
I. Corwin, A. Hammond, 
KPZ line ensemble,
{\em Prob. Th. Rel. Fields}, 166, 67-185, (2016).

\bibitem[COSZ14]{COSZ14}
I. Corwin, N. O’Connell, T. Seppäläinen, N. Zygouras,  
Tropical combinatorics and Whittaker functions,
{\em Duke Math. J.}, 163, no 3, (2014).

\bibitem[CD24]{CD24}
C. Cosco, A. Donadini,
On the Central Limit Theorem for the log-partition function of 2D directed polymers,
arXiv:2402.14647

\bibitem[CNN22]{CNN22}
C. Cosco, S. Nakajima, M. Nakashima, 
 Law of large numbers and fluctuations in the sub-critical and $L^2$ regions for SHE and KPZ equation in dimension $d\geq 3$,
 {\em Stoch. Proc. and their Applications}, 151, 127-173, (2022).
 
\bibitem[CoSeZe21]{CoSeZe21}
 C. Cosco, I. Seroussi, O.Zeitouni, 
 Directed polymers on infinite graphs,
 {\em Comm.  Math. Phys.}, 386(1), 395-432, (2021). 
 
 \bibitem[CZ21]{CZ21}
 C. Cosco, O.  Zeitouni, 
 Moments of partition functions of 2D Gaussian polymers in the weak disorder regime--I,
 {\em Comm. Math. Phys.}, to appear, (2021)
 
 \bibitem[CZ23]{CZ23}
 C. Cosco, O.  Zeitouni, 
 Moments of partition functions of 2D Gaussian polymers in the weak disorder regime--II. arXiv:2305.05758, (2023)

\bibitem[DFT94]{DFT94}
G.F. Dell'Antonio, R. Figari, A. Teta,  
 Hamiltonians for systems of N particles interacting through point interactions,
 {\em Annales de l'IHP Physique th\'eorique}, Vol. 60, No. 3, pp. 253-290, (1994).

\bibitem[dH09]{dH09}
F. den Hollander, 
Random Polymers, Lecture Notes in Mathematics 1974, Springer, Berlin, (2009).

\bibitem[DZ16]{DZ16}
P.S. Dey, N.Zygouras;
High temperature limits of directed polymers with heavy tail disorder, 
{\em Annals of Prob}., 2016, Vol. 44, No. 6, 4006-4048,  

\bibitem[DGLT09]{DGLT09}
B. Derrida,  G. Giacomin, H.  Lacoin, F. L. Toninelli,  
 Fractional moment bounds and disorder relevance for pinning models,
 {\em Comm. Math. Phys.},  287(3), 867-887, (2009).
 
 \bibitem[DG89]{DG89}
 B. Derrida, R.B. Griffiths,
 Directed polymers on disordered hierachical lattices, 
 {\em Europhys. Lett.} 8, No. 2, 111-116 (1989).
 
 \bibitem[DS88]{DS88}
 B. Derrida, H. Spohn, 
 Polymers on disordered trees, spin glasses, and traveling waves,
 {\em J. Stat. Phys.}, 51, 817-840, (1988). 

\bibitem[D23]{D23} 
E. Dimitrov,  
Two-point convergence of the stochastic six-vertex model to the Airy process,
{\em  Comm. Math. Phys.} 398, no. 3, 925-1027, (2023)
 
 \bibitem[DR04]{DR04}
 J. Dimock, S. G. Rajeev,  
 Multi-particle Schr\"odinger operators with point interactions in the plane,
 {\em J. of Physics A: Mathematical and General}, 37(39), 9157, (2004).
 
  \bibitem[DOV22]{DOV22}
 D. Dauvergne, J. Ortmann, B. Vir\'ag, 
 The directed landscape,
 {\em  Acta Mathematica}, 229(2), 201-285, (2022). 
 
 \bibitem[D19]{D19}
 R. Durrett,  Probability: theory and examples (Vol. 49). 
 {\em Cambridge university press}, (2019).

 \bibitem[DG23]{DG23}
 A. Dunlap, C.  Graham, 
 The 2D nonlinear stochastic heat equation: pointwise statistics and the decoupling function. arXiv:2308.11850, (2023). 
 
  \bibitem[DG22]{DG22}
 A. Dunlap, Y. Gu,  
 A forward-backward SDE from the 2D nonlinear stochastic heat equation. 
{Ann. Prob.}, 50(3), 1204-1253, (2022).
 
 \bibitem[DGRZ18]{DGRZ18}
 A. Dunlap, Y. Gu, L. Ryzhik, O. Zeitouni, 
 Fluctuations of the solutions to the KPZ equation in dimensions three and higher. 
 arXiv preprint arXiv:1812.05768 (2018). 
 
  \bibitem[ED92]{ED92}
 M.R. Evans,  B. Derrida, 
 Improved bounds for the transition temperature of directed polymers in a finite-dimensional random medium,
 {\em  J. Stat. Phys.} , 69, 427-437, (1992).
 
 \bibitem[FNS77]{FNS77}
D. Forster, D.R.  Nelson, M.J. Stephen,  
Large-distance and long-time properties of a randomly stirred fluid. 
{\em Phys. Review A}, 16(2), 732, (1977).

 \bibitem[FJ23]{FJ23}
 R. Fukushima, S. Junk, S. 
 Moment characterization of the weak disorder phase for directed polymers in a class of unbounded environments,
 {\em Elect. Comm. Prob.} (2023)
 
 \bibitem[G11]{G11}
C.  Garban, 
Oded Schramm’s contributions to noise sensitivity. 
{\em Selected Works of Oded Schramm} (pp. 287-350). New York, NY: Springer New York, (2011).

 \bibitem[GS14]{GS14}
C.  Garban, J.E.  Steif, 
 Noise sensitivity of Boolean functions and percolation (Vol 5) 
 {\em Cambridge University Press}, (2014).
 
\bibitem[G07]{G07}
G. Giacomin,  
Random polymer models,
{\em  Imperial College Press}, (2007).
 
\bibitem[G11]{G11}
G. Giacomin,  
 Disorder and Critical Phenomena Through Basic Probability Models: 
 \'Ecole d'\'Et\'e de Probabilit\'s de Saint-Flour XL-2010 (Vol. 2025),
 {\em Springer},(2011).
 
\bibitem[GLT11]{GLT11}
G. Giacomin, H. Lacoin, H., F.L. Toninelli, 
Hierarchical pinning models, quadratic maps and quenched disorder,
{\em Prob. Th. Rel. Fields}, 147, 185-216,  (2010). 
 

\bibitem[GJ14]{GJ14}
P. Goncalves, M. Jara, 
 Nonlinear fluctuations of weakly asymmetric interacting particle systems,
 {\em Archive for Rational Mechanics and Analysis}, 212(2), 597-644, (2014).
 
 \bibitem[GJ10]{GJ10}
P. Goncalves, M. Jara, 
Universality of KPZ equation. arXiv preprint arXiv:1003.4478,   (2010). 

\bibitem[G99]{G99}
Grimmett, G. R. 
Percolation. Second edition,
{\em Grundlehren der mathematischen Wissenschaften,} vol. 321, Springer-Verlag, (1999).

\bibitem[G18]{G18}
Y. Gu,  
Gaussian fluctuations from the 2D KPZ equation,
{\em Stoch. and Partial Differential Equations: Analysis and Computations}, 1-36, (2018).

\bibitem[GQT21]{GQT21}
Y. Gu, J. Quastel, L.C. Tsai, 
Moments of the 2D SHE at criticality.
{\em Prob. and Math. Physics}, 2(1), 179-219, (2021)

\bibitem[GIP15]{GIP15}
M. Gubinelli, P. Imkeller, N. Perkowski.
Paracontrolled distributions and singular PDEs.
\emph{Forum of Mathematics}, Pi (2015) Vol. 3, no. 6.

\bibitem[GP18]{GP18}
M. Gubinelli, P. Imkeller, N. Perkowski, 
Energy solutions of KPZ are unique,
{\em Journal of the AMS}, 31(2), 427-471, (2018)

\bibitem[GDBR16]{GDBR16}
T. Geudr\'e, P. Le Doussal, J.P. Bouchaud, A. Rosso,
Ground-state statistics of directed polymers with heavy-tailed disorder,
{\em Phys. Rev. E,} 91, (2016)

\bibitem[G69]{G69}
R.F. Gundy,
On the class of $L\log L$, martingales, and singular integrals,
{\em Studia Math.} T. XXXIII, (1969),

\bibitem[H13]{H13}
M. Hairer.
Solving the KPZ equation
{\em Ann. Math}, 178 (2), 559-664, (2013).

\bibitem[HQ18]{HQ18}
M. Hairer, J. Quastel, 
A class of growth models rescaling to KPZ,
{\em Forum Math., Pi}, Vol. 6, p. e3, (2018)

\bibitem[HM07]{HM07}
B. Hambly, J. B. Martin,
 Heavy tails in last-passage percolation,
 {\em Prob. Theory and Rel. Fields},
137:227-275, (2007)

\bibitem[H74]{H74}
A.B.~Harris,
Effect of random defects on the critical behaviour of Ising models.
{\em Journal of Physics C: Solid State Physics}, 1974.

\bibitem[HH85]{HH85}
D.A. Huse, C.L.  Henley, 
Pinning and roughening of domain walls in Ising systems due to random impurities,
{\em Phys. Rev. Letters}, 54(25), 2708, (1985).

\bibitem[HHF85]{HHF85}
Huse, Henley, and Fisher respond
David A. Huse, Christopher L. Henley, and Daniel S. Fisher
Phys. Rev. Lett. 55, 2924 (1985) 

\bibitem[IS88]{IS88}
J. Z. Imbrie, T. Spencer, 
Diffusion of directed polymers in a random environment,
{\em J. Stat. Phys.}, 52, 609-626, (1988). 

\bibitem[J97]{J97}
S. Janson, 
Gaussian hilbert spaces, 
 {\em Cambridge University Press}, Vol. 129, (1997).
 
  \bibitem[J00]{J00}
K. Johansson,
Shape fluctuations and random matrices,
{\em Comm. Math. Phys.}, 209, 437-476, (2000).
 
 \bibitem[J03]{J03}
K. Johansson, 
 Discrete polynuclear growth and determinantal processes,
 {\em Comm. Math. Phys.}, 242, 277-329,  (2003).
 
 \bibitem[J05]{J05}
K. Johansson, 
 Random matrices and determinantal processes, 
 {\em Lectures notes, Les Houches}, (2005),
arXiv:math-ph/0510038

\bibitem[dJ87]{dJ87}
P. de Jong.
A central limit theorem for generalized quadratic forms.
{\em Prob. Th. Rel. Fields} 75, 261--277, 1987.

\bibitem[dJ90]{dJ90}
P. de Jong.
A central limit theorem for generalized multilinear forms.
{\em J. Multivariate Anal.} 34,  275--289, 1990.

\bibitem[J22a]{J22a}
S. Junk,  
New characterization of the weak disorder phase of directed polymers in bounded random environments,
{Comm. Math. Phys.}, 389(2), 1087-1097, (2022).

\bibitem[J22b]{J22b}
S. Junk,  
 Fluctuations of partition functions of directed polymers in weak disorder beyond the $ L^ 2$-phase,
  arXiv:2202.02907, (2022)
  
 \bibitem[J23]{J23}
S. Junk, 
 Local limit theorem for directed polymers beyond the $ L^ 2$-phase. arXiv preprint arXiv:2307.05097, (2023)
 
 \bibitem[J23b]{J23b}
S. Junk,  
Stability of weak disorder phase for directed polymer with applications to limit theorems,
{\em ALEA, Lat. Am. J. Probab. Math. Stat.} 20, 861–883 (2023)
 
 \bibitem[JL24a]{JL24a}
S. Junk,  H Lacoin, 
Strong disorder and very strong disorder are equivalent for directed polymers, 
 arXiv:2402.02562, (2024). 
 
  \bibitem[JL24b]{JL24b}
S. Junk,  H Lacoin, 
 The tail distribution function of the partition function for directed polymer in the weak disorder phase,
 arXiv:2405.04335, (2024)
 
  \bibitem[K76]{K76}
 L.P. Kadanoff, 
 Notes on Migdals recursion formulae, 
 {'em Ann. Phys.} 100 (1976) 359–394, (1976)
 
  \bibitem[K85]{K85}
M. Kardar, 
 Roughening by impurities at finite temperatures
{\em Phys. Rev. Lett.} 55, 2923, (1985)

  \bibitem[K87]{K87}
M. Kardar,  Replica Bethe ansatz studies of two-dimensional interfaces 
with quenched random impurities,
{\em Nuclear Physics B}, 290, 582-602, (1987)

  \bibitem[KN85]{KN85}
M. Kardar, D. R. Nelson, Phys. 
{\em Rev. Lett.} 55:1157-1160 (1985)

 \bibitem[KPZ86]{KPZ86}
M. Kardar, G. Parisi,  Y.-C. Zhang.
Dynamic Scaling of Growing Interfaces.
{\em Physical Review Letters} 56 (1986), 889--892.
 
\bibitem[K14]{K14}
A. Kupiainen.
Renormalisation Group and Stochastic PDE's.
arXiv:1410.3094, 2014.

\bibitem[L10]{L10}
 H. Lacoin,  
 New bounds for the free energy of directed polymers in dimension 1+ 1 and 1+ 2. 
 {\em Comm. Math. Phys.}, 294, 471-503, (2010).
 
\bibitem[L16]{L16}
 H. Lacoin,
Marginal relevance for the $\gamma$-stable pinning model,
{\em Stochastic Dynamics Out of Equilibrium}, 597-616, (2016)

\bibitem[LM10]{LM10}
H. Lacoin, G. Moreno,
Directed Polymers on hierarchical lattices with site disorder, 
{\em Stoch. Proc. Appl.} 120, No. 4, 467-493, (2010)
 
\bibitem[LLR83]{LLR83}
M.R. Leadbetter, G. Lindgren, H. Rootzen,
 Extremes and related properties of random sequences and processes. 
 {\em Springer Series in Statistics}. Springer-Verlag, (1983)

\bibitem[Led96]{Led96}
M. Ledoux.
Isoperimetry and Gaussian analysis.
Chapter 3 (pp. 165-294) in
{\it Lectures on Probability Theory and Statistics,
Ecole d'Et\'e de Probabilit\'es de Saint-Flour XXIV--1994},
Springer (1996).

\bibitem[Led01]{Led}
M. Ledoux.
The concentration of measure phenomenon.
Mathematical Surveys and Monographs, Vol. 89,
{\em American Mathematical Society} (2001).

\bibitem[L85]{L85}
T. M. Liggett, 
Interacting particle systems (Vol. 2). 
{\em New York: Springer}, (1985).

\bibitem[LZ22]{LZ22}
D. Lygkonis, N. Zygouras, 
Edwards-Wilkinson fluctuations for the directed polymer in the full $ L^ 2$-regime for dimensions $ d\geq 3$, 
{\em  Ann. Inst. Henri Poincar\'e (B) Prob. et Stat.}, 58(1), 65-104,  (2022). 

\bibitem[LZ23]{LZ23}
D. Lygkonis, N. Zygouras, 
 Moments of the 2D Directed Polymer in the Subcritical Regime and a Generalisation of the Erdös–Taylor Theorem,
 {\em  Comm. Math. Physics}, 1-38, (2023)
 
\bibitem[LZ22+]{LZ22+}
D. Lygkonis, N. Zygouras,
A multivariate extension of the Erdös-Taylor theorem. arXiv-2202, (2022)

\bibitem[MQR21]{MQR21}
K. Matetski, J. Quastel, D. Remenik, 
The KPZ fixed point,
{\em Acta Mathematica}, 227(1), 115-203, (2021). 

\bibitem[MU18]{MU18}
J. Magnen, J. Unterberger.
The scaling limit of the KPZ equation in space dimension 3 and higher.
{\em J. Stat. Phys.} 171 (2018), Volume 171, 543--598.

\bibitem[M75]{M75}
A.A. Migdal, 
Recurrence equations in gauge field theory, 
{\em JETF} 69, 810-822. 1457-1467, (1975)

\bibitem[MG06a]{MG06a}
C. Monthus, T. Garel,  Freezing transition of the directed polymer in a 1+ d random medium: Location of the critical temperature and unusual critical properties,
{\em Phys. Review E}, 74(1), 011101, (2006). 

\bibitem[MG06b]{MG06b}
C. Monthus, T. Garel,
Numerical study of the directed polymer in a 1+ 3 dimensional random medium,
{\em European Phys. Journal B-Condensed Matter and Complex Systems}, 53(1), 39-45,
(2006).

\bibitem[MOO10]{MOO10}
E. Mossel, R. O'Donnell, K. Oleszkiewicz.
Noise stability of functions with low influences: Invariance and optimality.
{\it Ann. Math} 171 (2010), 295--341.

\bibitem[MSZ16]{MSZ16}
C. Mukherjee, A. Shamov,  O. Zeitouni,
Weak and strong disorder for the stochastic heat equation and continuous directed polymers in $d\geq3$.
{\em Elect.. Comm. Probab.} 21, no. 61, (2016),

\bibitem[N14]{N14}
M. Nakashima, 
A remark on the bound for the free energy of directed polymers in random environment in 
$1+ 2$ dimension,
{\em  J. Math. Physics}, 55(9), (2014)

\bibitem[N19]{N19}
M. Nakashima, 
Free energy of directed polymers in random environment in $1+1$-dimension at high temperature,
{\em Elect. J. Prob.}(2019)

\bibitem[NZ17]{NZ17}
V. L. Nguyen,  N. Zygouras, 
Variants of geometric RSK, geometric PNG, and the multipoint distribution of the log-gamma polymer, 
{Inter. Math. Res. Notices}, 2017(15), 4732-4795, (2017).

\bibitem[NP09]{NP09}
I. Nourdin, G. Peccati, 
 Stein's method on Wiener chaos,
 {\em Prob. Th. Rel. Fields}, 145(1-2), 75-118,  (2009).
 
 \bibitem[NPR10]{NPR10}
 I. Nourdin, G. Peccati, G. Reinert. 
 Invariance principles for homogeneous sums: universality of Gaussian Wiener chaos.
 {\em Ann. Probab.} 38, 1947-1985, (2010).

\bibitem[NP05]{NP05}
D. Nualart, G. Peccati, 
 Central limit theorems for sequences of multiple stochastic integrals,
  {\rm Ann. Prob.}, 33(1), 177-193, (2005).
  
  \bibitem[OC12]{OC12}
  N. O'Connell
  Directed polymers and the quantum Toda lattice
{\em Ann. Prob.} 40 (2), 437-458, (2013)

 \bibitem[OY01]{OY01}
  N. O'Connell, M. Yor, 
  Brownian analogues of Burke's theorem,
  {\em Stoch. Proc. Appl.}, 96(2), 285-304, (2001).

 \bibitem[OSZ14]{OSZ14}
N. O’Connell, T. Seppalainen, N. Zygouras, 
Geometric RSK correspondence, Whittaker functions
and symmetrized random polymers, 
{\em Inventiones Math.}, Vol. 197, Issue 2, 361-416 (2014)

\bibitem[QS23]{QS23}
J. Quastel, S. Sarkar, 
Convergence of exclusion processes and the KPZ equation to the KPZ fixed point,
{Journal of AMS}, 36(1), 251-289, (2023).
  
  \bibitem[QS15]{QS15}
J. Quastel, H. Spohn.
The one-dimensional KPZ equation and its universality class.
{\em J. Stat. Phys.} 160 (2015), 965--984.

 \bibitem[PS02]{PS02}
M. Prähofer, H.  Spohn, 
Scale invariance of the PNG droplet and the Airy process,
{\em J. Stat. Phys.}, 108, 1071-1106, (2002). 

\bibitem[RV14]{RV14}
R. Rhodes,  V, Vargas,  
Gaussian multiplicative chaos and applications: a review,
{\em Prob. Surveys}, 11, (2014). 

\bibitem[R74]{R74}
V. I.  Rotar', 
 Some limit theorems for polynomials of second degree,
 {\em Theory of Probability \& Its Applications}, 18(3), 499-507, (1974).
 
 \bibitem[S12]{S12}
  T. Sepp\"al\"ainen, 
  Scaling for a one-dimensional directed polymer with boundary conditions, 
  {\em Ann. Prob.} (2012), Vol. 40, No. 1, 19-73

\bibitem[S61]{S61}
B. A.  Sevastyanov, 
 A class of limit distributions for quadratic forms of normal stochastic variables
 {\em  Theor. Probab. Appl,} 6, 337-340, (1961).
 
 \bibitem[S95]{S95}
 Y. Sinai,  
 A remark concerning random walks with random potentials,
 {\em Fundamentae Math.}, 147(2), 173-180, (1995).
  
 \bibitem[S98]{S98}
 J. Szulga, 
 Introduction to random chaos, 
 {\em Chapman \& Hall, London}, (1998).
 
 \bibitem[SZ96]{SZ96}
 R. Song, X. Y.  Zhou, 
 A remark on diffusion of directed polymers in random environments,
 {\em J. Stat. Phys.}, 85, 277-289, (1996).
 
  \bibitem[T22]{T22}
 R. Tao, 
 Gaussian fluctuations of a nonlinear stochastic heat equation in dimension two,
 {\em Stoch. Partial Differential Equations: Analysis and Computations}, 1-27, (2022). 
 
 \bibitem[Ten95]{Ten95}
G. Tenenbaum.
Introduction to Analytic and Probabilistic Number Theory.
\textit{Cambridge University Press} (1995).
 
 \bibitem[T17]{T17}
F. L. Toninelli.
$(2+1)$-dimensional interface dynamics: mixing time, hydrodynamic limit and Anisotropic KPZ growth.
To appear in {\em Proceedings of the International Congress of Mathematicians 2018}, arXiv:1711.0557.

 \bibitem[T08]{T08}
F. L. Toninelli, Disordered pinning models and copolymers: beyond annealed bounds,
{\em Ann. Appl. Prob.} 18, 1569-1587, (2008).

 \bibitem[V07]{V07}
V. Vargas, 
Strong localization and macroscopic atoms for directed polymers,
{\em Prob. Th. Rel. Fields}, 138(3-4), 391-410, (2007).

\bibitem[V20]{V20}
B. Vir\'ag, 
The heat and the landscape I,
arXiv:2008.07241, (2020). 

 \bibitem[V21]{V21}
R. Viveros, 
Directed polymer in $\gamma$-stable random environments, 
{\em Ann. Inst. Henri Poincar\'e, Prob. Stat.},
Vol. 57, No. 2, pp. 1081-1102, (2021).

\bibitem[Z22]{Z22}
N. Zygouras
Some algebraic structures in the KPZ universality,
{\em Prob. Surveys}, 19, 590-700, (2022)
 
\end{thebibliography}
\end{document}